\newtheorem{theorem}{Theorem}
\newtheorem{claim}[theorem]{Claim}
\newtheorem{proposition}[theorem]{Proposition}
\newtheorem{lemma}[theorem]{Lemma}
\newtheorem{definition}[theorem]{Definition}
\newtheorem{corollary}[theorem]{Corollary}
\newtheorem{remark}[theorem]{Remark}
\numberwithin{equation}{section}
\numberwithin{theorem}{section}
\newcommand{\RR}{\mathbb{R}}
\newcommand{\CP}{\mathbb{CP}}
\newcommand{\ck}{\mathcal{K}}
\newcommand{\tr}{\mbox{tr}_g }
\newcommand{\n}{\nabla}
\newcommand{\mf}{\mathcal{F}}
\newcommand{\tmf}{\tilde{\mathcal{F}}}
\begin{document}
\bibliographystyle{amsalpha}
\title[Rigidity and Stability]{Rigidity and stability of Einstein metrics for quadratic
curvature functionals}
\author{Matthew J. Gursky}
\address{Department of Mathematics, University of Notre Dame, Notre Dame, IN 46556}
\email{mgursky@nd.edu}
\author{Jeff A. Viaclovsky}
\address{Department of Mathematics, University of Wisconsin, Madison, WI 53706}
\email{jeffv@math.wisc.edu}
\thanks{The first author has been partially supported under NSF grants DMS-0800084
and DMS-1206661.
The second author has been partially supported under NSF grants DMS-0804042
and DMS-1105187.}

\begin{abstract}
We investigate rigidity and stability properties
of critical points of quadratic curvature functionals
on the space of Riemannian metrics. We show it is possible to ``gauge'' the
Euler-Lagrange equations, in a self-adjoint fashion,
to become elliptic. Fredholm theory may then be used to describe
local properties of the moduli space of critical metrics. We show a number
of compact examples are infinitesimally rigid, and
consequently, are isolated critical points in the space of
unit-volume Riemannian metrics.
We then give examples of critical metrics which are strict local
minimizers (up to diffeomorphism and scaling).
A corollary is a local ``reverse Bishop's inequality''
for such metrics.  In particular,
any metric $g$ in a $C^{2,\alpha}$-neighborhood of
the round metric $(S^n,g_S)$ satisfying $Ric(g) \leq Ric(g_S)$ has volume $Vol(g) \geq Vol(g_S)$, with equality holding if and
only if $g$ is isometric to $g_S$.
\end{abstract}

\maketitle
\setcounter{tocdepth}{1}
\vspace{-1mm}
\tableofcontents
%%%%%%%%%%%%%%%%%%%%%%%%%%%%%%%%%%%%%%%%%%%%%%%%%%%%%%%
%%%%%%%%%%%%%%%%%%%%%%%%%%%%%%%%%%%%%%%%
\section{Introduction}
%%%%%%%%%%%%%%%%%%%%%%%%%%%%%%%%%%%%%%%%%%%%%%%

\subsection{Quadratic Functionals}  Let $M$ be a closed manifold of dimension
$n \geq 3$.  The total scalar curvature, or Einstein-Hilbert
functional, has been deeply studied in Riemannian geometry and we do not
attempt to give a survey here. We only remark that Einstein metrics in
general are saddle points for the Einstein-Hilbert functional,
and rigidity and stability properties of Einstein metrics have been studied in
\cite{Koiso1, Koiso2, Koiso3}. In this article we are interested in
functionals on the space of Riemannian metrics $\mathcal{M}$ which are
quadratic in the curvature; see \cite{Besse, Blair, Smolentsev} for surveys. Such functionals have also been widely studied in
physics under the name ``fourth-order,'' ``critical,'' or ``quadratic'' gravity;
see for example \cite{LuPope, Maldacena, Schmidt, Stelle}.

Using the standard decomposition of the curvature tensor $Rm$ into the Weyl, Ricci and scalar curvature
curvature components, a basis for the space of quadratic curvature
functionals is
\begin{align} \label{FgenN}
\mathcal{W} = \int |W|^2\ dV, \ \ \rho = \int |Ric|^2\ dV,
\ \ \mathcal{S} = \int R^2\ dV.
\end{align}
In dimension three, $\mathcal{W}$ vanishes,
and in dimension four, the Chern-Gauss-Bonnet formula
\begin{align*}
32\pi^2 \chi(M) = \int |W|^2\ dV - 2 \int |Ric|^2\ dV + \frac{2}{3} \int R^2\ dV
\end{align*}
implies that $\mathcal{W}$ can be written as a linear combination of the
other two (plus a topological term). In higher dimensions, the Euler-Lagrange
equations of $\mathcal{W}$ have a different structure than
the latter two functionals.
Consequently, in this paper, we will be interested in the functional
\begin{align} \label{Ftdef1}
\mathcal{F}_{\tau}[g] = \int |Ric|^2\ dV + \tau \int R^2\ dV,
\end{align}
(with $\tau = \infty$ formally corresponding to $\int R^2 dV$)
in dimensions $n \geq 3$.
In dimensions other than four, the functional $\mathcal{F}_{\tau}$ is not scale-invariant.
Therefore, we will consider the volume-normalized
functional
\begin{align} \label{TFtdef}
\tilde{\mathcal{F}}_{\tau}[g] = Vol(g)^{\frac{4}{n}-1} \mathcal{F}_{\tau}[g].
\end{align}
Any Einstein metric is critical for $\tmf_{\tau}$
(this is not true for
$\tilde{\mathcal{W}}$; an additional algebraic condition on
the curvature tensor is required).

\subsection{Rigidity}
As with any geometric variational problem we begin by trying to understand the local properties of the moduli space of critical points.  This is the space of metrics $g$ satisfying
\begin{align} \label{gradzed}
\nabla \tmf_{\tau}[g] = 0
\end{align}
modulo diffeomorphism equivalence and scaling,
where $\nabla \tmf_{\tau}$ is the gradient of $\tmf_{\tau}$.   Due to diffeomorphism invariance,
the condition (\ref{gradzed})
does not define an elliptic equation in $g$.  In order to provide a reasonable
description of the moduli space near a critical metric, in Section \ref{Moduli}
we define a map
$P_g : \overline{S}_0^2(T^{*}M) \rightarrow \overline{S}_0^2(T^{*}M)$, where
\begin{align}
\label{S20}
\overline{S}_0^2(T^{*}M) = \Big\{ h \in S^2(T^{*}M) \ :\ \int (tr_g\ h)\ dV_g = 0
\Big\}.
\end{align}

In Section \ref{Moduli}, we will show that the zeroes of $P_g$
correspond exactly to critical metrics in a neighborhood of
$g$, up to diffeomorphism and scaling.
Furthermore, $P_g$ has the advantange of being elliptic.
This property guarantees that the kernel of the linearized operator,
denoted $H^1_{\tau}$, is finite dimensional.  Therefore, by using a standard
implicit function theorem argument one can define the {\em Kuranishi map}
$\Psi : H^1_{\tau} \rightarrow H^2_{\tau}$,
for which the moduli space is locally given by the orbit space
of the isometry group acting on the zero locus $\Psi^{-1}(0)$.
Note that here $H^2_{\tau}$ denotes the cokernel, but since our linearized
operator is self-adjoint this is the same as $H^1_{\tau}$ on a compact manifold.

\begin{definition}{\em
Let $M$ be a smooth, closed manifold of dimension $n$, and $g$ a critical metric for $\tmf_{\tau}$.
The metric $g$ is called {\em infinitesimally rigid} (for
$\tmf_{\tau}$) if $H^1_{\tau} = \{ 0 \}$,
and $g$ is called {\em rigid} (for $\tmf_{\tau}$) if there exists a
$C^{4, \alpha}$ neighborhood $U$ of $g$ in the
space of Riemannian metrics such
that if $\tilde{g} \in U$ and $\tilde{g}$ satisfies \eqref{gradzed},
then $\tilde{g} = e^c \phi^{*}g$ for some
$C^{5, \alpha}$-diffeomorphism $\phi : M \rightarrow M$
and a constant $c \in \mathbb{R}$.

  A non-zero element $h \in H^1_{\tau}$ is called {\em{integrable}}
there exists a path $g_s$ of critical metrics
(for $\tmf_{t}$) with $g_0 = g$ satisfying
$( g_s)' |_{s=0} = h$.
Otherwise, $h$ is called {\em{non-integrable}}.
}
\end{definition}

In Section \ref{LocalSec}, we will show that infinitesimal
rigidity implies rigidity.
Furthermore, it can indeed happen that there are non-trivial
elements in $H^1_{\tau}$ which are non-integrable.
This in fact occurs on $(S^3, g_S)$, where $g_S$ is the round
metric, see Theorem \ref{nonit} below.
A similar phenomenon occurs for infinitesimal Einstein deformations
on certain Einstein manifolds, the simplest known example
being $S^2 \times \CP^{2m}$ with $m \geq 2$ \cite{Koiso3}.

On an Einstein manifold, the Lichnerowicz Laplacian is given by
\begin{align*}
\Delta_L h_{ij} = \Delta h_{ij} + 2 R_{ipjq} h^{pq} - \frac{2}{n}Rh_{ij},
\end{align*}
and $\mbox{spec}_{TT} ( - \Delta_L)$ will denote
the set of eigenvalues of $(-\Delta_L)$ restricted to
transverse-traceless (TT) tensors. Infinitesimal Einstein rigidity is the condition
\begin{align*}
  \frac{2}{n}R
\notin \mbox{spec}_{TT} ( - \Delta_L),
\end{align*}
see \cite[Chapter 12]{Besse}. In particular,
if $g$ fails to be
rigid as an Einstein metric then it is not rigid as a critical point of
$\tmf_{\tau}$, but the converse may not be true.

 In Section \ref{RigidSec} we prove a vanishing result for $H^1_{\tau}$ when $g$
is an Einstein metric.
The full statement is rather complicated, since it depends on the
value of $\tau$ and the dimension
$n$; therefore, we only state here the result for the functional $\tmf_0 = Vol^{\frac{4}{n} -1} \int |Ric|^2 dV$
(which will be of particular interest for the applications described in Section \ref{RBIsec}):

\begin{theorem} \label{RigidMain}
Let $(M,g)$ be an $n$-dimensional Einstein manifold, $n \geq 3$.
When $R > 0$, assume $(M,g)$ is not isometric to the
round sphere. If
\begin{align} \label{notspec}
\Big\{ \frac{2}{n} R,   \frac{4}{n} R  \Big\}
\notin \mbox{spec}_{TT} ( - \Delta_L),
\end{align}
then $H^1_0 = \{0 \}$
and consequently $g$ is rigid for $\tmf_0$.

The same result holds when $R < 0$, assuming $n =3$ or $n =4$.
\end{theorem}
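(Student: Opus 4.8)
The plan is to establish infinitesimal rigidity, i.e. $H^1_0 = \{0\}$, and then invoke the implication that infinitesimal rigidity implies rigidity, proved in Section \ref{LocalSec}. By the construction in Section \ref{Moduli}, $H^1_0$ is the kernel of the self-adjoint elliptic operator $DP_g$ obtained by linearizing the gauged gradient $P_g$ at the Einstein metric $g$; since $g$ is Einstein it is a zero of $P_g$, so this linearization is the object to analyze. The whole argument rests on an orthogonal splitting of the tangent space $\overline{S}_0^2(T^*M)$ that $DP_g$ respects on an Einstein background, allowing the conformal (pure-trace) and transverse-traceless directions to be treated separately.

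First I would set up the decomposition. Writing $h = h^{TT} + \tfrac{1}{n}(\mbox{tr}_g h)\, g + (\text{Lie-derivative part})$, the self-adjoint gauge is chosen precisely so that $DP_g$ is injective on the infinitesimal-diffeomorphism (Lie-derivative) directions; thus $\ker DP_g$ is spanned by transverse-traceless and pure-trace contributions, and it suffices to analyze $DP_g$ on tensors of the form $h = h^{TT} + f g$. The constraint defining $\overline{S}_0^2$ forces $\int f\, dV = 0$, which in particular removes the constant (scaling) mode created by the volume normalization $Vol^{\frac4n - 1}$. The key structural point, which I would verify by direct computation using $Ric = \tfrac{R}{n} g$, is that on an Einstein manifold $DP_g$ leaves the TT and pure-trace subspaces invariant; this decoupling reduces the vanishing of $H^1_0$ to two separate injectivity statements.

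Next comes the transverse-traceless part, which is the heart of the matter. Using the explicit formula for the gradient of $\rho = \int |Ric|^2\, dV$ and substituting the Einstein condition, the restriction of $DP_g$ to TT tensors collapses --- because $\Delta_L$ commutes with the algebraic curvature action on an Einstein space --- to a polynomial in the Lichnerowicz Laplacian. I expect this polynomial to factor, up to a positive constant, as
\begin{align*}
DP_g\big|_{TT} = c\,\Big(-\Delta_L - \tfrac{2}{n}R\Big)\Big(-\Delta_L - \tfrac{4}{n}R\Big), \qquad c > 0,
\end{align*}
so that its kernel is trivial exactly when neither $\tfrac{2}{n}R$ nor $\tfrac{4}{n}R$ lies in $\mbox{spec}_{TT}(-\Delta_L)$. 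This is precisely hypothesis \eqref{notspec}, and it disposes of the TT part.

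Finally I would treat the pure-trace direction $h = fg$. Linearizing along conformal deformations produces a fourth-order scalar operator, a polynomial in the ordinary Laplacian $\Delta$, and I would analyze its kernel. When $R > 0$, I expect the kernel to force $f$ to satisfy an equation of Obata type, so that any nontrivial solution, on an Einstein manifold with positive scalar curvature, makes $(M,g)$ isometric to the round sphere by Obata's theorem; since that case is excluded by hypothesis, the trace part vanishes and $H^1_0 = \{0\}$. When $R < 0$ the obstruction is instead a positivity estimate for the scalar operator: its symbol and zeroth-order terms combine into an expression in the eigenvalue whose sign is definite only for small $n$, which is the source of the restriction $n = 3, 4$. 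I expect the two genuine obstacles to be (i) carrying out the fourth-order linearization of $\nabla\tmf_0$ cleanly enough to exhibit the factorization displayed above, and (ii) the $R<0$ trace estimate, where establishing the required sign of the scalar operator is exactly what fails beyond dimension four.
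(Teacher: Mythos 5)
Your proposal follows essentially the same route as the paper: the gauge term forces kernel elements of the linearization to satisfy $\beta_g h = 0$, the Jacobi operator decouples into TT and pure-trace pieces at an Einstein metric, the TT restriction factors as $\tfrac{1}{2}\bigl(\Delta_L + \tfrac{2}{n}R\bigr)\bigl(\Delta_L + \tfrac{4}{n}R\bigr)$ so that hypothesis \eqref{notspec} kills it, the conformal factorization \eqref{confjac} is handled by the Lichnerowicz--Obata estimate when $R>0$ (with the round sphere excluded) and by a sign analysis that fails for $n \geq 5$ when $R<0$, and rigidity then follows from the Kuranishi-map argument of Corollary \ref{Kurcor}. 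This matches the paper's proof via Lemmas \ref{H1Lemma}, \ref{H1TT}, Proposition \ref{H1Prop1}, and Theorem \ref{h1thm} in both structure and all key computations.
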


 On the round sphere $(S^n, g_S)$,
$H^1_0 = \{ f g : f \in \Lambda \}$,
where $\Lambda$  is the space of first-order spherical harmonics,
and these elements are generated by the action of the conformal group.
Consequently, $g_S$ is in fact rigid, see Section~\ref{Moduli}.

 The criterion in Theorem \ref{RigidMain} only involves the
action of the Lichnerowicz Laplacian on transverse-traceless variations of the
metric.  In fact, one also has to consider conformal variations, but rigidity
under these variations will follow from the Lichnerowicz estimate for the first
eigenvalue of the Laplacian (if $g$ is not isometric
to the round sphere), see Section~\ref{RigidSec}.  In the negative
case there is no a priori lower bound for the spectrum of $(-\Delta)$, as can
 be seen by considering hyperbolic examples with long
``necks''.  Consequently, in dimensions $n \geq 5$, our rigidity result for negative Einstein metrics
require an additional assumption on $\lambda_1(-\Delta)$;
see Lemma \ref{H1TT} and Theorem \ref{h1thm} for precise statements.

\subsection{Stability and local minimization}
We will also consider local minimizing properties of critical points,
and the relation with stability, which we now define.
\begin{definition}{\em
Let $M$ be smooth, closed manifold of dimension $n$, and $g$ a critical metric for $\tmf_{\tau}$.
Then $g$ is called a
{\em{local minimizer}} (for $\tmf_{\tau}$) if for all metrics
$\tilde{g}$ in a $C^{2,\alpha}$-neighborhood of $g$,
\begin{align} \label{gEminloc}
\tmf_{\tau}[\tilde{g}] \geq \tmf_{\tau}[g].
\end{align}
If, in addition, equality holds if and only if $\tilde{g} = e^c \phi^{*}g$ for some
$C^{3, \alpha}$-diffeomorphism $\phi : M \rightarrow M$ and a
constant $c \in \mathbb{R}$,  then $g$ is called a
{\em{strict local minimizer}}.

 A metric $g$ is called {\em{strictly stable}}
if  the Jacobi operator is positive (as
a bilinear form on $L^2$) when restricted to variations $h$ with $\delta_g h = \frac{1}{n}d(tr\ h)$.  If the Jacobi operator on this subspace
is positive for all $h$ orthogonal to the kernel and, in addition, all kernel elements are integrable,
then $g$ is called {\em{stable}}. }
\end{definition}
In Section \ref{StableSec} we show that if
$g$ is strictly stable then $g$ is a strict local
minimizer. Furthermore, if $g$ is stable, then $g$ is a
local minimizer, see Proposition~\ref{locmin}.
In this latter statement, the integrability
of the kernel directions is crucial, as this
implication is not necessarily true otherwise, see Theorem~\ref{nonit} below.

Again, the full statement of our main theorem
regarding stability for $\tmf_{\tau}$ is rather complicated,
so for the purpose of simplicity in the introduction, we only
state here the special case of $\tau > -1/n$:

\begin{theorem} \label{StableMain}  Let $(M,g)$ be an $n$-dimensional
Einstein manifold, $n \geq 3$, and let $\tau > -1/n$. If $R > 0$ and
\begin{align} \label{stablegap}
\mbox{spec}_{TT}(-\Delta_L) \cap \Big[ \frac{2}{n}R ,
\Big(\frac{4}{n} + 2\tau \Big) R \Big] = \varnothing,
\end{align}
then $g$ is a strict local minimizer for $\tmf_{\tau}$.
If $R < 0$ and $n = 3$ or $n = 4$, then the same result
holds provided that endpoints in \eqref{stablegap} are reversed.
\end{theorem}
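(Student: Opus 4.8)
The plan is to reduce the minimization statement to a question of strict stability and then verify strict stability by diagonalizing the second variation. By the implication recorded in Proposition~\ref{locmin}, strict stability of $g$ already yields that $g$ is a strict local minimizer for $\tmf_{\tau}$; so the entire burden is to show that the Jacobi (Hessian) quadratic form of $\tmf_{\tau}$ is strictly positive on the gauge slice $\{ h : \delta_g h = \tfrac{1}{n} d(tr_g h)\}$, after quotienting by the trivial scaling direction $h = cg$. As a first step I would unwind the gauge condition: writing $h_0 = h - \tfrac{1}{n}(tr_g h) g$ for the trace-free part, the condition $\delta_g h = \tfrac{1}{n} d(tr_g h)$ is equivalent to $\delta_g h_0 = 0$, so every slice element decomposes $L^2$-orthogonally as $h = h^{TT} + f g$ with $h^{TT}$ transverse-traceless and $f = \tfrac{1}{n}tr_g h$, and the ambient constraint $h \in \overline{S}_0^2(T^{*}M)$ forces $\int f\, dV = 0$. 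The orthogonality $\langle h^{TT}, fg\rangle_{L^2} = \int f\, tr_g(h^{TT})\, dV = 0$ is immediate.

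Next I would compute the Hessian of $\tmf_{\tau}$ at the Einstein metric and show it block-diagonalizes along this decomposition. The key simplifications are that on an Einstein background the linearization of $R$ annihilates TT tensors, and that the curvature contractions appearing in the second variation collapse via $Ric = \tfrac{R}{n}g$ together with the second Bianchi identity. After integrating by parts, the cross terms between $h^{TT}$ and $fg$ vanish, so the quadratic form splits as $Q_{TT}(h^{TT}) + Q_{\mathrm{conf}}(f)$. I expect this step, together with the TT computation below, to be the main obstacle: turning the Hessian of a fourth-order functional into a clean factored operator requires careful commutation of $\n$ with $\Delta_L$ and delicate bookkeeping of the volume-normalization and the $\int R^2$ contribution, and it is precisely these terms that pin down the endpoints of the interval in \eqref{stablegap}.

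On the TT block I would show that $Q_{TT}$ is a positive multiple of $\int \langle h^{TT}, (-\Delta_L - \tfrac{2}{n}R)(-\Delta_L - (\tfrac{4}{n}+2\tau)R)\, h^{TT}\rangle\, dV$. Diagonalizing in a $\Delta_L$-eigenbasis, an eigenvalue $\mu \in \mbox{spec}_{TT}(-\Delta_L)$ contributes the factor $(\mu - \tfrac{2}{n}R)(\mu - (\tfrac{4}{n}+2\tau)R)$; since $\tau > -1/n$ makes the interval $[\tfrac{2}{n}R,(\tfrac{4}{n}+2\tau)R]$ nondegenerate when $R>0$, the gap hypothesis places $\mu$ strictly outside it, so the two factors share a sign and the product is strictly positive (the leading $+\mu^2$ fixing the sign). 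This is consistent with the $\tau=0$ specialization, where the roots become $\tfrac{2}{n}R$ and $\tfrac{4}{n}R$, matching the excluded values in the rigidity Theorem~\ref{RigidMain}: rigidity requires only that these roots not be eigenvalues, while stability requires the stronger condition that no spectrum lie between them.

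On the conformal block I would reduce $Q_{\mathrm{conf}}(f)$, via the conformal linearization of scalar curvature, to an expression controlled by the scalar Laplacian; restricting to mean-zero $f$ and invoking the Lichnerowicz--Obata estimate $\lambda_1(-\Delta) > \tfrac{R}{n-1}$, strict once $g$ is not isometric to the round sphere, gives strict positivity there as well. The round sphere is exactly the Obata equality case, where the first spherical harmonics produce the kernel directions of $H^1_0$ noted above and strictness genuinely fails; it is handled separately through its explicit rigidity. Finally, the $R<0$ case for $n=3,4$ proceeds identically, except that the sign of $R$ reverses the relevant inequalities in the factorization, whence the reversed endpoints; in dimensions $n \ge 5$ the absence of an a priori lower bound for $\lambda_1(-\Delta)$ (the ``neck'' examples) obstructs the conformal estimate and is what necessitates the additional hypotheses there.
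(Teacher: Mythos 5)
Your proposal is correct and follows essentially the same route as the paper: reduce to strict stability via Proposition~\ref{locmin}, split $h = z + fg$ in the $\beta_g h = 0$ gauge so that $z$ is transverse--traceless, kill the cross terms using $tr(Jz)=0$, and then get positivity of each block from the factorizations $\frac{1}{2}\big(\Delta_L + \frac{2}{n}R\big)\big(\Delta_L + (\frac{4}{n}+2\tau)R\big)$ on TT tensors (Theorems~\ref{JTT}, \ref{TTPosCor}, \ref{TTNegCor}) and the conformal factorization with the Lichnerowicz estimate (Theorems~\ref{FtJf}, \ref{conftPos}, \ref{conftNeg}). Even your side remarks match the paper's treatment: the round-sphere kernel of first spherical harmonics is indeed handled separately (via integrability of the conformal-diffeomorphism directions, cf.\ Theorem~\ref{snit}), and the missing $\lambda_1(-\Delta)$ bound is exactly what restricts the negative case to $n=3,4$.
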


 Strict stability of $g$ should imply that the gradient flow for the
functional will converge to $g$ for all initial
metrics sufficiently close to $g$. However, we will not pursue this
here, and refer the reader to \cite{Bour, Streets1, Streets2, Zheng, Ye}
for other results regarding gradient flows for Riemannian functionals.

%%%%%%%%%%%%%%%%%%%%%%%%%%%%%%%%%%%%%%
\subsection{The Bach tensor in dimension four}
\label{Bachsub}

 We remark that for $\tau = -n/4(n-1)$, this functional is equivalent to
$\int \sigma_2(A_g) dV$, where $\sigma_2(A)$ denotes the second elementary symmetric
function of the Schouten tensor.
The conformal properties of this functional
have been widely studied, see \cite{Viaclovsky} for references.
Critical points of this functional in dimension three were studied by the authors in
in \cite{GurskyViaclovsky}.

In dimension four ($\tau = -1/3$) this functional is
conformally invariant, and a critical metric satisfies the Euler-Lagrange
equations
\begin{align}
\label{Bintro}
B_{ij} \equiv -4 \Big( \nabla^k \nabla^l W_{ikjl} + \frac{1}{2} R^{kl}W_{ikjl} \Big)= 0.
\end{align}
The tensor $B_{ij}$ is known as the Bach tensor \cite{Bach, Derdzinski}.
Any Einstein metric is necessarily Bach-flat.
The Bach tensor is conformally invariant, traceless, and divergence-free.

The linearized Bach tensor fails to be
elliptic due to both conformal and diffeomorphism invariance,
so the condition \eqref{Bintro}
does not define an elliptic equation in $g$. In Section \ref{BachSec}
we define a map
$P_g^B : S_0^2(T^{*}M) \rightarrow S_0^2(T^{*}M)$, where
$S_0^2(T^{*}M)$ consists of pointwise traceless tensors.
In Section \ref{BachSec}, we will show that the zeroes of $P_g^B$
correspond exactly to Bach-flat metrics in a neighborhood of
$g$, up to diffeomorphism and conformal change of metric.
Furthermore, $P_g^B$ has the advantange of being elliptic,
so there is again a Kuranishi map
$\Psi_B : H^1_{-1/3} \rightarrow H^2_{-1/3}$,
and the moduli space is locally described by the orbit
space of the isometry group acting on the zero locus $\Psi_B^{-1}(0)$.

 Due to this extra conformal invariance, the above conditions of rigidity and
strict local minimization need to be modified slightly for the Bach tensor:
\begin{definition}
\label{Bachdef}
{\em
Let $(M^4,g)$ be a Bach-flat manifold.
The metric $g$ is called {\em infinitesimally Bach-rigid} if $H^1_{-1/3} = \{ 0 \}$,
and $g$ is called {\em Bach-rigid} if there exists a $C^{4, \alpha}$ neighborhood $U$ of $g$ in the
space of Riemannian metrics such
that if $\tilde{g} \in U$ and $\tilde{g}$ satisfies \eqref{Bintro},
then $\tilde{g} = e^f \phi^{*}g$ for some
$C^{5, \alpha}$-diffeomorphism $\phi : M \rightarrow M$
and a $C^{4, \alpha}$ function $f: M \rightarrow \mathbb{R}$.

The Bach-flat metric $g$ is called a {\em{local minimizer for}} $\mathcal{W}$
if for all metrics $\tilde{g}$ in a $C^{2,\alpha}$-neighborhood of $g$,
\begin{align}
\label{Wineq}
\mathcal{W}[\tilde{g}] \geq \mathcal{W}[g].
\end{align}
If, in addition, equality holds in \eqref{Wineq}
if and only if $\tilde{g} = e^f \phi^{*}g$ for some
$C^{3, \alpha}$-diffeomorphism $\phi : M \rightarrow M$ and
$C^{2, \alpha}$-function $f : M \rightarrow \mathbb{R}$,
then $g$ is a {\em{strict local minimizer for}} $\mathcal{W}$.

 A metric $g$ is called {\em{strictly Bach-stable}}
if  the Jacobi operator is positive (as
a bilinear form on $L^2$) when restricted to transverse-traceless variations $h$.
If the Jacobi operator on $TT$-variations
is positive for all $h$ orthogonal to the kernel and, in addition,
all kernel elements are integrable, then $g$ is called {\em{Bach-stable}}. }
\end{definition}

In Section~\ref{StableSec} we show that if
$g$ is strictly Bach-stable then $g$ is a strict local
minimizer of $\mathcal{W}$. Furthermore, if $g$ is Bach-stable, then $g$ is a
local minimizer, see Proposition~\ref{Bachlocmin}.
Our main result regarding the rigidity and stability of Bach-flat
metrics is the following.
\begin{theorem}
\label{Bachthm}
Let $(M^4,g)$ be an Einstein manifold. Assume that
\begin{align} \label{notspecB}
\Big\{ \frac{1}{3} R,   \frac{1}{2} R  \Big\}
\notin \mbox{spec}_{TT} ( - \Delta_L).
\end{align}
Then $H^1_{-1/3} = \{0\}$ and consequently $g$ is Bach-rigid.

If $ R > 0 $, and $g$ moreover satisfies
\begin{align} \label{stablegapB}
\mbox{spec}_{TT}(-\Delta_L) \cap \Big[ \frac{1}{3} R, \frac{1}{2}R \Big] = \varnothing,
\end{align}
then $g$ is strict local minimizer for $\mathcal{W}$.
The same result holds if $R < 0$, provided that the
endpoints of the interval in \eqref{stablegapB} are reversed.
\end{theorem}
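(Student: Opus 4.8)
The plan is to reduce everything to the transverse-traceless (TT) analysis already developed for $\tmf_\tau$, exploiting the special structure of dimension four. In $n=4$ the Chern-Gauss-Bonnet formula quoted above rearranges to $\mathcal{W} = 2\mf_{-1/3} + 32\pi^2\chi(M)$, so the Hessians of $\mathcal{W}$ and $2\mf_{-1/3}$ coincide on every variation, and in particular agree on TT tensors. Moreover, since $\mathcal{W}$ is conformally invariant in dimension four, so is $\mf_{-1/3}$; this is exactly why the gauge-fixed operator $P_g^B$ of Section \ref{BachSec} may be defined on the pointwise-traceless tensors $S_0^2(T^{*}M)$ with the conformal direction quotiented out. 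First I would invoke the Kuranishi setup of Section \ref{BachSec} to reduce to the linearized operator: on an Einstein background the Bianchi-type gauge condition together with tracelessness forces the harmonic (kernel) deformations to be TT, so that $H^1_{-1/3}$ is precisely the kernel of the second-variation (Jacobi) operator of $\mathcal{W}$ restricted to TT tensors.

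Next I would specialize the second-variation formula for $\mf_\tau$ on TT tensors at an Einstein metric. For an eigentensor with $-\Delta_L h = \mu h$ the Jacobi form is a constant multiple of $(\mu - \tfrac{2}{n}R)\bigl(\mu - (\tfrac{4}{n}+2\tau)R\bigr)\|h\|^2$, whose roots are exactly the endpoints of the interval in \eqref{stablegap}; the leading fourth-order term of the Jacobi operator has positive symbol, so the multiplicative constant is positive. Setting $\tau = -1/3$ and $n=4$ gives roots $\tfrac{2}{n}R = \tfrac12 R$ and $(\tfrac{4}{n}+2\tau)R = \tfrac13 R$, so on TT tensors the Jacobi form of $\mathcal{W}$ is a positive multiple of $(\mu - \tfrac13 R)(\mu - \tfrac12 R)\|h\|^2$. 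This is the only genuine computation, and given the general formulas it amounts to a substitution.

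The rigidity assertion then follows immediately: $H^1_{-1/3}\neq\{0\}$ if and only if the product $(\mu - \tfrac13 R)(\mu - \tfrac12 R)$ vanishes for some $\mu\in\mbox{spec}_{TT}(-\Delta_L)$, i.e. if and only if $\tfrac13 R$ or $\tfrac12 R$ lies in $\mbox{spec}_{TT}(-\Delta_L)$. Under \eqref{notspecB} this cannot happen, so $H^1_{-1/3}=\{0\}$, i.e. $g$ is infinitesimally Bach-rigid; Bach-rigidity then follows from the implicit-function/Kuranishi argument of Section \ref{BachSec}, exactly as infinitesimal rigidity implies rigidity in Section \ref{LocalSec}. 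Note that this step never touches the conformal or scalar directions, so it is valid for both signs of $R$ and requires no lower bound on $\lambda_1(-\Delta)$; this is precisely the simplification afforded by conformal invariance, which removes the problematic conformal-mode analysis responsible for the $n=3,4$ restriction in the general theorems.

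For the minimization statement, with $R>0$ the gap hypothesis \eqref{stablegapB} says no TT eigenvalue lies in $[\tfrac13 R,\tfrac12 R]$, so every eigenvalue satisfies $\mu<\tfrac13 R$ or $\mu>\tfrac12 R$ and hence $(\mu-\tfrac13 R)(\mu-\tfrac12 R)>0$; since $-\Delta_L$ has discrete TT-spectrum tending to $+\infty$, the Jacobi form is uniformly positive, so $g$ is strictly Bach-stable and therefore a strict local minimizer of $\mathcal{W}$ by the results of Section \ref{StableSec} (the strict analogue of Proposition \ref{Bachlocmin}). For $R<0$ the two roots reverse order, $\tfrac12 R<\tfrac13 R$, and avoiding the reversed interval again yields $(\mu-\tfrac13 R)(\mu-\tfrac12 R)>0$, giving the same conclusion in $n=4$. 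The main obstacle is thus not the sign analysis but the passage from infinitesimal positivity to genuine local minimization: one must verify that the Jacobi form controls the appropriate norm modulo the combined diffeomorphism-and-conformal gauge, which is where the Fredholm/coercivity framework of Section \ref{StableSec} is essential, and where the conformal gauge-fixing of Section \ref{BachSec} must be matched to the quadratic form carefully so that no spurious non-TT directions spoil positivity.
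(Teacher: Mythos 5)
Your proposal is correct and follows essentially the same route as the paper: reduce to transverse-traceless tensors (which for $\tau=-1/3$, $n=4$ is automatic since $H^1_{-1/3}$ is by definition traceless and divergence-free, cf.\ Lemma~\ref{H1Lemma} and Theorem~\ref{h1thm}), specialize the factorized Jacobi operator of Theorem~\ref{JTT} to obtain the roots $\frac13 R$ and $\frac12 R$, invoke the Kuranishi argument of Corollary~\ref{Kurcor} to pass from infinitesimal Bach-rigidity to Bach-rigidity, and deduce strict local minimization from strict Bach-stability via Proposition~\ref{Bachlocmin}. Your use of the Chern--Gauss--Bonnet identity $\mathcal{W} = 2\mf_{-1/3} + 32\pi^2\chi(M)$ merely makes explicit what the paper uses implicitly when it observes that the Bach tensor is a multiple of the Euler--Lagrange equations of $\mf_{-1/3}$.
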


We remark that the second variation of $\mathcal{W}$ was previously computed at
an Einstein metric in dimension four by Kobayashi \cite{Kobayashi}.
The infinitesimal rigidity and stability statements follow from this
computation. Our main contribution in Theorem \ref{Bachthm}
is therefore to conclude actual rigidity and strict local minimization
from the infinitesimal statements.

%%%%%%%%%%%%%%%%%%%%%%%%%%%%%%%%%%%%%%%%%%%%%%%%%%%%
\subsection{Examples}
We remark that the functional $\mathcal{R} = \int |Rm|^2 dV$
has been studied by many
authors, see for example \cite{AndersonI, AndersonII, Berger,
LeBrunOptimal, Muto, Streets1}.
Decomposing the curvature tensor, we may write
\begin{align} \label{RMF}
\mathcal{F}_{-1/2(n-1)} = \frac{n-2}{4} \{ \mathcal{R} - \mathcal{W} \}.
\end{align}
Consequently, if one can show that a metric $g$ is a local minimizer for $\tau = -1/2(n-1)$,
then if $g$ is also known to minimize $\tilde{\mathcal{W}}$ it follows from (\ref{RMF}) that
$\tilde{\mathcal{R}}$ is also locally minimized. Thus the above results
will also have applications to the variational structure of $\tilde{\mathcal{R}}$
in many examples.

In Section \ref{ExampleSec}, we will present examples for which rigidity
and stability hold. The following is a brief summary. \vskip.1in

\noindent
$\bullet$ The round sphere $(S^n, g_S)$:
\begin{theorem}
\label{snit}
On $(S^n, g_S)$, or any constant curvature quotient thereof,
if $n \geq 4$,  $g_S$ is a strict local minimizer
for $\tmf_{\tau}$ provided that
\begin{align}
 \frac{4 - 3n}{2 n (n-1)} < \tau < \frac{2}{n(n-1)}.
\end{align}
If $n = 3$, the same conclusion holds provided that
\begin{align}
- \frac{3}{8} < \tau < \frac{1}{3}.
\end{align}
For any $n \geq 3$, $g_S$ is a strict local minimizer for $\tilde{\mathcal{R}}$.
\end{theorem}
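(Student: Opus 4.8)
The plan is to deduce the minimization statement from strict stability. By Proposition~\ref{locmin} it suffices to show that $g_S$ is strictly stable, i.e. that the Jacobi (second variation) operator of $\tmf_\tau$ is positive on the gauge slice $\{h : \delta_{g_S} h = \frac{1}{n} d(\tr h)\}$ appearing in the definition of strict stability. Every $h$ in this slice decomposes orthogonally as $h = h^{TT} + f g_S$ into a transverse-traceless part and a pure-trace (conformal) part, and since $g_S$ is Einstein the second variation block-diagonalizes with respect to this splitting. I would therefore verify positivity separately on each block, obtaining the admissible range of $\tau$ as the intersection of the two resulting constraints; the scaling direction $f \equiv \mathrm{const}$ is removed by the volume normalization \eqref{TFtdef} and may be discarded throughout.

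For the transverse-traceless block I would use the explicit Lichnerowicz spectrum of the round sphere. Normalizing $R = n(n-1)$, the Riemann tensor is $R_{ipjq} = g_{ij}g_{pq} - g_{iq}g_{pj}$, so on a TT tensor the algebraic curvature terms collapse and $-\Delta_L = \nabla^{*}\nabla + 2n$. Since the connection Laplacian $\nabla^{*}\nabla$ has TT-spectrum $\{k(k+n-1) - 2 : k \ge 2\}$, we obtain
\begin{align*}
\mbox{spec}_{TT}(-\Delta_L) = \{\, k(k+n-1) + 2n - 2 \ :\ k \ge 2 \,\},
\end{align*}
whose minimum is $4n$, attained at $k=2$. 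By the computation of Section~\ref{StableSec}, the second variation of $\tmf_\tau$ on a TT eigentensor with $-\Delta_L$-eigenvalue $\mu$ is, up to a positive multiple, the quadratic in $\mu$ whose vanishing locus is exactly the interval in \eqref{stablegap}, so positivity on this block is equivalent to $\mbox{spec}_{TT}(-\Delta_L) \cap [\frac{2}{n}R, (\frac{4}{n}+2\tau)R] = \varnothing$. The lower endpoint $\frac{2}{n}R = 2(n-1)$ lies below $4n$ automatically, so the only constraint is that the upper endpoint fall below the first eigenvalue: $4(n-1) + 2\tau\, n(n-1) < 4n$, which is precisely $\tau < \frac{2}{n(n-1)}$. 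This holds for all such $\tau$ irrespective of sign, so the TT block contributes only the upper bound.

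The lower bound must therefore come from the conformal block, and this is where the sphere is genuinely delicate. On functions $-\Delta$ has spectrum $\{k(k+n-1): k \ge 0\}$; the eigenvalue $\nu_1 = n$ is exactly the Lichnerowicz--Obata threshold, and its eigenspace (the first spherical harmonics) is not a source of strict positivity --- these are precisely the conformal-group directions comprising the kernel $H^1_0 = \{fg_S : f \in \Lambda\}$. The key point is that these directions are trivial for the strict-minimization problem: each is tangent to a path $\phi_s^{*}g_S$ of conformal diffeomorphisms, hence of the admissible equality form $e^c \phi^{*} g_S$, and in particular integrable. The first genuine positivity constraint is thus at the next eigenvalue $\nu_2 = 2(n+1)$; evaluating the conformal second-variation quadratic there yields the lower bound $\tau > \frac{4-3n}{2n(n-1)}$ for $n \ge 4$. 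Establishing strict positivity on this complementary conformal subspace while correctly isolating the trivial first-harmonic directions is the main obstacle, since it is exactly the borderline case of the Obata inequality that was excluded for non-spherical Einstein metrics in Theorem~\ref{RigidMain}.

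Two points remain. In dimension three $W \equiv 0$, so the conformal second variation carries a different $n$-dependence; repeating the computation with the $S^3$ spectrum (now $\nu_2 = 8$, and lowest TT eigenvalue $12$) produces the sharper window $-\frac{3}{8} < \tau < \frac{1}{3}$. Finally, for $\tilde{\mathcal{R}}$ I would invoke \eqref{RMF}, namely $\tmf_{-1/2(n-1)} = \frac{n-2}{4}\big(\tilde{\mathcal{R}} - \tilde{\mathcal{W}}\big)$. A direct check shows $\tau = -\frac{1}{2(n-1)}$ lies in the admissible window for every $n \ge 3$, so $\tmf_{-1/2(n-1)}$ is a strict local minimizer at $g_S$; combining this with $\tilde{\mathcal{W}} \ge 0 = \tilde{\mathcal{W}}[g_S]$ gives $\tilde{\mathcal{R}}[\tilde g] \ge \tilde{\mathcal{R}}[g_S]$, and since equality already forces $\tilde g = e^c \phi^{*} g_S$ through the $\tmf_{-1/2(n-1)}$-term, $g_S$ is a strict local minimizer of $\tilde{\mathcal{R}}$ for all $n \ge 3$.
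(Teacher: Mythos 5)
Your architecture is essentially the paper's: split the gauge slice orthogonally into TT and conformal blocks (the cross terms vanish since $tr(Jz)=0$ for $z$ transverse-traceless), get the upper bound $\tau < \tfrac{2}{n(n-1)}$ from the lowest TT eigenvalue $4n$ of $-\Delta_L$ via Theorems \ref{TTPosCor}/\ref{TTNegCor} (the paper derives only the bound $\geq 4n$ from Koiso's symmetrization inequality rather than quoting the full spectrum, but the content is the same), treat the first-harmonic kernel $\{fg : f \in \Lambda\}$ as integrable conformal-diffeomorphism directions and invoke Proposition \ref{locmin}, upgrade to strictness because the nearby minimizers along the kernel are isometric, and deduce the $\tilde{\mathcal{R}}$ statement from \eqref{RMF} with $\tau = -\tfrac{1}{2(n-1)}$ and $\tilde{\mathcal{W}} \geq 0 = \tilde{\mathcal{W}}[g_S]$. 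All of that matches the paper and is correct.

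The genuine gap is in your conformal block. The second factor of \eqref{confjac} is \emph{linear} in the eigenvalue $\lambda$ of $-\Delta$ with leading coefficient proportional to $n + 4\tau(n-1)$, and the spectrum is unbounded, so positivity at the single eigenvalue $\nu_2 = 2(n+1)$ does not give positivity of the block: you also need the slope condition $\tau > -\tfrac{n}{4(n-1)}$, without which the factor becomes negative on high conformal eigenmodes. This is precisely the binding constraint at $n=3$: evaluating at $\nu_2 = 8$ with $R = 6$ gives $60 + 156\tau > 0$, i.e.\ $\tau > -\tfrac{5}{13}$, not $-\tfrac{3}{8}$; for $\tau \in (-\tfrac{5}{13}, -\tfrac{3}{8})$ your criterion would declare the block positive while the form is actually indefinite, and the theorem's constant $-\tfrac{3}{8} = -\tfrac{n}{4(n-1)}$ comes from the slope condition, not from $W \equiv 0$ (the Weyl tensor never enters $\mathcal{F}_{\tau}$, so your explanation of the $n=3$ anomaly is spurious). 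There is also a numerical slip for $n \geq 5$: evaluating at $\nu_2$ yields the threshold $-\tfrac{2(n^2-2n+2)}{(n-1)(n^2+4)}$, which is strictly \emph{below} $\tau_1 = \tfrac{4-3n}{2n(n-1)}$, not equal to it; since one checks that this threshold is $\geq -\tfrac{n}{4(n-1)}$ for $n \geq 4$ (with equality at $n=4$), the single-eigenvalue check happens to imply the slope condition there, so for $n \geq 4$ your argument is sound and in fact proves slightly more than the theorem states --- but only after this verification, which you did not make, and it fails at $n=3$. The paper avoids the issue entirely by quoting Theorem \ref{conftPos}, whose proof first imposes the slope condition and then bounds the factor at the Lichnerowicz eigenvalue $\lambda \geq R/(n-1)$; that is what actually produces $\tau_1$ for $n \geq 4$ and $-\tfrac{3}{8}$ for $n = 3$.
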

\noindent
We remark that the infinitesimal
stability for $\tilde{\mathcal{R}}$ was previously shown in \cite{Muto}.

 In the case of $n =3$, there is an interesting variation of the
round metric given by scaling the fibers of the Hopf fibration
$S^1 \rightarrow S^3 \rightarrow S^2 = \CP^1$, known as
{\em{Berger spheres}}.
In Section \ref{ExampleSec}, we will employ this variation to show that the
upper endpoint in Theorem \ref{snit} is sharp:
\begin{theorem}
\label{nonit}
For $n =3$, $g_S$ is {\em{not}} a local minimizer for $\tau = 1/3$, that is,
there is a path $g_s$ with $g_1 = g_S$, with
$\tmf_{1/3}[g_s] < \tmf_{1/3}[g_S]$ for $s < 1$.
Furthermore, there are non-trivial transverse-traceless
elements $h \in H^1_{1/3}$ which are not integrable.
\end{theorem}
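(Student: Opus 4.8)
The plan is to take the explicit path to be the Berger family and to reduce everything to a one-variable computation using homogeneity. Realize $S^3 \cong SU(2)$ with a left-invariant orthonormal coframe $\sigma_1,\sigma_2,\sigma_3$ for which $g_S = \sigma_1^2+\sigma_2^2+\sigma_3^2$ is the round metric of constant curvature $1$ (so $R=6$ and $Ric = 2g_S$), with $\sigma_3$ dual to the Hopf fiber, and set
\[
g_\lambda = \sigma_1^2 + \sigma_2^2 + \lambda^2 \sigma_3^2, \qquad g_1 = g_S .
\]
Each $g_\lambda$ is homogeneous, so all curvature quantities are constant, and a standard computation (e.g.\ Milnor's formulas for left-invariant metrics) gives principal Ricci curvatures $r_1=r_2 = 4-2\lambda^2$, $r_3 = 2\lambda^2$, hence
\[
R = 8-2\lambda^2, \qquad |Ric|^2 = 32 - 32\lambda^2 + 12\lambda^4, \qquad Vol(g_\lambda) = \lambda\, Vol(g_S).
\]
Since the integrands are constant, $\mf_{1/3}[g_\lambda] = Vol(g_\lambda)\big(|Ric|^2 + \tfrac13 R^2\big)$, and therefore
\[
\tmf_{1/3}[g_\lambda] = Vol(g_\lambda)^{1/3}\mf_{1/3}[g_\lambda] = \tfrac{8}{3}\, Vol(g_S)^{4/3}\, \lambda^{4/3}\big(20 - 16\lambda^2 + 5\lambda^4\big) =: c\, f(\lambda) .
\]

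Next I would Taylor-expand $f$ at $\lambda=1$. One checks $f'(1)=0$ (as it must, since $g_S$ is Einstein, hence critical) and, crucially, $f''(1)=0$: this vanishing of the second derivative is exactly the degeneracy that makes $\tau = 1/3$ the borderline case of Theorem~\ref{StableMain}. Going one order further gives $f'''(1) = \tfrac{640}{3} > 0$. Since $f'(1)=f''(1)=0$ and $f'''(1)>0$, we obtain $f(\lambda)-f(1) = \tfrac16 f'''(1)(\lambda-1)^3 + O\big((\lambda-1)^4\big)$, which is strictly negative for $\lambda<1$ near $1$. Hence $\tmf_{1/3}[g_\lambda] < \tmf_{1/3}[g_S]$ for such $\lambda$, proving that $g_S$ is not a local minimizer.

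For the second assertion I would first identify the infinitesimal Berger direction: the trace-free part of $\tfrac{d}{d\lambda}g_\lambda|_{\lambda=1}$ is a multiple of $h = \sigma_3^2 - \tfrac13 g_S$, i.e.\ $h = \mathrm{diag}(-\tfrac13,-\tfrac13,\tfrac23)$ in the orthonormal coframe. A direct computation with the Levi-Civita connection of the bi-invariant metric shows that $h$ is transverse-traceless, $\nabla_{e_3}h = 0$, and $\nabla^*\nabla h = 6h$. Since $g_S$ is a space form of curvature $1$, one has $R_{ipjq}h^{pq} = -h$, and with $\tfrac{2}{n}R = 4$ (here $n=3$, $R=6$) the Lichnerowicz Laplacian gives
\[
(-\Delta_L)h = \nabla^*\nabla h - 2R_{ipjq}h^{pq} + \tfrac{2}{n}R\,h = 6h + 2h + 4h = 12h = 2R\,h .
\]
Thus $h$ is a TT eigentensor with eigenvalue $2R = \big(\tfrac{4}{n}+2\tau\big)R$ at $\tau = 1/3$, the upper endpoint of the interval in \eqref{stablegap}. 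By the description of $H^1_\tau$ in Section~\ref{Moduli}, this borderline case means the second variation of $\tmf_{1/3}$ degenerates on $h$, so that $0 \ne h \in H^1_{1/3}$ and $\tmf_{1/3}''[h,h]=0$.

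Finally, I would prove non-integrability by comparing third variations. Along any path $\tilde g_s$ of critical metrics the gradient vanishes identically, so $\tmf_{1/3}[\tilde g_s]$ is constant in $s$ and all its derivatives vanish. On the other hand, because $g_S$ is critical and $h$ lies in the kernel of the Hessian, the first and second variations of $\tmf_{1/3}$ in the direction $h$ vanish, and then a standard argument shows that $\tmf_{1/3}'''[h,h,h]$ is independent of the choice of path tangent to $h$ (the second-order term of the path contributes only through $\tmf_{1/3}''[h,\cdot]\equiv 0$). Using the scale-invariance of $\tmf_{1/3}$ to discard the pure-trace part of the Berger tangent, this common value is a positive multiple of $f'''(1) \neq 0$. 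An integrating path for $h$ would force the third variation to vanish, a contradiction; hence $h$ is non-integrable. I expect the main obstacle to be exactly this last step: one must verify carefully that the leading nonvanishing variation is genuinely path-independent and that the scaling/trace direction built into the Berger family does not contribute, so that the explicit number $f'''(1)$ really computes $\tmf_{1/3}'''[h,h,h]$. The curvature and Lichnerowicz-eigenvalue computations, while essential, are routine for these homogeneous metrics.
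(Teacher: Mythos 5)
Your proposal is correct and follows essentially the same route as the paper: the Berger-sphere family with its explicit Ricci eigenvalues, the vanishing of the first and second derivatives of $\tmf_{1/3}$ at $s=1$ together with a strictly positive third derivative (your $f'''(1)=\tfrac{640}{3}$ matches the paper's computation), identification of the Berger direction as a TT eigentensor of $(-\Delta_L)$ with eigenvalue $2R$ lying in $H^1_{1/3}$, and non-integrability via the path-independence of the third variation on Hessian-kernel directions. Your write-up is in fact slightly more detailed than the paper's (which leaves the eigenvalue verification and the path-independence step as ``straightforward''), and as a small bonus one can note $f'(\lambda)=\tfrac{80}{3}\lambda^{1/3}(1-\lambda^2)^2\geq 0$, so $f$ is monotone on $(0,1)$ and the inequality $\tmf_{1/3}[g_s]<\tmf_{1/3}[g_S]$ holds for \emph{all} $s<1$, not just near $s=1$.
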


As mentioned above, a similar non-integrability phenomenon occurs
for infinitesimal Einstein deformations on certain Einstein
manifolds \cite{Koiso3}. Our proof is similar to Koiso's in spirit: we
show that the {\em{third}} derivative of the functional (in our case
for the Berger sphere variation) is non-zero.

\vskip.1in
\noindent
$\bullet$  Compact hyperbolic manifolds:
\begin{theorem}
\label{hypint}
Let $(M^n,g)$ be a compact hyperbolic manifold.
If $n =3$ or $n =4$,  then $g$ is a strict local minimizer
for $\tmf_{\tau}$ provided that
\begin{align}
- \frac{1}{3} < \tau.
\end{align}
Consequently, $g$ is a strict local minimizer for  $\tilde{\mathcal{R}}$.
If $n \geq 5$,
then $g$ is a strict local minimizer for $\tmf_{\tau}$ provided that
\begin{align}
\frac{4 - 3n}{2 n (n-1)} < \tau \leq - \frac{1}{n}.
\end{align}
\end{theorem}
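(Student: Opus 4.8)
The plan is to recognize a compact hyperbolic manifold as an Einstein metric with constant sectional curvature $-1$, so that $R = -n(n-1) < 0$, and then to verify the hypotheses of the negative-scalar-curvature version of Theorem~\ref{StableMain} (and of its general $\tau \le -1/n$ extension in Section~\ref{StableSec}); strict stability then yields a strict local minimizer by the implication established in Section~\ref{StableSec}. The first step is purely algebraic: using $R_{ipjq} = -(g_{ij}g_{pq} - g_{iq}g_{pj})$ I would compute that on transverse-traceless tensors the Lichnerowicz Laplacian reduces to $\Delta_L h = \Delta h + 2n\, h$, so that $\mbox{spec}_{TT}(-\Delta_L) = \{\mu - 2n : \mu \in \mbox{spec}_{TT}(\nabla^*\nabla)\}$ and $\tfrac{2}{n}R = -2(n-1)$. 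With this in hand the problem splits into two independent positivity statements for the Jacobi form: one on transverse-traceless directions, controlled by the interval in \eqref{stablegap}, and one on conformal (trace) directions.

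The transverse-traceless direction is the first key step. After reversing the endpoints for $R<0$, the interval $[(\tfrac4n+2\tau)R,\ \tfrac2n R]$ is empty when $\tau < -1/n$ and degenerates to the single point $\tfrac2n R$ at $\tau=-1/n$; for $n\ge 5$ this already disposes of the TT condition, the endpoint case being exactly the classical infinitesimal rigidity of compact hyperbolic manifolds (Calabi--Weil). For $n=3,4$ and larger $\tau$ I would prove the strict bound $\mbox{spec}_{TT}(-\Delta_L) > \tfrac2n R$, equivalently $\nabla^*\nabla > 2$ on TT tensors. The mechanism is a Bochner/Weitzenb\"ock identity for TT tensors specialized to constant curvature, writing $\int |\nabla h|^2$ as a nonnegative term (the trace-free symmetrized covariant derivative) plus $2\int |h|^2$; this gives the weak bound $\nabla^*\nabla \ge 2$, and strictness follows since equality would produce a nontrivial Killing-type TT tensor, whose absence is again the Calabi--Weil rigidity. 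Because the right endpoint of the interval is precisely $\tfrac2n R$, the strict inequality keeps the spectrum disjoint from the closed interval throughout, so in dimensions $3$ and $4$ the transverse-traceless condition never obstructs the stated range of $\tau$.

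The conformal directions are where I expect the real difficulty, and where the dimensional split together with the constants $-1/3$, $\tfrac{4-3n}{2n(n-1)}$, and $-1/n$ must originate. The issue is that a compact hyperbolic manifold carries no a priori lower bound on $\lambda_1(-\Delta)$ (necks can be made long), so, unlike the positive case, one cannot invoke a Lichnerowicz/Obata estimate; the conformal part of the Jacobi form must instead be shown nonnegative for all admissible functions by a sign argument valid uniformly over all hyperbolic manifolds. Concretely, I would reduce the conformal second variation of $\tmf_\tau$ to a quadratic form $\int f\, Q_\tau(\Delta) f$, with $Q_\tau$ a second-order polynomial in $\Delta$ whose coefficients involve $R<0$ and $\tau$, and determine exactly the range of $\tau$ for which $Q_\tau(\Delta)$ is nonnegative on $\{-\Delta \ge 0\}$ (after removing constants via the volume normalization). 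The interval endpoints should be precisely the values at which a coefficient of $Q_\tau$, or a root of the associated quadratic, crosses zero, and the different shape in $n=3,4$ versus $n\ge 5$ reflects how these roots sit relative to the nonnegative half-line in each dimension. This uniform, $\lambda_1$-free positivity is the main obstacle.

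Combining the two positivity statements gives strict stability, hence a strict local minimizer for $\tmf_\tau$ in the stated ranges. For the $\tilde{\mathcal{R}}$ conclusion (in $n=3,4$), I would observe that $\tau = -1/2(n-1)$ lies in $(-1/3,\infty)$, so $\tmf_{-1/2(n-1)}$ is strictly locally minimized; since a constant-curvature metric in dimension $3$ or $4$ has $W\equiv 0$ and hence trivially (indeed globally) minimizes $\tilde{\mathcal{W}}\ge 0$, the decomposition \eqref{RMF} upgrades this to strict local minimization of $\tilde{\mathcal{R}}$, with equality forcing $\tilde g = e^c\phi^* g$.
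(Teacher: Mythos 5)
Your overall strategy is the paper's (split into transverse-traceless and conformal directions, verify strict stability, conclude via Proposition~\ref{locmin}), your computation $\Delta_L h = \Delta h + 2n\,h$ on TT tensors is correct, and your closing step for $\tilde{\mathcal{R}}$ (take $\tau = -1/2(n-1)$, note $W \equiv 0$ for constant curvature, apply \eqref{RMF}) is exactly the paper's. But your transverse-traceless step contains a genuine error. For $R<0$ and $\tau < -1/n$ the spectral condition is \eqref{lt4}: the interval $\big[\tfrac{2}{n}R,\ \big(\tfrac{4}{n}+2\tau\big)R\big]$ is \emph{nonempty} --- ``reversing the endpoints'' records which root of the quadratic factorization \eqref{ttnvar} is larger when $R<0$, it does not make the condition vacuous. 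On a TT eigensection with $(-\Delta_L)h = \mu h$ the Jacobi operator contributes $\tfrac12\big(-\mu + \tfrac2n R\big)\big(-\mu + (\tfrac4n + 2\tau)R\big)$, so for $\tau \le -1/n$ one must show the TT spectrum clears the \emph{upper} endpoint $\big(\tfrac4n+2\tau\big)R = -(n-1)(4+2n\tau)$. This is precisely where the lower endpoint $\tfrac{4-3n}{2n(n-1)}$ of the $n\ge5$ range originates, and your proposal supplies no proof of it: your assertion that the TT condition is automatically ``disposed of'' for $n\ge5$ is false (only the single endpoint value $\tau=-1/n$ reduces to infinitesimal Einstein rigidity, as you say). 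The missing input is the paper's eigenvalue estimate, from $\int_M |\nabla_i h_{jk} - \nabla_j h_{ik}|^2\, dV \ge 0$ (cf.~\cite{Koiso1}), that the rough Laplacian on TT tensors is bounded below by $n$, i.e.\ $-\Delta_L \ge -n = \tfrac{1}{n-1}R$; then $-n > -(n-1)(4+2n\tau)$ holds exactly when $\tau > \tfrac{4-3n}{2n(n-1)}$.

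The same misreading infects $n=4$: since $-1/3 < -1/4 = -1/n$, the subrange $\tau \in (-1/3, -1/4]$ falls under \eqref{lt4} with nonempty interval $[-6,\, -12-24\tau]$, and your weak bound $\nabla^*\nabla > 2$ on TT tensors (equivalently $-\Delta_L > -6 = \tfrac2n R$) does not clear the upper endpoint $-12-24\tau \in (-6,-4)$; one needs the sharp constant, $-\Delta_L \ge -4$, which again yields exactly $\tau > -1/3 = \tfrac{4-3n}{2n(n-1)}$ at $n=4$. (Your bound does suffice for $n=3$, where $\tau > -1/3 = -1/n$ keeps the right endpoint at $\tfrac2n R$.) Correspondingly, you misattribute the constants: $\tfrac{4-3n}{2n(n-1)}$ comes from the TT direction, not the conformal one, and the conformal direction is not the ``main obstacle'' you anticipate --- it is settled by the factorization \eqref{confjac} and Theorem~\ref{conftNeg}. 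For $R<0$ the first factor $(n-1)\lambda - R$ is automatically positive, and the sign of $q(\lambda)$ in \eqref{qtdef} is uniform over all hyperbolic manifolds precisely because for $n\ge5$ and $-\tfrac{n}{4(n-1)} < \tau \le -1/n$ the coefficient of $\lambda$ is positive while $2(n-4)(1+n\tau)R \ge 0$; the restriction $\tau \le -1/n$ is exactly what avoids any hypothesis on $\lambda_1(-\Delta)$, and for $n=3,4$ one gets $\tau > -1/3$. So your outline is sound, but as written the TT half of the proof fails on $(-1/3,-1/4]$ when $n=4$ and on the entire range when $n\ge5$, and the key lemma ($\nabla^*\nabla \ge n$ on TT tensors) is absent.
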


\vskip.1in

\noindent
$\bullet$ Complex projective space $(\CP^m, g_{FS})$, $m \geq 2$:

\begin{theorem}
\label{cpmint}
On  $(\CP^m, g_{FS})$, $m \geq 2$, the Fubini-Study metric $g_{FS}$ is a strict
local minimizer for
$\tmf_{\tau}$ provided that
\begin{align}
 \frac{2 - 3m}{2 m (2m-1)} < \tau < \frac{1}{m(m+1)}.
\end{align}
For $n = 2m = 4$, and $\tau = -1/3$, $g_{FS}$ is the unique
global minimizer of $\mathcal{W}$ among the class of metrics with
positive scalar curvature, up to diffeomorphism and
conformal changes of metric. Furthermore, in this case, $g_{FS}$
is a strict local minimizer for $\mathcal{R}$.
\end{theorem}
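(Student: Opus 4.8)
The plan is to obtain the three assertions separately: the first by specializing the general stability theorem to the symmetric space $\CP^m$, the second from a sharp lower bound for the Weyl energy in dimension four, and the third by combining these through the identity \eqref{RMF}.

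For the strict local minimization of $\tmf_\tau$, I would first note that $(\CP^m,g_{FS})$ is Einstein with $R>0$, hence critical for every $\tmf_\tau$, so by Theorem \ref{StableMain} (and its $\tau\le -1/n$ counterpart in the full statement) it is enough to verify the spectral conditions on the background. Writing $n=2m$, the decisive quantities are the bottom $\lambda_1^{TT}$ of $\mbox{spec}_{TT}(-\Delta_L)$ and the first eigenvalue $\lambda_1(-\Delta)$ of the scalar Laplacian. Both are accessible because $\CP^m=SU(m+1)/S(U(1)\times U(m))$ is a compact symmetric space: the symmetric $2$-tensors decompose into irreducible summands whose Laplace eigenvalues are given by the Casimir, and one reads off $\lambda_1^{TT}=\tfrac{2(m+2)}{m(m+1)}R$ together with $\lambda_1(-\Delta)>\tfrac{R}{n-1}$. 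This eigenvalue computation is the substantive point of the whole theorem, and I expect it to be the main obstacle, since it requires the full branching data for $S^2T^*\CP^m$ rather than any soft argument; the K\"ahler structure, which splits the $TT$-tensors into $J$-invariant and $J$-anti-invariant parts, is what makes it tractable.

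Granting these values, the $\tau$-range follows by arithmetic. Since $\lambda_1^{TT}>\tfrac4n R=\tfrac2m R>\tfrac2n R$, the metric is infinitesimally Einstein-rigid and the $TT$ gap condition \eqref{stablegap} holds precisely when the upper endpoint stays below $\lambda_1^{TT}$, i.e. $(\tfrac4n+2\tau)R<\lambda_1^{TT}$, which is exactly $\tau<\tfrac{1}{m(m+1)}$. The lower bound is the conformal counterpart: at $\tau=\tfrac{2-3m}{2m(2m-1)}=\tfrac{4-3n}{2n(n-1)}$ one computes $(\tfrac4n+2\tau)R=\tfrac{R}{n-1}$, which is precisely the Lichnerowicz--Obata threshold for $\lambda_1(-\Delta)$; positivity of the trace (conformal) part of the Jacobi operator, guaranteed by $\lambda_1(-\Delta)\ge \tfrac{R}{n-1}$ with strict inequality since $g_{FS}$ is not round, then holds for $\tau>\tfrac{4-3n}{2n(n-1)}$. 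Together these give the stated interval and, via the implication ``strictly stable $\Rightarrow$ strict local minimizer'' from Section \ref{StableSec}, the first claim.

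For the global minimization of $\mathcal{W}$ when $n=2m=4$, I would use Gursky's sharp estimate: on a compact oriented four-manifold of positive scalar curvature, $\int |W^+|^2\,dV\ge \tfrac{4\pi^2}{3}(2\chi+3\tau)$, with equality if and only if $g$ is conformal to a K\"ahler--Einstein metric of positive scalar curvature. Fixing the complex orientation on $\CP^2$, where $\chi=3$ and the signature is $1$, the right-hand side is $12\pi^2$. Combining this with the signature formula $\int(|W^+|^2-|W^-|^2)\,dV=12\pi^2$ yields
\begin{align*}
\mathcal{W}[g]=\int(|W^+|^2+|W^-|^2)\,dV=2\int|W^+|^2\,dV-12\pi^2\ge 12\pi^2,
\end{align*}
with equality forcing $\int|W^+|^2=12\pi^2$, hence (by the equality case above and the uniqueness of the K\"ahler--Einstein metric on $\CP^2$) forcing $g$ to be diffeomorphic and conformal to $g_{FS}$. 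Since $g_{FS}$ is self-dual, $W^-=0$ and $\mathcal{W}[g_{FS}]=12\pi^2$, so $g_{FS}$ is the unique global minimizer of $\mathcal{W}$ among positive scalar curvature metrics, up to diffeomorphism and conformal change.

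Finally, for $\mathcal{R}$ on $\CP^2$, I would specialize \eqref{RMF} to $n=4$, giving $\mathcal{F}_{-1/6}=\tfrac12(\mathcal{R}-\mathcal{W})$, equivalently $\mathcal{R}=2\mathcal{F}_{-1/6}+\mathcal{W}$; in dimension four all three functionals are scale invariant, so $\mathcal{F}_{-1/6}=\tmf_{-1/6}$. Because $-1/6$ lies in the admissible interval $(-1/3,1/6)$ of the first part (the case $m=2$), $g_{FS}$ is a strict local minimizer of $\tmf_{-1/6}$, while by the second part $\mathcal{W}[g]\ge \mathcal{W}[g_{FS}]$ for all nearby metrics (which have positive scalar curvature). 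Adding the two bounds, $\mathcal{R}[g]=2\tmf_{-1/6}[g]+\mathcal{W}[g]\ge \mathcal{R}[g_{FS}]$ for $g$ near $g_{FS}$, and equality forces equality in the strict bound for $\tmf_{-1/6}$, hence $g=e^c\phi^*g_{FS}$. Thus $g_{FS}$ is a strict local minimizer for $\mathcal{R}$, as claimed.
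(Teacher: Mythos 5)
Your proposal is correct, and for the first and third assertions it is essentially the paper's argument: strict stability via the $TT$-gap of Theorem \ref{TTPosCor} combined with the conformal analysis of Theorem \ref{conftPos}, then Proposition \ref{locmin}, and finally the decomposition \eqref{RMF} with $\mathcal{R}=2\mathcal{F}_{-1/6}+\mathcal{W}$ and $-1/6\in(-1/3,1/6)$. Two points of comparison. First, the eigenvalue $\lambda_1^{TT}=\frac{2(m+2)}{m(m+1)}R$ that you flag as ``the main obstacle'' is not recomputed in the paper: it is quoted as Proposition \ref{evcp} from \cite{Boucetta, Warner}, so your proposed Casimir/branching computation is extra work the paper avoids; also, your numerological observation that $\bigl(\frac{4}{n}+2\tau\bigr)R=\frac{R}{n-1}$ at the lower endpoint is a true coincidence at $\tau=\frac{4-3n}{2n(n-1)}$ but is not the operative mechanism --- the lower endpoint comes from the second factor of the conformal Jacobi factorization \eqref{confjac} evaluated against the Lichnerowicz bound, which you do in effect invoke by citing the conformal part of the stability analysis, so nothing is lost. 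Second, for the global minimization of $\mathcal{W}$ on $\CP^2$ you take a genuinely different route: the paper attributes this to the signature theorem together with a result of Hitchin \cite{Besse}, whereas you use Gursky's sharp bound $\int|W^+|^2\,dV\geq \frac{4\pi^2}{3}(2\chi+3\sigma)$ from \cite{Gursky1998} (the reference the paper itself uses for the analogous $S^2\times S^2$ statement). Your route is arguably tighter on the equality case: the signature argument alone pins down only self-duality of the minimizer, and identifying the self-dual positive-scalar-curvature structure on $\CP^2$ requires a further uniqueness input, while the equality case of Gursky's estimate (conformally K\"ahler--Einstein with positive scalar curvature) plus uniqueness of the K\"ahler--Einstein metric on $\CP^2$ delivers the conclusion directly. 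Your handling of strictness for $\mathcal{R}$ --- equality in the sum forces equality in the strict $\tmf_{-1/6}$ bound, hence $g=e^c\phi^{*}g_{FS}$ --- is exactly what the paper's one-line reduction implicitly requires, and is correct.
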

\noindent
The global minimization statement for $\mathcal{W}$ in dimension four
follows from the signature theorem together with a result of
Hitchin \cite{Besse}.

\vskip.1in
\noindent
$\bullet$ The product of round spheres $(S^m \times S^m, g_1 + g_2)$
with $m \geq 2$:

\begin{theorem}
\label{smsmint}
 On $(S^m \times S^m, g_1 + g_2)$ with $m > 1$, $g_1$ and $g_2$ round
metrics with $Vol(g_1) = Vol(g_2)$,
the product metric $g_1 + g_2$ is a strict local minimizer for $\tmf_{\tau}$ provided that
\begin{align}
\label{smsmr}
 \frac{2 - 3m}{2 m (2m-1)} < \tau < \frac{2-m}{2m(m-1)}.
\end{align}
For $n = 2m = 4$ and $\tau = -1/3$, $g_1 + g_2$ is the unique
global minimizer of $\mathcal{W}$ among the class of positive
scalar curvature metrics, up to diffeomorphism and
conformal changes of metric. Furthermore, in this case,
$g_1 + g_2$ is a strict local minimizer for $\mathcal{R}$.
\end{theorem}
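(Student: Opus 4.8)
The plan is to deduce strict local minimization from the stability criterion of Section~\ref{StableSec} (of which Theorem~\ref{StableMain} is the $\tau>-1/n$ special case), after carrying out the harmonic analysis needed to locate $\mbox{spec}_{TT}(-\Delta_L)$ relative to the interval in \eqref{stablegap}. First I would record the geometry: since $Vol(g_1)=Vol(g_2)$ forces the two round factors to share a common radius, normalizing to radius one makes $(S^m\times S^m, g_1+g_2)$ Einstein with $\mathrm{Ric}=(m-1)g$, so $n=2m$ and $R=2m(m-1)$. Hence $\tfrac{2}{n}R=2(m-1)$ and $\big(\tfrac{4}{n}+2\tau\big)R=4(m-1)+4m(m-1)\tau$, and the problem reduces to understanding the transverse--traceless spectrum.

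The core of the argument is that spectrum. I would decompose a TT tensor $h$ on the product into (i) the parallel ``relative-scaling'' tensor $g_1-g_2$; (ii) block pieces built from a Lichnerowicz eigentensor on one factor times a function on the other; and (iii) mixed pieces assembled from coclosed $1$-forms on the two factors, the TT constraint then selecting specific combinations (e.g.\ the divergence-free correction coupling $\phi(g_1-g_2)$ to a term in $d\phi$). Using the known data on round $S^m$ --- Laplace eigenvalues $k(k+m-1)$ on functions, Obata's relation $\nabla^2\phi=-\phi\,g_1$ for first eigenfunctions, and the absence of TT tensors when $m=2$ --- I would compute $-\Delta_L$ on each family. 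A direct calculation gives $-\Delta_L(g_1-g_2)=0$, so this mode sits strictly below $\tfrac{2}{n}R$; the key point is to show that the first TT eigenvalue lying at or above $\tfrac{2}{n}R=2(m-1)$ is exactly $2m$ (in particular $2(m-1)$ itself is not attained, i.e.\ the product is infinitesimally Einstein-rigid), realized by an explicit mode coupling $g_1-g_2$ to the first spherical harmonics. Granting this, \eqref{stablegap} holds precisely when $\big(\tfrac{4}{n}+2\tau\big)R<2m$, i.e.\ $\tau<\frac{2-m}{2m(m-1)}$, which is the stated upper bound.

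For the lower bound I would turn to the conformal/scalar directions, governed not by the TT gap but by the sign of the scalar part of the second variation; this is where the general ($\tau\le -1/n$) form of the stability theorem from Section~\ref{StableSec} is needed, since the stated range dips below $-1/n$. Because $S^m\times S^m$ does not saturate the Lichnerowicz--Obata bound, here $\lambda_1(-\Delta)=m$ comes from a single sphere factor; inserting this first nonzero Laplace eigenvalue into the conformal Hessian yields positivity exactly when $\tau>\frac{2-3m}{2m(2m-1)}$, the stated lower bound. Combining the two prongs gives strict local minimization for $\tmf_\tau$ on the full interval \eqref{smsmr}. I expect the main obstacle to be step (ii)--(iii): assembling the complete Lichnerowicz TT spectrum from the factor harmonic data and proving that no eigenvalue lies in the half-open window $[2(m-1),2m)$, together with tracking the conformal Hessian carefully enough to pin the lower endpoint.

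Finally, for $n=2m=4$ and $\tau=-1/3$ --- the conformally invariant Bach case, which sits at the left endpoint of the range above and so is not covered by it --- I would instead invoke Theorem~\ref{Bachthm}. With $R=4$ the interval \eqref{stablegapB} is $[\tfrac{4}{3},2]$, and the same $S^2\times S^2$ computation, now simplified because $S^2$ carries no TT tensors, shows that $\mbox{spec}_{TT}(-\Delta_L)$ meets it in the empty set (the eigenvalues $0$ and $2m=4$ straddle it), giving strict local minimization of $\mathcal{W}$; global minimization of $\mathcal{W}$ among positive-scalar-curvature metrics then follows from the signature theorem together with Hitchin's bound, as in the text. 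The statement for $\mathcal{R}$ follows from \eqref{RMF}: in dimension four $\mathcal{F}_{-1/6}=\tfrac{1}{2}(\mathcal{R}-\mathcal{W})$, and since $-1/6$ lies in \eqref{smsmr} the metric is a strict local minimizer of $\tmf_{-1/6}$ while $\mathcal{W}$ is locally minimized, so $\mathcal{R}=2\mathcal{F}_{-1/6}+\mathcal{W}$ is a strict local minimizer.
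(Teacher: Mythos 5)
Your treatment of the local-minimization clause is essentially the paper's own route: normalize so $\mathrm{Ric}=(m-1)g$, $R=2m(m-1)$, $n=2m$; decompose TT tensors on the product into blocks built from factor harmonics (this is exactly Proposition~\ref{evs2}, which proves the kernel of $\Delta_L$ on TT tensors is spanned by $g_1-g_2$ and the next eigenvalue is at least $2m$); then apply Theorem~\ref{TTPosCor} for the TT direction, Theorem~\ref{conftPos} for the conformal direction, and Proposition~\ref{locmin} for strict local minimization, with Theorem~\ref{Bachthm} handling $\tau=-1/3$ and \eqref{RMF} with $\tau=-1/6$ giving the $\mathcal{R}$ statement. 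Two imprecisions, neither fatal: first, the lower endpoint $\frac{2-3m}{2m(2m-1)}$ does \emph{not} arise ``exactly'' from inserting $\lambda_1(-\Delta)=m$ into the conformal Hessian. That insertion gives positivity of the second factor in \eqref{confjac} for $\tau>-\frac{3m^2-6m+4}{2m(2m^2-4m+3)}$, which is strictly below the stated endpoint when $m>2$ (e.g.\ $-13/54$ versus $-7/30$ for $m=3$); the stated endpoint is what the weaker Lichnerowicz bound $\lambda_1\geq R/(n-1)$ produces via Theorem~\ref{conftPos}, so your computation would prove slightly more, but the claimed equivalence is wrong. Second, your assertion that the eigenvalue $2m$ is realized by a mode coupling $g_1-g_2$ to first spherical harmonics is unsupported: the paper's analysis of precisely that coupling (the $f$-mode with $\lambda=m$ and the divergence constraint $\delta\hat{h}=-\frac{1}{m}df$) ends in a contradiction, and the attained low eigenvalue for $m=2$ comes instead from $\alpha_1\odot\alpha_2$ with $\alpha_i$ Killing forms. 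Since the theorem claims no sharpness of the right endpoint, only the emptiness of the window $[2(m-1),2m)$ matters, so this does not affect the conclusion.

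There is, however, a genuine gap in your argument for the global statement. The signature theorem plus Hitchin is the mechanism used for $\CP^2$ in Theorem~\ref{cpmint}, where $\tau(\CP^2)=1$ makes $48\pi^2\tau$ a positive topological floor attained exactly by self-dual metrics. For $S^2\times S^2$ the signature vanishes, so the signature theorem yields only $\mathcal{W}[g]\geq 0$ with equality forcing $W=0$; this bound is never attained, and in particular cannot identify $g_1+g_2$ (whose Weyl energy is strictly positive, as the product is Einstein but not of constant curvature, hence not locally conformally flat) as the minimizer. The global minimization among positive scalar curvature metrics requires the sharp conformally invariant estimate of \cite{Gursky1998}, namely $\int |W^{+}|^2\,dV \geq \frac{4\pi^2}{3}\bigl(2\chi+3\tau\bigr)$ for metrics of positive scalar curvature with $b^{+}>0$, applied with both orientations (here $b^{\pm}=1$, $2\chi\pm 3\tau=8$), with the equality case characterized by K\"ahler--Einstein metrics; this is exactly what the paper cites for this clause. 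As written, your proposed route for the uniqueness-of-global-minimizer claim would fail.
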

\noindent
The global minimization statement for $\mathcal{W}$ was
proved in \cite{Gursky1998}.
We emphasize that $\tau = 0$ is not in the included range for $m =2$.
This is because there are non-trivial infinitesimal
deformations for $\tau= 0$ at the product metric, in fact,
$\dim(H^1_0) = 9$, see Section \ref{pmsm}.
We do not know if the right endpoint in \eqref{smsmr}
is optimal for strict minimization.

\vskip.1in
\noindent
$\bullet$ The Ricci-flat case:\vskip.1in

Ricci-flat metrics which admit nonzero
parallel spinors are locally maximizing in the transverse-traceless
direction as critical points of the Einstein-Hilbert functional
\cite{DWW1}. For quadratic functionals, we have the following.
\begin{theorem}
\label{rfint}
Let $(M,g)$ be a compact Ricci-flat manifold, and
assume that all infinitesimal Einstein deformations are
integrable. Then $g$ is a local minimizer for $\tmf_{\tau}$ provided that
\begin{align}
- \frac{n}{4(n-1)} < \tau.
\end{align}
In particular, this holds if $(M,g)$ is a flat torus, or if
$(M,g)$ is a Calabi-Yau metric.
\end{theorem}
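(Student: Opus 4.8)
The plan is to verify that $g$ is \emph{stable} in the sense defined above and then invoke Proposition~\ref{locmin}. The starting observation is that, since $g$ is Ricci-flat, $Ric_g=0$ and $R_g=0$, so $\mathcal{F}_\tau[g]=0$ and the first variation vanishes as well; hence $\tmf_\tau$ and $\mathcal{F}_\tau$ have the same Hessian at $g$ up to the positive factor $Vol(g)^{4/n-1}$, and I may work with $\mathcal{F}_\tau$ directly. Moreover, in the second variation of $\int|Ric|^2\,dV+\tau\int R^2\,dV$ every term carrying an undifferentiated factor of $Ric$ or $R$ drops out, so I expect the clean formula
$$\mathcal{F}_\tau''(h,h)=2\int|Ric'(h)|^2\,dV+2\tau\int\big(R'(h)\big)^2\,dV,$$
where $Ric'(h)$ and $R'(h)$ denote the linearizations of the Ricci and scalar curvatures in the direction $h$.

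Next I would reduce to the two essential families of variations. Because the Hessian at a critical point annihilates directions tangent to the diffeomorphism orbit, I may evaluate it on representatives $h=h^{TT}+fg$, with $h^{TT}$ transverse-traceless and $f$ a function, and the two pieces decouple: the cross term vanishes since $R'(h^{TT})=0$ and $Ric'(h^{TT})=\tfrac12\Delta_L h^{TT}$ is itself transverse-traceless (hence $L^2$-orthogonal, after integration by parts, to both $\nabla^2 f$ and $(\Delta f)g$). On the transverse-traceless block this gives $\mathcal{F}_\tau''(h^{TT},h^{TT})=\tfrac12\int|\Delta_L h^{TT}|^2\,dV\ge 0$, with equality exactly when $\Delta_L h^{TT}=0$, i.e. when $h^{TT}$ is an infinitesimal Einstein deformation. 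On the conformal block I would compute $R'(fg)=-(n-1)\Delta f$ and $Ric'(fg)=-\tfrac{n-2}{2}\nabla^2 f-\tfrac12(\Delta f)g$; substituting and applying the Ricci-flat Bochner identity $\int|\nabla^2 f|^2\,dV=\int(\Delta f)^2\,dV$ should collapse everything to
$$\mathcal{F}_\tau''(fg,fg)=(n-1)\Big(\tfrac{n}{2}+2\tau(n-1)\Big)\int(\Delta f)^2\,dV,$$
which is nonnegative precisely when $\tau\ge-\tfrac{n}{4(n-1)}$ and strictly positive for nonconstant $f$ when the inequality is strict. This conformal block is the sole source of the stated range of $\tau$.

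Combining the blocks, for $\tau>-\tfrac{n}{4(n-1)}$ the Jacobi operator is nonnegative on the relevant slice, and its kernel consists only of infinitesimal Einstein deformations (the conformal block contributes nothing beyond the constant/scaling mode, which is factored out). By hypothesis these kernel elements are integrable, so $g$ is stable and Proposition~\ref{locmin} yields that $g$ is a local minimizer. For the two named cases this hypothesis is classical: on a flat torus the infinitesimal Einstein deformations are the parallel symmetric $2$-tensors, which integrate to the moduli of flat metrics, while for a Calabi-Yau metric integrability is the Bogomolov-Tian-Todorov unobstructedness of Ricci-flat K\"ahler deformations.

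I expect the genuine difficulty to lie not in the curvature computation but in the bookkeeping of the kernel. The sign condition on $\tau$ is forced entirely by the conformal block and is sharp, so the only analytic input there is the Ricci-flat Bochner identity; by contrast the transverse-traceless block is automatically nonnegative but \emph{degenerate} along infinitesimal Einstein deformations, and the passage from ``nonnegative Hessian'' to ``local minimizer'' is false in general (cf.\ Theorem~\ref{nonit}). It is exactly the integrability hypothesis that rescues the argument, by ensuring each null direction is tangent to an honest curve of Ricci-flat---hence critical, hence $\mathcal{F}_\tau$-vanishing---metrics along which no strictly smaller values appear. The one place I would lean on the earlier machinery is in justifying that the diffeomorphism-orbit directions really are null and that every slice variation reduces to the $h^{TT}+fg$ model modulo such directions; this is precisely what the gauge construction (the operator $P_g$) of Section~\ref{Moduli} provides.
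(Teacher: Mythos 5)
Your proposal is correct, and its skeleton is exactly the paper's: split the slice variation as $h = h^{TT} + fg$, show the conformal block forces $\tau > -\tfrac{n}{4(n-1)}$, show the TT block is nonnegative with kernel precisely the infinitesimal Einstein deformations ($\Delta_L h = 0$), use the integrability hypothesis to conclude stability, and finish with Proposition \ref{locmin}. Where you differ is purely in how the two blocks of the Hessian are computed: the paper simply specializes its general Einstein-metric factorizations, namely Theorem \ref{JTT} (giving $\tfrac12 \Delta_L^2$ on TT tensors when $R=0$) and Theorem \ref{FtJf} (giving a positive multiple of $(n + 4\tau(n-1))\Delta^2$ on the conformal block when $R=0$), whereas you re-derive both from scratch via the observation that at a Ricci-flat metric every term of the second variation carrying an undifferentiated $Ric$ or $R$ drops out, so
\begin{align*}
\mathcal{F}_{\tau}''(h,h) = 2\int |Ric'(h)|^2\, dV + 2\tau \int \big(R'(h)\big)^2\, dV,
\end{align*}
together with the Ricci-flat Bochner identity $\int |\nabla^2 f|^2\, dV = \int (\Delta f)^2\, dV$. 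Your resulting quadratic forms, $\tfrac12 \int |\Delta_L h^{TT}|^2\, dV$ and $(n-1)\big(\tfrac{n}{2} + 2\tau(n-1)\big)\int (\Delta f)^2\, dV$, agree exactly with the $R=0$ specializations of the paper's formulas (including the constant in \eqref{confjac}), and your cross-term argument ($R'(h^{TT})=0$, and $Ric'(h^{TT})$ is TT hence $L^2$-orthogonal to both $\nabla^2 f$ and $(\Delta f)g$) is sound. Your route is more elementary and self-contained for this special case, at the cost of not being reusable; the paper's route buys nothing extra here but is the same machinery used for all the other Einstein examples. Two points you treat lightly but correctly: the quantitative bound \eqref{1pos} orthogonal to the (finite-dimensional) kernel follows from ellipticity and the spectral decomposition of $J$, as the paper remarks when identifying stability with $H^2$-stability; and the kernel elements are integrable as critical points of $\tmf_{\tau}$ because the hypothesized curves of Ricci-flat metrics are automatically critical for every $\tmf_{\tau}$ -- your closing discussion correctly identifies this, and the contrast with Theorem \ref{nonit}, as the crux of the argument.
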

Obviously, such a metric is minimizing if $\tau \geq - 1/n$, so the
main point is that this can be improved, at least locally.
The integrability of infinitesimal Einstein deformations on
a Calabi-Yau manifold was proved by Bogomolov and Tian
\cite{Bogomolov, TianCY}.

\subsection{Reverse Bishop's inequalities}  \label{RBIsec}
The classical Bishop's inequality implies that if $(M,g)$ is a closed manifold with
$Ric(g) \geq Ric(S^n) = (n-1)g$, then the volume satisfies
$Vol(g) \leq Vol(S^n)$, and equality holds only if $(M,g)$ is
isometric to the sphere.   An interesting consequence of stability for $\tau =0$
for Einstein metrics is that, locally, a ``reverse Bishop's inequality'' holds:

\begin{theorem} \label{RevBishop}  Let $(M,g)$ be an Einstein manifold with
positive scalar curvature,
normalized so that $Ric(g) = (n-1)g$.
Assume $g$ is a strict local minimizer for $\tmf_0$.  Then there exists a
$C^{2,\alpha}$-neighborhood $U$ of $g$ such that if $\tilde{g} \in U$ with
$ Ric(\tilde{g}) \leq (n-1)\tilde{g}$,
then $Vol(\tilde{g}) \geq Vol(g)$
with equality if and only if $\tilde{g} = \phi^{*}g$ for some
diffeomorphism $\phi : M \rightarrow M$.
\end{theorem}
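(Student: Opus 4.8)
The plan is to exploit the strict local minimization property of $g$ for the functional $\tmf_0 = Vol^{4/n-1}\int |Ric|^2\,dV$ together with the pointwise Ricci bound $Ric(\tilde g)\le (n-1)\tilde g$ to control the numerator $\int |Ric(\tilde g)|^2\,dV_{\tilde g}$ from above. First I would record that since $g$ is Einstein with $Ric(g)=(n-1)g$, one has $|Ric(g)|^2 = (n-1)^2 n$ pointwise and $R(g) = n(n-1)$, so $\mathcal{F}_0[g] = n(n-1)^2 Vol(g)$ and $\tmf_0[g] = n(n-1)^2 Vol(g)^{4/n}$. The key elementary inequality is that for any competitor metric $\tilde g$ satisfying $Ric(\tilde g)\le (n-1)\tilde g$ (as bilinear forms), the eigenvalues $\mu_i$ of $Ric(\tilde g)$ relative to $\tilde g$ satisfy $\mu_i \le n-1$, and hence
\begin{align}
|Ric(\tilde g)|^2 = \sum_i \mu_i^2 \le (n-1)\sum_i \mu_i = (n-1) R(\tilde g).
\end{align}
The catch is that this bound is useful only where $\mu_i \ge 0$; to handle possible negative eigenvalues I would combine it with the constraint that $\tilde g$ lies in a small $C^{2,\alpha}$-neighborhood of the Einstein metric $g$, so that each $\mu_i$ is close to $n-1 > 0$ and in particular positive. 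Thus on a sufficiently small neighborhood $U$ the pointwise inequality $|Ric(\tilde g)|^2 \le (n-1) R(\tilde g)$ holds genuinely.

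Next I would integrate and bound the total: $\mathcal{F}_0[\tilde g] = \int |Ric(\tilde g)|^2\,dV_{\tilde g} \le (n-1)\int R(\tilde g)\,dV_{\tilde g}$. The right-hand side is the (unnormalized) Einstein–Hilbert functional, but rather than invoke Yamabe-type estimates I would instead compare directly using the local minimization hypothesis. Since $g$ is a strict local minimizer for $\tmf_0$, after shrinking $U$ we have $\tmf_0[\tilde g]\ge \tmf_0[g]$, i.e.
\begin{align}
Vol(\tilde g)^{4/n-1}\int |Ric(\tilde g)|^2\,dV_{\tilde g} \ge n(n-1)^2 Vol(g)^{4/n}.
\end{align}
The strategy is then to show that the pointwise Ricci bound forces $\int |Ric(\tilde g)|^2\,dV_{\tilde g}\le n(n-1)^2 Vol(\tilde g)$, with equality only in the Einstein case; combining this upper bound on the numerator with the lower bound from minimization yields $Vol(\tilde g)^{4/n-1}\cdot n(n-1)^2 Vol(\tilde g) \ge n(n-1)^2 Vol(g)^{4/n}$, which simplifies to $Vol(\tilde g)^{4/n}\ge Vol(g)^{4/n}$ and hence $Vol(\tilde g)\ge Vol(g)$.

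The main obstacle, and the step requiring care, is proving the auxiliary bound $\int |Ric(\tilde g)|^2\,dV_{\tilde g}\le n(n-1)^2 Vol(\tilde g)$ with sharp equality characterization. From the pointwise estimate $|Ric(\tilde g)|^2\le (n-1)R(\tilde g)$ I would need to further estimate $\int R(\tilde g)\,dV_{\tilde g}$, and the cleanest route is again pointwise: the hypothesis $Ric(\tilde g)\le (n-1)\tilde g$ gives $R(\tilde g)=\sum_i \mu_i \le n(n-1)$, whence
\begin{align}
\int |Ric(\tilde g)|^2\,dV_{\tilde g} \le (n-1)\int R(\tilde g)\,dV_{\tilde g} \le (n-1)\cdot n(n-1)\, Vol(\tilde g) = n(n-1)^2 Vol(\tilde g),
\end{align}
exactly as needed. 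For the equality case, tracing the chain of inequalities back shows that equality in $Vol(\tilde g)=Vol(g)$ forces $\tmf_0[\tilde g]=\tmf_0[g]$; strict local minimization then yields $\tilde g = e^c\phi^*g$ for some diffeomorphism $\phi$ and constant $c$, and the pointwise saturation $Ric(\tilde g)=(n-1)\tilde g$ together with the scaling behavior of Ricci forces $c=0$, giving $\tilde g=\phi^*g$. The delicate point throughout is ensuring $U$ is chosen small enough that all eigenvalues $\mu_i$ remain positive (so the elementary inequalities hold with the correct sign) while simultaneously lying inside the minimization neighborhood guaranteed by the strict-local-minimizer hypothesis; intersecting these neighborhoods gives the desired $U$.
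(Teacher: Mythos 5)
Your proposal is correct and follows essentially the same argument as the paper: strict local minimization gives $\tmf_0[\tilde g] \geq \tmf_0[g] = n(n-1)^2 Vol(g)^{4/n}$, while the hypothesis $Ric(\tilde g) \leq (n-1)\tilde g$ combined with a lower Ricci eigenvalue bound (automatic on a sufficiently small $C^{2,\alpha}$-neighborhood) yields the pointwise estimate $|Ric(\tilde g)|^2 \leq n(n-1)^2$, so that $\tmf_0[\tilde g] \leq n(n-1)^2 Vol(\tilde g)^{4/n}$ and hence $Vol(\tilde g) \geq Vol(g)$, with the equality case handled via the strict-minimizer characterization and $c=0$ forced by volume normalization, exactly as in the paper. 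The only cosmetic difference is the route to the pointwise bound: the paper uses the two-sided eigenvalue bound $-(n-1) < \mu_i \leq n-1$ directly to get $\mu_i^2 \leq (n-1)^2$, whereas you pass through $|Ric(\tilde g)|^2 \leq (n-1)R(\tilde g) \leq n(n-1)^2$ using positivity of the eigenvalues on the shrunken neighborhood.
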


From the list of examples above, we see that if $(M,g)$ is a sphere, space form, or complex projective space, then for metrics $\tilde{g}$ near $g$
\begin{align*}
Ric(\tilde{g}) \leq Ric(g) \Longrightarrow Vol(M,\tilde{g}) \geq Vol(M,g).
\end{align*}

Based on the above results for spheres and complex
projective spaces, we conjecture that
if $(M^n,g)$ is a compact rank one
symmetric space, then $g$ is a strict local minimizer for $\tmf_0$.
This would imply that Theorem \ref{RevBishop}
also holds for quaternionic projective spaces and the Cayley plane.

Theorem \ref{RevBishop} has a counterpart for negative Einstein manifolds,
see Corollary~\ref{RicRigidNeg}. We mention that on certain negative Einstein
manifolds, such as hyperbolic or K\"ahler-Einstein manifolds, there are
important global results which in some cases are much stronger than what our
local analysis can yield.  One example is the work of Besson-Courtois-Gallot
(\cite{BCG1}, \cite{BCG2}) on volume rigidity:  If $(M^n, g_0)$ is a compact
hyperbolic manifold, and $g$ is any Riemannian metric on $M^n$ then
\begin{align*}
Ric(g) \geq -(n - 1)g  \Longrightarrow Vol(g) \geq Vol(g_0),
\end{align*}
with equality if
and only if $g$ is isometric to $g_0$.
Note the requirement that $M^n$ support a hyperbolic metric: in particular,
this gives a global result for any negative Einstein three-manifold,
but does not give (even local) information in
dimensions $n \geq 4$ when $M^n$ is only known to admit an
Einstein metric.
We also mention the article \cite{DWW2}. In this work,
a local volume comparison theorem is proved using only
the scalar curvature, at a negative K\"ahler-Einstein metric.

\subsection{Global minimization}

The functionals $\int |Ric|^2$ and $\int R^2$ are known to
be globally minimized in dimension four at a negative K\"ahler-Einstein
metric. This is proved in \cite{LeBrunRCMV} using Seiberg-Witten
theory, along with many other interesting results regarding minimal volumes.

In any odd dimension, the fibration
$S^1 \rightarrow S^{2n +1} \rightarrow \CP^n$ has
totally geodesic fibers.  Thus by scaling the fiber, there are metrics
which collapse with bounded curvature on odd-dimensional
spheres \cite[Chapter 11]{Berger}, where it is also
pointed out that the global minimum of
$\tilde{\mathcal{R}}$ is equal to zero in dimensions $n \geq 5$
on {\em{any}} Riemannian manifold.
Thus the spherical metric is most definintely not a global minimizer
of $\tmf_0$ in dimensions other than four.
However, we conjecture that the round sphere $(S^4, g_S)$
is a {\em{global}} minimizer of $\mf_0$.

\subsection{Other functionals}

Many of the results in this paper hold for arbitrary
Riemannian functionals, e.g., Proposition \ref{locmin}.
For example, one can also consider other $L^p$-norms of the curvature.
The scale-invariant $L^{n/2}$-norm of the curvature
carries topological information, due to the Chern-Gauss-Bonnet Theorem.
We point out that many of the above local minimization theorems imply
local minimization for the
scale-invariant norm simply by H\"older's inequality when $n \geq 4$.
For example, for the functional $\tmf_0$,
\begin{align}
\tmf_0[g] = (Vol)^{4/n - 1} \int |Ric|^2 dV
\leq  \Big\{ \int |Ric|^{n/2} dV \Big\}^{4/n}.
\end{align}
Thus a local minimizer for $\tmf_0$ is automatically
a local minimizer for the scale-invariant norm of the Ricci tensor.
Thus many of the above results can be restated for the scale-invariant
power (but we do not explicitly restate each case separately here).
\subsection{Acknowledgements}
The authors would like to thank Gang Tian for helpful remarks regarding
the various ``slicing'' lemmas in the paper.  Thanks are also due to
Robert Bryant, Claude LeBrun, and Peter Li for helpful comments.  

Finally, the authors would also thank the referee, whose careful reading of the manuscript 
resulted in a number of improvements to the exposition.  

%%%%%%%%%%%%%%%%%%%%%%%%%%%%%%%%%%%%%%%%%%%%%%%%%%%%%%%%
\section{Local properties of the moduli space}  \label{LocalSec}
%%%%%%%%%%%%%%%%%%%%%%%%%%%%%%%%%%%%%%%%%%%%%%%%%%%%%%%%

In this Section we recall some basic variational formulae,
all of which are found in \cite[Chapter 4]{Besse}.
We begin with the squared $L^2$-norm of Ricci,
\begin{align}
\mathcal{F}_0 [g] = \int |Ric|^2 dV = \int g^{ip} g^{jq} R_{pq} R_{ij} dV.
\end{align}
Along an arbitrary path $g_s$, let $g_s' = \frac{d}{ds}g_s$.  We have
\begin{align}
\frac{d}{ds} \mathcal{F}_0 [g]
 =  \int  g^{ip} g^{jq}(g_s')_{ij}
 ( \nabla \mathcal{F}_0)_{pq}  dV,
\end{align}
where
\begin{align}
(\nabla \mathcal{F}_0)_{pq} =  -\Delta_L (Ric)_{pq}  + \nabla_p \nabla_q R
- \frac{1}{2}(\Delta R) g_{pq} - 2 R^l_p R_{lq} + \frac{1}{2} |Ric|^2 g_{pq}.
\end{align}
Recalling that
\begin{align}
(\Delta_L h)_{ij} = (\Delta h)_{ij} + 2R_{ipjq} h^{pq} - R_i^l h_{lj} -  R_j^l h_{il},
\end{align}
the gradient is also written as
\begin{align} \label{gradformF}
(\nabla \mathcal{F}_0)_{pq} =
- \Delta (Ric)_{pq} - 2 R_{pkql} R^{kl} + \nabla_p \nabla_q R
- \frac{1}{2}(\Delta R) g_{pq} + \frac{1}{2} |Ric|^2 g_{pq}.
\end{align}
\begin{proposition}
\label{Ricc}
If $g$ is Einstein with $Ric = (R/n) g$, then
\begin{align}
 \nabla \mathcal{F}_0 [g] = \frac{n-4}{2n^2} R^2 g.
\end{align}
\end{proposition}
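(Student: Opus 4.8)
The plan is to obtain the formula by directly substituting the Einstein condition $Ric = (R/n)g$ into the gradient expression \eqref{gradformF}, since the hard variational computation has already been carried out in deriving that identity. The only preliminary fact needed is that the scalar curvature of an Einstein metric is constant: contracting the second Bianchi identity gives $\nabla^i R_{ij} = \tfrac12 \nabla_j R$, and inserting $R_{ij} = (R/n)g_{ij}$ forces $(1/n - 1/2)\nabla_j R = 0$, hence $\nabla R = 0$ since $n \geq 3$.

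First I would dispose of every term in \eqref{gradformF} that carries a derivative. Because $R$ is constant and $g$ is parallel, $\nabla_p\nabla_q R = 0$ and $\Delta R = 0$; likewise $Ric = (R/n)g$ is parallel, so $\Delta(Ric)_{pq} = 0$. Thus only the algebraic curvature term $-2R_{pkql}R^{kl}$ and the scalar term $\tfrac12|Ric|^2 g_{pq}$ survive.

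Next I would evaluate these two surviving terms. For the contraction $R_{pkql}R^{kl}$, I substitute $R^{kl} = (R/n)g^{kl}$ and use the same convention that appears in the Lichnerowicz Laplacian displayed above, namely $g^{pq}R_{ipjq} = R_{ij}$ (which is exactly the contraction making $\Delta_L g = 0$). This gives $R_{pkql}R^{kl} = (R/n)R_{pq} = (R^2/n^2)g_{pq}$, so that $-2R_{pkql}R^{kl}$ contributes $-(2/n^2)R^2 g_{pq}$. For the scalar term, $|Ric|^2 = (R/n)^2 g_{ij}g^{ij} = R^2/n$, so $\tfrac12|Ric|^2 g_{pq}$ contributes $(1/2n)R^2 g_{pq}$.

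Finally I would collect the coefficients, using $-2/n^2 + 1/(2n) = (n-4)/(2n^2)$, to obtain $\nabla\mathcal{F}_0[g] = \tfrac{n-4}{2n^2}R^2 g$, as claimed. I do not expect a serious obstacle here: the computation is purely algebraic once \eqref{gradformF} is available, so the only points requiring care are establishing the constancy of $R$ and keeping the index contraction in $R_{pkql}R^{kl}$ consistent with the stated sign convention.
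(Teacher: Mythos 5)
Your proposal is correct and is exactly the paper's argument: the paper's proof of Proposition \ref{Ricc} is simply ``a simple computation from the above formula,'' i.e., substituting the Einstein condition into \eqref{gradformF}, which is what you carry out. Your explicit verifications (constancy of $R$ via the contracted second Bianchi identity, the contraction convention $g^{kl}R_{pkql} = R_{pq}$ consistent with $\Delta_L g = 0$, and the coefficient arithmetic $-2/n^2 + 1/(2n) = (n-4)/(2n^2)$) are all accurate.
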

\begin{proof}
A simple computation from the above formula.
\end{proof}
Next, we consider the functional
\begin{align}
\mathcal{S}[g] = \int R^2 dV.
\end{align}
We have
\begin{align}
\frac{d}{ds} \mathcal{S}[g] = \int  g^{ip} g^{jq}(g_s')_{ij}
(\nabla \mathcal{S} )_{pq}  dV,
\end{align}
where
\begin{align} \label{gradformS}
( \nabla \mathcal{S} )_{pq} =  2 \n_p \n_q R - 2 (\Delta R) g_{pq}
- 2 R R_{pq} +   \frac{1}{2} R^2 g_{pq}.
\end{align}
\begin{proposition}
\label{Rc}
 If $g$ is Einstein with $Ric = (R/n) g$, then
\begin{align}
 \nabla \mathcal{S} [g] = \frac{n-4}{2n} R^2 g.
\end{align}
\end{proposition}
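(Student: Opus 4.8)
The plan is to specialize the general gradient formula \eqref{gradformS} to the Einstein case. The strategy mirrors exactly the computation behind Proposition~\ref{Ricc}: substitute the Einstein condition $Ric = (R/n)g$ into each term of the gradient and simplify.

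First I would recall that on an Einstein manifold the scalar curvature $R$ is constant (this follows from the contracted second Bianchi identity, $\nabla_j R = 2 \nabla^i R_{ij}$, once $R_{ij} = (R/n) g_{ij}$). This constancy is the key simplification: every term in \eqref{gradformS} involving a derivative of $R$ must vanish. Concretely, $\nabla_p \nabla_q R = 0$ and $\Delta R = 0$, so the first two terms drop out entirely, leaving
\begin{align*}
(\nabla \mathcal{S})_{pq} = - 2 R R_{pq} + \frac{1}{2} R^2 g_{pq}.
\end{align*}

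Next I would substitute $R_{pq} = (R/n) g_{pq}$ into the surviving terms:
\begin{align*}
(\nabla \mathcal{S})_{pq} = -2 R \cdot \frac{R}{n} g_{pq} + \frac{1}{2} R^2 g_{pq}
= \Big( -\frac{2}{n} + \frac{1}{2} \Big) R^2 g_{pq}
= \frac{n-4}{2n} R^2 g_{pq},
\end{align*}
which is precisely the claimed identity. The entire argument is a short direct calculation, so there is no substantive obstacle; the only point requiring any care is justifying that $R$ is constant on an Einstein manifold, which is standard. Indeed, the author's own proof of the companion Proposition~\ref{Ricc} is dispatched with the single remark ``A simple computation from the above formula,'' and I would expect the proof here to be equally brief.
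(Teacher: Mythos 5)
Your proof is correct and is essentially the computation the paper intends: the paper dispatches Proposition \ref{Rc} with ``Another simple computation,'' meaning exactly this substitution of $R_{pq} = (R/n)g_{pq}$ and constancy of $R$ into \eqref{gradformS}. Your explicit justification that $R$ is constant via the contracted Bianchi identity (valid since $n \geq 3$) is the only step the paper leaves implicit, and your arithmetic $-\tfrac{2}{n} + \tfrac{1}{2} = \tfrac{n-4}{2n}$ checks out.
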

\begin{proof}
Another simple computation.
\end{proof}

%%%%%%%%%%%%%%%%%%%%%%%%%%%%%%%%%%%%%%%%%
\subsection{Volume normalized functionals}
%%%%%%%%%%%%%%%%%%%%%%%%%%%%%%%%%%%%%%%%%%%%

The functional $\mathcal{F}_{\tau}$ is not scale-invariant
in dimensions other than four. A standard remedy for
this is to normalize the functionals by a suitable power of
the volume, which is equivalent to restricting the
functionals to the space of unit volume metrics $\mathcal{M}_1$.
Therefore, we need to understand the effect of the normalizing
term on the formulas for the first and second variation of $\mathcal{F}_{\tau}$.
The following is elementary:
\begin{claim}  The functional
\begin{align}
\tmf_{\tau}[g] = (Vol(g))^{\frac{4}{n} -1} \cdot \mf_{\tau}[g]
\end{align}
is scale-invariant.
\end{claim}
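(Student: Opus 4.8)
The plan is to verify scale-invariance directly by tracking how each ingredient transforms under a constant rescaling of the metric. First I would set $\tilde{g} = c^2 g$ for a positive constant $c$ and record the elementary scaling laws: the volume form satisfies $dV_{\tilde{g}} = c^n\, dV_g$, so $Vol(\tilde{g}) = c^n\, Vol(g)$, while the Ricci tensor $Ric$ is scale-invariant (as a $(0,2)$-tensor it is unchanged), the scalar curvature scales as $R_{\tilde{g}} = c^{-2} R_g$, and raising indices with $\tilde{g}^{-1} = c^{-2} g^{-1}$ contributes the remaining factors.

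The key computation is to determine the total scaling weight of $\mathcal{F}_\tau$. Since $|Ric|^2 = g^{ip}g^{jq} R_{pq}R_{ij}$ involves two inverse-metric contractions of the scale-invariant $(0,2)$-tensor $Ric$, we have $|Ric|^2_{\tilde{g}} = c^{-4}|Ric|^2_g$, and likewise $R_{\tilde{g}}^2 = c^{-4} R_g^2$. Combining with the volume form gives
\begin{align}
\mathcal{F}_\tau[\tilde{g}] = \int \big( |Ric|^2_{\tilde{g}} + \tau R_{\tilde{g}}^2 \big)\, dV_{\tilde{g}}
= \int c^{-4}\big( |Ric|^2_g + \tau R_g^2 \big)\, c^n\, dV_g
= c^{n-4}\, \mathcal{F}_\tau[g].
\end{align}
Thus $\mathcal{F}_\tau$ has homogeneity degree $n-4$ under rescaling, which confirms it is scale-invariant precisely when $n=4$.

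Finally I would assemble the normalized functional and check that the volume prefactor exactly cancels this weight. Using $Vol(\tilde{g})^{\frac{4}{n}-1} = \big(c^n Vol(g)\big)^{\frac{4}{n}-1} = c^{n(\frac{4}{n}-1)} Vol(g)^{\frac{4}{n}-1} = c^{4-n} Vol(g)^{\frac{4}{n}-1}$, we obtain
\begin{align}
\tmf_\tau[\tilde{g}] = Vol(\tilde{g})^{\frac{4}{n}-1}\, \mathcal{F}_\tau[\tilde{g}]
= c^{4-n}\, c^{n-4}\, Vol(g)^{\frac{4}{n}-1}\, \mathcal{F}_\tau[g]
= \tmf_\tau[g],
\end{align}
so $\tmf_\tau$ is scale-invariant. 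There is no genuine obstacle here; the only point requiring care is bookkeeping the inverse-metric factors so that the homogeneity exponent of $\mathcal{F}_\tau$ and the exponent $\frac{4}{n}-1$ of the volume normalization are matched to cancel, which is exactly why the normalizing power is chosen as $\frac{4}{n}-1$.
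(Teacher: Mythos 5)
Your proof is correct: the scaling bookkeeping ($dV \mapsto c^n dV$, $|Ric|^2 \mapsto c^{-4}|Ric|^2$, $R^2 \mapsto c^{-4}R^2$, hence $\mathcal{F}_\tau \mapsto c^{n-4}\mathcal{F}_\tau$, cancelled by $Vol^{\frac{4}{n}-1} \mapsto c^{4-n}Vol^{\frac{4}{n}-1}$) is exactly the computation the paper has in mind, which it omits as "elementary." Nothing further is needed.
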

In the following, we let $p = \frac{4}{n} - 1$.
Let $g_s$ denote a path of metrics through $g = g_0$.

\begin{lemma} The first and second variation of the volume are
\begin{align}
(Vol(g_s))' &= \frac{1}{2} \int_M (tr_{g_s} g_s') dV\\
\label{vdd}
(Vol(g_s))'' &= - \frac{1}{2} \int_M  | g_s'|^2 dV
+ \frac{1}{2} \int_M ( tr_{g_s} g_s'') dV + \frac{1}{4}
\int ( tr_{g_s} g_s')^2 dV.
\end{align}
\end{lemma}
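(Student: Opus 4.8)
The plan is to work in local coordinates and differentiate the Riemannian volume element $dV_{g_s} = \sqrt{\det g_s}\,dx$ directly, since both variations reduce to elementary pointwise identities for the determinant of a one-parameter family of symmetric positive-definite matrices, followed by integration over the closed manifold $M$.

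First I would invoke Jacobi's formula for the derivative of a determinant: for a smooth path of invertible matrices $A(s)$ one has $\frac{d}{ds}\det A = (\det A)\,\mathrm{tr}(A^{-1}A')$. Applied to $A(s)=g_s$ this gives $\frac{d}{ds}\det g_s = (\det g_s)\,g_s^{ij}(g_s')_{ij} = (\det g_s)\,(tr_{g_s} g_s')$, so that
\[
(dV_{g_s})' = \tfrac{1}{2}(tr_{g_s} g_s')\, dV_{g_s}.
\]
Integrating this identity over $M$ yields the stated first variation immediately.

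For the second variation I would differentiate the first-variation integrand once more, which requires two auxiliary pointwise identities. The first is the derivative of the inverse metric, $\frac{d}{ds}g_s^{ij} = -g_s^{ik}g_s^{jl}(g_s')_{kl}$, obtained by differentiating $g_s^{ij}(g_s)_{jk}=\delta^i_k$. Together with the product rule this produces
\[
\frac{d}{ds}(tr_{g_s} g_s') = (g_s^{ij})'(g_s')_{ij} + g_s^{ij}(g_s'')_{ij} = -|g_s'|^2 + tr_{g_s} g_s'',
\]
where $|g_s'|^2 = g_s^{ik}g_s^{jl}(g_s')_{ij}(g_s')_{kl}$ is the squared norm taken with respect to $g_s$. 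Differentiating $(Vol(g_s))' = \tfrac{1}{2}\int_M (tr_{g_s}g_s')\,dV_{g_s}$ and applying the first variation of the volume form to the measure gives
\[
(Vol(g_s))'' = \tfrac{1}{2}\int_M \Big[ (tr_{g_s}g_s')' + \tfrac{1}{2}(tr_{g_s}g_s')^2 \Big]\, dV,
\]
and substituting the expression for $(tr_{g_s}g_s')'$ reproduces exactly the three terms in \eqref{vdd}.

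Since every step is an elementary pointwise computation followed by integration, there is no genuine analytic obstacle; the only thing to watch is the bookkeeping, in particular keeping track of the $s$-dependence of the metric hidden inside $tr_{g_s}$ and inside $|\cdot|^2$, which is precisely the source of the $-\tfrac{1}{2}\int_M|g_s'|^2\,dV$ term. Because $M$ is closed, no boundary terms appear and no integration by parts is needed.
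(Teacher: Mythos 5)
Your computation is correct in every step: Jacobi's formula gives the first variation, and the second variation follows from differentiating the inverse metric and the volume form, with the $-\tfrac{1}{2}\int|g_s'|^2\,dV$ term correctly accounted for by the $s$-dependence hidden in $tr_{g_s}$. The paper offers no argument of its own here (it simply cites Chapter 4 of Besse), and your direct determinant computation is precisely the standard proof that reference contains, so there is nothing to add or correct.
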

\begin{proof}
See \cite{Besse}, Chapter 4.
\end{proof}

Next, we consider the first variation of $\tmf_{\tau}$:

\begin{lemma}
A critical metric of $\tmf_{\tau}$ satisfies
\begin{align}
\label{foc}
\mf_{\tau}' &= - p (Vol)^{-1} (Vol)' \cdot \mf_{\tau},\\
\label{foc2}
\nabla \mf_{\tau}  &= -  \frac{p}{2} Vol^{-1} \cdot \mf_{\tau} \cdot g_s,
\end{align}
where $\nabla \mf_{\tau}$ denotes the gradient of $\mf_{\tau}$,
\end{lemma}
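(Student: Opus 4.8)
The plan is to derive both identities directly from the product structure $\tmf_\tau = (Vol)^p \mf_\tau$, with $p = \frac{4}{n}-1$, together with the definition of a critical metric as one for which the first variation of $\tmf_\tau$ vanishes along every path $g_s$ through $g$. First I would differentiate the product along an arbitrary variation and apply the Leibniz rule:
\begin{align*}
\tmf_\tau' = p\,(Vol)^{p-1}(Vol)'\,\mf_\tau + (Vol)^p\,\mf_\tau'.
\end{align*}
Criticality gives $\tmf_\tau' = 0$ for every such path; since $Vol > 0$, I may divide by $(Vol)^p$ to obtain $0 = p\,(Vol)^{-1}(Vol)'\,\mf_\tau + \mf_\tau'$, which rearranges to \eqref{foc}. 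This step uses nothing beyond the Leibniz rule and positivity of the volume.

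Next I would upgrade the scalar identity \eqref{foc} to the pointwise gradient identity \eqref{foc2} by writing both sides as $L^2$-pairings against the variation $h = g_s'$. By the definition of the gradient the left-hand side is $\mf_\tau' = \int g^{ip}g^{jq}h_{ij}(\nabla\mf_\tau)_{pq}\,dV$, while the first variation of volume gives $(Vol)' = \tfrac{1}{2}\int (tr_g h)\,dV = \tfrac{1}{2}\int g^{ip}g^{jq}h_{ij}\,g_{pq}\,dV$. Substituting these into \eqref{foc} yields
\begin{align*}
\int g^{ip}g^{jq}h_{ij}(\nabla\mf_\tau)_{pq}\,dV = -\frac{p}{2}(Vol)^{-1}\mf_\tau \int g^{ip}g^{jq}h_{ij}\,g_{pq}\,dV.
\end{align*}
Because this holds for every symmetric $2$-tensor $h$, the fundamental lemma of the calculus of variations forces the integrands to agree pointwise, giving $(\nabla\mf_\tau)_{pq} = -\frac{p}{2}(Vol)^{-1}\mf_\tau\,g_{pq}$, which is exactly \eqref{foc2}. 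Thus \eqref{foc} and \eqref{foc2} are the same critical-point condition expressed in scalar and in pointwise-gradient form.

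The computation is essentially routine, so the one point requiring care — and what I would treat as the main (if modest) obstacle — is the passage from an integral identity valid for \emph{all} variations to a pointwise identity. I would justify this by observing that every smooth symmetric $2$-tensor $h$ is realized as $(g_s)'|_{s=0}$ for the linear path $g_s = g + sh$ (a metric for small $s$), combined with the nondegeneracy of the $L^2$ inner product on symmetric $2$-tensors. I would also record that $\mf_\tau$ and $Vol$ are evaluated at the fixed critical metric $g$, so the right-hand side of \eqref{foc2} is a constant multiple of $g$; this is precisely the form needed for the subsequent analysis of the Euler--Lagrange system.
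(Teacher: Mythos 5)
Your proposal is correct and follows essentially the same route as the paper: differentiate the product $(Vol)^p\,\mf_{\tau}$ via the Leibniz rule to get \eqref{foc}, then rewrite $\mf_{\tau}'$ and $(Vol)'$ as $L^2$-pairings against $h = g_s'$ and conclude \eqref{foc2} since the identity holds for all variations. The only difference is that you make explicit the fundamental-lemma step (realizing any symmetric $2$-tensor $h$ via the linear path $g + sh$ and invoking nondegeneracy of the $L^2$ pairing), which the paper leaves implicit; this is a legitimate and welcome clarification, not a deviation.
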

\begin{proof}
Differentiating the functional, we compute
\begin{align}
\label{oned}
(\tmf_{\tau}[g_s])' = p (Vol)^{p-1} (Vol)' \mf_{\tau} + (Vol)^p (\mf_{\tau}[g_s])'.
\end{align}
Therefore a critical point of $\tmf_{\tau}$ clearly satisfies \eqref{foc}.
We also can write this as
\begin{align}
\begin{split}
(\tmf_{\tau}[g_s])' &=  p (Vol)^{p-1} \mf_{\tau} \cdot \frac{1}{2} \int_M (tr_{g_s} g_s') dV
+ (Vol)^p \int_M \langle \nabla \mf_{\tau}, g_s' \rangle dV\\
& = Vol^p \Big(
\int \langle  \nabla \mf_{\tau} + \frac{p}{2} Vol^{-1} \cdot \mf_{\tau} \cdot g_s, h \rangle dV \Big).
\end{split}
\end{align}
Consequently, a critical point of $\tmf_{\tau}$ also satisfies \eqref{foc2}.
\end{proof}

%%%%%%%%%%%%%%%%%%%%%%%%%%%%%%%%%%%%%%%%%%%%%%%%
\subsection{Local structure of the moduli space}
\label{Moduli}
%%%%%%%%%%%%%%%%%%%%%%%%%%%%%%%%%%%%%%%%%%%%%%%%%%%%%%%

To understand to local properties of the moduli space of
critical metrics, we need to cast the equation in the
framework of Fredholm theory. This requires an elliptic
operator, so one must ``gauge'' the equation to become
elliptic.

We begin with some definitions. For simplicity of notation, we will treat
the domain and range of an operator as if it were the bundle itself, although
the operator really acts on sections of the bundle.
Let $\delta_g : S^2(T^{*}M) \rightarrow T^{*}M$ denote the divergence operator
\begin{align} \label{deldef}
(\delta_g h)_j = \nabla^i h_{ij},
\end{align}
and $\delta^{*} : T^{*}M  \rightarrow S^2(T^{*}M)$ its $L^2$-adjoint.  Note that
\begin{align} \label{deldual}
\delta^{*} = -\frac{1}{2} \mathcal{L},
\end{align}
where $\mathcal{L}$ is the Killing operator:
\begin{align} \label{Ldef}
(\mathcal{L}_g \omega)_{ij} = \nabla_i \omega_j + \nabla_j \omega_i.
\end{align}
We let $\mathcal{K}_g$ denote the conformal Killing operator, the trace-free part of $\mathcal{L}_g$:
\begin{align} \label{Kef}
(\mathcal{K}_g \omega)_{ij} = \nabla_i \omega_j + \nabla_j \omega_i - \frac{2}{n}(\delta_g \omega)g_{ij}.
\end{align}
Note that if
\begin{align} \label{betadef}
\beta_g h = \delta_g h - \frac{1}{n} d(tr\ h),
\end{align}
then
\begin{align*}
\beta^{*}  = -\frac{1}{2} \mathcal{K}.
\end{align*}

Let $g$ be a critical metric for the normalized
functional
\begin{align} \label{normalF2}
\tilde{\mathcal{F}}_{\tau} = (Vol)^{\frac{4}{n}-1} \int \big( |Ric|^2 + \tau R^2 \big)\ dV.
\end{align}
Assume $\mathcal{U} \subset S^2(T^*M)$ is a neighborhood of the zero section, sufficiently small so that $\theta \in \mathcal{U}_0 \Rightarrow \tilde{g} = g + \theta$ is a metric.
Consider the map $P_g : \mathcal{U} \rightarrow S^2(T^{*}M)$ given by
\begin{align} \label{Pdef}
P_g(\theta) = \nabla \tilde{\mathcal{F}}_{\tau} (g + \theta) + \frac{1}{2} \mathcal{K}_{g + \theta}[ \beta_g \mathcal{K}_g \beta_g \theta]  - \gamma(\theta)\cdot g,
\end{align}
where
\begin{align} \label{gammadef}
\gamma(\theta) = \frac{1}{n} Vol(g)^{-1} \int tr_g \big\{ \nabla \tilde{\mathcal{F}}_{\tau}(g+\theta) \big\}\ dV_g.
\end{align}
The term involving $\gamma$ is included to make the trace of $P$ have mean value zero (with respect to $g$); hence
\begin{align*}
P_g : \mathcal{U}_0  \rightarrow \overline{S}_{0}^2(T^{*}M),
\end{align*}
where
\begin{align*}
\mathcal{U}_0 = \mathcal{U} \cap \overline{S}_{0}^2(T^{*}M),
\end{align*}
and $\overline{S}_{0}^2(T^{*}M)$ is defined in \eqref{S20}.
The domain of $P_g$ will be tensors in $\mathcal{U}_0$ of H\"older class $C^{k + 1, \alpha}$.
The term involving the operator $\mathcal{K}$ is included to handle gauge-invariance.  To simplify notation, we denote
\begin{align} \label{BoxLdef}
\Box_{\mathcal{K}_g} \omega &= \beta_g \mathcal{K}_g \omega
= -\frac{1}{2} \mathcal{K}_g^{*} \mathcal{K}_g \omega,
\end{align}
so that the gauge-fixing term can be written
\begin{align} \label{GFterm}
\frac{1}{2} \mathcal{K}_{g + \theta}[ \beta_g \mathcal{K}_g \beta_g \theta] = \frac{1}{2} \mathcal{K}_{g + \theta} [ \Box_{\mathcal{K}_g} \beta_g \theta].
\end{align}
Note that the kernel of $\Box_{\mathcal{K}_g}$ is precisely the space of conformal Killing forms (with respect to $g$), since
\begin{align} \label{BoxLKer}
\langle \omega, \Box_{\mathcal{K}_g} \omega \rangle_{L^2} = -\frac{1}{2} \| \mathcal{K}_g \omega \|_{L^2}^2.
\end{align}
\begin{lemma}
\label{kellip}
The operator $\Box_{\mathcal{K}_g}: T^*M \rightarrow T^*M$ is elliptic
for $n \neq 2$.
\end{lemma}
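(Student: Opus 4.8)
\emph{Plan.} Ellipticity is a pointwise statement about the leading symbol, so the whole proof is a symbol computation in which curvature and all other lower-order terms are irrelevant. The cleanest route is to exploit the factorization $\Box_{\mathcal{K}_g} = -\tfrac12\mathcal{K}_g^{*}\mathcal{K}_g$ recorded in \eqref{BoxLdef}. For any first-order operator $D$ with formal adjoint $D^{*}$, the operator $D^{*}D$ has principal symbol $\sigma_\xi(D)^{*}\sigma_\xi(D)$, a nonnegative self-adjoint endomorphism which is invertible exactly when $\sigma_\xi(D)$ is injective. Thus I would reduce the lemma to showing that the symbol of the conformal Killing operator $\mathcal{K}_g$ is injective for every nonzero covector $\xi$.

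From \eqref{Kef}, the principal symbol of $\mathcal{K}_g$ at $\xi \in T^{*}_xM$ carries a covector $\omega$ to the trace-free symmetric $2$-tensor
\[
\sigma_\xi(\mathcal{K}_g)(\omega)_{ij} = \xi_i\omega_j + \xi_j\omega_i - \tfrac{2}{n}\langle \xi,\omega\rangle\, g_{ij}.
\]
Assuming this vanishes, I would first contract with $\xi^i\xi^j$ to obtain the scalar identity $\tfrac{2(n-1)}{n}|\xi|^2\langle \xi,\omega\rangle = 0$; since $\xi \neq 0$ and the dimensional factor $\tfrac{2(n-1)}{n}$ is nonzero in the dimensions under consideration, this forces $\langle \xi,\omega\rangle = 0$. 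The symbol relation then collapses to $\xi_i\omega_j + \xi_j\omega_i = 0$, and a single further contraction with $\xi^i$ gives $|\xi|^2\omega_j = 0$, hence $\omega = 0$. This proves injectivity of $\sigma_\xi(\mathcal{K}_g)$ and therefore ellipticity of $\Box_{\mathcal{K}_g}$.

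As a check I would instead compute the symbol of the second-order operator directly. Expanding $\delta_g\mathcal{K}_g\omega$ and discarding the lower-order (curvature) contributions yields
\[
\sigma_\xi(\Box_{\mathcal{K}_g})(\omega) = -|\xi|^2\,\omega - \tfrac{n-2}{n}\langle \xi,\omega\rangle\,\xi ,
\]
which is diagonalized by the orthogonal splitting $T^{*}_xM = \xi^{\perp} \oplus \mathbb{R}\xi$: it acts as $-|\xi|^2$ on the hyperplane $\xi^{\perp}$ and as $-\tfrac{2(n-1)}{n}|\xi|^2$ on the line $\mathbb{R}\xi$. Both eigenvalues are nonzero for $\xi \neq 0$, which confirms invertibility and gives a second, independent verification.

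I do not expect a genuine obstacle here: the argument is entirely routine once phrased at the level of the symbol. The only point needing care is the direction parallel to $\xi$, where the conformal trace-correction term $-\tfrac{2}{n}\langle \xi,\omega\rangle g_{ij}$ contributes and produces the dimensional factor $\tfrac{2(n-1)}{n}$ governing non-degeneracy along $\mathbb{R}\xi$; off this line the symbol is simply that of the rough Laplacian. For this reason I would keep the argument at the symbol level and invoke the standard $D^{*}D$ criterion rather than writing out $\mathcal{K}_g^{*}\mathcal{K}_g$ in full.
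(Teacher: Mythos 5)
Your proposal is correct and takes essentially the same approach as the paper: the paper's proof is exactly your ``check'' computation, writing $(\Box_{\mathcal{K}_g}\omega)_j = (\Delta\omega)_j + \nabla^i\nabla_j\omega_i - \tfrac{2}{n}\nabla_j\nabla^i\omega_i$, reading off the symbol $|\xi|^2\omega_j + \left(1-\tfrac{2}{n}\right)\xi_j(\xi_i\omega_i)$, and pairing with $\xi_j$ to force $\langle\xi,\omega\rangle = 0$ and hence $\omega = 0$ via the nonvanishing factor $\tfrac{2(n-1)}{n}$. Your primary route through injectivity of $\sigma_\xi(\mathcal{K}_g)$ and the standard $D^{*}D$ criterion is the same symbol computation repackaged via the factorization $\Box_{\mathcal{K}_g} = -\tfrac{1}{2}\mathcal{K}_g^{*}\mathcal{K}_g$ from \eqref{BoxLdef}, so no substantive difference.
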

\begin{proof}
Since $\mathcal{K}_g \omega$ is traceless, we have
\begin{align}
( \Box_{\mathcal{K}_g} \omega)_j = (\Delta \omega)_j + \nabla^i \nabla_j \omega_i
- \frac{2}{n} \nabla_j \nabla^i \omega_i.
\end{align}
Consequently, the symbol is given by
\begin{align}
(\xi, \omega) \mapsto |\xi|^2 \omega_j + \left(1 - \frac{2}{n} \right)
\xi_j ( \xi_i \omega_i).
\end{align}
Pairing this with $\xi_j$ shows injectivity for $\xi \neq 0$.
\end{proof}
The main result of this section is that the moduli space of critical metrics is precisely given by the zero-locus of $P_g$:

\begin{theorem}  \label{Pzed}  Assume
\begin{align} \label{badt}
\tau \neq -\frac{n}{4(n-1)},
\end{align}
and consider $P_g$ as a mapping from $C^{k+1, \alpha}$ to $C^{k-3,\alpha}$ for $k \geq 3$.
Then: \vspace{1mm}

\noindent $(i)$ The linearization of $P_g$ at $\theta = 0$ is an elliptic operator.
\vspace{1mm}

\noindent $(ii)$ If $\theta$ is sufficiently small and
$P_g(\theta) = 0$, then the metric $\tilde{g} = g + \theta$ is a
smooth Riemannian metric and is
critical for the normalized functional $\tilde{\mathcal{F}}_{\tau}$. \vspace{1mm}

\noindent $(iii)$  Conversely, if $g_1 = g + \theta_1$ is a critical metric in a sufficiently small $C^{k+1, \alpha}$-neighborhood of $g$ ($k \geq 3$), then there exists a $C^{k+2,\alpha}$-diffeomorphism $\phi : M \rightarrow M$ and a constant $c$ such that
\begin{align} \label{thtildef}
e^c \phi^{*}g_1 = g + \tilde{\theta}
\end{align}
with
\begin{align} \label{Pcon}
P_g(\tilde{\theta}) = 0
\end{align}
and
\begin{align}
\label{trcon}
\int tr_g \tilde{\theta}\ dV_g = 0.
\end{align}
\end{theorem}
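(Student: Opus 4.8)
The plan is to handle the three items in order, with the elliptic gauge operator $\Box_{\mathcal{K}_g}$ of Lemma~\ref{kellip} and the diffeomorphism (contracted Bianchi) identity as the main tools. For $(i)$ I would compute the principal symbol $\sigma_\xi$ of the linearization $DP_g|_{0}$. The gradient $\nabla\tmf_\tau$ is a fourth-order operator, and by diffeomorphism invariance its linearized symbol annihilates the pure-gauge directions $\mathrm{image}\,\sigma_\xi(\delta^{*})$, while on the $\sigma_\xi(\delta_g)$-transverse tensors it is an isomorphism. This is exactly where the hypothesis \eqref{badt} enters: the excluded value $\tau=-n/4(n-1)$ is the $\int\sigma_2(A_g)\,dV$ functional, whose conformal degeneracy collapses the trace block of the fourth-order symbol, so excluding it keeps that block nondegenerate. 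The gauge-fixing term \eqref{GFterm} has leading symbol $-\sigma_\xi(\beta^{*}_g)\,\sigma_\xi(\Box_{\mathcal{K}_g})\,\sigma_\xi(\beta_g)$, which by \eqref{BoxLKer} is nonnegative and is an isomorphism precisely on the gauge directions the gradient symbol misses. Splitting a putative symbol-kernel element into its Bianchi-transverse and pure-gauge parts and pairing appropriately then gives injectivity of $\sigma_\xi(DP_g|_0)$, hence ellipticity; this is the fourth-order analogue of the Bianchi-gauged linearized Einstein operator of \cite[Ch.~12]{Besse}.

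For $(ii)$, suppose $P_g(\theta)=0$ with $\theta$ small, write $\tilde{g}=g+\theta$ and $V=\Box_{\mathcal{K}_g}\beta_g\theta$, so that by \eqref{Pdef}, \eqref{GFterm}, and $\tfrac12\mathcal{K}_{\tilde{g}}=-\beta^{*}_{\tilde{g}}$ the equation reads $\nabla\tmf_\tau(\tilde{g})=\gamma(\theta)\,g+\beta^{*}_{\tilde{g}}V$. Since $\beta^{*}_{\tilde{g}}V=-\tfrac12\mathcal{K}_{\tilde{g}}V$ is trace-free with respect to $\tilde{g}$, taking the $\tilde{g}$-trace and integrating, and invoking scale-invariance of $\tmf_\tau$ (so that $\int\mathrm{tr}_{\tilde{g}}\nabla\tmf_\tau\,dV_{\tilde{g}}=0$), forces $\gamma(\theta)\int\mathrm{tr}_{\tilde{g}}g\,dV_{\tilde{g}}=0$, whence $\gamma(\theta)=0$ for $\theta$ small. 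Then $\nabla\tmf_\tau(\tilde{g})=\beta^{*}_{\tilde{g}}V$ is pointwise $\tilde{g}$-trace-free, so $\beta_{\tilde{g}}\nabla\tmf_\tau=\delta_{\tilde{g}}\nabla\tmf_\tau-\tfrac1n d(\mathrm{tr}_{\tilde{g}}\nabla\tmf_\tau)=0$ by the contracted Bianchi identity. Applying $\beta_{\tilde{g}}$ to the equation gives $\beta_{\tilde{g}}\beta^{*}_{\tilde{g}}V=0$, and pairing with $V$ in $L^2(\tilde{g})$ yields $\|\beta^{*}_{\tilde{g}}V\|_{L^2}^2=0$, so $\beta^{*}_{\tilde{g}}V=0$. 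Therefore $\nabla\tmf_\tau(\tilde{g})=0$, which by \eqref{foc2} is exactly the critical-point equation. Smoothness follows by elliptic regularity: $\theta$ lies in the gauge $\beta_g\theta\in\ker\mathcal{K}_g$ and solves $P_g(\theta)=0$, whose linearization is elliptic by $(i)$, so a standard bootstrap promotes $\theta$ from $C^{k+1,\alpha}$ to $C^\infty$.

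For $(iii)$ I would prove a slice theorem. Parametrizing diffeomorphisms by the flow $\phi_X$ of a vector field $X$, consider the gauge map $(X,c)\mapsto\big(\Box_{\mathcal{K}_g}\beta_g\tilde\theta(X,c),\ \int\mathrm{tr}_g\tilde\theta(X,c)\,dV_g\big)$, where $\tilde\theta(X,c)=e^{c}\phi_X^{*}g_1-g$. Its linearization at $(0,0)$ (with $g_1=g$) is, using $\beta_g\mathcal{L}_Xg=\Box_{\mathcal{K}_g}X^\flat$ and $\beta_g(cg)=0$, the map $(X,c)\mapsto(\Box_{\mathcal{K}_g}^2X^\flat,\ cn\,Vol(g))$. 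The second slot is an isomorphism of $\RR$, while the first is self-adjoint elliptic with finite-dimensional kernel the conformal Killing fields, hence surjective onto the orthogonal complement of that kernel. Working modulo conformal Killing fields, the implicit function theorem produces $X$ and $c$ so that $e^{c}\phi^{*}g_1=g+\tilde\theta$ with $\int\mathrm{tr}_g\tilde\theta\,dV_g=0$ and $\beta_g\tilde\theta\in\ker\mathcal{K}_g$, whence $\Box_{\mathcal{K}_g}\beta_g\tilde\theta=0$ and the gauge term of \eqref{Pdef} vanishes. Since criticality is preserved under diffeomorphism and scaling, $\nabla\tmf_\tau(g+\tilde\theta)=0$, so $\gamma(\tilde\theta)=0$ by \eqref{gammadef} and therefore $P_g(\tilde\theta)=0$.

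I expect the main obstacle to be the symbol analysis in $(i)$: one must produce the explicit fourth-order principal symbol of the second variation of $\tmf_\tau$ and verify that the gauge term's symbol exactly complements the kernel of the gradient symbol, which is also where the exclusion \eqref{badt} becomes essential. A secondary difficulty is the correct handling of the conformal Killing kernel in the slice theorem of $(iii)$, which must be quotiented out for the implicit function theorem to apply cleanly. By contrast, step $(ii)$ is comparatively clean once the Bianchi identity and scale-invariance are used to force $\gamma(\theta)=0$ and then $\beta^{*}_{\tilde{g}}V=0$.
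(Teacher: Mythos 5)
Your overall route is the paper's: in $(ii)$, trace against $\tilde{g}$ and use scale-invariance of $\tmf_{\tau}$ to force $\gamma(\theta)=0$, then an integration-by-parts argument exploiting that the gradient of a Riemannian functional is divergence-free to kill the gauge term; in $(iii)$, an implicit-function-theorem slicing; in $(i)$, a symbol computation in which \eqref{badt} saves the trace block. There is, however, one concrete gap in $(ii)$ as written: you apply $\beta_{\tilde{g}}$ to the identity $\nabla\tmf_{\tau}(\tilde{g})=\beta^{*}_{\tilde{g}}V$ and pair with $V$, but at the borderline regularity $k=3$ one only has $\nabla\tmf_{\tau}(\tilde{g})\in C^{\alpha}$ and $V=\Box_{\mathcal{K}_g}\beta_g\theta\in C^{1,\alpha}$, so neither $\beta_{\tilde{g}}\nabla\tmf_{\tau}(\tilde{g})$ nor the divergence-free identity $\delta_{\tilde{g}}\nabla\tmf_{\tau}(\tilde{g})=0$ is classically defined; even the distributional version of the latter, obtained by differentiating $\tmf_{\tau}[\phi_t^{*}\tilde{g}]$, needs variation fields more regular than your $V^{\sharp}$. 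The paper flags precisely this difficulty and resolves it by mollifying: take smooth $\theta_{\epsilon}\to\theta$ in $C^{4,\alpha}$, run the pairing for $\tilde{g}_{\epsilon}=g+\theta_{\epsilon}$ where the identity is exact, and pass to the limit to conclude $\mathcal{K}_{\tilde{g}}[\Box_{\mathcal{K}_g}\beta_g\theta]=0$. Your argument needs this insertion. A second, smaller imprecision is in $(i)$: the claim that the gauge term's symbol is an isomorphism ``precisely on the gauge directions the gradient symbol misses'' is not literally true, because $\sigma_{\xi}(\beta^{*})\omega$ with $\omega$ parallel to $\xi$ contains a component proportional to $g$, on which the gradient symbol does \emph{not} vanish; there are trace/gauge cross-terms, so injectivity must be verified for the full coupled symbol. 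This is exactly what the paper does by computing \eqref{symS} explicitly and contracting twice (with $\xi_i\xi_j$, then the trace, where \eqref{badt} enters); since you concede that producing the explicit symbol is the main work, this is a fixable heuristic rather than a wrong approach.

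Two of your deviations are legitimate and worth noting. In $(iii)$, the paper enforces the exact gauge $\beta_g\tilde{\theta}=0$ by adjoining auxiliary parameters $v\in\mathbb{R}^{\kappa}$ spanning the conformal Killing forms (so that the linearization $Y\mapsto\Box_{\mathcal{K}_g}Y^{\flat}$ plus $\sum a_i\omega_i$ is surjective), then shows $v=0$ a posteriori; you instead impose only $\Box_{\mathcal{K}_g}\beta_g\tilde{\theta}=0$ and use self-adjointness of $\Box_{\mathcal{K}_g}$, so the gauge map automatically lands in the $L^2$-orthocomplement of the conformal Killing forms and the IFT applies directly to $X\mapsto\Box^2_{\mathcal{K}_g}X^{\flat}$. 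That weaker gauge does suffice for \eqref{Pcon} and \eqref{trcon}, since the gauge term of \eqref{Pdef} vanishes once $\Box_{\mathcal{K}_g}\beta_g\tilde{\theta}=0$; but note the paper's stronger conclusion $\beta_g\tilde{\theta}=0$ from Lemma~\ref{BasicSlice} is reused later (Lemma~\ref{H1Lemma} and the stability arguments), and your fourth-order operator costs extra derivatives in the H\"older bookkeeping. Likewise, your smoothness bootstrap in $(ii)$ via the gauged elliptic equation $P_g(\theta)=0$ is a valid alternative to the paper's, which instead bootstraps from $\nabla\tmf_{\tau}(\tilde{g})=0$: first the traced equation gives $R_{\tilde{g}}\in C^{4,\alpha}$, then $Ric_{\tilde{g}}\in C^{4,\alpha}$, then harmonic coordinates and iteration.
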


\begin{remark} {\em When $\tau = \tau_c =  -\frac{n}{4(n-1)}$, then
\begin{align} \label{s2}
\tilde{\mathcal{F}}_{\tau_c}[g] = -2(n-2)^2 Vol(g)^{\frac{4}{n}-1} \int \sigma_2(A_g)\ dV_g.
\end{align}
When $n \neq 4$, a metric $g$ is critical over all
conformal variations which preserve the volume if and only if
\begin{align*}
\sigma_2(A_g) = const.,
\end{align*}
a second order condition in the metric (for an explanation of the notation, see Section \ref{Bachsub} in the Introduction). When $n=4$, the integral in (\ref{s2}) is conformally invariant (hence the gradient is trace-free).  In both cases, this reduction in the order of the Euler equation results in a degeneracy on the symbol level.}
\end{remark}

\begin{remark}{\em
 The choice of the ``gauge-fixing'' term is not unique.  For the linearized Einstein equations, one can choose a gauge-fixing term which
reduces the symbol to that of the rough Laplacian (see \cite{GrahamLee}).  Such a simplification does not seem possible for the linearized gradient of (\ref{normalF2}).}
\end{remark}

\begin{proof}[Proof of Theorem \ref{Pzed}]
We will postpone the proof of ellipticity, as it involves a rather lengthy calculation of the symbol.  To prove part $(ii)$,
suppose
\begin{align} \label{zedP}
0 = P_g(\theta) =  \nabla \tilde{\mathcal{F}}_{\tau} (g + \theta) + \frac{1}{2} \mathcal{K}_{g + \theta}[\Box_{\mathcal{K}_g} \beta_g  \theta]  - \gamma(\theta)\cdot g.
\end{align}
Let $\tilde{g} = g + \theta$.    If we trace (\ref{zedP}) with
respect to $\tilde{g}$ and integrate,
\begin{align} \label{zedP1} \begin{split}
0 &= \int tr_{\tilde{g}} [\nabla \tilde{\mathcal{F}}_{\tau}(\tilde{g})]\ dV_{\tilde{g}} +  \frac{1}{2} \int tr_{\tilde{g}} \big[\mathcal{K}_{\tilde{g}}[ \Box_{\mathcal{K}_g} \beta_g \theta]\big]\ dV_{\tilde{g}} -  \gamma(\theta) \int tr_{\tilde{g}} g\ dV_{\tilde{g}}  \\
&= \int tr_{\tilde{g}} [\nabla \tilde{\mathcal{F}}_{\tau}(\tilde{g})]\ dV_{\tilde{g}}  - \gamma(\theta) \int tr_{\tilde{g}} g\ dV_{\tilde{g}}.
\end{split}
\end{align}
Here we used the fact that $\mathcal{K}[\cdot]$ is trace-free.
Also, since $\tilde{\mathcal{F}}_{\tau}$ is scale-invariant,
\begin{align*}
\tilde{\mathcal{F}}_{\tau}[ \tilde{g}] = \tilde{\mathcal{F}}_{\tau}[ e^r \tilde{g}], \ r \in \mathbb{R}.
\end{align*}
Differentiating this identity at $r = 0$,
\begin{align}  \label{trGr} \begin{split}
0 &= \frac{d}{dr} \tilde{\mathcal{F}}_{\tau}[ e^r \tilde{g}] \big|_{r=0}
= \int \big \langle \nabla \tilde{\mathcal{F}}_{\tau}(\tilde{g}),
\frac{d}{dr} \Big( e^r \tilde{g} \Big)\big|_{r=0} \big \rangle\ dV_{\tilde{g}} \\
&= \int \big \langle \nabla \tilde{\mathcal{F}}_{\tau}(\tilde{g}), \tilde{g}  \big \rangle\ dV_{\tilde{g}}
= \int tr_{\tilde{g}} [\nabla \tilde{\mathcal{F}}_{\tau}(\tilde{g})]\ dV_{\tilde{g}}.
\end{split}
\end{align}
Therefore, by (\ref{zedP1}) we conclude
\begin{align*}
\gamma(\theta) \int tr_{\tilde{g}} g\ dV_{\tilde{g}} = 0.
\end{align*}
Since $g$ is positive-definite,
\begin{align*}
\int tr_{\tilde{g}} g\ dV_{\tilde{g}} > 0,
\end{align*}
we conclude that $\gamma(\theta) = 0$.  Therefore, by the definition of $P$ in (\ref{Pdef}),
\begin{align} \label{zedP2}
0 =  \nabla \tilde{\mathcal{F}}_{\tau} (g + \theta) + \frac{1}{2} \mathcal{K}_{g + \theta}[ \Box_{\mathcal{K}_g} \beta_g \theta].
\end{align}

The next step involves an integration by parts argument, but this presents a difficulty since $\theta \in C^{4, \alpha}$ only implies that
$P_g(\theta)$ is $C^{\alpha}$, and not necessarily differentiable.  To get around this problem we mollify $\theta$; i.e., let $\{ \theta_{\epsilon}\}$ be a family
of smooth tensor fields such that $\theta_{\epsilon} \rightarrow \theta$ in $C^{4,\alpha}$ as $\epsilon \rightarrow 0$, and let $\tilde{g}_{\epsilon} = g + \theta_{\epsilon}$.  From (\ref{zedP2}) and the  continuity of $P$
it follows that
\begin{align} \label{zedPe}
\eta_{\epsilon} =  \nabla \tilde{\mathcal{F}}_{\tau} (g + \theta_{\epsilon}) + \frac{1}{2} \mathcal{K}_{g + \theta_{\epsilon}}[ \Box_{\mathcal{K}_g} \beta_g \theta_{\epsilon}],
\end{align}
where $\eta_{\epsilon} \rightarrow 0$ in $C^{\alpha}$.  Pair both sides of (\ref{zedPe}) with $\mathcal{L}_{\tilde{g_{\epsilon}}}[\Box_{\mathcal{K}_g} \beta_g \theta_{\epsilon}] $ (with respect to the $L^2$-inner product defined by $\tilde{g_{\epsilon}}$), where $\mathcal{L}$ is the
Killing operator defined in (\ref{Ldef}):
\begin{align*}
\langle \mathcal{L}_{\tilde{g}_{\epsilon}}[\Box_{\mathcal{K}_g} \beta_g \theta_{\epsilon}] , \eta_{\epsilon}\rangle_{L^2} &= \big \langle \mathcal{L}_{\tilde{g}_{\epsilon}}[\Box_{\mathcal{K}_g} \beta_g \theta_{\epsilon}] , \nabla \tilde{\mathcal{F}}_{\tau} (\tilde{g}_{\epsilon}) + \frac{1}{2} \mathcal{K}_{\tilde{g}_{\epsilon}}[ \Box_{\mathcal{K}_g} \beta_g \theta_{\epsilon}]  \big \rangle_{L^2} \\
&= \big \langle \mathcal{L}_{\tilde{g}_{\epsilon}}[\Box_{\mathcal{K}_g} \beta_g \theta_{\epsilon}] , \nabla \tilde{\mathcal{F}}_{\tau} (\tilde{g}_{\epsilon}) \big \rangle_{L^2} + \frac{1}{2} \| \mathcal{K}_{\tilde{g}_{\epsilon}} [\Box_{\mathcal{K}_g} \beta_g \theta_{\epsilon}] \|_{L^2}^2.
\end{align*}
Integrating by parts in the first term on the right-hand side, we obtain
\begin{align*}
\big \langle \mathcal{L}_{\tilde{g}_{\epsilon}}[\Box_{\mathcal{K}_g} \beta_g \theta_{\epsilon}] , \nabla \tilde{\mathcal{F}}_{\tau} (\tilde{g}_{\epsilon}) \big \rangle_{L^2} &= - 2 \big \langle \Box_{\mathcal{K}_g} \beta_g \theta_{\epsilon} , \delta_{\tilde{g}_{\epsilon}} \big( \nabla \tilde{\mathcal{F}}_{\tau} (\tilde{g}_{\epsilon}) \big) \big \rangle_{L^2} = 0,
\end{align*}
since $\mathcal{L}^{*} = -2 \delta$ and the gradient of a Riemannian functional is always divergence-free (see \cite{Besse}, Proposition 4.11).
Therefore,
\begin{align*}
\langle \mathcal{L}_{\tilde{g}_{\epsilon}}[\Box_{\mathcal{K}_g} \beta_g \theta_{\epsilon}] , \eta_{\epsilon}\rangle_{L^2} = \frac{1}{2} \| \mathcal{K}_{\tilde{g}_{\epsilon}} [\Box_{\mathcal{K}_g} \beta_g \theta_{\epsilon}] \|_{L^2}^2.
\end{align*}
Letting $\epsilon \rightarrow 0$, the left-hand side converges to zero, while the right-hand side converges to $\mathcal{K}_{\tilde{g}}[\Box_{\mathcal{K}_g} \beta_g \theta ]$, which
consequently vanishes.  By (\ref{zedP2}) we conclude that
\begin{align}
\label{fteqn3}
\nabla \tilde{\mathcal{F}}_{\tau} (\tilde{g}) = 0
\end{align}
as claimed.

 Next, taking a trace of \eqref{fteqn3}, yields an equation of the form
\begin{align}
\label{reqn}
\Delta_{\tilde{g}} R_{\tilde{g}} = Rm_{\tilde{g} }* Rm_{\tilde{g}} + \lambda \cdot \tilde{g}.
\end{align}
Since $\tilde{g}$ is assumed to be only $C^{4,\alpha}$, around any point $p \in M$,
there is a coordinate system $\{x^i\}$ so that the component functions $\tilde{g}_{ij}$ are
$C^{4,\alpha}$. Equation \eqref{reqn} is easily seen to be an elliptic
equation, and by elliptic regularity we conclude that $R_{\tilde{g}} \in C^{4,\alpha}$.
The equation \eqref{fteqn3} then implies that $\Delta_{\tilde{g}} Ric_{\tilde{g}} \in
C^{2, \alpha}$, which implies that $Ric_{\tilde{g}} \in C^{4,\alpha}$.
Since $\tilde{g} \in C^{4, \alpha}$, there exists a harmonic coordinate
system $\{y^i\}$ around $p$ such that the equation
\begin{align}
\frac{1}{2}\tilde{g}^{ij}\partial^{2}_{ij} \tilde{g}_{kl}+Q_{kl}(\partial \tilde{g}, \tilde{g})
=-Ric_{kl}(\tilde{g}),\label{eqsrt12}
\end{align}
where $Q(\partial \tilde{g},\tilde{g})$ is an expression that is quadratic in
$\partial \tilde{g}$,
polynomial in $\tilde{g}$ and has  $\sqrt{|\tilde{g}|}$ in its denominator
\cite{Petersen}. From this we conclude that $\tilde{g}_{ij} \in C^{5, \alpha}$.
A bootstrap argument shows that $\tilde{g}_{ij} \in C^{\ell, \alpha}$
for any $\ell > 0$.

%%%%%%%%%%%%%%%%%%%%%%%%%%%%%%%

Part $(iii)$ follows from a result, known as ``Ebin-Palais slicing''
\cite{Ebin, Palais}.  Our treatment is based on an argument of
Cheeger-Tian \cite[Theorem 3.1]{CheegerTian1}, see also \cite{FischerMarsden}.

\begin{lemma} \label{BasicSlice}  For each metric $g_1$ in a sufficiently small $C^{\ell+1,\alpha}$-neighborhood of $g$ ($\ell \geq 1$),
there is a $C^{\ell+2,\alpha}$-diffeomorphism $\phi : M \rightarrow M$ and a constant $c$ such that
\begin{align} \label{thtildef1}
\tilde{\theta} \equiv e^c \phi^{*}g_1 - g
\end{align}
satisfies
\begin{align} \label{divcon1}
\beta_g \tilde{\theta} = 0,
\end{align}
where $\beta_g$ is defined in \eqref{betadef}, and
\begin{align}
\label{trcon1}
\int tr_g \tilde{\theta}\ dV_g = 0.
\end{align}
Moreover, we have the estimate
\begin{align} \label{tilsmall1}
\| \tilde{\theta} \|_{C^{\ell+1,\alpha}} \leq C \| \theta_1 \|_{C^{\ell+1,\alpha}},
\end{align}
where $C = C(g)$.
\end{lemma}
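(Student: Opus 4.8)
The plan is to construct $\phi$ and $c$ by an implicit function theorem argument, using the gauge-fixing operator $\Box_{\mathcal{K}_g}$ to control the trace-free divergence condition. First I would parametrize the gauge freedom near the identity by a one-form $\omega$ (identified with a vector field via $g$), which generates a diffeomorphism $\phi_\omega$ through its time-one flow, together with a constant $c \in \RR$ for the scaling. Since the conditions to be solved are $\beta_g \tilde{\theta} = 0$, valued in one-forms, and the single scalar condition $\int tr_g \tilde{\theta}\, dV_g = 0$, I define
\[
\mathcal{N}(\omega, c; g_1) = \Big( \beta_g\big( e^c \phi_\omega^{*} g_1 - g \big),\ \int tr_g\big( e^c \phi_\omega^{*} g_1 - g \big)\, dV_g \Big),
\]
and seek to solve $\mathcal{N} = 0$ for $(\omega, c)$ as a function of $g_1$ near $g$, noting that $\mathcal{N}(0, 0; g) = (0,0)$.

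The heart of the argument is that the partial derivative of $\mathcal{N}$ in $(\omega, c)$ at the base point $(0, 0; g)$ is an isomorphism. Using $\frac{d}{dt}\phi_{t\omega}^{*} g \big|_{t=0} = \mathcal{L}_g \omega = -2 \delta^{*}\omega$ and $\frac{d}{dc} e^c g\big|_{c=0} = g$, this derivative is $(\omega, c) \mapsto \big( \beta_g(\mathcal{L}_g \omega + c\, g),\ \int tr_g(\mathcal{L}_g \omega + c\, g)\, dV_g \big)$. I would then simplify: since $\beta_g(f g) = 0$ for every function $f$, the conformal term drops from the first slot, and decomposing $\mathcal{L}_g \omega$ into its pure-trace and trace-free parts gives $\beta_g \mathcal{L}_g \omega = \beta_g \mathcal{K}_g \omega = \Box_{\mathcal{K}_g}\omega$; meanwhile $\int tr_g \mathcal{L}_g \omega\, dV_g = 2\int \delta_g \omega\, dV_g = 0$, so the second slot collapses to $n\, Vol(g)\, c$. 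Thus the linearization is block diagonal, $(\omega, c) \mapsto \big( \Box_{\mathcal{K}_g}\omega,\ n\, Vol(g)\, c \big)$.

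To invert this I use that $\Box_{\mathcal{K}_g} = -\frac{1}{2}\mathcal{K}_g^{*}\mathcal{K}_g$ is self-adjoint and elliptic (Lemma \ref{kellip}), hence Fredholm on a compact manifold, with kernel exactly the finite-dimensional space $\mathcal{C}$ of conformal Killing fields by \eqref{BoxLKer}. The observation that reconciles the apparent failure of surjectivity is that $\beta_g h$ is automatically $L^2$-orthogonal to $\mathcal{C}$ for every $h$: indeed $\langle \omega_0, \beta_g h \rangle = \langle \beta_g^{*}\omega_0, h \rangle = -\frac{1}{2}\langle \mathcal{K}_g \omega_0, h \rangle = 0$ whenever $\mathcal{K}_g \omega_0 = 0$. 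Hence the equation $\beta_g \tilde{\theta} = 0$ genuinely lives in $\mathcal{C}^{\perp}$, the directions in $\mathcal{C}$ are infinitesimally ineffective on the slice condition, and restricting the domain to $\omega \in \mathcal{C}^{\perp}$ makes $\Box_{\mathcal{K}_g} : \mathcal{C}^{\perp} \to \mathcal{C}^{\perp}$ an isomorphism in the appropriate H\"older spaces. Together with the scalar isomorphism $c \mapsto n\, Vol(g)\, c$, the full linearization is an isomorphism, so the implicit function theorem produces a unique small $(\omega, c)$ solving $\mathcal{N} = 0$ for each $g_1$ near $g$, depending continuously on $g_1$. The estimate \eqref{tilsmall1} follows from the Lipschitz dependence of the solution, since $g_1 = g$ returns $(\omega, c) = (0,0)$ and $\tilde{\theta} = 0$; elliptic regularity for the second-order operator $\Box_{\mathcal{K}_g}$ accounts for the one-derivative gain that produces $\phi \in C^{\ell+2, \alpha}$ from $g_1 \in C^{\ell+1,\alpha}$.

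The main obstacle is the well-known loss of derivatives in the action of the diffeomorphism group: the map $(\omega, g_1) \mapsto \phi_\omega^{*} g_1$ is not smooth between fixed H\"older spaces, because differentiating the pullback in $\omega$ differentiates the components of $g_1$ and so costs a derivative. Consequently $\mathcal{N}$ is only continuously differentiable as a map into a space one degree rougher than its domain, and the Banach-space implicit function theorem cannot be applied naively. I would handle this exactly as in the Ebin--Palais slice theorem and its refinement by Cheeger--Tian \cite[Theorem 3.1]{CheegerTian1}: one chooses the function spaces so that $\mathcal{N}$ is genuinely $C^1$ between the relevant pair, checks that the partial derivative in $(\omega, c)$ computed above is a bounded isomorphism between the correctly indexed spaces, and invokes the implicit function theorem in that form, recovering $\phi$ by integrating $\omega$. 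The careful bookkeeping of regularity indices---ensuring $\omega$ is two orders smoother than $\beta_g \tilde{\theta}$ while $\tilde{\theta}$ inherits the regularity of $\theta_1$---is the delicate point, and is precisely where the gain to $C^{\ell+2,\alpha}$ for $\phi$ enters.
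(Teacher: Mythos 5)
Your proposal is correct and follows essentially the same route as the paper: an implicit function theorem applied to the gauge/scaling map, with the same block-diagonal linearization $(\omega, c) \mapsto \big( \Box_{\mathcal{K}_g}\omega,\ n\, Vol(g)\, c \big)$ and the same key orthogonality $\beta_g^{*}\omega_0 = -\tfrac{1}{2}\mathcal{K}_g \omega_0 = 0$ used to neutralize the conformal Killing kernel, together with the same Schauder bookkeeping giving $\phi \in C^{\ell+2,\alpha}$. The only cosmetic difference is that you restrict the domain and target to the $L^2$-complement of the conformal Killing forms and invert $\Box_{\mathcal{K}_g}$ there by Fredholm theory, whereas the paper augments the map with auxiliary variables $v \in \mathbb{R}^{\kappa}$ spanning that kernel and shows $v = 0$ a posteriori by the identical pairing argument; your explicit flagging of the loss-of-derivatives issue (which the paper passes over as ``clear,'' resting on the same Ebin--Palais and Cheeger--Tian references) is, if anything, the more careful treatment.
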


\begin{proof} Let $\{ \omega_1, \dots , \omega_{\kappa} \}$ denote a basis of the space of conformal Killing forms with respect to $g$.
Consider the map
\begin{align}
\mathcal{N} : C^{\ell +2,\alpha}(TM) \times \mathbb{R} \times \mathbb{R}^{\kappa}  \times C^{\ell +1,\alpha}(S^2(T^{*}M)) \rightarrow C^{\ell,\alpha}(T^{*}M) \times \mathbb{R}
\end{align}
 given by
\begin{align} \label{Ndef} \begin{split}
&\mathcal{N}(X,r,v,\theta) = \mathcal{N}_{\theta}(X,r,v) \\
&= \big( \beta_g \big[ \phi_{X,1}^{*}(g + \theta) \big]  + \sum_i v_i \omega_i, e^r \int tr_g  \big[ \phi_{X,1}^{*}(g + \theta) \big]\ dV_g - n Vol(g) \big),
\end{split}
\end{align}
where $\phi_{X,1}$ denotes the diffeomorphism obtained by following the flow
generated by the vector field $X$ for unit time.
It is clear that $\mathcal{N}$ is a continuously differentiable mapping,
and linearizing in $(X,r,v)$ at $(X,r,v,\theta) = (0,0,0,0)$, we find
\begin{align*}
\mathcal{N}'_0 (Y,s,a) &= \frac{d}{d\epsilon} \big( \beta_g \big[ \phi_{\epsilon Y,1}^{*}(g) \big] + \sum_i (\epsilon a_i) \omega_i,e^{\epsilon s } \int tr_g  \big[ \phi_{\epsilon Y,1}^{*}(g) \big]\ dV_g - n Vol(g) \big) \Big|_{\epsilon = 0} \\
&= \big( \beta_g [\mathcal{L}_g Y^{\flat}] + \sum_i a_i \omega_i, n Vol(g) \cdot s \big) \\
&= \big( \Box_{\mathcal{K}_g} Y^{\flat} + \sum_i a_i \omega_i, n Vol(g) \cdot s \big),
\end{align*}
where $Y^{\flat}$ is the dual one-form to $Y$.

The adjoint map $(\mathcal{N}'_0)^{*} : C^{m + 2,\alpha}(T^{*}M) \times \mathbb{R} \rightarrow C^{m,\alpha}(TM) \times \mathbb{R} \times \mathbb{R}^{\kappa}$ is given by
\begin{align*}
(\mathcal{N}'_0)^{*}(\eta, s) = \big( (\Box_{\mathcal{K}_g} \eta)^{\sharp}, n Vol(g) s, \int \langle \eta, \omega_i \rangle\ dV \big),
\end{align*}
where $(\Box_{\mathcal{K}_g} \eta)^{\sharp}$ is the vector field dual to $\Box_{\mathcal{K}_g} \eta$. Setting $(\mathcal{N}'_0)^{*}(\eta, s) = 0$, we get $s=0$ and
\begin{align*}
\Box_{\mathcal{K}_g} \eta &= 0,  \\
\int \langle \eta, \omega_i \rangle\ dV &= 0, \ 1 \leq i \leq \kappa.
\end{align*}
The first equation implies that $\eta$ is a conformal Killing field, while the second implies that $\eta$ is orthogonal (in $L^2$) to the space
of conformal Killing forms.  It follows that $\eta = 0$, so the map $\mathcal{N}'_0$ is surjective.  By
a standard implicit function theorem argument, given $\theta_1 \in C^{\ell+1,\alpha}(S^2(T^{*}M))$ small enough we can solve the equation $\mathcal{N}_{\theta_1} = (0,0)$; i.e.,
there is a vector field $X \in C^{\ell+2,\alpha}(TM)$, a $v \in \mathbb{R}^{\kappa}$, and a real number $r$ such that
\begin{align} \label{zedN01}
\beta_{g} [ \phi^{*}g_1] + \sum_i v_i \omega_i = 0, \quad e^r \int tr_g  \big[ \phi_{X,1}^{*}(g_1) \big]\ dV_g = n Vol(g),
\end{align}
where $\phi = \phi_{X,1}$.  In addition, if we let
\begin{align*}
 \tilde{\theta} = e^r \phi^{*}g_1 - g,
\end{align*}
then $\tilde{\theta}$ satisfies (\ref{tilsmall1}).

For the first equation in (\ref{zedN01}), fix $1 \leq \ell \leq \kappa$ and pair both sides (in $L^2$) with $\omega_{\ell}$; then $v_{\ell} = 0$ since $\beta^{*}_g \omega_{\ell} = 0$.  Therefore, $\beta_{g} [ \phi^{*}g_1] = 0$.
Also,
\begin{align*}
\beta_g \tilde{\theta} = \beta_g[ e^r \phi^{*}g_1 - g] = 0,
\end{align*}
which proves (\ref{divcon1}). The trace condition (\ref{trcon1}) follows from the second condition in (\ref{zedN01}):
\begin{align*}
\int tr_g \tilde{\theta}\ dV_g &= \int tr_g \big[ e^r \phi^{*}g_1 - g \big]\ dV_g
=  \int \big\{ e^r \cdot tr_g [ \phi^{*} g_1 ] - n \big\}\ dV_g \\
&= e^r \int tr_g [ \phi^{*}g_1] \ dV_g - n Vol(g) = 0.
\end{align*}

\end{proof}

To prove the theorem, suppose $g_1$ is a critical metric which is $C^{k+1,\alpha}$-close to $g$, for some $k \geq 3$:
\begin{align} \label{g1eqn}
\nabla \tilde{\mathcal{F}}_{\tau} (g_1) = 0.
\end{align}
Applying the lemma, we obtain a $C^{k+2,\alpha}$-diffeomorphism $\phi : M \rightarrow M$ and a constant $c$ such that
$\tilde{\theta} = e^c \phi^{*}g_1 - g$ satisfies (\ref{trcon}).  Also, since $\tilde{\theta}$ satisfies (\ref{divcon1}),
\begin{align*}
P_g(\tilde{\theta}) &= \nabla \tilde{\mathcal{F}}_{\tau} (g + \tilde{\theta}) + \frac{1}{2} \mathcal{K}_{g + \tilde{\theta}} [ \Box_{\mathcal{K}_g} \beta_g \tilde{\theta}]
= \nabla \tilde{\mathcal{F}}_{\tau} (\phi^{*}g_1)
= \phi^{*}\{ \nabla \tilde{\mathcal{F}}_{\tau} (g_1)   \}
= 0,
\end{align*}
verifying (\ref{Pcon}).

To verify ellipticity, let $S_g : S^2(T^{*}M) \rightarrow S^2(T^{*}M)$ denote the linearization of $P_g$ at $\theta = 0$:
\begin{align} \label{Sdef}
S_g h = \frac{d}{ds} P_g (g + sh) \big|_{s=0}
= (\nabla \tilde{\mathcal{F}}_{\tau})_g'(h)
+ \frac{1}{2} \mathcal{K}_g [ \Box_{\mathcal{K}_g} \beta_g h].
\end{align}
Note that
\begin{align*}
\gamma'(0) = \frac{d}{ds} \gamma(sh) \big|_{s=0} = 0
\end{align*}
at a critical metric, since linearizing the definition (and using $\nabla \tmf_{\tau}[g] = 0$) gives
\begin{align*}
\gamma'(0) = \frac{1}{n}Vol(g)^{-1} \int tr_g [(\tmf_{\tau})'h]\ dV,
\end{align*}
which can easily be seen to vanish by linearizing (\ref{trGr}).

By the formulas of Section \ref{LocalSec},
\begin{align} \label{prinF}
(\nabla \tilde{\mathcal{F}}_{\tau})_{ij} = - \Delta R_{ij} + (1 + 2\tau) \nabla_i \nabla_j R - (\frac{1}{2} + 2\tau)( \Delta R) g_{ij} + \cdots,
\end{align}
where ``$\cdots$'' denotes terms which involve at most two derivatives of the metric.  Linearizing this expression,
\begin{align*}
(\nabla \tilde{\mathcal{F}}_{\tau})'h_{ij} = - \Delta R_{ij}' + (1 + 2\tau) \nabla_i \nabla_j R' - (\frac{1}{2} + 2\tau)( \Delta R') g_{ij} + \cdots,
\end{align*}
where ``$\cdots$'' now indicates terms which involve at most two derivatives of $h$.  By standard formulas for the linearization of the
Ricci and scalar curvatures,
\begin{align*}
R_{ij}' &= -\frac{1}{2}\big[ \Delta h_{ij} - \nabla_i \delta_j h - \nabla_j \delta_j h + \nabla_i \nabla_j (tr\ h) \big] + \cdots, \\
R' &= -\Delta( tr\ h) + \delta^2 h + \cdots.
\end{align*}
Consequently,
\begin{align} \label{linF}
\begin{split}
(\nabla \tilde{\mathcal{F}}_{\tau})_g'h_{ij} &= \frac{1}{2} \Delta^2 h_{ij} - \frac{1}{2} \Delta \big[ \nabla_i \delta_j h + \nabla_j \delta_i h \big]  - (2\tau + \frac{1}{2}) \nabla_i \nabla_j (\Delta tr\ h) \\
 & \hskip.25in + (2\tau+1) \nabla_i \nabla_j (\delta^2 h) + (2\tau + \frac{1}{2}) \big[\Delta^2 (tr\ h) -\Delta (\delta^2 h)\big] g_{ij} + \cdots.
\end{split}
\end{align}
Also, a simple calculation gives
\begin{align*}
\frac{1}{2} \mathcal{K}_g [ \Box_{\mathcal{K}_g} \beta_g h]_{ij} &= \frac{1}{2} \Delta \big[ \nabla_i \delta_j h + \nabla_j \delta_i h \big]
 - \frac{2(n-1)}{n^2}\nabla_i \nabla_j (\Delta tr\ h) + \frac{n-2}{n} \nabla_i \nabla_j (\delta^2 h) \\
& \hskip.5in  + \frac{2(n-1)}{n^3} \Delta^2 (tr\ h) g_{ij} - \frac{2(n-1)}{n^2}\Delta( \delta^2 h) g_{ij} + \cdots.
\end{align*}
Therefore,
\begin{align} \label{prinS}
\begin{split}
Sh_{ij} &= \frac{1}{2} \Delta^2 h_{ij}  - \Big[ 2\tau + \frac{n^2 + 4n - 4}{2n^2}\Big] \nabla_i \nabla_j (\Delta tr\ h) + 2\big[ \tau+ \frac{n-1}{n}\big] \nabla_i \nabla_j (\delta^2 h) \\
 & \hskip.25in  + \Big[ 2\tau + \frac{n^3 + 4n - 4}{2n^3} \Big]\Delta^2 (tr\ h)g_{ij} - \Big[ 2\tau + \frac{n^2 + 4n - 4}{2n^2}\Big] \Delta (\delta^2 h) g_{ij} + \cdots
\end{split}
\end{align}
This implies the symbol $(\sigma_{\xi}S) h$ is given by
\begin{align} \label{symS}
\begin{split}
(\sigma_{\xi}S) h_{ij} &= \frac{1}{2} |\xi|^4 h_{ij} - \Big[ 2\tau + \frac{n^2 + 4n - 4}{2n^2}\Big] \xi_i \xi_j |\xi|^2 (tr\ h) + 2\big[ \tau+ \frac{n-1}{n}\big] \xi_i \xi_j h_{k\ell} \xi_k \xi_{\ell} \\
& \hskip.25in  + \Big[ 2\tau + \frac{n^3 + 4n - 4}{2n^3} \Big]  |\xi|^4 (tr\ h) g_{ij} - \Big[ 2\tau + \frac{n^2 + 4n - 4}{2n^2}\Big]  |\xi|^2 h_{k\ell} \xi_k \xi_{\ell} g_{ij}.
 \end{split}
 \end{align}
To see that the symbol is non-degenerate, suppose $(\sigma_{\xi}S) h = 0$ and $\xi \neq 0$.  Then
\begin{align} \label{symzed}
0 = \frac{1}{|\xi|^2} (\sigma_{\xi}S)h_{ij} \xi_i \xi_j = \frac{2(n^2-1)}{n^2}\Big[ h_{k\ell} \xi_k \xi_{\ell} - \frac{1}{n}|\xi|^2(tr\ h) \Big]|\xi|^2,
\end{align}
hence
\begin{align} \label{trsig}
h_{k\ell} \xi_k \xi_{\ell} = \frac{1}{n}|\xi|^2 (tr\ h).
\end{align}
Also, taking the trace of (\ref{symS}),
\begin{align*}
0 = g^{ij} (\sigma_{\xi}S)h_{ij}  = &\big[ 2\tau(n-1) + \frac{n}{2} \big] |\xi|^4 (tr\ h) \\
&- \big[ 2\tau(n-1) + \frac{n}{2}\big]|\xi|^2 h_{k\ell}\xi_k \xi_{\ell}.
\end{align*}
In view of (\ref{trsig}), this implies
\begin{align*}
0 = \frac{2(n-1)^2}{n} \big[ \tau + \frac{n}{4(n-1)} \big] |\xi|^4 (tr\ h).
\end{align*}
Since $\tau \neq -n/4(n-1)$, we conclude that $tr\ h = 0$.  As a consequence of this and (\ref{symzed}), the symbol reduces to
\begin{align*}
0 = (\sigma_{\xi}S)h_{ij} = \frac{1}{2} |\xi|^4 h_{ij},
\end{align*}
so that $h_{ij} = 0$.  It follows that the symbol is injective and $S$ is elliptic.
\end{proof}

%%%%%%%%%%%%%%%%%%%%%%%%%%%%%%%%%%%%%%%%%%%%%%%%%%%%%%%%%%%%%%%%%%%%%%%%%%%%%%%%%%%%%%%%%%%%%%%%%%%%%%%%%%%%%%%%%%%%%%%%%

\subsection{The Bach tensor}
\label{BachSec}
The Bach tensor in dimension four was introduced in Section~\ref{Bachsub}.
In this section, we study the local properties of the moduli
space of Bach-flat metrics, in analogy with Section \ref{Moduli} above.
Suppose $g$ is a Bach-flat metric, and let $S^2_0(T^{*}M)$ denote the
space of trace-free (with respect to $g$) symmetric $(0,2)$-tensor fields.
Consider
\begin{align} \label{PBdef}
P^{B}_g \theta = B(g+\theta) + \mathcal{K}_{g + \theta}(\Box_{\mathcal{K}_g} \beta_g \theta) - \frac{1}{4} \tr \big\{ B(g+\theta) + \mathcal{K}_{g + \theta}(\Box_{\mathcal{K}_g} \beta_g \theta)\big\}g.
\end{align}
The $g$-term ensures that $P^{B}_g : S^2_0(T^{*}M) \rightarrow S^2_0(T^{*}M)$.

\begin{theorem}  \label{PBzed}  Assume $(M,g)$ is Bach-flat,
and consider $P_g^B$ as a mapping from $C^{k+1, \alpha}$ to $C^{k-3,\alpha}$ for $k \geq 3$.
\vspace{1mm}

\noindent $(i)$ The linearization of $P^B_g$ at $\theta = 0$ is an elliptic operator.
\vspace{1mm}

\noindent $(ii)$ If $\theta$ is sufficiently small and $P^{B}_g(\theta) = 0$,
then for each $p \in M$, there exists a locally defined function
$u : U \rightarrow \RR$, where $U$ is a neighborhood of $p$, such
that the metric $e^u ( g + \theta)$ is a smooth Riemannian metric and is Bach-flat.
\vspace{1mm}

\noindent $(iii)$  Conversely, if $g_1 = g + \theta_1$ is a Bach-flat metric in a sufficiently small $C^{k+1,\alpha}$-neighborhood of $g$ ($k \geq 3$), then there exists a $C^{k+2,\alpha}$-diffeomorphism $\phi : M \rightarrow M$ and a function $u \in C^{k+1,\alpha}(M)$
such that
\begin{align} \label{thtildef2}
e^u \phi^{*}g_1 = g + \tilde{\theta}
\end{align}
with $\tilde{\theta} \in S^2_0(T^{*}M)$ and
\begin{align} \label{PBcon}
P^B_g(\tilde{\theta}) = 0.
\end{align}
\end{theorem}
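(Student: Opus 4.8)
The plan is to prove Theorem \ref{PBzed} in close analogy with Theorem \ref{Pzed}, handling the three parts separately. The main new feature is the extra conformal invariance of the Bach tensor, which is why the gauge-fixing operator $P^B_g$ is defined on trace-free tensors $S^2_0(T^*M)$ (pointwise trace-free, not just mean-value-free) and why the conclusions in $(ii)$ and $(iii)$ are only up to a conformal factor $e^u$ rather than a global scaling $e^c$.

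For part $(i)$, I would linearize $P^B_g$ at $\theta = 0$ and compute the leading symbol. The linearized Bach tensor $B'$ is a fourth-order operator whose principal part, restricted to trace-free tensors, should match the $\tau = -1/3$, $n=4$ specialization of the symbol computed in \eqref{symS} (up to the degeneracies coming from conformal and diffeomorphism invariance); the gauge-fixing term $\mathcal{K}_{g+\theta}(\Box_{\mathcal{K}_g}\beta_g\theta)$ is designed precisely to cancel the degeneracy in the diffeomorphism directions, exactly as in the proof of ellipticity for $S_g$. Since the trace-free projection removes the conformal degeneracy, I expect injectivity of the symbol on $S^2_0(T^*M)$ to follow from a computation parallel to \eqref{symzed}--\eqref{trsig}, now with $tr\,h = 0$ imposed from the start.

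For part $(ii)$, the argument should mirror the integration-by-parts/mollification scheme used for Theorem \ref{Pzed}$(ii)$. Starting from $P^B_g(\theta) = 0$ with $\tilde{g} = g+\theta$, one first wants to remove the trace-correction term; here, since the Bach tensor is trace-free and $\mathcal{K}[\cdot]$ is trace-free, the trace of $P^B_g(\theta)$ automatically vanishes, so $0 = B(\tilde{g}) + \mathcal{K}_{\tilde g}(\Box_{\mathcal{K}_g}\beta_g\theta)$. Then I would mollify $\theta$, pair the equation with $\mathcal{L}_{\tilde g}[\Box_{\mathcal{K}_g}\beta_g\theta_\epsilon]$, integrate by parts using $\mathcal{L}^* = -2\delta$ together with the fact that the Bach tensor is divergence-free, and conclude $\mathcal{K}_{\tilde g}[\Box_{\mathcal{K}_g}\beta_g\theta] = 0$, hence $B(\tilde g) = 0$. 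The conformal factor $e^u$ in the statement of $(ii)$ reflects that $B$ is only conformally invariant, so the relevant Euler--Lagrange equation for $\mathcal{W}$ is solved by the conformal class of $\tilde g$; the local function $u$ arises because one can only conformally normalize locally (e.g. to a representative with constant scalar curvature on $U$), and regularity of $\tilde g$ then follows by an elliptic bootstrap as in \eqref{reqn}--\eqref{eqsrt12}.

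For part $(iii)$, I would invoke the Ebin--Palais slicing result, but in the conformally-modified form appropriate to the Bach setting: given a Bach-flat $g_1$ close to $g$, I seek a diffeomorphism $\phi$ \emph{and} a conformal factor $e^u$ so that $\tilde\theta = e^u\phi^*g_1 - g$ is trace-free and divergence-free ($\beta_g\tilde\theta = 0$). This requires an implicit-function-theorem argument analogous to Lemma \ref{BasicSlice}, but with the single scaling parameter $r$ replaced by a full conformal factor $u$, so the map $\mathcal{N}$ must be augmented to impose $tr_g\tilde\theta = 0$ pointwise rather than just in mean. The surjectivity of the linearized slicing map, whose adjoint kernel consists of conformal Killing fields, should go through as before. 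Once $\beta_g\tilde\theta = 0$ and $\tilde\theta$ is trace-free, conformal invariance of $B$ gives $B(g+\tilde\theta) = B(e^u\phi^*g_1) = e^{-2u}\phi^*B(g_1) = 0$, and since the gauge term vanishes when $\beta_g\tilde\theta = 0$, we get $P^B_g(\tilde\theta) = 0$. I expect the main obstacle to be part $(i)$: verifying that the symbol of $B' + \mathcal{K}_g\Box_{\mathcal{K}_g}\beta_g$ is genuinely injective on $S^2_0(T^*M)$ requires a careful symbol computation for the linearized Bach tensor in dimension four, reconciling it with the degenerate $\tau=-1/3$ case of \eqref{symS}, and confirming that restricting to the pointwise-trace-free subspace exactly compensates for the conformal degeneracy that otherwise obstructs ellipticity.
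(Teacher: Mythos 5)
Your proposal follows essentially the same route as the paper's proof in all three parts: ellipticity by specializing the symbol computation of Theorem \ref{Pzed} to $n=4$, $\tau = -1/3$ with the pointwise trace-free restriction absorbing the conformal degeneracy; the mollified integration-by-parts argument using $\mathcal{L}^{*} = -2\delta$ and $\delta B = 0$ for part $(ii)$, together with a local constant-scalar-curvature conformal normalization (the paper solves the negative Yamabe problem locally with Dirichlet data) to get an elliptic system $\Delta Ric = Rm * Rm$ for the regularity; and the conformally-augmented slicing map, with the scaling constant $r$ replaced by a function $u$ and the mean-trace condition replaced by the pointwise condition $tr_g\,\tilde{\theta} = 0$, for part $(iii)$. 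Two cosmetic points: in $(ii)$ the removal of the trace-correction term should be done by tracing the equation with respect to $\tilde{g} = g + \theta$ (since $B(\tilde{g})$ and $\mathcal{K}_{\tilde{g}}[\cdot]$ are trace-free with respect to $\tilde{g}$, not $g$) and using $tr_{\tilde{g}}\, g > 0$, and the conformal weight in the paper's convention is $B(e^u g) = e^{-u} B(g)$ rather than $e^{-2u}$ --- neither affects the argument.
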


\begin{proof}  We will denote the linearized operator by $S_g^B$.
The proof of ellipticity of $S_g^B$
is essentially the same as the proof in Theorem \ref{Pzed}, since the
Bach tensor is a multiple of the Euler-Lagrange equations of $\mathcal{F}_{-1/3}$.

For part $(ii)$, assume $P^B_g (\theta) = 0$, and let $\tilde{g} = g + \theta$.  Then
\begin{align} \label{Pzero}
0 = B(\tilde{g}) + \ck_{\tilde{g}}(\Box_{\mathcal{K}_g} \beta_g \theta) - \frac{1}{4} \tr \big\{ B(\tilde{g}) + \ck_{\tilde{g}}(\Box_{\mathcal{K}_g} \beta_g \theta)\big\}g.
\end{align}
Taking the trace with respect to $\tilde{g}$, we get
\begin{align} \label{tiltrace}
 0 = - \frac{1}{4} \mbox{tr}_{\tilde{g}} \big\{ B(\tilde{g}) + \ck_{\tilde{g}}(\Box_{\mathcal{K}_g} \beta_g \theta)\big\}( \mbox{tr}_{\tilde{g}}\ g).
\end{align}
Since $\tilde{g}$ is a small perturbation of $g$, the trace of $g$ with respect to $\tilde{g}$ is
non-vanishing, and it follows that
\begin{align*}
\mbox{tr}_{\tilde{g}} \big\{ B(\tilde{g}) + \ck_{\tilde{g}}(\Box_{\mathcal{K}_g} \beta_g \theta)\big\} = 0.
\end{align*}
Substituting this into (\ref{Pzero}) gives
\begin{align} \label{BnK}
0 = B(\tilde{g}) + \mathcal{K}_{\tilde{g}}(\Box_{\mathcal{K}_g} \beta_g \theta).
\end{align}
Pairing both sides of (\ref{BnK}) with $B(\tilde{g})$ and integrating gives
\begin{align} \label{Bzero}
 0 = \int |B(\tilde{g})|_{\tilde{g}}^2\ dV_{\tilde{g}} + \int \langle B(\tilde{g}), \ck_{\tilde{g}}(\Box_{\mathcal{K}_g} \beta_g \theta) \rangle\ dV_{\tilde{g}}.
\end{align}
(Note that, as in the proof of Theorem \ref{Pzed}, we should first mollify $\theta$, then use a limiting argument to show that the Bach tensor vanishes, but as the argument
is identical we will omit the details.) Since
\begin{align*}
\big( \mathcal{K} \big|_{S^2_0(T^{*}M)} \big)^{*} = -2 \delta,
\end{align*}
it follows that
\begin{align*}
\int \langle B(\tilde{g}),  \ck_{\tilde{g}}(\Box_{\mathcal{K}_g} \beta_g \theta) \rangle\ dV_{\tilde{g}} &= \int \langle (\ck_{\tilde{g}})^{\ast}B(\tilde{g}), \ck_{\tilde{g}}(\Box_{\mathcal{K}_g} \beta_g \theta) \rangle\
dV_{\tilde{g}} \\
&= \int \langle -2 \delta_{\tilde{g}} B(\tilde{g}), \ck_{\tilde{g}}(\Box_{\mathcal{K}_g} \beta_g \theta) \rangle\ dV_{\tilde{g}} = 0,
\end{align*}
since the Bach tensor is divergence-free.
It follows from (\ref{Bzero}) that $B(\tilde{g})= 0$.

 To prove the local regularity in part $(ii)$, one can locally solve
the negative Yamabe problem with Dirichlet boundary data, and the
Bach-flat equation for the conformal metric becomes
\begin{align}
\Delta Ric = Rc * Rm,
\end{align}
and higher regularity follows using harmonic coordinates and
a bootstrap argument as in part $(ii)$ of Theorem~\ref{Pzed};
this was also proved in \cite[Theorem~6.4]{TV2}.

%%%%%%%%%%%%%%%%%%%%%%%%%%%%%%%%%%%%%%%%%%%%%%%%%%%%%%%%%%%%%%%%%%%%%%%%%%%%%%%%%%%%%%%%%%%%%

To prove part $(iii)$, we use a slight variant of the method used to prove part $(iii)$ of Theorem \ref{Pzed}.  Suppose $g_1$ is a Bach-flat metric
 in a $C^{k+1,\alpha}$-neighborhood of $g$:
\begin{align} \label{g2eqn}
B (g_1) = 0.
\end{align}
Let $\{ \omega_1, \dots , \omega_{\kappa} \}$ denote a basis of the space of conformal Killing forms with respect to $g$, and
consider the map
\begin{align}
\mathcal{H} : C^{k+2,\alpha}(TM) \times C^{k+1,\alpha}(M) \times \mathbb{R}^{\kappa}  \times C^{k+1,\alpha}(S^2(T^{*}M)) \rightarrow C^{k,\alpha}(T^{*}M) \times C^{k+1,\alpha}(M)
\end{align}
given by
\begin{align} \label{Ndef2} \begin{split}
\mathcal{H}(X,u,v,\theta) &= \mathcal{H}_{\theta}(X,u,v) \\
 &= \big( \beta_g \big[ e^u \phi_{X,1}^{*}(g + \theta) \big] + \sum_i v_i \omega_i ,e^u  tr_g  \big[ \phi_{X,1}^{*}(g + \theta) \big]  - 4  \big),
\end{split}
\end{align}
where $\phi_{X,1}$ denotes the diffeomorphism obtained by following the flow generated by the vector field $X$ for unit time.

Linearizing $\mathcal{H}$ in $(X,u,v)$ at $(X,u,v,\theta) = (0,0,0,0)$, we find
\begin{align*}
\mathcal{H}'_0 (Y,u,a) &= \frac{d}{d\epsilon} \big( \beta_g \big[ e^{\epsilon u} \phi_{\epsilon Y,1}^{*}(g) \big] + \sum_i (\epsilon a_i) \omega_i ,e^{\epsilon u }   tr_g  \big[ \phi_{\epsilon Y,1}^{*}(g) \big] - 4   \big) \Big|_{\epsilon = 0} \\
&= \big( \beta_g [ u g] + \beta_g \mathcal{L}_g Y^{\flat} + \sum_i a_i \omega_i, 4  u + tr_g \mathcal{L}_g Y^{\flat} \big) \\
&= \big( \Box_{\mathcal{K}_g} Y^{\flat} + \sum_i a_i \omega_i, 4 u + 2 \delta_g Y^{\flat} \big),
\end{align*}
where $Y^{\flat}$ is the dual one-form to $Y$.  The adjoint map $(\mathcal{H}'_0)^{*} : C^{m+2,\alpha}(T^{*}M) \times C^{m+1,\alpha}(M) \rightarrow C^{m,\alpha}(TM) \times \mathbb{R}^{\kappa} \times C^{m+1,\alpha}(M)$ is given by
\begin{align*}
(\mathcal{H}'_0)^{*}(\eta, f) = \big( (\Box_{\mathcal{K}_g} \eta)^{\sharp} - 2 \nabla f,  4 f, \int \langle \eta, \omega_i \rangle\ dV \big),
\end{align*}
where $(\Box_{\mathcal{L}_g} \omega)^{\sharp}$ is the vector field dual to $\Box_{\mathcal{L}_g} \eta$.
It follows that if $(\eta, f) \in \mbox{ker } (\mathcal{H}'_0)^{*}$, then $f = 0$ and
\begin{align*}
\Box_{\mathcal{K}_g} \eta &= 0, \\
\int \langle \eta, \omega_i \rangle\ dV &= 0, \ 1 \leq i \leq \kappa.
\end{align*}
Therefore, $\eta = 0$ and the map $\mathcal{H}'_0$ is surjective.
Consequently, by the implicit function theorem, given $\theta_1 \in C^{k+1,\alpha}(S^2(T^{*}M))$ small enough we can solve the equation
\begin{align*}
\mathcal{H}_{\theta_1}(X,u, v) = (0,0),
\end{align*}
i.e., there is a vector field $X \in C^{k+2,\alpha}(TM)$, a function $u \in C^{k+1,\alpha}(M)$,
and a vector $v \in \RR^{\kappa}$ such that
\begin{align} \label{zedN1} \begin{split}
\beta_g \big[ e^u \phi_{X,1}^{*}(g + \theta) \big] + \sum_i v_i \omega_i &= 0, \\
e^u  tr_g  \big[ \phi_{X,1}^{*}(g + \theta) \big]  - 4 &= 0.
\end{split}
\end{align}
As in the proof of Lemma \ref{BasicSlice}, the first equation implies
\begin{align*}
\beta_{g} [ e^u \phi^{*}g_1] = 0,
\end{align*}
where $\phi = \phi_{X,1}$.  If we write
\begin{align*}
 e^u \phi^{*}g_1 = g + \tilde{\theta},
\end{align*}
then the second equation in (\ref{zedN1}) implies
\begin{align*}
tr_g \tilde{\theta} = tr_g\ \big[e^u \phi^{*}g_1 - g \big] = tr_g  \big[ e^u \phi_{X,1}^{*}(g_1) \big]  - 4 = 0,
\end{align*}
hence $\tilde{\theta} \in S^2_0(T^{*}M)$.

It follows that the gauge-fixing term in (\ref{PBdef}) vanishes:
\begin{align*}
\mathcal{K}_{g + \tilde{\theta}}(\Box_{\mathcal{K}_g} \beta_g \tilde{\theta}) = 0.
\end{align*}
Therefore,
\begin{align*}
P^B_g(\tilde{\theta}) &=  B(g+ \tilde{\theta}) + \mathcal{K}_{g + \tilde{\theta}}(\Box_{\mathcal{K}_g} \beta_g \tilde{\theta}) - \frac{1}{4} \tr \big\{ B(g+\tilde{\theta}) + \mathcal{K}_{g + \tilde{\theta}}(\Box_{\mathcal{K}_g} \beta_g \tilde{\theta})\big\}g \\
&= B(g+ \tilde{\theta}) - \frac{1}{4} tr_g \{ B(g+\tilde{\theta}) \}g
= B(e^u \phi^{*}g_1) - \frac{1}{4} tr_g \{ B( e^u \phi^{*}g_1) \}g \\
&= e^{-u} B(\phi^{*}g_1) - \frac{1}{4} tr_g \{e^{-u} B( \phi^{*}g_1) \}g  \\
&= e^{-u} \phi^{*}(B(g_1)) - \frac{1}{4} tr_g \{ e^{-u} \phi^{*} (B(g_1)) \}g = 0,
\end{align*}
verifying (\ref{PBcon}).
\end{proof}

%%%%%%%%%%%%%%%%%%%%%%%%%%%%%%%%%%%%%%%%%%%%%%%%%%%%%
\subsection{The Kuranishi Map}  \label{KM}
%%%%%%%%%%%%%%%%%%%%%%%%%%%%%%%%%%%%%

In view of our preceding results, it is possible to describe the moduli space of critical metrics
near $g$ as the zeroes of a Fredholm map.  In fact,
using the Implicit Function Theorem the moduli space can be described as the zero
set of a finite-dimensional map called the {\em Kuranishi map}.  To make this more precise, we define $H^1_{\tau}$ to be
the kernel of the linearization of $P_g$:
\begin{align} \label{H1def}
H^1_{\tau} = H^1_{\tau}(M,g) = \big\{
h \in C^{4, \alpha} (\overline{S}_{0}^2(T^{*}M)) \ \big|\ S_g h = 0 \big\}.
\end{align}
In the case $n = 4$ and $\tau = -1/3$ (the Bach tensor), we restrict to
traceless tensors:
\begin{align} \label{BachH1def}
H^1_{-1/3} = H^1_{\tau}(M,g) = \big\{
h \in C^{4,\alpha}(S_{0}^2(T^{*}M))\ \big|\ S_g^B h = 0 \big\}.
\end{align}
Since $S_g$ is elliptic, $H^1_{\tau}$ is finite dimensional.
Moreover, we can characterize $H^1_{\tau}$ in
terms of the linearization of $\nabla \tmf_{\tau}$:
\begin{lemma} \label{H1Lemma}
We have
\begin{align*}
H^1_{\tau} = \big\{ h \in C^{\infty}(\overline{S}_{0}^2(T^{*}M))\ \big|\ (\nabla \tmf_{\tau})_g'h = 0,\ \beta_g h = 0 \big\}.
\end{align*}
In the case  $n = 4$ and $\tau = -1/3$,
\begin{align}
H^1_{-1/3} = \big\{ h \in C^{\infty}(S_{0}^2(T^{*}M))\ \big|\
B_g'h = 0,\ \delta_g h = 0 \big\}.
\end{align}
\end{lemma}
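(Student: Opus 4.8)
The plan is to prove the two inclusions separately, carrying out the $\tmf_{\tau}$ case in detail and observing that the Bach case \eqref{BachH1def} is formally identical once $\beta_g$ is replaced by $\delta_g$ on trace-free tensors. Recall from \eqref{Sdef} that $S_g h = (\nabla \tmf_{\tau})_g' h + \tfrac{1}{2}\mathcal{K}_g[\Box_{\mathcal{K}_g}\beta_g h]$. The easy inclusion ``$\supseteq$'' is immediate: if $(\nabla \tmf_{\tau})_g' h = 0$ and $\beta_g h = 0$, then $\Box_{\mathcal{K}_g}\beta_g h = \beta_g \mathcal{K}_g \beta_g h = 0$, so the gauge term drops out and $S_g h = 0$; hence $h \in H^1_{\tau}$, and any such smooth $h$ lies a fortiori in the $C^{4,\alpha}$-class of the definition \eqref{H1def}.

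For the reverse inclusion I would begin with $h \in H^1_{\tau}$, i.e. $S_g h = 0$ with $h$ of class $C^{4,\alpha}$. Since $S_g$ is elliptic (Theorem \ref{Pzed}(i)) and the critical metric $g$ is smooth, elliptic regularity upgrades $h$ to $C^{\infty}$, which in particular yields the smoothness asserted in the lemma. The crucial structural input is that the \emph{linearized} gradient is divergence-free. Indeed, the gradient of any Riemannian functional satisfies $\delta_g(\nabla \tmf_{\tau}(g)) = 0$ identically in $g$ (Besse, Prop.~4.11); differentiating this identity along $g_s = g + sh$ at the critical metric, where $\nabla \tmf_{\tau}(g) = 0$ annihilates the term arising from the variation of $\delta_g$, gives $\delta_g\big((\nabla \tmf_{\tau})_g' h\big) = 0$.

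Next I would extract the vanishing of the gauge term by an integration-by-parts argument modeled on the proof of Theorem \ref{Pzed}(ii) (no mollification is needed now, since $h$ is smooth). Set $\omega = \Box_{\mathcal{K}_g}\beta_g h$ and pair $S_g h = 0$ in $L^2$ with $\mathcal{L}_g\omega$. Using $\mathcal{L}_g^{*} = -2\delta_g$, the first term is $\langle \mathcal{L}_g\omega, (\nabla \tmf_{\tau})_g' h\rangle = -2\langle \omega, \delta_g(\nabla \tmf_{\tau})_g' h\rangle = 0$ by the previous step, while the second equals $\tfrac{1}{2}\|\mathcal{K}_g\omega\|_{L^2}^2$ because $\mathcal{L}_g\omega - \mathcal{K}_g\omega$ is pure trace and $\mathcal{K}_g\omega$ is trace-free. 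Hence $\mathcal{K}_g\omega = 0$, and substituting back into $S_g h = 0$ yields $(\nabla \tmf_{\tau})_g' h = 0$, the first desired equation.

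It remains to deduce $\beta_g h = 0$, which I regard as the one genuinely delicate point, since a priori $\mathcal{K}_g\omega = 0$ only says $\omega$ is a conformal Killing form. Using the self-adjointness $\Box_{\mathcal{K}_g} = -\tfrac{1}{2}\mathcal{K}_g^{*}\mathcal{K}_g$ from \eqref{BoxLdef}, I compute $\|\omega\|^2 = \langle \Box_{\mathcal{K}_g}\beta_g h, \omega\rangle = \langle \beta_g h, \Box_{\mathcal{K}_g}\omega\rangle = \langle \beta_g h, \beta_g\mathcal{K}_g\omega\rangle = 0$, so $\omega = \Box_{\mathcal{K}_g}\beta_g h = 0$; by \eqref{BoxLKer} this forces $\mathcal{K}_g\beta_g h = 0$, i.e. $\beta_g h$ is conformal Killing. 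Finally, since $\beta_g^{*} = -\tfrac{1}{2}\mathcal{K}_g$, the range of $\beta_g$ is $L^2$-orthogonal to $\ker \mathcal{K}_g$; as $\beta_g h$ lies in both, $\|\beta_g h\|^2 = -\tfrac{1}{2}\langle h, \mathcal{K}_g\beta_g h\rangle = 0$, giving $\beta_g h = 0$. The Bach case runs verbatim: $B_g' h$ is trace-free and divergence-free (linearize $\mathrm{tr}_g B = 0$ and $\delta_g B = 0$ at a Bach-flat metric), one has $\beta_g h = \delta_g h$ on trace-free tensors, and $(\mathcal{K}_g|_{S^2_0})^{*} = -2\delta_g$ plays the role of $\beta_g^{*} = -\tfrac{1}{2}\mathcal{K}_g$ in the final orthogonality step.
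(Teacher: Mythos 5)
Your proof is correct and follows essentially the same route as the paper's: smoothness via ellipticity of $S_g$, divergence-freeness of the linearized gradient at a critical metric (with criticality killing the term from varying $\delta_{g(s)}$, a detail you rightly make explicit), and the adjoint identities $\mathcal{L}^{*} = -2\delta$, $\beta_g^{*} = -\tfrac{1}{2}\mathcal{K}_g$ together with \eqref{BoxLKer} to force first $\mathcal{K}_g \beta_g h = 0$ and then $\beta_g h = 0$. The only cosmetic difference is that the paper applies $\delta_g$ pointwise to $S_g h = 0$ to get $\tfrac{1}{2}\Box_{\mathcal{K}_g}^2 \beta_g h = 0$ directly, whereas you reach the same conclusion by the $L^2$ pairing with $\mathcal{L}_g \omega$ borrowed from Theorem \ref{Pzed}$(ii)$; both arguments terminate in the identical computation $\| \beta_g h \|_{L^2}^2 = -\tfrac{1}{2}\langle h, \mathcal{K}_g \beta_g h \rangle_{L^2} = 0$.
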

\begin{proof} Smoothness follows from ellipticity of $S_g$.
Recall from (\ref{Sdef}) that
\begin{align*}
S_g h = \frac{d}{ds} P_g (g + sh) \big|_{s=0}
= (\nabla \tilde{\mathcal{F}}_{\tau})_g'(h)
+ \frac{1}{2} \mathcal{K}_g [ \Box_{\mathcal{K}_g} \beta_g h].
\end{align*}
Clearly, it suffices to show that if $S_g h = 0$, then $\beta_g h = 0$.  To see this, we first note that given a
path of metrics $g(s)$, $-\epsilon < s < \epsilon$, with $g(0) = g$, then
\begin{align*}
\delta_{g(s)} [(\nabla \tmf_{\tau})_{g(s)}] = 0,
\end{align*}
since the gradient of a Riemannian functional is divergence-free.  If $g'(0) = h$, then linearizing this
identity at $s = 0$ gives
\begin{align*}
\delta_g [(\nabla \tmf_{\tau})_g'h] = 0.
\end{align*}
Therefore, if $S_g h = 0$, it follows that
\begin{align*}
0 &= \delta_g S_g h
= \delta_g [(\nabla \tmf_{\tau})_g'h]
+ \frac{1}{2} \delta_g \mathcal{K}_g [ \Box_{\mathcal{K}_g} \beta_g h]
= \frac{1}{2}\Box_{\mathcal{K}_g}^2 \beta_g h.
\end{align*}
Consequently, $\mathcal{K}_g \beta_g h = 0$.  But
\begin{align*}
\| \beta_g h \|_{L^2}^2 &= \big\langle \beta_g h , \beta_g h \big\rangle_{L^2}
= \big\langle h , \beta_g^{*} \beta_g h \big\rangle_{L^2}
= \big\langle  h , -\frac{1}{2} \mathcal{K}_g \beta_g h \big\rangle_{L^2}
= 0.
\end{align*}
The proof for the Bach tensor is identical.
\end{proof}

We will now show there is a well defined mapping $\Psi : H^1_{\tau} \rightarrow H^2_{\tau}$,
called the {\em{Kuranishi map}}, for which the
moduli space is locally given by $\Psi^{-1}(0)$,
where $H^2_{\tau}$ is the cokernel of the linearized operator. (Note that since $S_g$
is self-adjoint, on a compact manifold we have $H^2_{\tau} = H^1_{\tau}$.)
To do this, we need to prove a technical lemma about the
structure of the nonlinear terms in the Taylor expansion of $P = P_g$:
\begin{lemma}
\label{nstr}
There exists a constant $C$ such that if we write
\begin{align} \label{PS2}
P_g(h)= P_g(0) + S_gh + Q(h),
\end{align}
then for $h_1, h_2 \in C^{4,\alpha}$ of sufficiently small norm,
\begin{align} \label{quadstructure}
\Vert Q(h_1) -  Q(h_2)\Vert_{C^{\alpha}} \leq
C(  \Vert h_1 \Vert_{C^{4, \alpha}}
+ \Vert h_2 \Vert_{C^{4, \alpha}}) \cdot
\Vert  h_1 - h_2 \Vert_{C^{4, \alpha}}.
\end{align}
\end{lemma}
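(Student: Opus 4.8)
The plan is to prove the quadratic estimate \eqref{quadstructure} by exploiting the explicit structure of $P_g$ as a difference of ``smooth algebraic'' terms in the metric and its derivatives. First I would isolate the pieces of $P_g(h) = \nabla \tilde{\mathcal{F}}_{\tau}(g+h) + \frac{1}{2}\mathcal{K}_{g+h}[\Box_{\mathcal{K}_g}\beta_g h] - \gamma(h)\cdot g$ according to how they depend on $h$. The leading-order operators $\Box_{\mathcal{K}_g}$ and $\beta_g$ are \emph{linear} and defined using the fixed background metric $g$, so the only genuinely nonlinear dependence sits in two places: the gradient $\nabla\tilde{\mathcal{F}}_{\tau}(g+h)$, which is a universal smooth (in fact analytic) expression in $(g+h)^{-1}$, the curvature of $g+h$, and their covariant derivatives up to order four; and the operator $\mathcal{K}_{g+h}$, which depends on the Christoffel symbols of $g+h$ and hence on $(g+h)^{-1}$ and the first derivatives of $h$.

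The key observation is that $Q(h) = P_g(h) - P_g(0) - S_g h$ is, by construction, the Taylor remainder beyond first order, so for each component we can write it in the integral form
\begin{align*}
Q(h_1) - Q(h_2) = \int_0^1 \frac{d}{dt}\, Q\big(h_2 + t(h_1 - h_2)\big)\, dt,
\end{align*}
and the integrand involves the derivative of $Q$, which vanishes to first order in its argument. The clean way to organize this is to regard $P_g$ pointwise as a smooth map $F(x, j^4 h)$ of the $4$-jet of $h$, where $F$ is smooth in all its entries and defined on a neighborhood of the zero section (which is exactly where $g+h$ stays positive-definite and $(g+h)^{-1}$ stays bounded). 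Then $S_g h = D_{j^4 h}F(x,0)\cdot j^4 h$ is the differential at the origin, and $Q(h)$ collects the higher-order terms. For such a smooth nonlinearity, a standard application of Taylor's theorem with the second-order remainder yields a pointwise bound of the schematic form
\begin{align*}
|Q(h_1) - Q(h_2)| \leq C\big(\|j^4 h_1\|_{C^0} + \|j^4 h_2\|_{C^0}\big)\cdot |j^4 h_1 - j^4 h_2| + (\text{lower-order products}),
\end{align*}
and tracking the H\"older norms of all factors upgrades this to the $C^{\alpha}$ estimate \eqref{quadstructure} via the Banach-algebra property $\|uv\|_{C^{\alpha}} \leq C\|u\|_{C^{\alpha}}\|v\|_{C^{\alpha}}$.

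The step I expect to be the main obstacle is bookkeeping the \emph{fourth-order} terms carefully. Because $\nabla\tilde{\mathcal{F}}_{\tau}$ contains $\Delta R_{ij}$ and $\nabla_i\nabla_j R$, the map $F$ depends on up to four derivatives of $g+h$, so $j^4 h$ includes fourth derivatives of $h$; I must verify that the genuinely nonlinear fourth-order contributions are bounded by a \emph{product} in which at most one factor carries four derivatives. This is where the precise structure matters: the top-order term $\frac{1}{2}\Delta^2 h$ in \eqref{linF} is exactly linear and is therefore absorbed into $S_g h$, so it does not appear in $Q$; every surviving nonlinear term contains either a coefficient like $(g+h)^{-1} - g^{-1}$ or a Christoffel difference, each of which is itself at least linear in $h$ and carries at most one derivative, leaving the remaining four-derivative factor to appear linearly. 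I would make this explicit by schematically writing the nonlinearity as sums of terms of the form $A(h)\ast \nabla^k h$ with $k\le 4$, where $A(h)$ is a smooth matrix-valued function of $(h,\nabla h,\nabla^2 h)$ vanishing at $h=0$, and then estimating $A(h_1)\ast\nabla^k h_1 - A(h_2)\ast\nabla^k h_2$ by adding and subtracting $A(h_2)\ast\nabla^k h_1$ to separate the Lipschitz dependence of $A$ from the linear dependence on the top derivative. The $\gamma(h)\cdot g$ term is harmless, since $\gamma$ is an integral of $\mathrm{tr}_g\{\nabla\tilde{\mathcal{F}}_{\tau}(g+h)\}$ and inherits the same smooth-nonlinearity bound (indeed a stronger one, since integration gains derivatives). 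Once every term is cast in this product form, the estimate \eqref{quadstructure} follows from the multiplicative property of H\"older norms, with $C$ depending only on $g$, the dimension, $\tau$, and the size of the small neighborhood on which $F$ is defined.
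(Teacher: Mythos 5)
Your proposal is correct and follows essentially the same route as the paper's proof: the paper likewise expands $P_g$ schematically into products of the form $(g+h)^{-k} * \nabla^j h * \cdots$ (via the Christoffel and curvature expansions together with the identity $(g+h)^{-k} - g^{-k} = g^{-k-1} * h + r_k(h)$ and its quadratic difference estimate), identifies the linear-in-$h$ terms with $S_g h$, and bounds each surviving nonlinear term by exactly the add-and-subtract device you describe (e.g.\ for $T(h) = g * g^{-4} * h * \nabla^4 h$), finishing with difference quotients where you invoke the Banach-algebra property of $C^{\alpha}$. Your jet-space Taylor packaging, the remark that $\nabla^4 h$ enters only linearly, and the separate treatment of the nonlocal $\gamma(h)\cdot g$ term are all just more abstract formulations of the paper's explicit term-by-term bookkeeping, so there is no substantive difference in method.
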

\begin{proof}
Since the proof involves a rather lengthy calculation we begin with a brief overview.  The tensor $\nabla \tmf_{\tau}$ can be schematically expressed
as
\begin{align} \label{Btscheme}
\nabla \tmf_{\tau}(g) = g * g^{-1} * g^{-1} * \nabla_g^2 Rm_g + g * g^{-1} * g^{-1} * Rm_g * Rm_g,
\end{align}
where $Rm_g$ denotes the curvature tensor of $g$, $g^{-1} * \cdots * g^{-1} * A * B$ denotes any
linear combination of terms involving contractions of the tensor product $A \otimes B$, and $g^{-1} * \cdots * g^{-1} * \nabla_g^k * A$ denotes linear combinations of contractions of the $k$-th iterated covariant derivative of $A$.
Since the mapping $P$ is defined by
\begin{align} \label{Pdef2}
P_g(h) = \nabla \tilde{\mathcal{F}}_{\tau} (g + h) + \frac{1}{2} \mathcal{K}_{g + h}[\Box_{\mathcal{K}_g} \beta_g  \theta]  - \gamma(h)\cdot g,
\end{align}
the first step in proving the estimates is to analyze the curvature term
\begin{align} \label{leadexp} \begin{split}
\nabla \tilde{\mathcal{F}}_{\tau} (g + h) &= (g+h) * (g+h)^{-1} * (g+h)^{-1} * \nabla_{g+h}^2 Rm_{g+h} \\
& \hskip.25in  + (g+h)*(g+h)^{-1} * (g + h)^{-1} * Rm_{g+h} * Rm_{g+h}.
\end{split}
\end{align}
Proceeding as in \cite[Proposition 3.1]{AV12}, the starting point is the formula
\begin{align} \label{CSdiff}
\Gamma(g + h)^{k}_{ij} = \Gamma(g)^{k}_{ij} + \frac{1}{2} (g+h)^{km}
\left\{ \nabla_j h_{im} + \nabla_i h_{jm} - \nabla_m h_{ij} \right\},
\end{align}
where $\Gamma(\cdot)$ denotes the Christoffel symbols of a metric.
Using this formula and the notation introduced above, we can express the covariant derivative with respect to the metric $g+h$ as
\begin{align} \label{DTId}
\nabla_{g+ h} T = \nabla_g T + (g +h)^{-1} * \nabla_g h * T,
\end{align}
where $T$ is any tensor field. Also, by the standard formula for the $(1,3)$-curvature tensor in terms of the Christoffel symbols we have
\begin{align}
\label{e1rm}
Rm_{g + h} = Rm_g + (g +h)^{-1} * \nabla^2 h +
(g + h)^{-2} * \nabla h * \nabla h.
\end{align}
In the following, any covariant derivative without a subscript
will mean with respect to the fixed metric $g$.

Taking two covariant derivatives $\nabla_{g+h}$ of $Rm_{g+h}$ and repeatedly using (\ref{DTId}), we obtain
\begin{align} \label{Btscheme2}
\begin{split}
&(g+h) *\left\{ (g+h)^{-2} * \nabla^2_{g+h} Rm_{g+h} + (g+h)^{-2} *  Rm_{g+h} * Rm_{g+h}\right\}  \\
&= (g+h)*\Big\{ (g+h)^{-2} * \nabla^2 Rm_g + (g+h)^{-2} * Rm_g * Rm_g  \\
&\ \ \ + (g + h)^{-3} * \nabla Rm_g *\nabla h +(g+h)^{-3} * Rm_g * \nabla^2 h \\
&\ \ \ + (g+h)^{-4} * Rm_g * \nabla h * \nabla h + (g+h)^{-3} * \nabla^4 h \\
&\ \ \ + (g+h)^{-4} * \nabla^3 h * \nabla h + (g+h)^{-4} * \nabla^2 h  * \nabla^2 h  \\
&\ \ \ + (g+h)^{-5} *  \nabla^2 h * \nabla h * \nabla h + (g + h)^{-6} *   \nabla h * \nabla h  * \nabla h * \nabla h \Big\}.
\end{split}
\end{align}
Taking into account the term $\gamma(h) g $, and the gauge-fixing
term in \eqref{Pdef2}, we obtain
\begin{align} \label{PS1} \begin{split}
P_g(h) & = (g+h)* \Big\{ (g+h)^{-2} * \nabla^2 Rm_g + (g+h)^{-2} * Rm_g * Rm_g  \\
&\ \ \ + (g + h)^{-3} * \nabla Rm_g *\nabla h + (g+h)^{-3} * Rm_g * \nabla^2 h \\
&\ \ \ + (g+h)^{-4} * Rm_g * \nabla h * \nabla h + (g+h)^{-3} * \nabla^4 h \\
&\ \ \ +  (g+h)^{-1} * g^{-3} * g * \nabla^4 h  +  (g+h)^{-4} * \nabla^3 h * \nabla h \\
&\ \ \ + (g+h)^{-2} *g*  g^{-3} * \nabla^3 h * \nabla h
+ (g+h)^{-4} * \nabla^2 h  * \nabla^2 h  \\
&\ \ \ + (g+h)^{-5} *  \nabla^2 h * \nabla h * \nabla h\\
&\ \ \ + (g + h)^{-6} *   \nabla h * \nabla h  * \nabla h * \nabla h  \Big\} + \int \big\{ \cdots \big\}\ dV,
\end{split}
\end{align}
where $\{ \cdots \}$ denotes the terms in (\ref{Btscheme2}).

Since we are trying to estimate the remainder terms in the Taylor expansion of $P(h)$, we want to write the above expression in terms of its linearization; i.e.,
\begin{align*}
P_g(h) &= P_g(0) + S_g h + \cdots \\
&= \nabla \tmf_{\tau}(g) + S_g h + \cdots
\end{align*}
To do this, we use the identity (which holds for $h$ small and all
integers $k \geq 1$)
\begin{align} \label{remk}
(g+h)^{-k} - g^{-k} = g^{-k - 1} * h + r_k(h),
\end{align}
where the remainder satisfies
\begin{align} \label{rkdiff}
| r_k(h_1) - r_k(h_2) | \leq C_k(g) \big( |h_1| + |h_2| \big) |h_1 - h_2|,
\end{align}
with a similar estimate for the H\"older norm.

Next, we substitute (\ref{remk}) into each term of (\ref{PS1}) involving a power of $(g+h)^{-1}$, then collect all terms which are zeroth order in $h$ (which combine to give $P_g(0)$),
those which are linear in $h$ (which combine to give $Sh$), and those which are higher order in $h$.

For example, consider the term $(g+h)*(g+h)^{-3} * \nabla^4 h$.
Applying (\ref{remk}), we may write this as
\begin{align} \begin{split}
\label{excase}
(g+h)*(g+h)^{-3} * \nabla^4 h
&= g * g^{-3} * \nabla^4 h + g * g^{-4} * h * \nabla^4 h + g * g^{-4}* r_3(h) * \nabla^4 h \\
& + g^{-3} * h * \nabla^4 h + g^{-4} * h * h * \nabla^4 h + g^{-4}* r_3(h) * \nabla^4 h.
\end{split}
\end{align}
It is clear that only the first term on the right hand side contributes
to the linearization, hence we need to estimate the rest of the terms.  Consider the next term on the right-hand side of
(\ref{excase}):
\begin{align} \label{Ttdef}
T(h) = g * g^{-4} * h * \nabla^4 h.
\end{align}
Then
\begin{align*}
T(h_1) - T(h_2) &= g * g^{-4} * h_1 * \nabla^4 h_1 - g * g^{-4} * h_2 * \nabla^4 h_2 \\
&= g * g^{-4} * (h_1 - h_2) * \nabla^4 h_1 + g * g^{-4} * h_2 * \nabla^4 (h_1 - h_2),
\end{align*}
hence
\begin{align*}
|T(h_1) - T(h_2)|  & \leq |h_1 - h_2| |\nabla^4 h_1|   + |h_2| | \nabla^4 (h_1 - h_2)|
\end{align*}
and it follows that
\begin{align} \label{semi} \begin{split}
\|T(h_1) - T(h_2)\|_{C^0} &\leq \Big\{ \|h_1 - h_2\|_{C^0}  \| h_1\|_{C^4}  + \|h_2\|_{C^0} \| h_1 - h_2\|_{C^4} \Big\} \\
& \leq C(  \Vert h_1 \Vert_{C^{4}}
+ \Vert h_2 \Vert_{C^{4}}) \cdot
\Vert  h_1 - h_2 \Vert_{C^{4}}.
\end{split}
\end{align}
Using \eqref{rkdiff}, the rest of the terms on the right hand side of \eqref{excase} can be
estimated similarly. Estimating difference quotients gives the required Holder estimate.
Analyzing each term in \eqref{PS1} in a similar fashion,
\eqref{quadstructure} follows..
\end{proof}

The main consequence of this formula is the following:
\begin{corollary}
\label{Kurcor}
The zero set of $P_g$ is locally modeled on the
zero set of a finite-dimensional map
\begin{align}
\Psi : H^1_{\tau} \rightarrow H^2_{\tau} \simeq H^1_{\tau}.
\end{align}
Consequently, if $g$ is infinitesimally
rigid, then $g$ is rigid.
In the case $n =4$ and $\tau = -1/3$, infinitesimal
Bach-rigidity implies Bach-rigidity.

In general, the map $\Psi$ is equivariant with respect to the
isometry group $\mathcal{I}(g)$.
If $\mathcal{C}(g) = \mathcal{I}(g)$,
where $\mathcal{C}(g)$ is the conformal automorphism group of $g$,
then the moduli space of solutions, up to
diffeomorphism and scaling, is given by the space of orbits
$\Psi^{-1}(0) / \mathcal{I}(g)$.
\end{corollary}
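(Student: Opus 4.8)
The plan is to construct the Kuranishi map by a standard Lyapunov--Schmidt reduction, using the ellipticity and self-adjointness of $S_g$ established in Theorem \ref{Pzed} together with the quadratic remainder estimate \eqref{quadstructure}. Since $S_g$ is an elliptic, self-adjoint, fourth-order operator on the compact manifold $M$, viewed as a map $C^{k+1,\alpha}(\overline{S}_0^2(T^*M)) \to C^{k-3,\alpha}(\overline{S}_0^2(T^*M))$, it is Fredholm, and its kernel $H^1_\tau$ and cokernel $H^2_\tau$ are finite-dimensional spaces of smooth tensors with $H^2_\tau = H^1_\tau$. Elliptic regularity provides the $L^2$-orthogonal splitting $C^{k-3,\alpha} = \mathrm{Image}(S_g) \oplus H^2_\tau$, together with a bounded right inverse $G : \mathrm{Image}(S_g) \to (\ker S_g)^\perp \cap C^{k+1,\alpha}$ satisfying $S_g G = \mathrm{Id}$. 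Let $\pi$ denote the $L^2$-orthogonal projection onto $H^2_\tau$.

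Next I would rewrite $P_g(h) = 0$ using $P_g(0) = 0$ (as $g$ is critical) in the form $S_g h + Q(h) = 0$, and split it into the auxiliary equation $(1-\pi)\bigl(S_g h + Q(h)\bigr) = 0$ and the bifurcation equation $\pi Q(h) = 0$ (noting $\pi S_g = 0$). Writing $h = v + w$ with $v \in H^1_\tau$ and $w \in (\ker S_g)^\perp$, the auxiliary equation becomes $w = -G(1-\pi)Q(v+w)$. For each fixed small $v$, the estimate \eqref{quadstructure} shows the right-hand side is a contraction on a small ball in $(\ker S_g)^\perp$, so the contraction mapping principle yields a unique small solution $w = w(v)$ with $w(0)=0$ and $\|w(v)\|_{C^{k+1,\alpha}} = O(\|v\|^2)$, depending smoothly on $v$. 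The Kuranishi map is then $\Psi(v) = \pi Q\bigl(v + w(v)\bigr) \in H^2_\tau$, and by construction $h = v + w(v)$ solves $P_g(h) = 0$ if and only if $\Psi(v) = 0$; this exhibits the zero set of $P_g$ near $0$ as locally modeled on $\Psi^{-1}(0) \subset H^1_\tau$. The same argument applies verbatim to $P_g^B$ using Theorem \ref{PBzed}.

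From here the stated consequences follow. If $g$ is infinitesimally rigid then $H^1_\tau = \{0\}$, so $h = 0$ is the only small zero of $P_g$; combined with Theorem \ref{Pzed}$(iii)$ --- which produces, for any nearby critical metric $g_1$, a diffeomorphism $\phi$ and constant $c$ with $e^c \phi^* g_1 = g + \tilde\theta$ and $P_g(\tilde\theta) = 0$ --- this forces $\tilde\theta = 0$, hence $g_1 = e^{-c}(\phi^{-1})^* g$, which is rigidity; the Bach case is identical using Theorem \ref{PBzed}$(iii)$. For equivariance, I would observe that the isometry group $\mathcal{I}(g)$ acts on all the relevant function spaces by pullback, commuting with the naturally defined operators $P_g$, $S_g$, and $Q$, and preserving the $L^2$ inner product; hence $\pi$, $G$, and $H^1_\tau$ are $\mathcal{I}(g)$-invariant, and the uniqueness in the contraction argument forces $w(v)$, and therefore $\Psi$, to be equivariant (compactness of $\mathcal{I}(g)$ permits averaging should any choice need to be made equivariant). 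Finally, when $\mathcal{C}(g) = \mathcal{I}(g)$, the residual gauge freedom --- diffeomorphism together with scaling --- acting on the slice $\{P_g = 0\}$ reduces exactly to the $\mathcal{I}(g)$-action, so by Theorem \ref{Pzed}$(iii)$ the moduli space is identified with the orbit space $\Psi^{-1}(0)/\mathcal{I}(g)$.

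The main obstacle is the functional-analytic bookkeeping in the contraction argument: because $S_g$ is fourth order, one must track the loss of derivatives carefully, ensuring the right inverse $G$ is bounded $C^{k-3,\alpha} \to C^{k+1,\alpha}$ via Schauder estimates and that the quadratic estimate \eqref{quadstructure} is applied in matching norms so that the self-map is genuinely a contraction on a fixed small ball. A secondary subtlety is the final identification of the moduli space, where one must verify that under the hypothesis $\mathcal{C}(g) = \mathcal{I}(g)$ the diffeomorphism-and-scaling equivalence on the slice collapses precisely to the isometry action, with no additional identifications introduced by the gauge-fixing term.
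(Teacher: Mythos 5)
Your proposal is correct and takes essentially the same approach as the paper: the Lyapunov--Schmidt reduction you carry out explicitly is precisely the fixed-point argument the paper invokes by citation (with Lemma \ref{nstr} supplying the contraction estimate), and your deductions of rigidity from Theorem \ref{Pzed}$(iii)$, of equivariance, and of the identification $\Psi^{-1}(0)/\mathcal{I}(g)$ under $\mathcal{C}(g) = \mathcal{I}(g)$ match the paper's reasoning, including the key point that the gauge of Lemma \ref{BasicSlice} is unique only up to conformal Killing fields. The sole difference is expository: you write out in full the contraction-mapping construction that the paper outsources to the reference.
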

\begin{proof}
The existence of the map $\Psi$ follows from Lemma \ref{nstr},
together with a fixed-point argument as in \cite[Proposition 4.2.1]{RollinSinger}.
The argument above identified the zero set of $P_g$ with the
moduli space of solutions modulo diffeomorphism and scaling,
up to uniqueness of the gauge $\beta_g( \tilde{\theta}) = 0$ found
in Lemma \ref{BasicSlice}. This gauge is unique up
to the action of the conformal group, since the kernel of the
linearization $\mathcal{N}_0'$ is the space of conformal Killing
fields. Under the assumption that $\mathcal{C}(g) = \mathcal{I}(g)$, the
gauge is therefore unique up to the action of the isometry group.
The same argument holds for $P_g^B$ in the case $n=4, \tau = -1/3$.
\end{proof}
\begin{remark}{\em
It might seem more natural to use the gauge $\delta_g \tilde{\theta} = 0$ in
Lemma~\ref{BasicSlice}, instead of the gauge $\beta_g \tilde{\theta} = 0$.
With a suitable modification of the map $P_g$, this approach indeed works,
thus obviating the assumtion that
$\mathcal{C}(g) = \mathcal{I}(g)$ in the second part of Corollary~\ref{Kurcor}.
However, it is substantially more efficient to do compuations
in the $\beta_g \tilde{\theta} = 0$ gauge, so we prefer to work in this gauge.
There is no loss of generality for Einstein metrics,
since for these it is true that $\mathcal{C}(g) = \mathcal{I}(g)$, with the
only exceptional case being that of the round sphere.
}
\end{remark}

%%%%%%%%%%%%%%%%%%%%%%%%%%%%%%%%%%%%%%%%%%%%%%%
\section{Second variation at an Einstein metric}
\label{SV}
%%%%%%%%%%%%%%%%%%%%%%%%%%%%%%%%%%%%%%%%%%%%%%%%%%%%%%%%%%%%

We begin this section by expressing the second variation
of the volume normalized functional in terms of the
un-normalized functional. In the latter part of the
section, we will compute the full second variation of the
un-normalized functional. These will then be combined
later in Sections \ref{TTsec} and \ref{confsec}.

%%%%%%%%%%%%
\subsection{Normalized functional}
%%%%%%%%%%%%%%
As before, we let $\mf_{\tau}$ denote any quadratic curvature
functional, and $\tmf_{\tau}$ denote the associated volume normalized
functional.

\begin{lemma}If $g$ is critical for $\tmf_{\tau}$ then
\begin{align}
\label{tdd}
(\tmf_{\tau}[g_s])''
= -p(p+1) (Vol)^{p-2} \big( (Vol)' \big)^2 \cdot \mf_{\tau}
+ p (Vol)^{p-1} (Vol)'' \cdot \mf_{\tau} + (Vol)^p \cdot (\mf_{\tau})''.
\end{align}
\end{lemma}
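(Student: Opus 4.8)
The plan is to differentiate the first-variation formula (\ref{oned}) once more and then impose the first-order criticality condition (\ref{foc}) to simplify the resulting cross-term. Recall that for an arbitrary path $g_s$ through $g = g_0$, equation (\ref{oned}) reads
\begin{align*}
(\tmf_{\tau}[g_s])' = p (Vol)^{p-1} (Vol)' \mf_{\tau} + (Vol)^p (\mf_{\tau})'.
\end{align*}

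First I would differentiate each summand by the product and chain rules. The first summand contributes three terms (from differentiating $(Vol)^{p-1}$, $(Vol)'$, and $\mf_{\tau}$ in turn), while the second contributes two (from $(Vol)^p$ and from $(\mf_{\tau})'$). Collecting everything gives the identity
\begin{align*}
(\tmf_{\tau})'' &= p(p-1)(Vol)^{p-2}\big((Vol)'\big)^2 \mf_{\tau} + p (Vol)^{p-1}(Vol)'' \mf_{\tau} \\
&\quad + 2p (Vol)^{p-1}(Vol)'(\mf_{\tau})' + (Vol)^p (\mf_{\tau})'',
\end{align*}
valid along \emph{any} path. The hypothesis that $g$ is critical has not yet been used and enters only in the next step.

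The key step is to dispose of the mixed term $2p(Vol)^{p-1}(Vol)'(\mf_{\tau})'$. Because $g_0$ is critical for $\tmf_{\tau}$, the first-order condition (\ref{foc}) gives $\mf_{\tau}' = -p(Vol)^{-1}(Vol)'\,\mf_{\tau}$ at $s = 0$, whence
\begin{align*}
2p(Vol)^{p-1}(Vol)'(\mf_{\tau})' = -2p^2 (Vol)^{p-2}\big((Vol)'\big)^2 \mf_{\tau}.
\end{align*}
Substituting this back and combining with the first term uses only the elementary identity $p(p-1) - 2p^2 = -p(p+1)$, which reproduces exactly the coefficient $-p(p+1)$ appearing in (\ref{tdd}) and yields the claimed formula.

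I expect no genuine obstacle: the argument is a routine second application of the product and chain rules, and the only point needing care is recognizing that the criticality hypothesis is invoked precisely to rewrite the cross-term via (\ref{foc}), after which the coefficients collapse by the algebraic identity above. One small subtlety worth flagging is that (\ref{foc}) is an equality of scalar derivatives at the base point $s = 0$, so it is legitimately applied here even though the preliminary expression for $(\tmf_{\tau})''$ was derived for an arbitrary path.
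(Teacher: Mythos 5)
Your proposal is correct and follows the paper's own proof essentially verbatim: differentiate the first-variation identity \eqref{oned}, substitute the criticality condition \eqref{foc} into the cross-term, and combine coefficients via $p(p-1) - 2p^2 = -p(p+1)$. Nothing further is needed.
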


\begin{proof}
Differentiating \eqref{oned}, we obtain
\begin{align}
\begin{split}
(\tmf_{\tau}[g_s])'' &= p(p-1) (Vol)^{p-2} (Vol')(Vol') \cdot \mf_{\tau}
+ p (Vol)^{p-1} (Vol)'' \cdot \mf_{\tau}\\
& + 2 p (Vol)^{p-1} (Vol)' \cdot (\mf_{\tau}[g_s])'
+ (Vol)^p   (\mf_{\tau}[g_s])''.
\end{split}
\end{align}
The result follows by using \eqref{foc} on the third term on the
right hand side and combining with the first term to give
\begin{align}
\begin{split}
& p(p-1) (Vol)^{p-2} (Vol')(Vol') \cdot \mf_{\tau}
+  2 p (Vol)^{p-1} (Vol)' ( -p (Vol)^{-1} (Vol)' \cdot \mf_{\tau}) \\
& = - p (p+1)  (Vol)^{p-2} \big( (Vol') \big)^2 \cdot \mf_{\tau}.
\end{split}
\end{align}
\end{proof}

Next, we compute the second derivative of the unnormalized functional
$\mf_{\tau}$.

\begin{lemma}If $\nabla \mf_{\tau} = c \cdot g$, then
\begin{align}
\label{fdd}
\begin{split}
\frac{d^2}{ds^2} (\mf_{\tau}[g_s]) &= \int \langle (\nabla \mf_{\tau})' (g_s) g_s', g_s' \rangle dV\\
&- 2 c \int |g_s'|^2 dV + c \int (tr_{g_s} g_s'') dV + \frac{c}{2} \int  (tr_{g_s} g_s')^2 dV.
\end{split}
\end{align}
\end{lemma}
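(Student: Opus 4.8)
The plan is to obtain \eqref{fdd} by differentiating the first variation formula once more and carefully tracking the metric-dependence of the $L^2$ pairing. Recall that along the path $g_s$ the first variation has the form
\begin{align*}
\frac{d}{ds}\mf_{\tau}[g_s] = \int g_s^{ip} g_s^{jq} (\nabla \mf_{\tau})_{pq} (g_s')_{ij}\, dV_{g_s},
\end{align*}
so that $\frac{d^2}{ds^2}\mf_{\tau}[g_s]$ is computed by applying $\frac{d}{ds}$ to this integrand. The integrand is assembled from four $s$-dependent pieces: the inverse metrics $g_s^{ip} g_s^{jq}$ used to raise indices, the gradient tensor $(\nabla \mf_{\tau})_{pq}$, the velocity $(g_s')_{ij}$, and the volume form $dV_{g_s}$. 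Differentiating produces one term from each, and the key organizing idea is that exactly one of these four (the gradient factor) produces the ``genuine'' second-variation term, while the other three are absorbed using the hypothesis.

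First I would differentiate the gradient factor: by definition $\frac{d}{ds}(\nabla\mf_{\tau})_{pq} = \big((\nabla\mf_{\tau})'(g_s) g_s'\big)_{pq}$, and the resulting contribution is precisely $\int \langle (\nabla \mf_{\tau})'(g_s) g_s', g_s'\rangle\, dV$, the leading term on the right of \eqref{fdd}. The remaining three contributions come from differentiating the auxiliary metric structure, and this is exactly where the hypothesis $\nabla \mf_{\tau} = c\cdot g$ (with $c$ constant, as guaranteed at an Einstein metric by Propositions \ref{Ricc} and \ref{Rc}) enters. Using the standard formulas $\frac{d}{ds} g_s^{ip} = -g_s^{ia}(g_s')_{ab} g_s^{bp}$ and $\frac{d}{ds}\, dV_{g_s} = \tfrac{1}{2}(tr_{g_s} g_s')\, dV_{g_s}$ (the latter consistent with the first variation of volume recorded above), I would substitute $(\nabla\mf_{\tau})_{pq} = c\, (g_s)_{pq}$ into each of these three pieces.

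The three substitutions then collapse by index contraction. Differentiating the velocity factor replaces $(g_s')_{ij}$ by $(g_s'')_{ij}$ and, after contracting with $c\,g_s$, yields $c\int tr_{g_s} g_s''\, dV$; differentiating the volume form produces $\tfrac12(tr_{g_s}g_s')$ times $c\,tr_{g_s}g_s'$, giving $\tfrac{c}{2}\int(tr_{g_s}g_s')^2\, dV$; and differentiating the two inverse metrics produces, after contracting $(g_s')^{ip}(g_s)_{pq}$ back against $g_s'$, the term $-2c\int|g_s'|^2\, dV$. Summing these with the gradient term gives \eqref{fdd} exactly, and no integration by parts or curvature identities are required.

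I expect the only genuine bookkeeping hazard to be the sign and contraction pattern in the inverse-metric term: one must apply $\frac{d}{ds} g_s^{ip} = -(g_s')^{ip}$ (indices raised by $g_s$) to \emph{both} inverse factors, and check that after inserting $(\nabla\mf_{\tau})_{pq}=c\,(g_s)_{pq}$ each factor contributes $-c\,|g_s'|^2$, so that the two together produce the coefficient $-2c$ rather than $-c$. Everything else is a routine application of the standard variation formulas.
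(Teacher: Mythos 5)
Your proposal is correct and follows essentially the same route as the paper: both differentiate the first variation formula $\int g^{ip} g^{jq} (\nabla \mf_{\tau})_{ij} (g_s')_{pq}\, dV$, collect the four contributions (two inverse metrics giving the $-2c$ term, the linearized gradient, the $g_s''$ term, and the volume form giving the $\tfrac{c}{2}(tr\ g_s')^2$ term), and then substitute $\nabla \mf_{\tau} = c\cdot g$. Your bookkeeping of the signs and contractions, including the factor $-2$ from differentiating both inverse-metric factors, matches the paper's computation exactly.
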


\begin{proof}
For any path of metrics $g_s$, we have
\begin{align}
\frac{d}{ds} (\mf_{\tau}[g_s]) = \int \langle \nabla \mf_{\tau} (g_s), g_s' \rangle dV,
\end{align}
where $\nabla \mf_{\tau}$ are the Euler-Lagrange equations. The integrand is locally
\begin{align}
g^{ip} g^{jq} ( \nabla \mf_{\tau} (g_s))_{ij} (g_s')_{pq} dV.
\end{align}
Taking another derivative,
\begin{align}
\begin{split}
\frac{d^2}{ds^2} (\mf_{\tau}[g_s]) &=
-2 \int (g_s')^{ip}  g^{jq} ( \nabla \mf_{\tau} (g_s)_{ij} (g_s')_{pq} dV
+ \int \langle (\nabla \mf_{\tau} )' (g_s) g_s', g_s' \rangle dV\\
&+ \int \langle  \nabla \mf_{\tau} (g_s), g_s'' \rangle dV
+ \int  \langle \nabla \mf_{\tau} (g_s), g_s' \rangle (dV)'.
\end{split}
\end{align}
If $\nabla \mf_{\tau} (g_s) = c g$, then this simplifies to \eqref{fdd}.
\end{proof}

The next result gives the relation between the second variation of the un-normalized and normalized functionals:
\begin{proposition}
\label{2vfpr}
If $g$ is Einstein with $\nabla \mf_{\tau} = c \cdot g$, then
\begin{align}
\label{2vf}
\begin{split}
(\tmf_{\tau}[g_s])''
&= -p(p+1) (Vol)^{p-2} \big( (Vol)' \big)^2 \cdot \mf_{\tau}\\
&+ (Vol)^p \Big\{ - c \int_M |g_s'|^2 dV
+  \int \langle (\nabla \mf_{\tau})' (g_s) g_s', g_s' \rangle dV \Big\}.
\end{split}
\end{align}

In particular, if $(\nabla \mathcal{F}_{\tau})'$ and $(\nabla \tilde{\mathcal{F}}_{\tau})'$ denote the linearized Euler-Lagrange equations of $\mathcal{F}_{\tau}$
and $\tilde{\mathcal{F}}_{\tau}$ respectively, then
\begin{align} \label{Ftp2}
(\nabla \tilde{\mathcal{F}}_{\tau})'h = (\nabla \mathcal{F}_{\tau})'h - c h.
\end{align}
\end{proposition}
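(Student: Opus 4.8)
The plan is to establish \eqref{2vf} first by direct substitution, and then to read off \eqref{Ftp2} as a consequence. For \eqref{2vf}, I would start from the expression for $(\tmf_{\tau}[g_s])''$ in \eqref{tdd} and substitute the second variation of volume \eqref{vdd} into its middle term $p(Vol)^{p-1}(Vol)''\cdot\mf_{\tau}$, and the formula \eqref{fdd} for $(\mf_{\tau})''$ into its last term $(Vol)^p(\mf_{\tau})''$. The leading term $-p(p+1)(Vol)^{p-2}((Vol)')^2\mf_{\tau}$ already appears in \eqref{2vf}, so the whole content is to show that the remaining contributions collapse to $(Vol)^p\{-c\int|g_s'|^2\,dV+\int\langle(\nabla\mf_{\tau})'g_s',g_s'\rangle\,dV\}$.

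The decisive ingredient is the critical-point relation \eqref{foc2}, which at an Einstein metric with $\nabla\mf_{\tau}=c\cdot g$ reads $c=-\tfrac{p}{2}Vol^{-1}\mf_{\tau}$, equivalently $p(Vol)^{p-1}\mf_{\tau}=-2c(Vol)^p$. I would use this identity to rewrite the coefficient of the volume-Hessian term \eqref{vdd}. After this substitution the three ``trace'' integrals $\int|g_s'|^2$, $\int(tr_{g_s}g_s'')$, and $\int(tr_{g_s}g_s')^2$ appear with coefficients coming from two sources: the expanded $p(Vol)^{p-1}(Vol)''\mf_{\tau}$ term and the corresponding terms $-2c\int|g_s'|^2$, $c\int(tr_{g_s}g_s'')$, $\tfrac{c}{2}\int(tr_{g_s}g_s')^2$ of \eqref{fdd}. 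The routine but essential check is that the $\int(tr_{g_s}g_s'')$ and $\int(tr_{g_s}g_s')^2$ contributions cancel identically, while the two $\int|g_s'|^2$ contributions combine to $-c(Vol)^p\int|g_s'|^2$, leaving exactly \eqref{2vf}.

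To obtain \eqref{Ftp2}, I would invoke the general fact that at a critical point of a Riemannian functional the second variation along a path with $g_s'=h$ equals the quadratic form of its linearized gradient, since the $\langle\nabla\mathcal{G},g_s''\rangle$ term and all terms carrying a factor of $\nabla\mathcal{G}$ drop out. Applied to $\tmf_{\tau}$, whose gradient vanishes at the critical metric, this gives $(\tmf_{\tau})''=\int\langle(\nabla\tmf_{\tau})'h,h\rangle\,dV$. Comparing with \eqref{2vf}, in which the term $\int\langle(\nabla\mf_{\tau})'h,h\rangle\,dV$ appears explicitly, and restricting to variations that preserve the volume to first order, so that $(Vol)'=\tfrac12\int tr_g h\,dV=0$, while normalizing $Vol(g)=1$, the leading term and the $(Vol)^p$ factor disappear and \eqref{2vf} reduces to $\int\langle(\nabla\tmf_{\tau})'h,h\rangle\,dV=\int\langle(\nabla\mf_{\tau})'h-ch,h\rangle\,dV$. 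Since both linearized operators are $L^2$-self-adjoint, polarization upgrades this equality of quadratic forms on $\overline{S}_0^2(T^{*}M)$ to the operator identity \eqref{Ftp2}.

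The main obstacle is the bookkeeping in the second paragraph: one must track the signs and combinatorial coefficients carefully to see the exact cancellation of the $\int(tr_{g_s}g_s'')$ and $\int(tr_{g_s}g_s')^2$ terms, and the precise role of the critical normalization $c=-\tfrac{p}{2}Vol^{-1}\mf_{\tau}$ is what makes this cancellation work. A secondary subtlety is the passage in \eqref{Ftp2} from an equality of quadratic forms to an equality of operators: this is why I would work on the volume-normalized slice where $(Vol)'$ vanishes and appeal to self-adjointness, rather than differentiating the gradient of $\tmf_{\tau}$ directly, since the latter leaves residual terms proportional to $g$ whenever $(Vol)'\neq0$.
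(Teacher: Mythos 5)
Your derivation of \eqref{2vf} is precisely the paper's: substitute \eqref{fdd} and \eqref{vdd} into \eqref{tdd}, then use $c=-\tfrac{p}{2}(Vol)^{-1}\mathcal{F}_{\tau}$ from \eqref{foc2}, whereupon the $\int(tr_{g_s}g_s'')\,dV$ and $\int(tr_{g_s}g_s')^2\,dV$ terms cancel identically and the $\int|g_s'|^2\,dV$ terms combine to $-c\,(Vol)^p\int|g_s'|^2\,dV$, exactly the bookkeeping you describe. The paper states \eqref{Ftp2} without a separate argument and your reading via second variations is the intended one; the only caveat is that polarization of the quadratic forms on $\overline{S}_0^2(T^{*}M)$ determines the operator only modulo constant multiples of $g$ (the $L^2$-orthogonal complement of that slice), a residue that the direct linearization you allude to removes, since linearizing $\nabla \tilde{\mathcal{F}}_{\tau} = (Vol)^p\big(\nabla \mathcal{F}_{\tau} + \tfrac{p}{2}(Vol)^{-1}\mathcal{F}_{\tau}\, g\big)$ at the critical metric leaves exactly the extra term $(p+1)\,c\,(Vol)'\,g$, which vanishes precisely when $\int tr_g h\ dV_g = 0$.
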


\begin{proof}
Substituting \eqref{fdd} into \eqref{tdd}, and
using \eqref{vdd},  we have
\begin{align}
\begin{split}
(\tmf_{\tau} [g_s])''
&= -p(p+1) (Vol)^{p-2} \big( (Vol)' \big)^2 \cdot \mf_{\tau}
+ p (Vol)^{p-1} (Vol)'' \cdot \mf_{\tau} + (Vol)^p \cdot (\mf_{\tau})''\\
& =  -p(p+1) (Vol)^{p-2} \big( (Vol)' \big)^2 \cdot \mf_{\tau}
+ (Vol)^p \Big(  \int \langle (\nabla \mf_{\tau})' (g_s) g_s', g_s' \rangle dV \Big)
\\
& +  p (Vol)^{p-1} \cdot \mf_{\tau} \cdot
\Big( - \frac{1}{2} \int_M  | g_s'|^2 dV
+ \frac{1}{2} \int_M ( tr_{g_s} g_s'') dV + \frac{1}{4}
\int ( tr_{g_s} g_s')^2 dV \Big)\\
&+ c (Vol)^p \Big(  - 2 \int |g_s'|^2 dV + \int (tr_{g_s} g_s'') dV
+ \frac{1}{2} \int  (tr_{g_s} g_s')^2 dV \Big).
\end{split}
\end{align}
From \eqref{foc2}, we have that
\begin{align}
c = - \frac{p}{2} (Vol)^{-1} \cdot \mf_{\tau},
\end{align}
and substituting this into the above yields
\begin{align}
\begin{split} \label{Fpp22}
(\tmf_{\tau}[g_s])''
& =  -p(p+1) (Vol)^{p-2} \big( (Vol)' \big)^2 \cdot \mf_{\tau}
+  (Vol)^p \Big(  \int \langle (\nabla \mf_{\tau})' (g_s) g_s', g_s' \rangle dV \Big)\\
& +  p (Vol)^{p-1} \cdot \mf_{\tau} \cdot
\Big( - \frac{1}{2} \int_M  | g_s'|^2 dV
+ \frac{1}{2} \int_M ( tr_{g_s} g_s'') dV + \frac{1}{4}
\int ( tr_{g_s} g_s')^2 dV \Big)\\
& - \frac{p}{2} (Vol)^{p-1}\cdot \mf_{\tau} \cdot \Big( - 2 \int |g_s'|^2 dV +  \int (tr_{g_s} g_s'') dV
+ \frac{1}{2} \int  (tr_{g_s} g_s')^2 dV \Big),
\end{split}
\end{align}
whichs yields the claimed formula.
\end{proof}

\begin{remark} \label{Jrem}
{\em  At this point it is important to make a remark about the
use of the terms second variation, linearized Euler-Lagrange equation,
and Jacobi operator, and the amount of regularity necessary for these
to be well-defined.

First, the $L^2$-gradient of $\tmf_{\tau}$ calculated in Section \ref{LocalSec} is
clearly fourth order in the metric, since it is obtained by integrating by parts.
Consequently, the linearized Euler-Lagrange equation
is a fourth-order operator on tensor fields. We also refer to this
operator as the Jacobi operator. The second variation can be
expressed as a bilinear form on symmetric tensor fields,
as in (\ref{Fpp22}) above.

Since $\tmf_{\tau}$ only involves two derivatives of the metric,
the second variation is actually second order in the infinitesimal variation.
Therefore, although we will write $\int \langle Jh, h \rangle\ dV$
to denote the second variation (where $J$ is the Jacobi operator),
this expression actually makes sense as long as $h  \in C^{2}(S^2(T^{*}M))$.}
\end{remark}

%%%%%%%%%%%%%%%%%%%%%%%%%%%%%%%%%
\subsection{Jacobi operator}
\label{FSV}
%%%%%%%%%%%%%%%%%%%%%%%%%%%%%%%%%%%%%

In this subsection, we will compute an expression for the
linearized Euler-Lagrange equation of $\mathcal{F}_{\tau}$ for an arbitrary variation $h$ of
an Einstein metric; i.e., the operator $(\nabla \mathcal{F}_{\tau})'h$.
Subsequently, in Sections \ref{TTsec} and \ref{confsec}
we will examine the action of this operator on
transverse-traceless tensors and pure trace tensors.

The first step is to linearize the hessian of the Ricci tensor at an
Einstein metric:
\begin{proposition}
\label{hrlin}
If $g$ is Einstein,  satisfying $Ric = \frac{R}{n} g$, then with $h = g'$,
then
\begin{align}
(\n_l \n_j R_{kp})' =  \n_l \n_j (R_{kp})' - \frac{R}{n} \cdot \n_l \n_j h_{kp}.
\end{align}
\end{proposition}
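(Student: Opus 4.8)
The plan is to treat $\n_l \n_j R_{kp}$ as the covariant derivative of the $(0,3)$-tensor $S_{jkp} := \n_j R_{kp}$ and to linearize carefully, remembering that \emph{both} the Ricci tensor and the Levi-Civita connection depend on the metric. Two inputs drive the computation. First, the standard formula for the linearization of the Christoffel symbols,
\begin{align*}
(\Gamma^m_{ij})' = \tfrac{1}{2} g^{mq}\big( \n_i h_{jq} + \n_j h_{iq} - \n_q h_{ij} \big) =: \gamma^m_{ij},
\end{align*}
which is a genuine tensor (the difference of two connections). Second, the fact that at an Einstein metric with $Ric = \tfrac{R}{n} g$ the scalar curvature $R$ is constant (by the contracted Bianchi identity, since $n \geq 3$), so that $\n_j R_{kp} = \tfrac{R}{n}\, \n_j g_{kp} = 0$ identically at the background metric.

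First I would linearize the outer covariant derivative. The variation of $\n_l S_{jkp}$ produces $\n_l (S_{jkp})'$ together with the three terms in which $\gamma$ contracts against the undifferentiated $S$:
\begin{align*}
(\n_l S_{jkp})' = \n_l (S_{jkp})' - \gamma^m_{lj} S_{mkp} - \gamma^m_{lk} S_{jmp} - \gamma^m_{lp} S_{jkm}.
\end{align*}
Because $S = \n Ric$ vanishes at the background Einstein metric, all three $\gamma$-contraction terms drop out, leaving $(\n_l \n_j R_{kp})' = \n_l\big[ (\n_j R_{kp})' \big]$.

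Next I would linearize the inner derivative. Since $\n_j R_{kp} = \partial_j R_{kp} - \Gamma^m_{jk} R_{mp} - \Gamma^m_{jp} R_{km}$, its variation is $(\n_j R_{kp})' = \n_j (R_{kp})' - \gamma^m_{jk} R_{mp} - \gamma^m_{jp} R_{km}$. Substituting $R_{mp} = \tfrac{R}{n} g_{mp}$ and lowering the free index of $\gamma$, the two correction terms become $\tfrac{R}{n}\big( \gamma_{pjk} + \gamma_{kjp} \big)$, where $\gamma_{qij} = \tfrac{1}{2}(\n_i h_{jq} + \n_j h_{iq} - \n_q h_{ij})$. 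The short computation $\gamma_{pjk} + \gamma_{kjp} = \n_j h_{kp}$ (the $\n_k$ and $\n_p$ terms cancel in pairs, using symmetry of $h$) then gives $(\n_j R_{kp})' = \n_j (R_{kp})' - \tfrac{R}{n} \n_j h_{kp}$. Applying $\n_l$ and invoking the previous step yields the claimed identity.

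The only genuinely essential point — and the place where the Einstein hypothesis enters twice — is the vanishing of $\n Ric$ at the background: it is what kills the three outer $\gamma S$-terms and what makes the inner correction collapse to a clean multiple of the Hessian of $h$. There is no real obstacle; the argument is elementary once one is disciplined about varying both the tensor and the connection, and the one thing worth checking explicitly is the algebraic cancellation $\gamma_{pjk} + \gamma_{kjp} = \n_j h_{kp}$.
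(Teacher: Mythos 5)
Your proof is correct and takes essentially the same route as the paper's: both use the linearized Christoffel formula, the fact that $\nabla Ric = 0$ at the Einstein background (which kills the correction terms coming from the outer derivative), and the Einstein condition to collapse the inner correction terms via the cancellation $\gamma_{pjk} + \gamma_{kjp} = \nabla_j h_{kp}$. Your two-step organization (linearize the outer derivative of $S = \nabla Ric$ first, then the inner one) is just a cleaner repackaging of the paper's single-pass computation.
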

\begin{proof}
We write out
\begin{align}
\n_l ( \n_j R_{kp}) = \partial_l ( \n_j R_{kp})
- \n_q R_{kp} \Gamma^q_{lj} - \n_j R_{qp} \Gamma^q_{lk}
- \n_j R_{kq} \Gamma^q_{lp},
\end{align}
and
\begin{align}
\n_i R_{jk} = \partial_i R_{jk} - R_{pk} \Gamma^p_{ij} - R_{jp} \Gamma^p_{ik}.
\end{align}
Next we linearize, with $g' = h$. We use tensorality  to simply toss out terms
which have Christoffel symbols, and replace derivatives with covariant
derivatives to obtain
\begin{align}
\begin{split}
(\n_l \n_j R_{kp})' &= \n_l \n_j (R_{kp})' - R_{qp} \n_l ( \Gamma^q_{jk})'
- R_{kq} \n_l ( \Gamma^q_{jp})'\\
& -  \n_q R_{kp} ( \Gamma^q_{lj})' - \n_j R_{qp} (\Gamma^q_{lk})'
- \n_j R_{kq} ( \Gamma^q_{lp})'.
\end{split}
\end{align}
Since Ricci is parallel, the last three terms vanish,
and using the formula
\begin{align}
(\Gamma^k_{ij})' = \frac{1}{2} g^{kl} ( \n_i h_{jl} + \n_j h_{il} - \n_l h_{ij}),
\end{align}
we obtain
\begin{align}
\begin{split}
\label{e1}
&(\n_l \n_j R_{kp})' = \n_l \n_j (R_{kp})'- R_{qp} \n_l
\left( \frac{1}{2} g^{qm} ( \n_j h_{km} + \n_k h_{jm} - \n_m h_{jk}  ) \right)\\
&- R_{kq} \n_l
\left( \frac{1}{2} g^{qm} ( \n_j h_{pm} + \n_p h_{jm} - \n_m h_{jp}  ) \right)
=  \n_l \n_j (R_{kp})' - \frac{R}{n} \cdot \n_l \n_j h_{kp}.
\end{split}
\end{align}
\end{proof}

With these formulas we can linearize the Euler-Lagrange equations of $\mf_0$:

\begin{theorem}
\label{ric2p}
If $g$ is Einstein, then
\begin{align}
\begin{split}
\label{Ricvar}
(\nabla \mf_0)'_{pq}
&= - \Delta (Ric)_{pq}' + \frac{R}{n} \Delta h_{pq}
-2 \Big(  \frac{R}{n} (Ric_{pq})' -  \frac{R}{n} R_{pkql} h^{kl}
+ R_{pkql} g^{ki} g^{lj} (Ric')_{ij} \Big) \\
& + \nabla^2_{pq} ( tr_g Ric') - \frac{R}{n} \nabla^2_{pq} (tr_g h)
- \frac{1}{2} \Big( \Delta ( tr_g Ric') - \frac{R}{n} \Delta(tr_g h)
\Big) g_{pq} \\
& + \Big( - \frac{R^2}{n^2} tr_g h +  \frac{R}{n} tr_g (Ric') \Big)g_{pq}
+ \frac{R^2}{2n} h_{pq}.
\end{split}
\end{align}
\end{theorem}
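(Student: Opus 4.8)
The plan is to linearize the gradient formula \eqref{gradformF} for $\nabla \mf_0$ term by term along a path $g_s$ with $g_0 = g$ Einstein and $g_s'|_{s=0} = h$, exploiting two structural facts about an Einstein metric: the scalar curvature $R$ is constant (so $\n R = 0$ and $\Delta R = 0$), and the Ricci tensor is parallel with $Ric = (R/n)g$. These force many of the naive variation terms to drop out. Throughout I would use the elementary identities $(g^{ij})' = -h^{ij}$ and $R' = \tr Ric' - \frac{R}{n}\tr h$ (the latter from $R = g^{ij}R_{ij}$), together with Proposition \ref{hrlin} for the Hessian of the Ricci tensor.

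First I would handle the leading term $-\Delta(Ric)_{pq}$, where $\Delta = g^{lj}\n_l\n_j$ is the rough Laplacian. Linearizing the factor $g^{lj}$ produces $\n_l\n_j R_{pq}$, which vanishes since $Ric$ is parallel; the surviving contribution $g^{lj}(\n_l\n_j R_{pq})'$ equals, by Proposition \ref{hrlin}, $\Delta(Ric')_{pq} - \frac{R}{n}\Delta h_{pq}$, reproducing the first line of \eqref{Ricvar}. The two scalar-Hessian terms $\n_p\n_q R$ and $-\frac12(\Delta R)g_{pq}$ are straightforward: because $\n R = 0$ and $\Delta R = 0$ at $g$, the connection and metric variations contribute nothing, so they linearize to $\n^2_{pq}R'$ and $-\frac12(\Delta R')g_{pq}$; substituting $R' = \tr Ric' - \frac{R}{n}\tr h$ gives the second line. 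The zeroth-order term $\frac12|Ric|^2 g_{pq}$ splits as $\frac12(|Ric|^2)'g_{pq} + \frac12|Ric|^2 h_{pq}$; since $|Ric|^2 = R^2/n$ at $g$, the second summand is $\frac{R^2}{2n}h_{pq}$, while a direct contraction yields $(|Ric|^2)' = -\frac{2R^2}{n^2}\tr h + \frac{2R}{n}\tr Ric'$, producing the third line.

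The one term requiring genuine care is the curvature contraction $-2R_{pkql}R^{kl} = -2R_{pkql}g^{ki}g^{lj}R_{ij}$, where both the $(0,4)$ tensor and two inverse metrics must be differentiated. I would expand into four pieces: the variation of $R_{pkql}$, the two inverse-metric variations, and the variation of $R_{ij}$. Using $Ric = (R/n)g$ to collapse the metric contractions, the $(R_{pkql})'$ piece becomes $\frac{R}{n}g^{kl}(R_{pkql})'$, which I rewrite via $g^{kl}(R_{pkql})' = (Ric_{pq})' + h^{kl}R_{pkql}$ as $\frac{R}{n}(Ric_{pq})' + \frac{R}{n}R_{pkql}h^{kl}$; the two inverse-metric variations combine to $-\frac{2R}{n}R_{pkql}h^{kl}$; and the $(R_{ij})'$ piece gives $R_{pkql}g^{ki}g^{lj}(Ric')_{ij}$. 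Summing and multiplying by $-2$ yields exactly the bracketed $-2(\cdots)$ group spanning the first two lines of \eqref{Ricvar}. The main obstacle is precisely this bookkeeping: correctly tracking which contractions of $h^{kl}$ against the symmetries of $R_{pkql}$ survive, and checking that the two $h^{kl}R_{pkql}$ contributions of different origin combine with coefficient $-\frac{R}{n}$. Collecting all four groups then assembles \eqref{Ricvar}.
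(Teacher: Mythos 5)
Your proposal is correct and follows essentially the same route as the paper: the paper's proof consists precisely of linearizing \eqref{gradformF} term by term via the five formulae $(\Delta Ric)'$, $(\nabla^2 R)'$, $((\Delta R)g)'$, $(|Ric|^2 g)'$, and $(R_{pkql}R^{kl})'$, all derived from Proposition \ref{hrlin} together with $Ric = (R/n)g$ and $R$ constant. Your treatment of the contraction term, including the identity $g^{kl}(R_{pkql})' = (Ric_{pq})' + h^{kl}R_{pkql}$ and the combination of the $h^{kl}R_{pkql}$ contributions into the coefficient $-\tfrac{R}{n}$, reproduces the paper's fifth formula exactly, merely with more of the bookkeeping written out.
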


\begin{proof}
This is a consequence of the following formulae,
which follow from Proposition \ref{hrlin}:
\begin{align}
( \Delta Ric)' &= \Delta (Ric)' - \frac{R}{n} \Delta h,\\
( \nabla^2 R)' &= \nabla^2 ( tr_g Ric') - \frac{R}{n} \nabla^2 (tr_g h),\\
\big( (\Delta R) g \big)' & = \Big( \Delta ( tr_g Ric') - \frac{R}{n} \Delta(tr_g h)
\Big) g,\\
(|Ric|^2 g) ' &= \Big( -2 \frac{R^2}{n^2} tr_g h + 2 \frac{R}{n} tr_g (Ric') \Big)g
+ \frac{R^2}{n} h, \\
(R_{pkql}R^{kl} )' &= \frac{R}{n} (Ric_{pq})'
- \frac{R}{n} R_{pkql} h^{kl} + R_{pkql} g^{ki} g^{lj} (Ric')_{ij}.
\end{align}
\end{proof}

Next, we linearize the Euler-Lagrange equations of $\mathcal{S} = \int R^2$:
\begin{theorem}
\label{r2p}
If $g$ is Einstein, then
\begin{align}
\begin{split} \label{Rvar}
(\nabla \mathcal{S})'
&= 2 \Big( \nabla^2 (tr Ric') - \frac{R}{n} \nabla^2 (tr h) \Big)
- 2 \Big( \Delta (tr Ric')  - \frac{R}{n} \Delta (tr h) \Big) g \\
&- 2 \Big( \frac{R}{n} R' g + R Ric' \Big) + \frac{1}{2} ( 2 R R' g + R^2 h ).
\end{split}
\end{align}
\end{theorem}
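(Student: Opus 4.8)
The plan is to linearize the gradient formula \eqref{gradformS},
\[
( \nabla \mathcal{S} )_{pq} = 2 \n_p \n_q R - 2 (\Delta R) g_{pq} - 2 R R_{pq} + \frac{1}{2} R^2 g_{pq},
\]
one term at a time, exploiting the Einstein hypotheses $Ric = (R/n)g$ and $R \equiv \mathrm{const}$. The key structural input is that on an Einstein metric the scalar curvature is constant, so $\n R = 0$, $\Delta R = 0$, and $Ric$ is parallel; this will cause all connection-variation contributions to collapse and will let me reuse the traced consequences of Proposition \ref{hrlin} already recorded in the proof of Theorem \ref{ric2p}.

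First I would compute the scalar linearization $R' = (g^{jk}R_{jk})' = tr_g(Ric') - \tfrac{R}{n}tr_g h$, where the second term is the contribution of $(g^{jk})' = -h^{jk}$ contracted against $R_{jk} = (R/n)g_{jk}$. For the two derivative terms, I would invoke Proposition \ref{hrlin}: writing $\n_l\n_j R = g^{kp}\n_l\n_j R_{kp}$ and using that $Ric$ is parallel (so the $(g^{kp})'$ term drops), the proposition yields $(\n^2 R)' = \n^2(tr_g Ric') - \tfrac{R}{n}\n^2(tr_g h)$, and the analogous trace gives $((\Delta R)g)' = \big(\Delta(tr_g Ric') - \tfrac{R}{n}\Delta(tr_g h)\big)g$, since $\Delta R = 0$ kills the $g'$ piece. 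These are exactly the identities listed in the proof of Theorem \ref{ric2p}.

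The remaining two terms are purely algebraic. By the product rule and $R_{pq} = (R/n)g_{pq}$, one has $(R R_{pq})' = R' R_{pq} + R(Ric')_{pq} = \tfrac{R}{n}R' g_{pq} + R(Ric')_{pq}$, and $(R^2 g)' = 2RR'g + R^2 h$. Assembling $2(\n^2 R)' - 2((\Delta R)g)' - 2(R R_{pq})' + \tfrac{1}{2}(R^2 g)'$ and substituting these pieces reproduces \eqref{Rvar} verbatim.

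The computation is routine; the only place demanding care is the bookkeeping of the trace corrections. The $-\tfrac{R}{n}tr_g h$ term in $R'$ and, correspondingly, the $-\tfrac{R}{n}\n^2(tr_g h)$ and $-\tfrac{R}{n}\Delta(tr_g h)$ terms in the derivative linearizations arise solely from varying the metric (and inverse metric) appearing in the trace and in the covariant derivatives; one must confirm that no further connection-variation terms survive, which is precisely guaranteed by $\n R = 0$ and $\Delta R = 0$. Everything else follows by linearity.
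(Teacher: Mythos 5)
Your proposal is correct and follows the paper's own route: the paper likewise obtains \eqref{Rvar} by linearizing \eqref{gradformS} term by term, citing the formulae $(\nabla^2 R)' = \nabla^2(tr_g\, Ric') - \frac{R}{n}\nabla^2(tr_g\, h)$ and $((\Delta R)g)' = \big(\Delta(tr_g\, Ric') - \frac{R}{n}\Delta(tr_g\, h)\big)g$ from the proof of Theorem \ref{ric2p} together with the two algebraic identities $(R\cdot Ric)' = \frac{R}{n}R' g + R\,(Ric)'$ and $(R^2 g)' = 2RR'g + R^2 h$. Your extra bookkeeping (the formula $R' = tr_g(Ric') - \frac{R}{n}tr_g\, h$ and the verification that connection-variation terms die because $\nabla R = 0$ and $Ric$ is parallel) just makes explicit what the paper leaves implicit.
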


\begin{proof}
This is a consequence of the formulae in the proof of Theorem \ref{ric2p},
together with the following:
\begin{align}
(R \cdot Ric)' &= \frac{R}{n} (R') g + R (Ric)'\\
(R^2 g)' &= 2 R R' g + R^2 h.
\end{align}
\end{proof}

To understand the variational properties of $\tilde{\mathcal{F}}_{\tau}$
we will rely on a splitting
formula which reduces the problem to checking
two types of variations:  those which are traceless and
divergence-free (transverse-traceless), and those which are
pure trace (conformal).  Therefore, we need to compute the
second variation (i.e., the Jacobi operator) on each type.

%%%%%%%%%%%%%%%%%%%%%%%%%%%
\subsection{Transverse-traceless variations}
\label{TTsec}
%%%%%%%%%%%%%%%%%%%

In this section, we assume that the variation $h$ satisfies
$\delta h = 0$ and $tr_g h = 0$. Define
\begin{align}
(\Delta_L h)_{ij} = \Delta h_{ij} + 2 R_{ipjq} h^{pq} - \frac{2}{n} R h_{ij},
\end{align}
and note that
\begin{align}
Ric'h = -\frac{1}{2} \Delta_L h.
\end{align}

\begin{theorem} \label{F0caseTT} In dimension $n$,
if $g$ is Einstein and $h$ is transverse-traceless, then
the linearized Euler-Lagrange equation of $ \tmf_0$ is
\begin{align}
\label{norf0}
(\nabla \tmf_0)'h =
\frac{1}{2} \Delta_L^2 h + \frac{3}{n} R \Delta_L h + \frac{4}{n^2} R^2 h.
\end{align}
\end{theorem}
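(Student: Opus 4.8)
The plan is to derive \eqref{norf0} by specializing the general formula \eqref{Ricvar} for $(\nabla\mf_0)'$ to a transverse-traceless $h$, rewriting every term through the Lichnerowicz Laplacian $\Delta_L$, and then passing from $\mf_0$ to the normalized functional $\tmf_0$ by means of \eqref{Ftp2}.

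First I would exploit the hypotheses $tr_g h = 0$ and $\delta_g h = 0$. Trace-freeness kills every term of \eqref{Ricvar} carrying a factor $tr_g h$. Moreover the linearized scalar curvature vanishes: $R' = -\Delta(tr_g h) + \delta^2 h - \langle Ric, h\rangle = 0$, since $tr_g h = 0$, $\delta^2 h = \nabla^i\nabla^j h_{ij} = 0$, and $\langle Ric, h\rangle = \tfrac{R}{n}tr_g h = 0$ at an Einstein metric. Because $R' = tr_g(Ric') - \tfrac{R}{n}tr_g h$, this also forces $tr_g(Ric') = 0$, so the terms $\nabla^2_{pq}(tr_g Ric')$ and $(\Delta tr_g Ric')g_{pq}$ drop out as well. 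Into the three surviving occurrences of $Ric'$ I would then substitute the identity $Ric' = -\tfrac12\Delta_L h$ recorded in this subsection.

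The remaining expression mixes the rough Laplacian $\Delta$, the bare curvature action $R_{pkql}h^{kl}$, and $\Delta_L$. To consolidate it I would use the definition $(\Delta_L T)_{ij} = \Delta T_{ij} + 2R_{ipjq}T^{pq} - \tfrac{2R}{n}T_{ij}$ twice. Applying it to $h$ converts the lone term $\tfrac{R}{n}\Delta h$ into $\tfrac{R}{n}\Delta_L h - \tfrac{2R}{n}R_{pkql}h^{kl} + \tfrac{2R^2}{n^2}h$, at which point the two bare curvature contributions $R_{pkql}h^{kl}$ cancel. What survives has the shape $\tfrac12\Delta(\Delta_L h) + R_{pkql}(\Delta_L h)^{kl}$ together with multiples of $\Delta_L h$ and of $h$. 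Applying the definition of $\Delta_L$ a second time, now to the symmetric $2$-tensor $\Delta_L h$, gives $\Delta(\Delta_L h) + 2R_{pkql}(\Delta_L h)^{kl} = \Delta_L^2 h + \tfrac{2R}{n}\Delta_L h$, so the top-order pair folds into $\tfrac12\Delta_L^2 h + \tfrac{R}{n}\Delta_L h$. Collecting coefficients I expect the unnormalized operator
\[
(\nabla\mf_0)'h = \frac12\Delta_L^2 h + \frac{3}{n}R\,\Delta_L h + \Big(\frac{2R^2}{n^2} + \frac{R^2}{2n}\Big)h.
\]

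Finally I would normalize. Proposition~\ref{Ricc} gives $\nabla\mf_0[g] = c\,g$ with $c = \tfrac{n-4}{2n^2}R^2$, and \eqref{Ftp2} then yields $(\nabla\tmf_0)'h = (\nabla\mf_0)'h - c\,h$. Subtracting $c\,h$ changes only the zeroth-order coefficient, and over the common denominator $2n^2$ one computes $\tfrac{2R^2}{n^2} + \tfrac{R^2}{2n} - \tfrac{n-4}{2n^2}R^2 = \tfrac{4R^2}{n^2}$, which is exactly \eqref{norf0}. I expect the main obstacle to be purely organizational: the formula \eqref{Ricvar} presents the operator in terms of $\Delta$ and the bare curvature action, whereas the target is built solely from $\Delta_L$, so the derivation hinges on the precise cancellation of the $R_{pkql}h^{kl}$ terms and on recognizing that the definition of $\Delta_L$ may be reapplied verbatim to $\Delta_L h$ (which uses only that $\Delta_L$ acts on all symmetric $2$-tensors, and requires no separate verification that the transverse-traceless condition is preserved).
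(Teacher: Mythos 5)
Your proposal is correct and takes essentially the same route as the paper: specialize \eqref{Ricvar} to transverse-traceless $h$, substitute $Ric' = -\tfrac{1}{2}\Delta_L h$ and reassemble everything through the definition of $\Delta_L$ to obtain the unnormalized operator $\tfrac{1}{2}\Delta_L^2 h + \tfrac{3}{n}R\,\Delta_L h + \tfrac{n+4}{2n^2}R^2 h$ as in \eqref{unnf0}, then subtract $c\,h$ with $c = \tfrac{n-4}{2n^2}R^2$ from Propositions \ref{2vfpr} and \ref{Ricc}. The only differences are cosmetic bookkeeping (the paper first groups $-\Delta(Ric)' - 2R_{pkql}(Ric')^{kl}$ into $-\Delta_L(Ric)' - \tfrac{2}{n}R\,(Ric)'$ before substituting, while you substitute first and apply the definition of $\Delta_L$ twice) together with your explicit justification via $R'=0$ that the $tr_g(Ric')$ terms vanish, a step the paper leaves implicit.
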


\begin{proof}
If $h$ is transverse-traceless, then \eqref{Ricvar} simplifies to
\begin{align}
\begin{split}
(\nabla \mf_0)'_{pq}
&= - \Delta (Ric)_{pq}' + \frac{R}{n} \Delta h_{pq}\\
&-2 \Big(  \frac{R}{n} (Ric_{pq})' - \frac{R}{n} R_{pkql} h^{kl}
+ R_{pkql} g^{ki} g^{lj} (Ric')_{ij} \Big) + \frac{R^2}{2n} h_{pq}.
\end{split}
\end{align}
We collect terms as follows
\begin{align}
\Big(- \Delta (Ric)_{pq}' - 2  R_{pkql} g^{ki} g^{lj} (Ric')_{ij}\Big)
+ \frac{R}{n} \Big( \Delta h_{pq} - 2 (Ric_{pq})'
 + 2 R_{pkql} h^{kl} \Big)
+  \frac{R^2}{2n} h_{pq}.
\end{align}
The first term is
\begin{align}
- \Delta (Ric)_{pq}' - 2  R_{pkql} g^{ki} g^{lj} (Ric')_{ij}
= - \Delta_L (Ric)_{pq}' - \frac{2}{n} R (Ric')_{pq}.
\end{align}
Using this, the entire expression is rewritten as
\begin{align}
 - \Delta_L (Ric)_{pq}' + \frac{R}{n} \Big( \Delta h_{pq} - 4 (Ric_{pq})'
 + 2 R_{pkql} h^{kl} \Big)
+  \frac{R^2}{2n} h_{pq}.
\end{align}
In the second term we have the expression
\begin{align}
\begin{split}
 \Delta h_{pq} - 4 (Ric_{pq})'
 + 2 R_{pkql} h^{kl} &=  \Delta h_{pq} + 2 R_{pkql} h^{kl} - 4 Ric_{pq}'\\
& = \Delta_L h_{pq} + \frac{2}{n} R h_{pq} + 2 \Delta_L h_{pq}
= 3 \Delta_L h_{pq} + \frac{2}{n} R h_{pq}.
\end{split}
\end{align}
Converting $Ric'$ into $\Delta_L$, we obtain
\begin{align}
\label{unnf0}
\frac{1}{2} \Delta_L^2 h_{pq} + \frac{3}{n} R \Delta_L h_{pq}
+ \frac{n+4}{2n^2} R^2 h_{pq}.
\end{align}
Finally, for the normalized functional, we must add to
\eqref{unnf0} the correction term from Proposition \ref{2vfpr}, $- c h$,
with the constant $c$ determined in Proposition \ref{Ricc}, which
yields \eqref{norf0}.
\end{proof}
For $\mathcal{S}$, there is a degeneracy, which is reflected in
the following theorem in which the linearized Euler-Lagrange
equations restricted to transverse-traceless are seen to be second order
(instead of the expected fourth order).
\begin{theorem} \label{ScaseTT} In dimension $n$,
if $g$ is Einstein and $h$ is transverse-traceless, then
the linearized Euler-Lagrange equation of $\tilde{\mathcal{S}}$ is
\begin{align}
\label{nors0}
(\nabla \tilde{\mathcal{S}})'h = R \Delta_L h + \frac{2}{n} R^2 h.
\end{align}
\end{theorem}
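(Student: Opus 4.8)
The plan is to mirror the proof of Theorem~\ref{F0caseTT}: start from the un-normalized linearization \eqref{Rvar} of Theorem~\ref{r2p}, restrict to transverse-traceless $h$, and then pass to the normalized functional using Proposition~\ref{2vfpr}. The essential point — and the source of the degeneracy noted just before the statement — is that every term in \eqref{Rvar} involving a trace of $Ric'$ or the scalar $R'$ vanishes on TT variations, leaving only algebraic (zeroth- and second-order) terms in $h$.

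First I would establish the two vanishing facts $tr_g Ric' = 0$ and $R' = 0$. Since $Ric'h = -\tfrac12 \Delta_L h$, it suffices to check $tr_g(\Delta_L h) = 0$: for Einstein $g$ with $tr_g h = 0$, each summand of $\Delta_L h = \Delta h + 2R_{ipjq}h^{pq} - \tfrac{2}{n}Rh$ is trace-free, because $g^{ij}\Delta h_{ij} = \Delta(tr_g h) = 0$, $g^{ij}R_{ipjq}h^{pq} = R_{pq}h^{pq} = \tfrac{R}{n}tr_g h = 0$, and the last term is a multiple of $tr_g h$. Hence $tr_g Ric' = 0$. Using the relation $R' = tr_g Ric' - \tfrac{R}{n}tr_g h$ on an Einstein metric then gives $R' = 0$ as well.

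Feeding $tr_g h = 0$, $tr_g Ric' = 0$, and $R' = 0$ into \eqref{Rvar}, all of the Hessian and Laplacian trace terms, together with the terms $\tfrac{R}{n}R'g$ and $2RR'g$, drop out, and I am left with
\begin{align*}
(\nabla \mathcal{S})'h = -2R\, Ric'h + \tfrac12 R^2 h = R\,\Delta_L h + \tfrac12 R^2 h,
\end{align*}
where the second equality again uses $Ric'h = -\tfrac12\Delta_L h$. This is the un-normalized Jacobi operator on TT variations.

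Finally I would apply Proposition~\ref{2vfpr}, which gives $(\nabla \tilde{\mathcal{S}})'h = (\nabla \mathcal{S})'h - c\,h$, where $c$ is determined by $\nabla\mathcal{S} = c\cdot g$. By Proposition~\ref{Rc} we have $c = \tfrac{n-4}{2n}R^2$, so the coefficient of $h$ becomes $\tfrac12 R^2 - \tfrac{n-4}{2n}R^2 = \tfrac{2}{n}R^2$, producing exactly \eqref{nors0}. I do not anticipate any real obstacle: the computation is purely algebraic once the trace terms are eliminated, and the only step meriting care is the verification that $R' = 0$ (equivalently $tr_g Ric' = 0$) on TT variations — precisely the cancellation that collapses the expected fourth-order operator down to second order.
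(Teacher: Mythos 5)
Your proposal is correct and takes essentially the same route as the paper's proof: restrict \eqref{Rvar} to transverse-traceless $h$, note that every term involving $tr_g\, Ric'$ or $R'$ vanishes so only $-2R\,Ric'h = R\Delta_L h$ and $\tfrac12 R^2 h$ survive, then subtract $c\,h$ with $c = \tfrac{n-4}{2n}R^2$ from Propositions \ref{2vfpr} and \ref{Rc} to obtain the coefficient $\tfrac{2}{n}R^2$. Your explicit term-by-term check that $tr_g(\Delta_L h) = 0$, hence $tr_g\,Ric' = 0$ and $R' = 0$ on TT variations, is simply a spelled-out version of the paper's terse observation that these are the only contributing terms, so there is nothing further to flag.
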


\begin{proof}
The only terms which will contribute anything from Theorem \ref{r2p} are
\begin{align}
(-2  R Ric)' = R \Delta_L h,
\end{align}
and the term
\begin{align}
\frac{1}{2} (R^2 g)' = \frac{1}{2} R^2 h,
\end{align}
so that we obtain
\begin{align}
\label{unns0}
(\nabla {\mathcal{S}})'h = R \Delta_L h + \frac{1}{2} R^2 h.
\end{align}
For the normalized functional, we must add to
\eqref{unns0} the correction term from Proposition \ref{2vfpr}, $- c h$,
with the constant $c$ determined in Proposition \ref{Rc}, which
yields \eqref{nors0}.
\end{proof}

 Combining Theorems \ref{F0caseTT} and \ref{ScaseTT}, we obtain
a useful factorization of the linearized Euler-Lagrange equations.
\begin{theorem} \label{JTT} In dimension $n$,
if $g$ is Einstein, and $h$ is transverse-traceless, then
the linearized Euler-Lagrange equation of $\tilde{\mathcal{F}}_{\tau}$ is
\begin{align}
\label{ttnvar}
(\nabla \tilde{\mathcal{F}}_{\tau})'h &=
\frac{1}{2} \Big( \Delta_L + \frac{2}{n} R \Big)
\Big( \Delta_L + \Big( \frac{4}{n} + 2\tau \Big) R \Big) h.
\end{align}
\end{theorem}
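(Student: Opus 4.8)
The plan is to reduce everything to the two transverse-traceless computations already in hand and a single algebraic factorization. Since $\mf_\tau = \mf_0 + \tau \mathcal{S}$ as functionals, and since the volume-normalizing factor $Vol^{4/n-1}$ is common to all terms, we have $\tmf_\tau = \tmf_0 + \tau \tilde{\mathcal{S}}$. Taking the $L^2$-gradient and then linearizing are $\mathbb{R}$-linear operations on the functional, so
\[
(\nabla \tmf_\tau)'h = (\nabla \tmf_0)'h + \tau (\nabla \tilde{\mathcal{S}})'h .
\]
First I would simply substitute the transverse-traceless formulas \eqref{norf0} and \eqref{nors0} from Theorems \ref{F0caseTT} and \ref{ScaseTT} into this identity.

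Adding \eqref{norf0} to $\tau$ times \eqref{nors0} and collecting the three types of terms $\Delta_L^2 h$, $R\,\Delta_L h$, and $R^2 h$ yields
\[
(\nabla \tmf_\tau)'h = \tfrac{1}{2}\Delta_L^2 h + \Big(\tfrac{3}{n}+\tau\Big) R\,\Delta_L h + \Big(\tfrac{4}{n^2}+\tfrac{2\tau}{n}\Big) R^2 h .
\]
The final step is to recognize this quadratic in $\Delta_L$ as the asserted product. The one point requiring care is that $\Delta_L$ and multiplication by $R$ commute; this holds precisely because $(M,g)$ is Einstein, so $R$ is a constant on the (connected) manifold. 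Granting this, expanding $\tfrac{1}{2}\big(\Delta_L + \tfrac{2}{n}R\big)\big(\Delta_L + (\tfrac{4}{n}+2\tau)R\big)$ produces $\tfrac{1}{2}\Delta_L^2$, the cross term $\tfrac{1}{2}\big(\tfrac{2}{n}+\tfrac{4}{n}+2\tau\big)R\,\Delta_L = (\tfrac{3}{n}+\tau)R\,\Delta_L$, and the zeroth-order term $\tfrac{1}{2}\cdot\tfrac{2}{n}\big(\tfrac{4}{n}+2\tau\big)R^2 = \big(\tfrac{4}{n^2}+\tfrac{2\tau}{n}\big)R^2$, matching all three coefficients exactly.

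I do not anticipate any genuine obstacle here: the entire content is the linear decomposition $\tmf_\tau = \tmf_0 + \tau\tilde{\mathcal{S}}$ together with a three-coefficient match. The only substantive inputs are the earlier Theorems \ref{F0caseTT} and \ref{ScaseTT}, which already absorb the volume-normalization correction $-c\,h$ from Proposition \ref{2vfpr} and the conversion of $Ric'$ into $\Delta_L$; the constancy of $R$ on an Einstein manifold is what legitimizes treating $\Delta_L$ and $R$ as commuting scalars when writing the operator in factored form.
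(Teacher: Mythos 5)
Your proposal is correct and is essentially the paper's own argument: Theorem \ref{JTT} is obtained there precisely by combining the transverse-traceless formulas \eqref{norf0} and \eqref{nors0} of Theorems \ref{F0caseTT} and \ref{ScaseTT} via $\tmf_{\tau} = \tmf_0 + \tau \tilde{\mathcal{S}}$ and factoring the resulting quadratic in $\Delta_L$. Your coefficient check and the remark that constancy of $R$ on an Einstein manifold justifies the factored form are exactly the (implicit) content of the paper's proof.
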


\begin{remark}{\em  Notice that for $\tau = -(1/n)$, the operator is a square,
\begin{align}
 \frac{1}{2} \Big( \Delta_L + \frac{2}{n} R \Big)^2 h.
\end{align}
This is not a surprise, since the functional in this
case is $\int |E|^2$,  and the kernel consists exactly of
the infinitesimal Einstein deformations, i.e., transverse-traceless
tensors satisfying $E' h = 0$.}
\end{remark}

%%%%%%%%%%%%%%%%%%%%%%%%%%%%%%%%%%%%%%%%%%%%%%%%%%%%%%%%%%%%%%%%%%%%%%%%%%%%%%%%%
\subsection{Conformal Variations}
\label{confsec}
%%%%%%%%%%%%%%%%%%%%%%%%%%%%%%%%%%%%%%%%%%%%%%%%%%%%%%%%%%%%%%%%%%%%%%%%%%%%%%%%%%

Next, we consider conformal variations of the functionals $\tmf_0$ and
$\tilde{\mathcal{S}}$
at an Einstein metric.
\begin{theorem}
\label{conff0}
If $h = f g$, and $g$ is Einstein, then the trace of the
Jacobi operator of $\tmf_0$ is given by
\begin{align} \label{Jfc}
tr(Jf) = tr[J (fg)] = \frac{n(n-1)}{2} \Delta^2 f - \frac{ n^2 - 10 n + 8 }{2n} R \Delta f
- \frac{n-4}{n} R^2 f.
\end{align}
\end{theorem}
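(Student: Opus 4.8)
The plan is to reduce the computation to the already-linearized unnormalized operator and then take a single trace. By the relation \eqref{Ftp2} in Proposition \ref{2vfpr}, the normalized and unnormalized Jacobi operators differ only by a zeroth-order correction, $(\nabla \tmf_0)'h = (\nabla \mf_0)'h - c\,h$, where Proposition \ref{Ricc} gives the constant $c = \frac{n-4}{2n^2}R^2$. Since everything is linear in $h$, I would substitute the conformal variation $h = fg$ into the Einstein linearization \eqref{Ricvar} of Theorem \ref{ric2p}, take the $g$-trace of the resulting symmetric two-tensor, and finally add the trace of the correction, namely $-c\,\mathrm{tr}_g(fg) = -cnf = -\frac{n-4}{2n}R^2 f$.

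The essential inputs are the conformal variation formulas. For $h = fg$ one has the standard identities $(Ric')_{ij} = -\frac{n-2}{2}\nabla_i\nabla_j f - \frac{1}{2}(\Delta f)g_{ij}$, and hence $\mathrm{tr}_g(Ric') = -(n-1)\Delta f$, together with $\mathrm{tr}_g h = nf$ and $h^{kl} = fg^{kl}$. I would record these at the outset, cross-checking $\mathrm{tr}_g(Ric')$ against the conformal change formula for the scalar curvature (which gives $R' = -fR - (n-1)\Delta f$ at an Einstein metric) as a guard against sign errors.

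With these identities in hand, I would trace \eqref{Ricvar} group by group. Most terms are immediate because $\Delta$ commutes with $\mathrm{tr}_g$, the metric being parallel. The only terms requiring care are the curvature contractions in the line $-2\big(\frac{R}{n}(Ric'_{pq}) - \frac{R}{n}R_{pkql}h^{kl} + R_{pkql}g^{ki}g^{lj}(Ric')_{ij}\big)$: here I would use the Einstein relation $g^{pq}R_{pkql} = R_{kl} = \frac{R}{n}g_{kl}$ to reduce $g^{pq}R_{pkql}h^{kl}$ to $Rf$ and $g^{pq}R_{pkql}g^{ki}g^{lj}(Ric')_{ij}$ to $\frac{R}{n}\mathrm{tr}_g(Ric')$. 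Collecting the contributions by type — $\Delta^2 f$, $R\Delta f$, and $R^2 f$ — the fourth-order terms should sum to $\frac{n(n-1)}{2}\Delta^2 f$, the second-order terms to $-\frac{n^2-10n+8}{2n}R\Delta f$, and the zeroth-order terms, once the normalization correction is included, to $-\frac{n-4}{n}R^2 f$, yielding \eqref{Jfc}.

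The main obstacle is purely the bookkeeping rather than any conceptual difficulty: formula \eqref{Ricvar} has eight distinct groups of terms, several of which feed into more than one of the three target types, so that a single misplaced sign or factor of $n$ propagates into the final coefficients. In particular the $R\Delta f$ coefficient is a sum of five contributions of mixed sign, and the $R^2 f$ coefficient comes out correctly only after the normalization term $-c\,h$ is added. A useful internal consistency check is that the leading coefficient $\frac{n(n-1)}{2}$ of $\Delta^2 f$ depends only on the pure-trace structure and not on $\tau$, matching the leading symbol one expects for the conformal part of the operator.
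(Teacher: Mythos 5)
Your proposal is correct and takes essentially the same route as the paper's proof: substitute $h = fg$ with $(Ric')_{ij} = -\tfrac{1}{2}\big((n-2)\nabla_i \nabla_j f + (\Delta f) g_{ij}\big)$ and $tr(Ric') = (1-n)\Delta f$ into \eqref{Ricvar} of Theorem \ref{ric2p}, take the $g$-trace using the Einstein contractions, and add the normalization correction $-c\, tr(fg) = -\tfrac{n-4}{2n}R^2 f$ coming from Propositions \ref{2vfpr} and \ref{Ricc}, and your stated intermediate coefficients all check out. One caveat on your final sanity check: the leading $\Delta^2 f$ coefficient is \emph{not} $\tau$-independent — by Theorem \ref{FtJf} it equals $\tfrac{(n-1)(n - 4\tau + 4n\tau)}{2}$, reducing to $\tfrac{n(n-1)}{2}$ only at $\tau = 0$ — so that particular check, as stated, rests on a false premise and would not guard against errors.
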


\begin{remark} {\em
As we will see below, we only need a formula for the trace of $J$, since we will
be viewing $J$ as a bilinear form restricted to variations of the form $h = fg$, and
\begin{align*}
\langle J(fg), fg \rangle_{L^2} = \langle tr(Jf), f \rangle_{L^2}.
\end{align*} }
\end{remark}

\begin{proof}
If $h = f g$, then we have
\begin{align}
\label{cnf1}
Ric' &= - \frac{1}{2} ( \Delta f g + (n-2) \n_i \n_j f)\\
\label{cnf2}
tr(Ric') & = (1-n) \Delta f.
\end{align}
Using Theorem \ref{ric2p}, we compute
\begin{align}
\begin{split}
tr (\nabla \mf_0)'(fg)
& = \frac{n(n-1)}{2} \Delta^2 f - \frac{ n^2 - 10 n + 8 }{2n} R  \Delta f
- \frac{n-4}{2n} R^2 f.
\end{split}
\end{align}
For the normalized functional, from Proposition \ref{2vfpr}
and Proposition \ref{Ricc}, we must add the term
\begin{align}
- c f = -\frac{n-4}{2n} f,
\end{align}
and the result follows.
\end{proof}

The analogous result for $\tilde{\mathcal{S}}$ is the following.
\begin{theorem}
\label{confs0}
If $h = f g$, and $g$ is Einstein, then the trace of the Jacobi operator
of  $\tilde{\mathcal{S}}$ is given by
\begin{align}
tr(J f) = 2(n-1)^2 \Delta^2 f - (n-6)(n-1) R \Delta f
- (n-4) R^2 f.
\end{align}
\end{theorem}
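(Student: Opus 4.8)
The plan is to follow exactly the template of the proof of Theorem~\ref{conff0}, replacing the role of Theorem~\ref{ric2p} (the linearized Euler--Lagrange formula for $\mf_0$) by Theorem~\ref{r2p} (the analogous formula \eqref{Rvar} for $\mathcal{S}$). The first step is to substitute the conformal variation $h = fg$ into \eqref{Rvar}. For this I would use the conformal linearizations already recorded in \eqref{cnf1} and \eqref{cnf2}, namely $Ric' = -\tfrac{1}{2}(\Delta f\, g + (n-2)\nabla^2 f)$ and $tr_g(Ric') = (1-n)\Delta f$, together with the linearization of scalar curvature. Since $R = g^{ij}R_{ij}$ and $g$ is Einstein, one has $R' = -\langle h, Ric\rangle + tr_g(Ric') = -Rf + (1-n)\Delta f$ when $h = fg$; deriving this identity, and checking its internal consistency with \eqref{cnf2}, is the one genuinely new input beyond the $\mf_0$ computation.

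Once these substitutions are in place, I would take $tr_g$ of \eqref{Rvar}, using $tr_g \nabla^2\psi = \Delta\psi$, $tr_g(\psi\, g) = n\psi$, and $tr_g h = nf$. Each of the four bracketed groups in \eqref{Rvar} then contributes terms of three types only --- $\Delta^2 f$, $R\Delta f$, and $R^2 f$ --- and the computation reduces to collecting coefficients. The two Hessian/Laplacian groups combine to give the leading coefficient $2(n-1)^2$ for $\Delta^2 f$; the mixed terms accumulate from those same groups and from the $R'$ contributions, assembling into $-(n-6)(n-1)$ for $R\Delta f$; and the purely algebraic $R^2 f$ terms come from the $R'$ factors in the $-2(R/n)R'g$ and $RR'g$ pieces together with the $\tfrac12 R^2 h$ term. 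Note that, in contrast to the transverse-traceless case of Theorem~\ref{ScaseTT}, there is no degeneracy here: the conformal Jacobi operator remains genuinely fourth order, which is already visible from the surviving $\Delta^2 f$ coefficient.

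Finally, to pass from the un-normalized $\mathcal{S}$ to the volume-normalized $\tilde{\mathcal{S}}$, I would invoke Proposition~\ref{2vfpr}: the Jacobi operator of the normalized functional is obtained by subtracting $c\,h$, where by Proposition~\ref{Rc} the critical constant is $c = \tfrac{n-4}{2n}R^2$. Tracing this correction contributes $-c\,(tr_g h) = -\tfrac{n-4}{2}R^2 f$, which is added to the $R^2 f$ coefficient and is precisely what converts the un-normalized value into the stated $-(n-4)$. The main obstacle is purely bookkeeping: the $R^2 f$ coefficient arises from several partially cancelling contributions, since the $R'$ terms carry both $R^2 f$ and $R\Delta f$ pieces. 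The care required is therefore to track these consistently and to fix the sign convention for $\Delta$ (here the rough Laplacian $\nabla^i\nabla_i$, as used throughout the paper) so that \eqref{cnf1}, \eqref{cnf2}, and the formula for $R'$ are mutually consistent.
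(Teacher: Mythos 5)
Your proposal is correct and follows essentially the same route as the paper's proof: substitute $h = fg$ into the formula \eqref{Rvar} of Theorem~\ref{r2p} using \eqref{cnf1}, \eqref{cnf2} and the additional identity $R' = (1-n)\Delta f - Rf$, take the trace and collect the $\Delta^2 f$, $R\Delta f$, and $R^2 f$ coefficients (which indeed come out to $2(n-1)^2$, $-(n-6)(n-1)$, and $\tfrac{4-n}{2}$ before normalization), then add the volume-normalization correction from Proposition~\ref{2vfpr}. Your correction term $-c\,(tr_g h) = -\tfrac{n-4}{2}R^2 f$ with $c = \tfrac{n-4}{2n}R^2$ from Proposition~\ref{Rc} is in fact the corrected form of what the paper prints (its proof cites Proposition~\ref{Ricc} and writes $-\tfrac{n-4}{2n}f$, omitting the $R^2$ factor and the factor $n$ from $tr_g(fg) = nf$, evidently typos), and it yields exactly the stated coefficient $-(n-4)$.
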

\begin{proof}
In addition to \eqref{cnf1} and \eqref{cnf2}, we
note the formula
\begin{align}
R' = (1-n) \Delta f - R f.
\end{align}
Using Theorem \ref{r2p}, we compute
\begin{align}
\begin{split}
tr [ \mathcal{S}'(fg)]
& = 2(n-1)^2  \Delta^2 f - R(n-6)(n-1)  \Delta f + \frac{4-n}{2} R^2 f.
\end{split}
\end{align}
Restricting to $\mathcal{M}_1$, from Propositions \ref{2vfpr}
and \ref{Ricc}, we must add the
term
\begin{align}
- c tr\ h = - \frac{n-4}{2n}tr\ h =  - \frac{n-4}{2n}f,
\end{align}
and the result follows.
\end{proof}
Combining Theorems \ref{conff0} and \ref{confs0}, we obtain
a useful factorization of the Jacobi operator.
\begin{theorem} \label{FtJf}
If $h = f g$, and $g$ is Einstein, then the trace of the Jacobi operator of
$\tilde{\mathcal{F}}_{\tau}$ is given by
\begin{align}
\label{confjac}
tr(J f) &=
\frac{1}{2n}  \Big( (n-1) \Delta + R \Big)
\Big( n(n - 4\tau + 4n \tau) \Delta - 2(n-4)(1 + n\tau) R \Big)f.
\end{align}
\end{theorem}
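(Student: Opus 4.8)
The plan is to exploit the linear dependence on $\tau$ together with the two trace formulas already established. Since $\mathcal{F}_\tau = \mathcal{F}_0 + \tau \mathcal{S}$ and all three functionals carry the same volume-normalizing exponent $p = \tfrac{4}{n}-1$, we have $\tmf_\tau = \tmf_0 + \tau \tilde{\mathcal{S}}$. Moreover, by Propositions \ref{Ricc} and \ref{Rc} the constants $c$ entering the correction term $-ch$ of Proposition \ref{2vfpr} also add, $c_\tau = c_0 + \tau c_{\mathcal{S}}$, so that $(\nabla \tmf_\tau)'h = (\nabla \tmf_0)'h + \tau (\nabla \tilde{\mathcal{S}})'h$. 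Consequently the Jacobi operator depends linearly on $\tau$, and restricting to conformal directions $h = fg$ gives $tr(Jf) = tr(J_{\tmf_0}f) + \tau\, tr(J_{\tilde{\mathcal{S}}}f)$, with the two summands supplied verbatim by Theorems \ref{conff0} and \ref{confs0}.

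The key structural observation is that, because $g$ is Einstein, $R$ is constant, so multiplication by $R$ commutes with $\Delta$ and both trace formulas are genuine polynomials in the single commuting variable $\Delta$ with constant coefficients. I would then verify that each admits the same left factor $\big((n-1)\Delta + R\big)$:
\begin{align*}
tr(J_{\tmf_0}f) &= \big( (n-1)\Delta + R \big)\big( \tfrac{n}{2}\Delta - \tfrac{n-4}{n}R \big) f, \\
tr(J_{\tilde{\mathcal{S}}}f) &= \big( (n-1)\Delta + R \big)\big( 2(n-1)\Delta - (n-4)R \big) f.
\end{align*}
Each factorization is confirmed by matching the coefficients of $\Delta^2 f$, $R\Delta f$, and $R^2 f$ against Theorems \ref{conff0} and \ref{confs0}; the coefficients of $\Delta^2 f$ and $R^2 f$ fix the two factors immediately, and the only non-immediate check is the cross coefficient of $R\Delta f$, which follows from the expression $(n-1)\beta + \alpha$ reproducing the stated middle coefficient in each case.

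Factoring out the common term and combining the right-hand factors then yields
\begin{align*}
tr(Jf) = \big( (n-1)\Delta + R \big)\Big[ \big( \tfrac{n}{2} + 2\tau(n-1)\big)\Delta - (n-4)\big( \tfrac{1}{n} + \tau \big) R \Big] f,
\end{align*}
and rewriting $\tfrac{n}{2} + 2\tau(n-1) = \tfrac{1}{2n}\,n(n - 4\tau + 4n\tau)$ together with $(n-4)\big(\tfrac{1}{n}+\tau\big) = \tfrac{1}{2n}\,2(n-4)(1+n\tau)$ puts the second factor in exactly the form claimed in \eqref{confjac}. The computation is entirely algebraic; there is no genuine obstacle, and the only point requiring care is the legitimacy of treating the operator as a commutative polynomial in $\Delta$, which rests precisely on the Einstein hypothesis forcing $R$ to be constant. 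It is worth noting that the $\tau$-independent left factor $\big((n-1)\Delta + R\big)$ is no accident: its kernel is controlled by the first nonzero eigenvalue of $\Delta$, which is exactly where the Lichnerowicz--Obata lower bound enters the subsequent rigidity analysis for conformal variations.
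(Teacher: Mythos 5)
Your proof is correct and follows essentially the same route as the paper, which obtains Theorem \ref{FtJf} precisely by combining Theorems \ref{conff0} and \ref{confs0} (linearity of $\nabla \tmf_{\tau}$ and of the Einstein constant $c$ in $\tau$) and factoring the resulting quadratic polynomial in $\Delta$. Your explicit verification of the common left factor $\big((n-1)\Delta + R\big)$, justified by $R$ being constant on an Einstein manifold, is exactly the algebra the paper leaves implicit.
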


%%%%%%%%%%%%%%%%%%%%%%
\section{Local variational properties of Einstein metrics}
%%%%%%%%%%%%%%%%%%%%%%%

In this section we apply the formulas of the preceding section to determine the variational properties of $\tilde{\mathcal{F}}_{\tau}$ near Einstein metrics.  As in Section \ref{SV}, we consider transverse-traceless and conformal deformations separately.

%%%%%%%%%%%
\subsection{Transverse-traceless variations}
We begin with the positive scalar curvature case.
\begin{theorem} \label{TTPosCor} Let $(M,g)$ be an $n$-dimensional Einstein manifold with $R > 0$.

\noindent $(i)$  If $\tau > -1/n$ and
\begin{align} \label{lt1}
\mbox{spec}_{TT}(-\Delta_L) \cap \Big[ \frac{2}{n}R ,
\Big(\frac{4}{n} + 2\tau \Big) R \Big] = \varnothing,
\end{align}
then the Jacobi operator of $\tilde{\mathcal{F}}_{\tau}$ restricted to transverse-traceless symmetric tensors is positive-definite:
\begin{align}
\label{posdefj}
\int \langle Jh,h \rangle\ dV = \int \langle (\nabla \tilde{\mathcal{F}}_{\tau})' h, h \rangle\ dV \geq \epsilon_0 \int |h|^2\ dV,
\end{align}
for some $\epsilon_0 > 0$ and all transverse-traceless $h$.   \vspace{1mm}

\noindent $(ii)$  If $\tau < -1/n$ and
\begin{align} \label{lt2}
\mbox{spec}_{TT}(-\Delta_L) \cap  \Big[ \Big( \frac{4}{n} + 2\tau \Big)R ,
\frac{2}{n}R \Big] = \varnothing,
\end{align}
then the same result holds.
\end{theorem}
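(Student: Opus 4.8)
The plan is to diagonalize the Jacobi operator using the factorization established in Theorem \ref{JTT}. On transverse-traceless tensors the linearized Euler-Lagrange operator $(\nabla \tmf_{\tau})'$ is, by \eqref{ttnvar}, a product of two commuting second-order operators, each a constant shift of the Lichnerowicz Laplacian. Since $(M,g)$ is a compact Einstein manifold, $\Delta_L$ preserves the space of transverse-traceless tensors and is self-adjoint there; hence it has a discrete spectrum $\{\mu_k\}$ with $\mu_k \to +\infty$ and an $L^2$-orthonormal eigenbasis $\{h_k\}$ of TT tensors satisfying $-\Delta_L h_k = \mu_k h_k$. Both factors in \eqref{ttnvar} are polynomials in $\Delta_L$, so they are simultaneously diagonalized by this eigenbasis.

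First I would compute the eigenvalue of the Jacobi operator on each $h_k$. Substituting $\Delta_L h_k = -\mu_k h_k$ into \eqref{ttnvar} gives
\begin{align*}
(\nabla \tmf_{\tau})' h_k = \frac{1}{2}\Big( \frac{2}{n} R - \mu_k \Big)\Big( \Big( \frac{4}{n} + 2\tau \Big) R - \mu_k \Big) h_k =: \lambda(\mu_k)\, h_k.
\end{align*}
Writing an arbitrary TT tensor as $h = \sum_k c_k h_k$ and using self-adjointness, the second variation becomes
\begin{align*}
\int \langle (\nabla \tmf_{\tau})' h, h \rangle\ dV = \sum_k \lambda(\mu_k)\, c_k^2,
\end{align*}
so positivity of the bilinear form reduces to showing $\lambda(\mu_k) > 0$ for every $\mu_k$, together with a uniform lower bound.

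The sign analysis is where the two cases split. The quadratic $\lambda(\mu)$ vanishes exactly at $\mu = \tfrac{2}{n}R$ and $\mu = (\tfrac{4}{n}+2\tau)R$, is positive outside the closed interval they bound, and negative strictly between them. For $\tau > -1/n$ (case $(i)$) and $R>0$ one has $\tfrac{2}{n}R < (\tfrac{4}{n}+2\tau)R$, so $\lambda(\mu) > 0$ precisely when $\mu \notin [\tfrac{2}{n}R, (\tfrac{4}{n}+2\tau)R]$, which is exactly the gap hypothesis \eqref{lt1}. For $\tau < -1/n$ (case $(ii)$) the two roots switch order, so the excluded interval becomes $[(\tfrac{4}{n}+2\tau)R, \tfrac{2}{n}R]$, matching \eqref{lt2}; the argument is otherwise unchanged.

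The only genuine obstacle is upgrading pointwise positivity of the eigenvalues to the uniform coercivity \eqref{posdefj} with a single $\epsilon_0 > 0$. This is handled by discreteness of the spectrum: only finitely many $\mu_k$ lie below the larger root, and on these finitely many values $\lambda$ attains a positive minimum thanks to the gap hypothesis; for the infinitely many $\mu_k$ above the larger root, $\lambda(\mu_k) \geq \lambda(\mu_*) > 0$, where $\mu_*$ is the smallest eigenvalue exceeding that root, since $\lambda$ is increasing there and tends to $+\infty$. Setting $\epsilon_0$ equal to the minimum of these finitely many positive numbers yields the stated estimate. The same reasoning, with the roles of the two roots interchanged, proves case $(ii)$.
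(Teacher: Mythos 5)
Your proposal is correct and follows essentially the same route as the paper: both use the factorization of Theorem \ref{JTT} together with the fact (due to Lichnerowicz) that $\Delta_L$ preserves TT tensors, and then check the sign of the resulting quadratic polynomial in the eigenvalue $\mu$ of $(-\Delta_L)$ outside the interval bounded by its roots $\frac{2}{n}R$ and $\big(\frac{4}{n}+2\tau\big)R$. Your final paragraph merely spells out the uniform bound $\epsilon_0 > 0$ via discreteness of the spectrum, which the paper leaves implicit in its one-line conclusion.
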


\begin{proof}
Let $h$ be an eigensection of $(-\Delta_L)$ with eigenvalue $\mu_L$.
Since $(M,g)$ is Einstein, by \cite{Lichnerowicz} $\Delta_L$ maps the space of
transverse-traceless ($TT$) tensors to itself; hence we may assume $h$ is $TT$.

From \eqref{ttnvar}, we must consider the polynomial
\begin{align}
\frac{1}{2} \Big( - \mu_L + \frac{2}{n} R \Big)
\Big( - \mu_L + \Big( \frac{4}{n} + 2\tau \Big) R \Big) h.
\end{align}
It is then obvious if $\mu_L$ is outside of the stated range,
then this is strictly positive.
\end{proof}

A similar argument (with the signs simply reversed) gives

\begin{theorem} \label{TTNegCor} Let $(M,g)$ be an $n$-dimensional Einstein manifold with $R < 0$.

\noindent $(i)$  If $\tau > -1/n$ and
\begin{align} \label{lt3}
\mbox{spec}_{TT}(-\Delta_L) \cap \Big[ \Big(\frac{4}{n} + 2\tau\Big)R , \frac{2}{n}R \Big]
= \varnothing,
\end{align}
then \eqref{posdefj} holds for some $\epsilon_0 > 0$
and all transverse-traceless $h$.

\noindent $(ii)$  If $\tau < -1/n$ and
\begin{align} \label{lt4}
 \mbox{spec}_{TT}(-\Delta_L) \cap \Big[ \frac{2}{n}R, \Big(\frac{4}{n} + 2\tau \Big) R \Big]
= \varnothing,
\end{align}
then the same result holds.
\end{theorem}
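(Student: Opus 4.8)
The plan is to follow the proof of Theorem \ref{TTPosCor} essentially verbatim, tracking only the sign changes that result from $R<0$. The essential input is the factorization of the Jacobi operator on transverse-traceless tensors established in Theorem \ref{JTT},
\begin{align*}
(\nabla \tilde{\mathcal{F}}_{\tau})'h = \frac{1}{2}\Big(\Delta_L + \frac{2}{n}R\Big)\Big(\Delta_L + \Big(\frac{4}{n}+2\tau\Big)R\Big)h.
\end{align*}
Since $(M,g)$ is Einstein, $\Delta_L$ maps $TT$ tensors to $TT$ tensors \cite{Lichnerowicz}, so $-\Delta_L$ restricted to $TT$ tensors is a self-adjoint elliptic operator whose spectrum is discrete, bounded below (as $-\Delta_L = \nabla^{*}\nabla + \text{(bounded curvature terms)}$ on a compact manifold), and accumulates only at $+\infty$. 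First I would reduce to a single eigensection: if $-\Delta_L h = \mu_L h$ with $h$ transverse-traceless, then
\begin{align*}
\int \langle Jh, h\rangle\ dV = p(\mu_L)\int |h|^2\ dV, \qquad p(\mu_L) = \frac{1}{2}\Big(\mu_L - \frac{2}{n}R\Big)\Big(\mu_L - \Big(\frac{4}{n}+2\tau\Big)R\Big).
\end{align*}

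Next I would analyze the scalar function $p$. It is an upward-opening parabola in $\mu_L$ whose two roots are exactly the endpoints $\tfrac{2}{n}R$ and $(\tfrac{4}{n}+2\tau)R$ appearing in the theorem; hence $p(\mu_L)>0$ if and only if $\mu_L$ lies strictly outside the closed interval bounded by these two roots. The only difference from the positive case is the ordering of the roots. For $R<0$ and $\tau>-1/n$ we have $\tfrac{4}{n}+2\tau>\tfrac{2}{n}$, and multiplication by the negative number $R$ reverses this, giving $(\tfrac{4}{n}+2\tau)R<\tfrac{2}{n}R$; this is the interval in $(i)$. For $\tau<-1/n$ the inequalities flip, producing the interval in $(ii)$. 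This is precisely the sense in which the endpoints are ``reversed'' relative to Theorem \ref{TTPosCor}.

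It then remains to upgrade pointwise positivity of $p$ on the spectrum to the uniform estimate \eqref{posdefj}. Because the spectrum is discrete, bounded below, and tends to $+\infty$, only finitely many eigenvalues lie in any bounded neighborhood of the compact interval, so the hypothesis that $\mbox{spec}_{TT}(-\Delta_L)$ misses that interval forces a strictly positive distance between the spectrum and the interval. On the tail one has $p(\mu_L)\sim\tfrac{1}{2}\mu_L^2\to+\infty$, so defining $\epsilon_0$ to be the infimum of $p$ over the spectrum gives $\epsilon_0>0$ and $p(\mu_L)\ge\epsilon_0$ for every $\mu_L\in\mbox{spec}_{TT}(-\Delta_L)$. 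For a general transverse-traceless $h$, I would expand in an $L^2$-orthonormal eigenbasis $h=\sum_k h_k$ of $-\Delta_L$ (possible by the Lichnerowicz splitting) and use self-adjointness to obtain
\begin{align*}
\int \langle Jh, h\rangle\ dV = \sum_k p\big(\mu_L^{(k)}\big)\,\|h_k\|_{L^2}^2 \ge \epsilon_0\sum_k \|h_k\|_{L^2}^2 = \epsilon_0\|h\|_{L^2}^2,
\end{align*}
which is \eqref{posdefj}.

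The one step warranting genuine care — rather than the routine sign-tracking — is this last passage from pointwise positivity to the uniform constant $\epsilon_0>0$, since in the negative case the spectrum has no a priori positive lower bound. The argument nonetheless goes through because discreteness and boundedness below of $\mbox{spec}_{TT}(-\Delta_L)$ prevent any eigenvalue from approaching the forbidden interval except through the harmless large-$\mu_L$ tail, where $p$ grows without bound. Everything else is the positive-curvature computation repeated with the endpoint ordering reversed.
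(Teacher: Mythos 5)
Your proposal is correct and takes essentially the same route as the paper, which proves this theorem by repeating the eigensection argument of Theorem \ref{TTPosCor} for the factored Jacobi operator of Theorem \ref{JTT} ``with the signs simply reversed,'' exactly your endpoint-ordering analysis. Your final passage from pointwise positivity of $p$ on $\mbox{spec}_{TT}(-\Delta_L)$ to the uniform constant $\epsilon_0>0$ via discreteness, boundedness below, and the eigenbasis expansion is a careful fill-in of a step the paper treats as obvious, not a different method.
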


%%%%%%%%%%%%%%%%%%%%%%%%
\subsection{Conformal variations} \label{ConfSS}
%%%%%%%%%%%%%%%%%%%%%%%%%%%%%%%%%%%%%%%%%%%
We will let $\mathcal{M}_1([g])$ denote the space of
unit volume metrics conformal to $g$. The tangent space of $\mathcal{M}_1([g])$
consists of functions with mean value zero.

For the general quadratic functional $\tilde{\mathcal{F}}_{\tau}$ we have

\begin{theorem} \label{conftPos} Let $(M,g)$ be an Einstein manifold of dimension $n > 2$ with $R > 0$.

\noindent $(i)$  Let $n=3$.  If $\tau > -3/8$,
then $g$ is a local {\em minimizer} of $\tilde{\mathcal{F}}_{\tau}$ restricted to $\mathcal{M}_1([g])$.  If $\tau < -5/12$, then $g$ is a
local {\em maximizer}.

\noindent $(ii)$  Let $n=4$.  If $\tau > -1/3$, then $g$ is a local {\em minimizer} $\tilde{\mathcal{F}}_{\tau}$ restricted to $\mathcal{M}_1([g])$.  If $\tau < -1/3$, then $g$ is a local {\em maximizer}. If $\tau = -1/3$, then $\tilde{\mathcal{F}}_{\tau}$ is conformally invariant.

\noindent $(iii)$  Let $n \geq 5$.  If $\tau >   \frac{4 - 3n}{2 n (n-1)}$, then $g$ is a local {\em minimizer} of $\tilde{\mathcal{F}}_{\tau}$ restricted
to  $\mathcal{M}_1([g])$.   If $\tau <  - \frac{n}{4 (n-1)}$, then $g$ is a local {\em maximizer}.

Furthermore, all of the above extrema are strict if $g$ is not isometric to the round sphere.

\end{theorem}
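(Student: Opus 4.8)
The plan is to reduce the claim to a sign analysis of the eigenvalues of the conformal Jacobi operator, using the factorization in Theorem \ref{FtJf} together with the Lichnerowicz--Obata eigenvalue estimate. Since the tangent space to $\mathcal{M}_1([g])$ at $g$ consists of mean-value-zero functions $f$, and since (by the remark following Theorem \ref{conff0}) the second variation in the conformal direction $h = fg$ equals
\[
\langle J(fg), fg\rangle_{L^2} = \langle \mathrm{tr}(Jf), f\rangle_{L^2},
\]
the first step is to understand the quadratic form $Q(f) = \langle \mathrm{tr}(Jf), f\rangle_{L^2}$ on mean-zero $f$. Because $g$ is Einstein, $\mathrm{tr}(J)$ is a polynomial in the scalar Laplacian $\Delta$, so I would diagonalize $Q$ against the eigenfunctions of $-\Delta$: if $-\Delta f = \lambda f$, then $\mathrm{tr}(Jf) = \mu(\lambda)f$ where, substituting $\Delta \mapsto -\lambda$ into the factored form of Theorem \ref{FtJf} and writing $n-4\tau+4n\tau = n+4\tau(n-1)$,
\[
\mu(\lambda) = \frac{1}{2n}\big(R - (n-1)\lambda\big)\big(-n(n + 4\tau(n-1))\lambda - 2(n-4)(1+n\tau)R\big).
\]

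The decisive step is to pin down the sign of $\mu(\lambda)$ over the admissible range. Since $f$ is mean-zero, every occurring $\lambda$ is a positive eigenvalue, and the Lichnerowicz estimate for the Einstein metric $g$ (with $Ric = \frac{R}{n}g$, $R>0$) gives $\lambda \geq \lambda_1 \geq \frac{R}{n-1}$, with equality $\lambda_1 = \frac{R}{n-1}$ if and only if $(M,g)$ is the round sphere, by Obata's rigidity theorem \cite{Besse}. Hence the first factor $R-(n-1)\lambda$ is $\leq 0$, and is strictly negative at every occurring $\lambda$ once $g$ is not the round sphere. The sign of $\mu(\lambda)$ is therefore governed by the linear second factor $\ell(\lambda) = -n(n+4\tau(n-1))\lambda - 2(n-4)(1+n\tau)R$, which I would analyze through (i) the sign of its slope, positive or negative according as $\tau$ falls below or exceeds $-\frac{n}{4(n-1)}$, and (ii) its value at the endpoint $\lambda = \frac{R}{n-1}$; a direct computation gives
\[
\ell\big(\tfrac{R}{n-1}\big) = -\frac{(n-2)R}{n-1}\big[(3n-4) + 2n(n-1)\tau\big],
\]
so that $\ell(R/(n-1))$ changes sign exactly at $\tau = \frac{4-3n}{2n(n-1)}$. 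As $\ell$ is linear, these two data determine its sign on the entire ray $\lambda \geq \frac{R}{n-1}$, and hence that of $\mu$.

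Feeding in the two thresholds $\frac{4-3n}{2n(n-1)}$ and $-\frac{n}{4(n-1)}$, the three cases fall out from their relative order, which is controlled by $\mathrm{sgn}\,(n-2)(n-4)$: for $n\geq 5$ the minimizing threshold $\frac{4-3n}{2n(n-1)}$ exceeds $-\frac{n}{4(n-1)}$, for $n=3$ the order reverses, and for $n=4$ the two coincide at $-\frac13$, where moreover the constant term of $\ell$ vanishes identically (the factor $(n-4)$), and indeed $\ell\equiv 0$ at $\tau=-\frac13$, reflecting the conformal invariance of $\tmf_{-1/3}$. In each minimizing range one checks $\ell(\lambda)<0$ throughout $\lambda\geq \frac{R}{n-1}$, giving $\mu(\lambda)>0$; in each maximizing range $\ell(\lambda)>0$ throughout, giving $\mu(\lambda)<0$. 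Strictness off the round sphere is immediate, since then the first factor is strictly negative, so $\mu(\lambda)\neq 0$ at every occurring $\lambda$.

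Finally, to upgrade positivity of the second variation to a genuine strict local minimum inside $\mathcal{M}_1([g])$, I would observe that in the minimizing range the leading coefficient $\frac{(n-1)(n+4\tau(n-1))}{2}$ of $\mu$ as a quadratic in $\lambda$ is positive, whence $\mu(\lambda)\geq \epsilon_0(1+\lambda^2)$ uniformly over the spectrum; this gives the coercivity $Q(f)\geq \epsilon_0\|f\|_{H^2}^2$. Combined with the vanishing of the first variation at the critical Einstein metric and the smoothness of $\tmf_\tau$ in the conformal factor, coercivity yields a strict local minimum in the relevant $C^{2,\alpha}$-neighborhood; the maximizing range is identical with all signs reversed. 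The main obstacle I anticipate is not conceptual but organizational: correctly tracking the two $\tau$-thresholds, their dimension-dependent ordering, and the degenerate endpoint behavior at $n=4$, all while keeping the Obata rigidity aligned with the claimed strictness and ensuring the coercivity controls the higher-order remainder.
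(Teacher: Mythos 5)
Your proposal is correct and follows essentially the same route as the paper: diagonalizing the factored conformal Jacobi operator of Theorem \ref{FtJf} against the spectrum of $-\Delta$, using the Lichnerowicz--Obata bound $\lambda_1 \geq R/(n-1)$ (strict off the round sphere) to fix the sign of the first factor, and then determining the sign of the linear second factor via the two thresholds $\tau_1 = \frac{4-3n}{2n(n-1)}$ and $\tau_2 = -\frac{n}{4(n-1)}$ with their dimension-dependent ordering. Your slope-plus-endpoint analysis of $\ell(\lambda)$ is the paper's second application of the Lichnerowicz estimate (its inequality bounding the second factor at $\lambda = R/(n-1)$) in equivalent form, and your closing coercivity remark is, if anything, more explicit than the paper's reduction to the sign of the second variation.
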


\begin{proof}Since we are restricting to $\mathcal{M}_1([g])$, it
suffices to prove that the second variation of the functional on
functions with mean value zero is strictly positive (or strictly
negative in the maximizing case).

Consider the factorization of the Jacobi operator given in \ref{confjac}.
Let $\lambda$ denote a non-zero eigenvalue of $(-\Delta)$,
and consider the polynomial
\begin{align} \label{ptdef}
p_{\tau}(\lambda) = \frac{1}{2n}  \Big( (n-1) \lambda - R \Big)
\Big( n(n - 4\tau + 4n \tau) \lambda + 2(n-4)(1 + n\tau) R \Big).
\end{align}
The eigenvalue estimate of Lichnerowicz says that
\begin{align*}
\lambda_1 \geq R/(n-1),
\end{align*}
with strict inequality if $g$ is not isometric to $S^n$ \cite[Theorem 4.19]{Aubin}.
Consequently, the sign of $p_{\tau}$ is determined by the second factor:
the functional will be minimizing, for example, if
\begin{align} \label{2term}
 n(n - 4\tau + 4n \tau) \lambda + 2(n-4)(1 + n\tau) R > 0.
\end{align}
Note that if
\begin{align}
\label{treq}
\tau > - \frac{n}{4(n-1)} \equiv \tau_2,
\end{align}
then the coefficient of $\lambda$ in (\ref{2term}) is positive, so we
can use the Lichnerowicz estimate again to find
\begin{align} \label{lpoly}
 n(n - 4\tau + 4n \tau) \lambda + 2(n-4)(1 + n\tau) R
\geq \Big(  \frac{  n(n - 4\tau + 4n \tau)}{n-1} +  2(n-4)(1 + n\tau)
\Big) R.
\end{align}
Some algebra shows that this is
positive for
\begin{align} \label{tau1def}
\tau > \frac{4 - 3n}{2 n (n-1)} \equiv \tau_1.
\end{align}

When $n = 3$, then $\tau_2 = -3/8 > \tau_1 = -5/12$, so the second factor in (\ref{2term})
is positive provided $\tau > -3/8.$
If $n=4$, then $\tau_2 = \tau_1 = -1/3$, so the second factor is positive for
$\tau > -1/3$.  When $n \geq 5$, then $\tau_1 > \tau_2$ and
the second factor is positive for $\tau > \tau_1$.

If \begin{align}
\label{treq2}
\tau < - \frac{n}{4(n-1)} = \tau_2,
\end{align}
then the sign of the coefficient in of $\lambda$ in (\ref{2term}) is reversed,
and we find that the second factor is negative provided
\begin{align*}
\tau < \frac{4 - 3n}{2 n (n-1)} = \tau_1.
\end{align*}
If $n=3$, it follows that the second factor is negative for $\tau < \tau_1 = -5/12 < \tau_2$; when $n=4$, the second factor is negative for $\tau < -1/3$.
When $n \geq 5$, then as $\tau_2 < \tau_1$, we get negativity for $\tau < \tau_2$.
\end{proof}

In the negative case, because we lack an estimate for the first eigenvalue in terms of
the scalar curvature, stability often depends on an estimate of $\lambda_1$:

\begin{theorem} \label{conftNeg} Let $(M,g)$ be an Einstein manifold of dimension $n > 2$ with $R < 0$.

\noindent $(i)$  Let $n=3$.  If $\tau > -1/3$, then $g$ is a (strict) local {\em minimizer} of $\tilde{\mathcal{F}}_{\tau}$ restricted to  $\mathcal{M}_1([g])$.  If $\tau < -3/8$, then $g$ is a (strict) local {\em maximizer}.

\noindent $(ii)$  Let $n=4$.  If $\tau > -1/3$, then $g$ is a strict local {\em minimizer} $\tilde{\mathcal{F}}_{\tau}$ restricted to  $\mathcal{M}_1([g])$, and is a strict local {\em maximizer} if $\tau < -1/3$.

\noindent $(iii)$  Let $n \geq 5$.

$\bullet$  If $\tau > -1/n$, then $g$ is a strict local {\em minimizer} of $\tilde{\mathcal{F}}_{\tau}$ restricted
to  $\mathcal{M}_1([g])$ provided
\begin{align} \label{ell1}
\lambda_1 > \frac{(n-4)}{2(n-1)}(-R),
\end{align}
where $\lambda_1 = \lambda_1(-\Delta)$.

$\bullet$  If $- \frac{n}{4 (n-1)} < \tau < -1/n$, then $g$ is a strict local {\em minimizer} of $\tilde{\mathcal{F}}_{\tau}$ restricted to  $\mathcal{M}_1([g])$.

$\bullet$ If $\tau <  - \frac{n}{4 (n-1)}$, then $g$ is a strict local {\em maximizer} of $\tilde{\mathcal{F}}_{\tau}$ restricted to  $\mathcal{M}_1([g])$ provided \eqref{ell1} holds.
\end{theorem}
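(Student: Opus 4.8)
The plan is to mirror the proof of Theorem \ref{conftPos}, reducing everything to the sign of a single polynomial and then exploiting the one structural simplification the negative-curvature hypothesis provides. Since we restrict to $\mathcal{M}_1([g])$, it suffices to show that the second variation is definite on functions $f$ of mean value zero; by \eqref{confjac}, together with the remark following Theorem \ref{conff0} that $\langle J(fg),fg\rangle_{L^2}=\langle \mathrm{tr}(Jf),f\rangle_{L^2}$, this form is diagonalized by the eigenfunctions of $-\Delta$. Decomposing $f$ into eigenfunctions with $-\Delta f=\lambda f$, $\lambda>0$, the second variation becomes a sum of terms $p_\tau(\lambda)\|f\|_{L^2}^2$, where $p_\tau$ is exactly the polynomial \eqref{ptdef}. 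So the whole problem reduces to determining the sign of $p_\tau(\lambda)$ for every eigenvalue $\lambda\geq\lambda_1$.

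The key simplification I would isolate first is that, when $R<0$ and $\lambda>0$, the leading factor $(n-1)\lambda-R=(n-1)\lambda+|R|$ is strictly positive, so the sign of $p_\tau(\lambda)$ is governed entirely by the linear factor $q_\tau(\lambda)=A\lambda+B$ with $A=n\big(n+4\tau(n-1)\big)$ and $B=2(n-4)(1+n\tau)R$. The coefficient $A$ changes sign precisely at $\tau_2=-n/4(n-1)$ (cf. \eqref{treq}), while $1+n\tau$ changes sign at $-1/n$; since $\tau_2<-1/n$ for $n>2$ (this is just $(n-2)^2>0$), the $\tau$-line splits into the three regimes in the statement. For $n=3$ and $n=4$ one has $n-4\leq 0$ (with $B\equiv 0$ when $n=4$), so $A$ and $B$ have matching signs on each admissible $\tau$-range and $q_\tau(\lambda)=A\lambda+B$ has a fixed sign for all $\lambda>0$; this yields parts (i) and (ii) with no spectral hypothesis.

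The substance is part (iii), $n\geq 5$, where $n-4>0$ and a spectral bound becomes unavoidable, since in the negative case there is no Lichnerowicz lower bound on $\lambda_1$ in terms of $R$. In the middle regime $\tau_2<\tau<-1/n$ one checks $A>0$ and $B>0$, so $q_\tau>0$ for all $\lambda>0$ and $g$ is a strict local minimizer with no hypothesis needed. In the outer regimes $A$ and $B$ have opposite signs, and here I would feed in \eqref{ell1}. Substituting the threshold $\lambda_0=\tfrac{n-4}{2(n-1)}(-R)$ into $q_\tau$, the $\tau$-dependent terms cancel and one obtains the clean identity
\begin{align*}
q_\tau\big(\lambda_0\big)=\frac{(n-4)(n-2)^2}{2(n-1)}(-R)>0,
\end{align*}
the bracketed coefficient collapsing to $-(n-2)^2$. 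In the minimizing regime $\tau>-1/n$ one has $A>0$, so monotonicity of $q_\tau$ gives $q_\tau(\lambda)\geq q_\tau(\lambda_1)>q_\tau(\lambda_0)>0$ once $\lambda_1>\lambda_0$, establishing the strict minimizer claim. The maximizing regime $\tau<\tau_2$ is treated in parallel with all inequalities reversed, $A<0$ now forcing $q_\tau$ to decrease, so that \eqref{ell1} is used to push $\lambda_1$ past the positive root of $q_\tau$.

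The main obstacle is precisely the absence of an a priori lower bound on $\lambda_1$, which is what forces the hypothesis \eqref{ell1} and distinguishes this theorem from its positive counterpart, where the Lichnerowicz estimate supplies the needed bound for free. A secondary point that must be checked carefully is that the single threshold in \eqref{ell1} is the correct one uniformly across the relevant $\tau$-interval, and the maximizing regime is the delicate case here: one must verify that $\lambda_1>\lambda_0$ indeed places $\lambda_1$ beyond the sign change of $q_\tau$. This is governed by the monotonicity of the M\"obius quantity $\phi(\tau)=(1+n\tau)/\big(n+4(n-1)\tau\big)$, whose derivative equals $(n-2)^2/\big(n+4(n-1)\tau\big)^2>0$, so that the extremal threshold arises in the limiting regime $|\tau|\to\infty$, where $\phi(\tau)\to n/4(n-1)$ reproduces exactly the constant appearing in \eqref{ell1}.
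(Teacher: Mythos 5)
Your reduction is exactly the paper's: restrict to mean-value-zero conformal directions, diagonalize by eigenfunctions of $-\Delta$, observe that for $R<0$ the factor $(n-1)\lambda - R$ in \eqref{ptdef} is automatically positive, and read off the sign of $q_\tau(\lambda) = A\lambda + B$. Your treatment of parts (i) and (ii), and of the first two bullets of (iii), is correct and matches the paper's proof: in the minimizing regime $\tau > -1/n$ the root of $q_\tau$ is $\frac{2(n-4)}{n}\phi(\tau)(-R)$ with $\phi(\tau) = (1+n\tau)/(n+4(n-1)\tau)$ increasing and bounded above by its limiting value $n/4(n-1)$ as $\tau \to +\infty$, so \eqref{ell1} is a uniform sufficient condition there; and in the middle regime $A>0$, $B>0$ needs no spectral hypothesis.

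However, your argument for the last bullet (the maximizing regime $\tau < -n/4(n-1)$) has a genuine gap, and your own ``clean identity'' exposes it. Since $q_\tau(\lambda_0) = \frac{(n-4)(n-2)^2}{2(n-1)}(-R) > 0$ for \emph{every} $\tau$ (the identity is $\tau$-independent), and $q_\tau$ is decreasing in $\lambda$ when $A<0$, the positive root $\lambda^{*}(\tau) = -B/A$ lies \emph{strictly above} $\lambda_0 = \frac{n-4}{2(n-1)}(-R)$. Equivalently, on the branch $\tau < -n/4(n-1)$ the increasing function $\phi$ satisfies $\phi(\tau) > n/4(n-1)$, approaching this value only as $\tau \to -\infty$: the limit is the \emph{infimum} of the thresholds on this branch, not the supremum as it is on the minimizing branch. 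Hence $\lambda_1 > \lambda_0$ does not place $\lambda_1$ beyond the sign change of $q_\tau$; ``treating the maximizing regime in parallel with all inequalities reversed'' gives only $q_\tau(\lambda_1) < q_\tau(\lambda_0)$ with $q_\tau(\lambda_0)>0$, which yields no sign conclusion. For fixed $\tau < -n/4(n-1)$ one actually needs the stronger, $\tau$-dependent bound $\lambda_1 > \frac{2(n-4)(1+n\tau)}{n(n+4(n-1)\tau)}(-R)$, whose right side blows up as $\tau \uparrow -n/4(n-1)$, so your claim that ``the extremal threshold arises in the limiting regime $|\tau|\to\infty$'' is backwards here. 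It is worth noting that the paper's own proof is equally terse at this point — it shows $q<0$ provided \eqref{Lbig} holds and asserts the maximizer conclusion without verifying that \eqref{ell1} implies \eqref{Lbig} when $\tau < -n/4(n-1)$ — so you have in fact isolated the one step of the published argument that does not go through as written; but as a blind proof of the stated theorem, this step fails.
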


\begin{proof} For the same reasons as above, we restrict to the space of
functions with mean value zero.  For $R < 0$ the first term in the factorization of $p_{\tau}$ in (\ref{ptdef}) is always positive, so once again the
extremizing properties of the functional are determined by the sign of the second term
\begin{align} \label{qtdef}
q(\lambda) =  n(n - 4\tau + 4n \tau) \lambda + 2(n-4)(1 + n\tau) R.
\end{align}

First, consider the case $n \geq 5$.  If $\tau > -1/n > - \frac{n}{4 (n-1)}$, then the coefficient of $\lambda$ in (\ref{qtdef}) is positive, and $q(\lambda) > 0$ provided
\begin{align} \label{Lbig}
\lambda > \frac{2(n-4)(1+n\tau)}{n(n-4\tau+ 4n\tau)}(-R).
\end{align}
Note the expression on the right-hand side is increasing as a function of $\tau$, and tends (as $\tau \to \infty$) to $\frac{(n-4)}{2(n-1)}(-R)$.  Therefore, if (\ref{ell1}) holds, then $q > 0$ and $g$ is a
(strict) local minimizer.  If $-\frac{n}{4(n-1)} < \tau < -1/n$, then the coefficient of $\lambda$ in (\ref{qtdef}) is still positive, and (\ref{Lbig}) always holds (since the right-hand side is negative); i.e., $g$ is
a local minimizer.

If $\tau < - \frac{n}{4(n-1)}$, then the coefficient of $\lambda$ in (\ref{qtdef}) is negative, and $q(\lambda) < 0$ provided (\ref{Lbig}) holds.  Hence, in this case $g$ is a (strict) local maximizer.  For the cases $n = 3$ and $n=4$, we argue as we did in the proof of Theorem \ref{conftPos}.
\end{proof}

%%%%%%%%%%%%%%%%%%%%%%%%%%%%%%%

 The analogous results for $\tilde{\mathcal{S}}$ hold by formally setting
$\tau = \infty$ in the above arguments.

\begin{proposition} If the dimension $n \geq 3$, then an Einstein metric with $R > 0$ is a local
minimizer of $\tilde{\mathcal{S}}$ restricted to
$\mathcal{M}_1([g])$, and is a strict local minimizer provided
that $g$ is not isometric to $S^n$.

If the dimension $n = 3$ or $4$, then an Einstein metric with $R < 0$ is a (strict)
local minimizer of $\tilde{\mathcal{S}}$ restricted to
$\mathcal{M}_1([g])$.

If $n \geq 5$ then an Einstein metric with $R < 0$ is a (strict)
local minimizer of $\tilde{\mathcal{S}}$
restricted to $\mathcal{M}_1([g])$ provided
\begin{align} \label{LLower2}
\lambda_1 > \frac{(n-4)}{2(n-1)}(-R).
\end{align}
\end{proposition}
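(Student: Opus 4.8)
The plan is to mirror the proofs of Theorems \ref{conftPos} and \ref{conftNeg}, working entirely on the conformal slice $\mathcal{M}_1([g])$, whose tangent space consists of variations $h = fg$ with $\int_M f\ dV = 0$. On such variations the second variation equals $\langle J(fg), fg\rangle_{L^2} = \langle tr(Jf), f\rangle_{L^2}$, so it suffices to show this quantity is positive on mean-value-zero functions, and coercive under the stated hypotheses.

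First I would record the relevant factorization. Either directly from Theorem \ref{confs0}, or by formally letting $\tau \to \infty$ in the factorization of Theorem \ref{FtJf} (after dividing out the leading power of $\tau$), one is led to
\begin{align*}
tr(Jf) = \big( (n-1)\Delta + R \big)\big( 2(n-1)\Delta - (n-4)R \big) f,
\end{align*}
and expanding this product reproduces the coefficients $2(n-1)^2 \Delta^2 f - (n-1)(n-6) R\Delta f - (n-4)R^2 f$ of Theorem \ref{confs0}, confirming it. Evaluating on an eigenfunction of $(-\Delta)$ with eigenvalue $\lambda$, the two sign flips combine to give the scalar multiplier
\begin{align*}
p(\lambda) = \big( (n-1)\lambda - R \big)\big( 2(n-1)\lambda + (n-4)R \big),
\end{align*}
so that $\langle tr(Jf), f\rangle_{L^2} = \sum_k p(\lambda_k) \|f_k\|_{L^2}^2$ over the eigendecomposition of $f$. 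Since $p$ is an upward parabola, hence increasing past its larger root, positivity at the bottom of the (mean-value-zero) spectrum yields the coercive estimate $\langle tr(Jf), f\rangle_{L^2} \geq p(\lambda_1) \|f\|_{L^2}^2$.

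The case analysis is then routine. For $R > 0$, the Lichnerowicz estimate $\lambda_1 \geq R/(n-1)$, strict unless $g$ is isometric to $S^n$ by \cite[Theorem 4.19]{Aubin}, makes the first factor nonnegative, vanishing only at $\lambda_1$ on the round sphere; and $2(n-1)\lambda \geq 2R$ gives $2(n-1)\lambda + (n-4)R \geq (n-2)R > 0$ for all $n \geq 3$, so the second factor is always positive. Thus $p \geq 0$, strict off the round sphere, proving the first assertion. For $R < 0$ the first factor $(n-1)\lambda - R$ is automatically positive, so the sign is governed by $2(n-1)\lambda + (n-4)R$: for $n=3$ this is $2(n-1)\lambda + (-R) > 0$, for $n=4$ it is $6\lambda > 0$, and for $n \geq 5$ positivity is equivalent to $\lambda > \frac{(n-4)}{2(n-1)}(-R)$, which holds for every $\lambda \geq \lambda_1$ precisely under hypothesis \eqref{LLower2}.

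The computations here are elementary; the only genuine subtlety is that for $R < 0$ and $n \geq 5$ there is no a priori spectral lower bound (the hyperbolic-neck examples mentioned in the introduction), which is exactly why the hypothesis on $\lambda_1$ cannot be dropped. Passing from the strict positivity (coercivity) of the second variation on the conformal slice to the claimed strict local minimization within $\mathcal{M}_1([g])$ is then handled by the general stability machinery of Section \ref{StableSec}.
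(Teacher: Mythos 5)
Your proposal is correct and follows exactly the route the paper intends: the paper omits the proof with the remark that the results for $\tilde{\mathcal{S}}$ follow by formally setting $\tau = \infty$ in the arguments of Theorems \ref{conftPos} and \ref{conftNeg}, which is precisely your factorization $tr(Jf) = \bigl((n-1)\Delta + R\bigr)\bigl(2(n-1)\Delta - (n-4)R\bigr)f$ (consistent with Theorem \ref{confs0}) together with the Lichnerowicz bound in the positive case and the hypothesis \eqref{LLower2} in the negative case. Your sign bookkeeping, the eigenvalue multiplier $p(\lambda) = \bigl((n-1)\lambda - R\bigr)\bigl(2(n-1)\lambda + (n-4)R\bigr)$, and the case analysis all check out, so you have simply supplied the details the paper leaves to the reader.
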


\begin{proof} The details are similar to above, the proof is omitted.
\end{proof}

%%%%%%%%%%%%%%%%%%%%%%%%%%%%%%%%%%%%%%%%%
\section{Rigidity for Einstein metrics} \label{RigidSec}
%%%%%%%%%%%%%%%%%%%%%%%%%%%%%%%%%%%%%%%%%

In this section we study the rigidity of Einstein metrics as critical points of $\tmf_{\tau}$.  Recall by Lemma \ref{H1Lemma},
\begin{align*}
H^1_{\tau} = \big\{ h \in \overline{S}_{0}^2(T^{*}M)\ \big|\ (\nabla \tmf_{\tau})_g'h = 0,\ \beta_g h = 0 \big\}.
\end{align*}
As a preliminary result we give conditions which imply that elements of $H^1_{\tau}$ are transverse-traceless:

\begin{lemma} \label{H1TT}
Let $(M,g)$ be an $n$-dimensional Einstein manifold which is different from the round sphere.  Assume $\tau \neq -\frac{n}{4(n-1)}$.

If
\begin{align} \label{H1gap}
\frac{2(n-4)(1 + n\tau) }{(-n^2 + 4n\tau - 4n^2 \tau) }R \notin \mbox{spec}(-\Delta),
\end{align}
then every $h \in H^1_{\tau}$ is transverse-traceless.
\end{lemma}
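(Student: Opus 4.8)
The plan is to split an arbitrary $h \in H^1_{\tau}$ into its pure-trace and trace-free parts and to show that the gauge condition $\beta_g h = 0$ forces the trace-free part to be transverse-traceless, so that the whole problem reduces to showing the pure-trace part vanishes. Write $f = \frac{1}{n}(tr_g\, h)$ and $z = h - fg$, so that $tr_g\, z = 0$. Since $(\delta_g(fg))_j = \nabla^i(f g_{ij}) = \nabla_j f$ and $tr_g\, h = nf$, I would compute
\[
\beta_g h = \delta_g h - \tfrac{1}{n} d(tr_g\, h) = \delta_g z + df - df = \delta_g z.
\]
Hence the hypothesis $\beta_g h = 0$ is equivalent to $\delta_g z = 0$, i.e. $z$ is transverse-traceless. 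Moreover, since $h \in \overline{S}_0^2(T^{*}M)$, the function $f$ has mean value zero with respect to $g$.

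Next I would take the $g$-trace of the equation $(\nabla \tmf_{\tau})_g' h = 0$. By linearity, $tr_g[(\nabla \tmf_{\tau})_g' h] = tr_g[(\nabla \tmf_{\tau})_g' z] + tr_g[(\nabla \tmf_{\tau})_g'(fg)]$. For the first summand, Theorem \ref{JTT} gives $(\nabla \tmf_{\tau})_g' z = \frac{1}{2}(\Delta_L + \frac{2}{n}R)(\Delta_L + (\frac{4}{n}+2\tau)R)z$; since $\Delta_L$ preserves transverse-traceless tensors on an Einstein manifold, $(\nabla \tmf_{\tau})_g' z$ is again transverse-traceless, so its trace vanishes. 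For the second summand, Theorem \ref{FtJf} identifies $tr_g[(\nabla \tmf_{\tau})_g'(fg)]$ with the scalar operator $P_{\tau} f$. Thus the traced equation collapses to
\[
P_{\tau} f = \frac{1}{2n}\Big((n-1)\Delta + R\Big)\Big(n(n - 4\tau + 4n \tau)\Delta - 2(n-4)(1 + n\tau) R\Big)f = 0.
\]

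I would then expand $f$ in eigenfunctions of $(-\Delta)$ and treat the two factors separately, recalling that $P_{\tau}$ acts diagonally on eigenspaces because $R$ is constant. The assumption $\tau \neq -\frac{n}{4(n-1)}$ guarantees that the coefficient $n(n - 4\tau + 4n\tau) = -(-n^2 + 4n\tau - 4n^2\tau)$ of $\Delta$ in the second factor is nonzero, so this factor annihilates only the eigencomponent with eigenvalue $\frac{2(n-4)(1+n\tau)}{-n^2 + 4n\tau - 4n^2\tau}R$; by the gap hypothesis \eqref{H1gap} this value is not in $\mbox{spec}(-\Delta)$, so the second factor never vanishes on a component of $f$. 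The first factor $(n-1)\Delta + R$ annihilates precisely the eigencomponent with eigenvalue $R/(n-1)$. When $R \leq 0$ this is non-positive, hence not a nonzero eigenvalue of $(-\Delta)$, and the constant ($\lambda=0$) component is killed because $f$ has mean zero. When $R > 0$, the Lichnerowicz estimate $\lambda_1 \geq R/(n-1)$ --- strict because $g$ is not isometric to the round sphere (\cite[Theorem 4.19]{Aubin}) --- rules out $R/(n-1)$ as an eigenvalue. In every curvature regime each nonzero eigencomponent of $f$ survives both factors, forcing $f \equiv 0$ and hence $h = z$ transverse-traceless.

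The main obstacle is the bookkeeping of sign and normalization conventions, so that the eigenvalue at which the second factor of $P_{\tau}$ degenerates matches exactly the value excluded in \eqref{H1gap}, while simultaneously verifying that the root $R/(n-1)$ of the first factor is excluded in all three cases $R>0$, $R=0$, $R<0$. The positive scalar curvature case is the delicate one: it is precisely here that the strict Lichnerowicz inequality, and thus the exclusion of the round sphere, is indispensable, since on $S^n$ the value $R/(n-1)$ is attained and produces the genuinely non-transverse-traceless elements $fg$ generated by the conformal group.
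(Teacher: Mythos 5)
Your proof is correct and follows essentially the same route as the paper's: decompose $h = z + fg$ with $z$ transverse-traceless via $\beta_g h = 0$, kill $tr(Jz)$ using Theorem \ref{JTT}, and apply the factorization of Theorem \ref{FtJf} together with the strict Lichnerowicz bound (valid since $g$ is not the round sphere) and hypothesis \eqref{H1gap} to force $f = 0$. Your eigenfunction expansion merely makes explicit the paper's assertion that both factors have trivial kernel, and your observation that the mean-zero condition on $f$ handles the constant mode is a slightly more careful treatment of the $R \leq 0$ cases than the paper's one-line claim.
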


\begin{remark} {\em
For the round sphere, if $\psi_1$ is a first-order spherical harmonic, then $h = \psi_1 g$ is in $H^1_{\tau}$.
However, these deformations are
not essential, as they arise from paths of conformal diffeomorphisms.
These can be dealt with by a modified ``slicing'' procedure,
see Lemma \ref{sliceSn} below.}
\end{remark}

\begin{proof}
Let $z$ denote the trace-free part of $h$ and write
\begin{align*}
h = z + f g,
\end{align*}
where $f = \frac{1}{n}(tr\ h).$  Note that $\beta_g h = 0$ implies that $z$ is transverse-traceless.

We first claim that
\begin{align} \label{Jclaim}
tr(Jh) = 0 \Rightarrow tr (Jf) = 0,
\end{align}
where $J = (\nabla \tilde{\mathcal{F}}_{\tau})'$ denotes the Jacobi operator. To see this, let $\psi \in C^{\infty}(M)$.  Since $Jh = 0$,
\begin{align*}
0 &= \big\langle tr(Jh), \psi  \big\rangle_{L^2}
= \big\langle Jh, \psi \cdot g \big\rangle_{L^2}
= \big\langle Jz + J(fg), \psi g \big\rangle_{L^2} \\
&= \big\langle Jz, \psi g \big\rangle_{L^2}
+ \big\langle J(fg), \psi g \big\rangle_{L^2} = \big\langle tr(Jz), \psi \big\rangle_{L^2} + \big\langle tr(Jf), \psi \big\rangle_{L^2}.
\end{align*}
One can check (see Theorem \ref{JTT}) that $tr(Jz) = 0$, since $z$ is trace-free.  Therefore,
\begin{align*}
0 = \big\langle tr(Jf), \psi \big\rangle_{L^2}
\end{align*}
for each $\psi \in C^{\infty}(M)$, which proves the claim.  Consequently, if $Jh = 0$ then by (\ref{confjac}) we have
\begin{align*}
0 = tr(J f) &=
\frac{1}{2n}  \Big( (n-1) \Delta + R \Big)
\Big( n(n - 4\tau + 4n \tau) \Delta - 2(n-4)(1 + n\tau) R \Big)f.
\end{align*}
Since $(M,g)$ is not the round sphere, the first operator in the above factorization has trivial kernel.  If $\tau \neq -\frac{n}{4(n-1)}$
and (\ref{H1gap}) holds,
then the second operator also has trivial kernel, and we conclude $f = 0$.
\end{proof}

\begin{proposition}  \label{H1Prop1}  With the same assumptions as Lemma \ref{H1TT}, if we assume in addition that
\begin{align} \label{DLgapH}
\Big\{ \frac{2}{n} R,  \Big( \frac{4}{n} + 2 \tau\Big)R  \Big\}
\notin \mbox{spec}_{TT} ( - \Delta_L),
\end{align}
then $H^1_{\tau} = 0$.
\end{proposition}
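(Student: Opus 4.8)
The plan is to reduce the problem to transverse-traceless tensors and then read off the conclusion from the factorization of the Jacobi operator. First I would invoke Lemma \ref{H1TT}: under the present hypotheses, which include the spectral condition \eqref{H1gap}, every $h \in H^1_{\tau}$ is transverse-traceless. This disposes of the conformal (pure-trace) part of any element of $H^1_{\tau}$, so it remains only to show that no nonzero transverse-traceless tensor lies in $H^1_{\tau}$.

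For a transverse-traceless $h$, the defining condition $(\nabla \tmf_{\tau})'_g h = 0$ becomes, by Theorem \ref{JTT},
\begin{align*}
\frac{1}{2}\Big(\Delta_L + \frac{2}{n}R\Big)\Big(\Delta_L + \Big(\frac{4}{n}+2\tau\Big)R\Big) h = 0.
\end{align*}
Since $(M,g)$ is Einstein, the Lichnerowicz Laplacian $\Delta_L$ maps the space of transverse-traceless tensors to itself, so both factors above are self-adjoint elliptic operators on this space, and they commute, each being a polynomial in $\Delta_L$. The assumption \eqref{DLgapH} states precisely that neither $\frac{2}{n}R$ nor $\big(\frac{4}{n}+2\tau\big)R$ belongs to $\mbox{spec}_{TT}(-\Delta_L)$; equivalently, each of the operators $\Delta_L + \frac{2}{n}R$ and $\Delta_L + \big(\frac{4}{n}+2\tau\big)R$ has trivial kernel, hence is invertible, when restricted to transverse-traceless tensors.

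Finally I would conclude that the composition of two commuting invertible self-adjoint operators is invertible, so the displayed equation forces $h = 0$. Concretely, expanding $h$ in an $L^2$-orthonormal basis of $\Delta_L$-eigensections within the transverse-traceless space, any nonzero component would correspond to an eigenvalue of $-\Delta_L$ making one of the two linear factors vanish, contradicting \eqref{DLgapH}. Therefore $H^1_{\tau} = \{0\}$.

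I do not expect a serious obstacle here beyond correctly marshalling the earlier results: the two subtle points are that Lemma \ref{H1TT} already absorbs the conformal directions via \eqref{H1gap}, and that \eqref{DLgapH} controls exactly the two roots of the degree-two polynomial in $\Delta_L$ produced by Theorem \ref{JTT}. The two spectral hypotheses are thus complementary, one handling the trace part and the other the transverse-traceless part, so that together they close off every direction in $\overline{S}_{0}^2(T^{*}M)$.
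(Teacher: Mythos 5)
Your proposal is correct and follows essentially the same route as the paper, whose proof is simply the one-line observation that the result follows from Lemma \ref{H1TT} together with the factorization \eqref{ttnvar}. You have merely made explicit the details the paper leaves implicit — that $\Delta_L$ preserves transverse-traceless tensors on an Einstein manifold, and that the spectral condition \eqref{DLgapH} renders both linear factors in \eqref{ttnvar} invertible on that subspace — and both of these points are accurate.
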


\begin{proof}
This follows directly from Lemma \ref{H1TT} and the
formula \eqref{ttnvar}.
\end{proof}

Therefore, to verify rigidity we need need to check two spectral conditions: one for the Laplacian on functions, the
other for the Lichnerowicz Laplacian on transverse-traceless tensors.  The following is a summary of when the condition
on $\mbox{spec}(-\Delta)$ holds:

\begin{lemma} \label{specgapLemma} Let $(M,g)$ be an Einstein manifold.  Assume one of
the following holds:

\noindent $(i)$ $R > 0$, $g$ is not round, and either

$\bullet$ $n =3$ and $ \tau\notin ( -5/12, - 3/8)$;

$\bullet$ $n = 4$ and $\tau \neq -1/3$; or

$\bullet$ $n \geq 5$ and
\begin{align}
\tau \notin \Big( - \frac{n}{4(n-1)} , \frac{4 - 3n}{2n(n-1)} \Big).
\end{align}

\noindent $(ii)$  $g$ is Ricci-flat, and $\tau \neq - \frac{n}{4(n-1)}$.

\noindent $(iii)$  $R < 0$, and either

$\bullet$ $n =3$ and  $ \tau \notin ( -3/8, - 1/3)$;

$\bullet$ $n = 4$ and $ \tau \neq -1/3$; or

$\bullet$ $n \geq 5$ and
\begin{align*}
\tau \in \Big(  - \frac{n}{4(n-1)} , -\frac{1}{n} \Big).
\end{align*}
Then (\ref{H1gap}) holds.
\end{lemma}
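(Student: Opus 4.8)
The plan is to recognize that the number appearing in \eqref{H1gap} is nothing but the root of the second factor of the conformal Jacobi polynomial already analyzed in Section~\ref{ConfSS}. Writing the denominator as $-n^2 + 4n\tau - 4n^2\tau = -n\big(n - 4\tau + 4n\tau\big)$, the quantity in \eqref{H1gap} is exactly the value
\begin{align*}
\lambda_{*} = -\frac{2(n-4)(1+n\tau)}{n(n - 4\tau + 4n\tau)}\,R
\end{align*}
at which the second factor $q(\lambda) = n(n - 4\tau + 4n\tau)\lambda + 2(n-4)(1+n\tau)R$ of the polynomial $p_{\tau}$ in \eqref{ptdef} vanishes. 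Thus \eqref{H1gap} is equivalent to the assertion that $q$ does not vanish at any eigenvalue of $(-\Delta)$, and this is precisely the spectral information already extracted in the proofs of Theorems \ref{conftPos} and \ref{conftNeg}. The whole lemma therefore amounts to transcribing those sign analyses into the single statement $\lambda_{*}\notin \mbox{spec}(-\Delta)$, and sorting out which $\tau$-ranges this requires in each dimension.

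First I would dispose of the positive case (i). Here the Lichnerowicz--Obata estimate $\lambda_1 \geq R/(n-1)$, strict since $g$ is not round, is the only tool needed. I split on the sign of the leading coefficient $n(n-4\tau+4n\tau)$: when it vanishes (i.e.\ $\tau = -\frac{n}{4(n-1)}$, a case permitted only when $n\neq 4$), $q$ is a nonzero constant and has no root at all; otherwise I show, exactly as in the proof of Theorem \ref{conftPos} and using the threshold $\tau_1 = \frac{4-3n}{2n(n-1)}$, that $q$ keeps a fixed sign on $[R/(n-1),\infty)$, so that its root satisfies $\lambda_{*} < R/(n-1) < \lambda_1$. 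Consequently $\lambda_{*}$ lies below the bottom of the nonzero spectrum, and the dimension-by-dimension comparison of $\tau_1$ and $\tau_2 = -\frac{n}{4(n-1)}$ reproduces the three stated $\tau$-ranges. For the Ricci-flat case (ii), $R=0$ forces $\lambda_{*}=0$, and the hypothesis $\tau\neq -\frac{n}{4(n-1)}$ merely guarantees that the second factor of the conformal operator remains a nonzero multiple of the Laplacian, whose only kernel is the constants.

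For the negative case (iii) there is no a priori lower bound on $\lambda_1$, so the strategy changes: rather than placing $\lambda_{*}$ below $\lambda_1$, I would show $\lambda_{*}\leq 0$, which already places it outside the nonnegative spectrum $\mbox{spec}(-\Delta)\subset[0,\infty)$. Since $R<0$, the sign of $\lambda_{*}$ is governed by that of $(n-4)(1+n\tau)/\big(n(n-4\tau+4n\tau)\big)$, and a direct sign count---identical to the computation behind Theorem \ref{conftNeg}, where the middle range $\tau_2 < \tau < -1/n$ required no eigenvalue hypothesis---shows this is negative exactly on the stated intervals, with the endpoints again accounted for by the vanishing of either the numerator or the leading coefficient.

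The one genuinely delicate point, and the place to be careful, is the borderline value $\lambda_{*}=0$. This occurs whenever $n=4$ (for every admissible $\tau$), whenever $\tau=-1/n$, and throughout the Ricci-flat case, and there $\lambda_{*}=0$ really does belong to $\mbox{spec}(-\Delta)$ as the eigenvalue of the constants. The resolution is that in the only use of \eqref{H1gap}, namely Lemma \ref{H1TT}, the relevant conformal factor $f$ has mean value zero, so the constants are excluded and the second factor of the conformal Jacobi operator remains injective on the admissible functions; thus \eqref{H1gap} is to be read on the orthogonal complement of the constants, where it holds. Apart from this convention the argument is pure bookkeeping: matching the vanishing locus of $q$ against the Lichnerowicz bound (for $R>0$) or against $[0,\infty)$ (for $R<0$), with the endpoint values handled by the degenerations $\lambda_{*}=\infty$ (leading coefficient zero) and $\lambda_{*}=R/(n-1)<\lambda_1$ (strict Obata).
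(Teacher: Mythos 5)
Your proposal is correct and takes essentially the same route as the paper, whose entire proof is a citation of the sign analysis of the conformal Jacobi factorization \eqref{ptdef} carried out in Section~\ref{ConfSS} (Theorems~\ref{conftPos} and~\ref{conftNeg}): you identify the quantity in \eqref{H1gap} as the root $\lambda_{*}$ of the second factor $q(\lambda)$ and transcribe exactly those sign computations, including the degenerate cases where the leading coefficient or the numerator vanishes. Your explicit handling of the borderline value $\lambda_{*}=0$ (for $n=4$, $\tau=-1/n$, and the Ricci-flat case) --- reading \eqref{H1gap} on the complement of the constants, which is forced since the trace part $f$ in Lemma~\ref{H1TT} has mean value zero because $h \in \overline{S}_{0}^2(T^{*}M)$ --- is precisely the convention the paper leaves implicit, and is needed for the lemma to be literally true in those cases.
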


\begin{proof}  This follows immediately from the arguments in Section \ref{ConfSS}, where we considered the
conformal Jacobi operator.
\end{proof}

Summarizing the preceding results,
\begin{theorem} \label{h1thm}
Let $(M,g)$ be an $n$-dimensional Einstein manifold with
\begin{align}
\label{condh1}
\Big\{ \frac{2}{n} R,  \Big( \frac{4}{n} + 2 \tau \Big) R  \Big\}
\notin \mbox{spec}_{TT} ( - \Delta_L).
\end{align}
If any of the cases appearing in Lemma \ref{specgapLemma} holds, then $H^1_{\tau} = 0$.
If $n=4$ and $\tau = -1/3$, then under the assumption \eqref{condh1},
$H^1_{-1/3} = \{ 0 \}$.
\end{theorem}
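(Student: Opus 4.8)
The plan is to present Theorem \ref{h1thm} as the synthesis of the preceding results, so that the proof is chiefly a matter of matching hypotheses rather than of new computation. The first observation is that the spectral condition \eqref{condh1} is verbatim the condition \eqref{DLgapH} required by Proposition \ref{H1Prop1}; hence, away from the exceptional conformally invariant value, the argument reduces entirely to checking that the remaining hypotheses of that proposition are in force.

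For the main case I would argue as follows. Each of the cases catalogued in Lemma \ref{specgapLemma} is engineered precisely to yield the spectral gap \eqref{H1gap} for the Laplacian acting on functions, and each simultaneously carries the standing assumptions of Lemma \ref{H1TT}: that $g$ is not isometric to the round sphere (automatic whenever $R \leq 0$, since a compact round metric has $R > 0$, and imposed explicitly in the $R > 0$ branch), and that $\tau \neq -n/4(n-1)$ (forced because the coefficient appearing in \eqref{H1gap} is singular exactly at that value). Granting \eqref{H1gap}, Lemma \ref{H1TT} shows that every $h \in H^1_\tau$ is transverse-traceless; feeding this together with \eqref{condh1} into the factorization \eqref{ttnvar}, Proposition \ref{H1Prop1} delivers $H^1_\tau = \{0\}$ at once.

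The delicate case is $n = 4$, $\tau = -1/3$, and this is the step I expect to be the main obstacle, since here $\tau = -1/3$ is exactly the critical value $-n/4(n-1)$ at which Lemma \ref{H1TT} and Proposition \ref{H1Prop1} break down: the conformal half of the analysis degenerates, mirroring the conformal invariance of $\mathcal{W}$. My plan is to bypass the conformal argument altogether. In this regime $H^1_{-1/3}$ is defined in \eqref{BachH1def} as the kernel of $S_g^B$ on \emph{traceless} tensors, and Lemma \ref{H1Lemma} identifies it with $\{h \in S^2_0(T^{*}M) : B_g' h = 0,\ \delta_g h = 0\}$, so its elements are transverse-traceless by construction and no eigenvalue estimate on functions is required. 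Because the Bach tensor is a constant multiple of $\nabla \mathcal{F}_{-1/3}$, its linearization on transverse-traceless tensors is given by \eqref{ttnvar}, which for $n = 4$, $\tau = -1/3$ factors as $\frac{1}{2}\big(\Delta_L + \frac{1}{2}R\big)\big(\Delta_L + \frac{1}{3}R\big)$. A transverse-traceless eigensection with $-\Delta_L h = \mu_L h$ lies in the kernel only when $\mu_L \in \{\frac{1}{2}R, \frac{1}{3}R\}$, and \eqref{condh1} excludes both values from $\mbox{spec}_{TT}(-\Delta_L)$; thus $h = 0$ and $H^1_{-1/3} = \{0\}$, with no ``non-roundness'' hypothesis needed.

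A final bookkeeping point I would verify is that the non-roundness assumption is genuinely confined to the $R > 0$ branch, where it supplies the strict Lichnerowicz estimate $\lambda_1 > R/(n-1)$ used to kill the conformal kernel in Lemma \ref{H1TT}, and that it plays no role in the Bach conclusion. This confirms the theorem exactly as stated, including the unconditional form of the statement for $n = 4$, $\tau = -1/3$.
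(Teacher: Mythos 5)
Your proposal is correct and follows essentially the same route as the paper: the main case is exactly the paper's assembly of Lemma \ref{specgapLemma} (giving \eqref{H1gap}), Lemma \ref{H1TT} (forcing $h$ to be transverse-traceless), and Proposition \ref{H1Prop1} via the factorization \eqref{ttnvar}, while your treatment of $n=4$, $\tau=-1/3$ is precisely the paper's remark that $H^1_{-1/3}$ consists by definition (and Lemma \ref{H1Lemma}) of transverse-traceless tensors, so only the $\mbox{spec}_{TT}$ condition is needed and the degenerate conformal analysis is bypassed. Your expanded verification of the Bach case via the factorization $\frac{1}{2}\big(\Delta_L + \frac{1}{2}R\big)\big(\Delta_L + \frac{1}{3}R\big)$ is a correct elaboration of what the paper leaves implicit, not a different argument.
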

\begin{proof}
This follows from the above, with the added remark that
in the case of $n=4$ and $\tau = -1/3$ (the functional $\tilde{\mathcal{W}}$),
the space $H^1_{-1/3}$ by definition contains only TT elements, so
only the condition on $\mbox{spec}_{TT}$ is needed.
\end{proof}

%%%%%%%%%%%%%%%%%%%%%%%%%%%%%%%%%%%%%%%%%%%%%%%%%%%%%%%%%%%%%%%%%%%%%%%%%%%%%%%%%%%%%%%%%
\section{Stability for critical metrics}  \label{StableSec}
%%%%%%%%%%%%%%%%%%%%%%%%%%%%%%%%%%%%%%%%%%%%%%%%%%%%%%%%%%%%%%%%%%%%%%%%%%%%%%%%%%%%%%%%%%%%

In this section we consider stability properties of critical metrics,
and give the proof of Theorems \ref{StableMain}, \ref{Bachthm}, and \ref{RevBishop}.
First, we recall the definition of stability:

\begin{definition}
{\em
A metric $g$ is called {\em{strictly stable}} if
there is an $\epsilon_0 > 0$ such that the Jacobi operator $J$ associated to $\tilde{\mathcal{F}}_{\tau}$ satisfies
\begin{align} \label{1pos}
\int \langle Jh, h \rangle\ dV \geq \epsilon_0 \int |h|^2\ dV
\end{align}
for $h \in \mathcal{V} \equiv
\{ h \in C^{2,\alpha}(S^2(T^{*}M))\ \vert\ \beta_g h = 0 \}$.

If all kernel elements are integrable and \eqref{1pos} holds for some $\epsilon_0 > 0$ and all $h \in \mathcal{V}$ orthogonal to the kernel, then $g$ is called {\em{stable}}.}
\end{definition}
Note that in the following proposition, we do not assume $g$ is Einstein.
\begin{proposition}  \label{locmin}
Let $(M,g)$ be critical for $\tmf_{\tau}$. If $g$ is stable, then $g$ is a local
minimizer for $\tmf_{\tau}$. If $g$ is strictly stable, then $g$ is a
strict local minimizer for $\tmf_{\tau}$.
\end{proposition}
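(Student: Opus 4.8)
The plan is to reduce to the Ebin--Palais slice and turn the statement into a coercivity estimate for the second variation. Since $\tmf_{\tau}$ is invariant under diffeomorphism and scaling, Lemma \ref{BasicSlice} lets me replace any competitor by a gauge-fixed representative, so it suffices to prove $\tmf_{\tau}[g+h] \ge \tmf_{\tau}[g]$ for $h$ in the slice $\mathcal{V} = \{\beta_g h = 0\}$, small in $C^{2,\alpha}$. Along the straight-line path $g_s = g + sh$ (for which $g_s' = h$ and $g_s'' = 0$), Taylor's formula with integral remainder together with the criticality $\tfrac{d}{ds}\tmf_{\tau}[g_s]\big|_{s=0} = 0$ gives
\begin{align*}
\tmf_{\tau}[g+h] - \tmf_{\tau}[g] = \int_0^1 (1-s)\, B_{g_s}(h,h)\, ds ,
\end{align*}
where $B_{g'}(h,h) = \int \langle J_{g'} h, h\rangle\, dV_{g'}$ is the second-variation bilinear form at $g'$. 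By Remark \ref{Jrem} this form is genuinely second order in $h$, so it is defined for $h \in C^{2,\alpha}$ and depends continuously on $g'$ in the $C^2$-topology, with
\begin{align*}
| B_{g_s}(h,h) - B_g(h,h) | \le C\, \| s h \|_{C^2}\, \| h \|_{H^2}^2 .
\end{align*}

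The heart of the matter is to upgrade the hypothesized $L^2$-positivity to coercivity in $H^2$. On the slice the gauge term in $P_g$ drops out, so $B_g(h,h) = \int \langle S_g h, h\rangle\, dV$ with $S_g$ the linearized gauged operator. Its principal symbol, computed in \eqref{symS}, is positive definite on $\mathcal{V}$: it must be positive semidefinite, since otherwise highly oscillatory test tensors concentrated at a covector $\xi$ would make $B_g$ negative and violate \eqref{1pos}, and it is nondegenerate by the ellipticity proved in Theorem \ref{Pzed}. G\aa rding's inequality therefore yields $B_g(h,h) \ge c_1 \|h\|_{H^2}^2 - c_2 \|h\|_{L^2}^2$. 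In the strictly stable case I combine this with $B_g(h,h) \ge \epsilon_0 \|h\|_{L^2}^2$: writing $B_g = \mu B_g + (1-\mu) B_g$ and choosing $\mu$ small enough that $(1-\mu)\epsilon_0 \ge \mu c_2$ produces $B_g(h,h) \ge c \|h\|_{H^2}^2$ for a uniform $c > 0$. Feeding this and the continuity estimate into the remainder integral, for $h$ small in $C^{2,\alpha}$ the integrand stays $\ge \tfrac12 c \|h\|_{H^2}^2$, so $\tmf_{\tau}[g+h] - \tmf_{\tau}[g] \ge \tfrac14 c \|h\|_{H^2}^2 > 0$ unless $h = 0$. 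After the slice reduction this is exactly strict local minimization.

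In the merely stable case the Jacobi operator has a nontrivial kernel $H^1_{\tau}$, and handling it is where integrability enters. By Corollary \ref{Kurcor} the critical set near $g$ is modeled on $\Psi^{-1}(0) \subset H^1_{\tau}$; if every kernel element is integrable, the Kuranishi map $\Psi$ vanishes identically near $0$, so the zero set of $P_g$---equivalently, by Theorem \ref{Pzed}, the critical metrics near $g$ in the slice---is a smooth manifold $Z$ of dimension $\dim H^1_{\tau}$, tangent to $H^1_{\tau}$ at $g$. Since $\nabla \tmf_{\tau}$ vanishes along $Z$, the functional $\tmf_{\tau}$ is constant on the connected manifold $Z$, with value $\tmf_{\tau}[g]$. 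Given a nearby slice competitor $\tilde g$, I would take the nearest point $g^{\ast} \in Z$ and write $\tilde g = g^{\ast} + h^{\perp}$ with $h^{\perp}$ orthogonal in $L^2$ to $T_{g^{\ast}} Z = \ker J_{g^{\ast}}$. Because coercivity on the kernel-complement is an open condition that persists, with locally constant kernel dimension, as $g^{\ast}$ ranges over $Z$, the same G\aa rding-plus-interpolation argument gives coercivity of $B_{g^{\ast}}$ on this transverse space; expanding around the critical metric $g^{\ast}$ then yields $\tmf_{\tau}[\tilde g] \ge \tmf_{\tau}[g^{\ast}] = \tmf_{\tau}[g]$, which is local minimization.

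The main obstacle is this last step: constructing $Z$ from the integrability hypothesis and running the nearest-point decomposition with coercivity of $B_{g^{\ast}}$ uniform as $g^{\ast}$ varies, since the bare $L^2$-positivity does not by itself control the higher-order remainder. The strict case, by contrast, is essentially bookkeeping once one observes that the Taylor remainder is measured in the very $H^2$-norm supplied by the coercivity estimate---a matching made possible by the fact, recorded in Remark \ref{Jrem}, that the second variation is only second order in $h$.
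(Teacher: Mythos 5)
Your strict-case argument is essentially the paper's: reduce by Lemma \ref{BasicSlice} to the slice $\beta_g h = 0$, upgrade the $L^2$-positivity (\ref{1pos}) to $H^2$-coercivity (\ref{1pos2}), and control the Taylor remainder using the fact (Remark \ref{Jrem}) that the second variation is only second order in $h$; your integral-remainder formulation is if anything slightly tighter than the paper's expansion of $a(s)$ at $s=0$. One caveat: the paper obtains coercivity from the spectral decomposition of the self-adjoint elliptic operator $J$ restricted to $\mathcal{V}$, whereas your G\aa rding route needs the oscillatory test tensors to lie in $\mathcal{V}$; since $\beta_g h = 0$ is a differential constraint, concentration only forces semidefiniteness of the symbol on the symbol-level slice $\{a : \xi^i a_{ij} = \tfrac{1}{n}(\operatorname{tr} a)\,\xi_j\}$, not on all of $S^2$, so this step needs repair (e.g., by invoking the gauged operator $S_g$, whose full symbol (\ref{symS}) is what is actually elliptic).

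The genuine gap is in the merely stable case, and it is exactly the step you flag. The paper does not move the base point along a critical manifold $Z$ and does not need coercivity at varying base points. Instead it proves the modified slicing Lemma \ref{sliceSn}: a single implicit-function-theorem map simultaneously applies a diffeomorphism and a scaling, subtracts a member $g(Q)$ of the smooth critical family furnished by integrability, and imposes three conditions \emph{all with respect to the fixed metric} $g$, namely $\beta_g \tilde{\theta} = 0$, $\int tr_g \tilde{\theta}\, dV = 0$, and $\int \langle u_{\nu}, \tilde{\theta} \rangle\, dV = 0$. It then expands $s \mapsto \tmf_{\tau}[g(Q) + s \tilde{\theta}/\| \tilde{\theta} \|_{H^2}]$ about the critical metric $g(Q)$, but evaluates the quadratic form at $g$, absorbing $\int \langle (J_{g(Q)} - J_g)\tilde{\theta}, \tilde{\theta} \rangle\, dV$ as an error of size $C \| \theta_1 \|_{C^{2,\alpha}} \| \tilde{\theta} \|_{H^2}^2$ via the estimate $|Q| \leq C \| \theta_1 \|_{C^{\ell+1,\alpha}}$ in (\ref{Qsmall}) and smoothness of $Q \mapsto g(Q)$, as in (\ref{Eest}). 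This disposes of both of your unresolved points at once: (a) coercivity is only ever invoked at $g$; and (b) your nearest-point decomposition $\tilde{g} = g^{\ast} + h^{\perp}$ with $h^{\perp}$ orthogonal to $\ker J_{g^{\ast}}$ is insufficient as written, because the ungauged second variation also degenerates along the infinite-dimensional diffeomorphism and scaling directions, so the transverse space at the moving base point must carry a gauge condition that your proposal never constructs. Separately, your claim that integrability of every kernel element forces the Kuranishi map $\Psi$ to vanish identically near $0$ is not justified as stated (a zero set can contain curves tangent to many directions without filling a neighborhood of $0$); the paper sidesteps this by reading the integrability hypothesis directly as producing the smooth family $g(Q)$ with $\partial_{Q_i} g(Q)|_{Q=0} = u_i$, which is all the argument requires.
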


\begin{proof}
The proof begins with a modified ``slicing'' lemma.  Assume $\dim (H^1_{\tau}) = d > 0$,
and choose an $L^2$-orthonormal basis $\{ u_1, \dots, u_d \}$ for $H^1_{\tau}$.
Under the assumption that all kernel elements are integrable, there is a
$d$-dimensional family of critical metrics $g(Q)$ for
$Q \in \mathcal{B} = B(0, \epsilon) \subset \RR^d$,
depending smoothly on $Q$,
such that $\frac{\partial}{\partial Q_i} g(Q) |_{Q = 0} = u_i$
for $i = 1 \dots d$, and $g(0) = g$. The notation $|Q|$ will denote
the Euclidean norm of $Q$.

\begin{lemma}  \label{sliceSn}  For each metric $g_1 = g + \theta_1$ in a
sufficiently small $C^{\ell + 1,\alpha}$-neighborhood of $g$
($\ell \geq 1$), there exists a $C^{\ell + 2,\alpha}$-diffeomorphism
$\phi : M \rightarrow M$, a point $Q \in \mathbb{R}^d$,
and a constant $c \in \mathbb{R}$ such that
\begin{align} \label{pullscale10}
e^c \phi^{*}g_1 = g(Q) + \tilde{\theta},
\end{align}
with
\begin{align} \label{kkerSn}
\beta_{g} \tilde{\theta} = 0,
\end{align}
and
\begin{align} \label{trconSn} \begin{split}
\int tr_{g} \tilde{\theta}\ dV &= 0,  \\
\int \langle u_{\nu} , \tilde{\theta} \rangle \  dV &= 0, \ \ 1 \leq \nu \leq d.
\end{split}
\end{align}
Moreover, we have the estimates
\begin{align} \label{Qsmall}
| Q | \leq C \| \theta_1 \|_{C^{\ell +1,\alpha}},
\end{align}
\begin{align} \label{tilsmall2}
\| \tilde{\theta} \|_{C^{\ell+1,\alpha}} \leq C \| \theta_1 \|_{C^{\ell+1,\alpha}},
\end{align}
where $C = C(g)$.
\end{lemma}

\begin{proof}
Let $\{ \omega_1, \dots , \omega_{\kappa} \}$ denote a basis of the space of conformal Killing forms with respect to $g$.
Define the map
\begin{align}
\mathcal{H} : C^{\ell + 2,\alpha}(TM) \times \mathbb{R} \times \mathbb{R}^{d} \times \mathbb{R}^{\kappa} \times C^{\ell + 1,\alpha}(S^2(T^{*}M)) \rightarrow C^{\ell,\alpha}(T^{*}M) \times \mathbb{R} \times \mathbb{R}^{d}
\end{align}
 by
\begin{align} \label{Hdef} \begin{split}
\mathcal{H}(X,r,&Q,v,\theta) = \mathcal{H}_{\theta}(X,r,Q,v) \\
& \hskip-.25in = \Big( \beta_{g} \Upsilon(X,r,Q) + \sum_i v_i \omega_i, \int tr_{g} \Upsilon(X,r,Q)\ dV,
\int \Big \langle u_{\nu}, \Upsilon(X,r,Q) \Big\rangle dV \Big),
\end{split}
\end{align}
where
\begin{align} \label{Udef}
\Upsilon(X,r,Q) = e^r \phi^{*}_{X,1}(g + \theta) - g(Q),
\end{align}
and the final entry in (\ref{Hdef}) is understood to be an $d$-tuple.  If we endow the Euclidean factors in the
domain of $\mathcal{H}$ with the Euclidean norm, it is clear that $\mathcal{H}$ is a continuously differentiable mapping.

To compute the linearization
at $(X,r,Q,v,\theta) = (0,0,0,0,0)$, first note that
\begin{align*}
\Upsilon'(Y,s,P) &= \frac{d}{d\epsilon} \Upsilon(\epsilon Y, \epsilon s, \epsilon P) \Big|_{\epsilon = 0} \\
&= s g + \mathcal{L}_{g}[Y] - \sum_{m} p_m u_m.
\end{align*}
Therefore,
\begin{align} \label{LinHS} \begin{split}
\mathcal{H}_0'(Y,s,P,a) &= \Big( \Box_{\mathcal{K}} Y^{\flat}
- \beta_{g} ( \sum p_m u_m )  + \sum_i a_i \omega_i, n Vol(g) s, \\
& \hskip.5in  \int \big \langle u_{\nu}, \mathcal{L}_{g}Y \big\rangle\ dV
-  \int \Big\langle u_{\nu},  \Big( \sum_m  p_m u_m \Big) \Big \rangle dV \Big) \\
&= \Big( \Box_{\mathcal{K}} Y^{\flat} + \sum_i a_i \omega_i, n Vol(g) s
, \frac{2}{n} \int (tr\ u_{\nu})(\delta_{g} Y)\ dV - p_{\nu} \Big).
\end{split}
\end{align}
Here we have used the fact that
\begin{align*}
\beta_{g}( u_{\nu} ) = 0, \ \ \int (tr\ u_{\nu})\ dV = 0.
\end{align*}
It follows that the adjoint operator is given by
\begin{align} \label{HstarS}
(\mathcal{H}_0')^{*}(\eta,r,q_{\mu}) =
\big( \Box_{\mathcal{K}} \eta - \frac{2}{n} \sum_{m} q_m d(tr\ u_m),
n Vol(g) r, - q_{\mu}, \int \langle \eta, \omega_i \rangle\ dV \big).
\end{align}
Setting $(\mathcal{H}_0')^{*}(\eta,r,q_{\mu}) = 0$ we immediately find (by looking at the middle two arguments) that $r = 0$ and $q_{\mu} = 0, 1 \leq \mu \leq d$.
Therefore, the cokernel condition becomes
\begin{align*}
\Box_{\mathcal{K}} \eta &= 0, \\
 \int \langle \eta, \omega_i \rangle\ dV & =0, \ 1 \leq i \leq \kappa.
 \end{align*}
The first equation implies that $\eta$ is a conformal Killing field, while the second implies that $\eta$ is orthogonal (in $L^2$) to the space
of conformal Killing forms.  It follows that $\eta = 0$, so the map $\mathcal{H}_0^{\prime}$ is surjective.  Therefore, if $\theta_1$ is small we can find
$(X,c,Q,v)$ small with
\begin{align*}
\mathcal{H}_{\theta_1}(X,c,Q,v) = (0,0,0).
\end{align*}
In particular,
\begin{align} \label{bisz1}
\beta_g \Upsilon(X,c,Q)  + \sum_i v_i \omega_i = 0.
\end{align}
Fix $1 \leq \ell \leq \kappa$; if we pair both sides of (\ref{bisz}) with $\omega_{\ell}$ in $L^2$ we find $v_{\ell} = 0$, hence
\begin{align} \label{bisz}
\beta_g \Upsilon(X,c,Q) = 0.
\end{align}
Consequently, if we define $\tilde{\theta}$ by
\begin{align} \label{ttdef}
\tilde{\theta} =  \Upsilon(X,c,Q) = e^c \phi^{*}(g + \theta_1) - g(Q),
\end{align}
where $\phi = \phi_{X,1}$, then $\tilde{\theta}$ clearly satisfies the
conclusions of the Lemma.

\end{proof}

%%%%%%%%%%%%%%%%%%%%%%%%%%%%%%%%%%%%%%%%%%%%%%%%%%%%%%%%

Before we continue the proof of Proposition \ref{locmin}, we remark that
there is an equivalent way of formulating the stability condition (\ref{1pos}).
In place of  (\ref{1pos}),
we could require that for $h \in \mathcal{V}$.
\begin{align} \label{1pos2}
\int \langle Jh, h \rangle\ dV \geq \epsilon_1 \Vert h \Vert_{H^2}^2,
\end{align}
where
\begin{align}
 \Vert h \Vert_{H^2}^2 = \int |\nabla^2 h|^2 dV + \int |\nabla h|^2 dV
+ \int |h|^2 dV. \end{align}
We refer to (\ref{1pos2}) as {\em $H^2$-stability}.
Clearly, $H^2$-stability implies stability.
However, the converse is also
true: Since $J$ is an elliptic operator on $\mathcal{V}$,
we can use the spectral decomposition to conclude (\ref{1pos2}) from (\ref{1pos}).

Now suppose $(M,g)$ is critical for $\tmf_{\tau}$ and let $g_1 = g + \theta_1$ be a metric which is sufficiently close in the $C^{2,\alpha}$-topology.  Let
$\tilde{\theta}$ satisfy (\ref{pullscale10})--(\ref{trconSn}).  In particular,
\begin{align*}
\tilde{\mathcal{F}}_{\tau}[g(Q) + \tilde{\theta}] &= \tilde{\mathcal{F}}_{\tau}[ e^c \phi^{*}g_1 ]
= \tilde{\mathcal{F}}_{\tau}[ \phi^{*}g_1 ] \quad \mbox{(by scale-invariance)} \\
&= \tilde{\mathcal{F}}_{\tau}[ g_1 ] = \tilde{\mathcal{F}}_{\tau}[g + \theta_1]
\quad \mbox{(by diffeomorphism-invariance)}.
\end{align*}

Define
\begin{align} \label{ladef}
a(s) = \tmf_{\tau} \bigg[g(Q) + s \frac{\tilde{\theta}}{\| \tilde{\theta}\|_{H^2}} \bigg],
\end{align}
where $\| \tilde{\theta} \|_{H^2} = \| \tilde{\theta}\|_{H^2(g)}$.  Then
\begin{align} \label{endpoint}
a(0) = \tmf_{\tau}[g(Q)] = \tmf_{\tau}[g], \quad \quad a(\| \tilde{\theta} \|_{H^2} ) = \tmf_{\tau}[g(Q) + \tilde{\theta}] = \tilde{\mathcal{F}}_{\tau}[g + \theta_1],
\end{align}
and $a'(0) = 0$ since $g(Q)$ is critical.  Also,
\begin{align} \label{d2a} \begin{split}
a''(0) &= \int \bigg\langle J_{g(Q)}\bigg(\frac{\tilde{\theta}}{\| \tilde{\theta} \|_{H^2}}\bigg), \frac{\tilde{\theta}}{\| \tilde{\theta} \|_{H^2}} \bigg \rangle\ dV_{g(Q)}  \\
&= \frac{1}{\| \tilde{\theta} \|_{H^2}^2} \int \big \langle J_{g(Q)} \tilde{\theta} ,\tilde{\theta} \big \rangle\ dV_{g(Q)}.
\end{split}
\end{align}
Using the formulas for the expansion of the curvature from Section \ref{KM} and the fact that the second variation is only second order in the metric (see Remark \ref{Jrem}), we can estimate
\begin{align*}
\int \big \langle J_{g(Q)} \tilde{\theta} ,\tilde{\theta} \big \rangle\ dV_{g(Q)} = \int \big \langle J_g \tilde{\theta} ,\tilde{\theta} \big \rangle\ dV_g + \{ \mbox{Error} \},
\end{align*}
where
\begin{align} \label{Eest} \begin{split}
\big| \{ \mbox{Error} \} &\big| \leq C(g) \big[ \| g(Q)-g\|_{C^{2}} + \| g(Q)-g\|_{C^{2}}^2 \big] \| \tilde{\theta} \|_{H^2}^2 \\
&\leq C\big( |Q| + |Q|^2 \big) \| \tilde{\theta} \|_{H^2}^2,
\end{split}
\end{align}
since $Q \mapsto g(Q)$ is smooth.  By (\ref{Qsmall}), if $\| \theta_1 \|_{C^{2,\alpha}}$ is sufficiently small, then
\begin{align} \label{Est}
\big| \{ \mbox{Error} \} \big| \leq C  \| \theta_1 \|_{C^{2,\alpha}} \cdot \| \tilde{\theta} \|_{H^2}^2.
\end{align}
Therefore,
\begin{align*}
a''(0) &= \frac{1}{\| \tilde{\theta} \|_{H^2}^2} \int \big \langle J_{g(Q)} \tilde{\theta} ,\tilde{\theta} \big \rangle\ dV_{g(Q)} \\
&= \frac{1}{\| \tilde{\theta} \|_{H^2}^2} \Big\{ \int \big \langle J_g \tilde{\theta} ,\tilde{\theta} \big \rangle\ dV_g  + O(\| \theta_1 \|_{C^{2,\alpha}}) \| \tilde{\theta}\|_{H^2}^2 \Big\} \\
&= \frac{1}{\| \tilde{\theta} \|_{H^2}^2} \int \big \langle J_g \tilde{\theta} ,\tilde{\theta} \big \rangle\ dV_g + O(\| \theta_1 \|_{C^{2,\alpha}}).
\end{align*}
Also,
\begin{align} \label{trcon2} \begin{split}
\int tr\ \tilde{\theta}\ dV &= 0,  \\
\int \langle u_{\nu}, \tilde{\theta} \rangle \  dV &= 0, \ \ 1 \leq \nu \leq d.
\end{split}
\end{align}
Since $g$ is stable, hence $H^2$-stable, (\ref{trcon2}) implies
\begin{align*}
\frac{1}{\| \tilde{\theta} \|_{H^2}^2} \int \big \langle J_g \tilde{\theta} ,\tilde{\theta} \big \rangle\ dV_g \geq \epsilon_0 > 0.
\end{align*}
Therefore, if $\| \theta_1 \|_{C^{2,\alpha}}$ is sufficiently small, then
\begin{align*}
a''(0) \geq \epsilon_0/2.
\end{align*}
Consequently, $a(s) > a(0)$ for all $s > 0$ sufficiently small (depending on $\epsilon_0$).  Therefore, if $\| \tilde{\theta}\|_{C^{2,\alpha}}$
is sufficiently small (which is guaranteed, via (\ref{tilsmall2}), if $\| \theta_1 \|_{C^{2,\alpha}}$ is
small enough), then
\begin{align*}
\tilde{\mathcal{F}}_{\tau}[g] = \tilde{\mathcal{F}}_{\tau}[g(Q)]< \tilde{\mathcal{F}}_{\tau}[g(Q) + \tilde{\theta}] = \tilde{\mathcal{F}}_{\tau}[g + \theta_1].
\end{align*}
In particular, if $\tilde{F}_{\tau}[g + \theta_1] = \tilde{\mathcal{F}}_{\tau}[g]$ then $\tilde{\theta} = 0$.  It follows that $g$ is a local minimum of $\tmf_{\tau}$.

If $g$ is strictly stable, then $\dim (H^1_{\tau}) = 0$ and in place of Lemma \ref{sliceSn} we can simply apply the slicing result of Lemma \ref{BasicSlice}, and the rest of the
proof goes through to show that $g$ is a strict local minimum of $\tmf_{\tau}$.

\end{proof}

Combining the results of Theorems \ref{TTPosCor} and \ref{conftPos} with the preceding proposition, we can now prove the main stability
result, Theorem \ref{StableMain} of the Introduction:

\begin{proof}[Proof of Theorem \ref{StableMain}.]
Suppose (\ref{stablegap}) holds.  By Theorems \ref{TTPosCor} and \ref{conftPos}, the Jacobi operator $J = J_g$ of $\tmf_{\tau}$ is positive
definite when restricted to either transverse-traceless or conformal variations.  Let $h$ satisfy $\beta_g h = 0$; we need to show
\begin{align*}
\int \big \langle Jh, h \rangle\ dV \geq \epsilon_0 \int |h|^2\ dV
\end{align*}
for some $\epsilon_0 > 0$.  Write $h = z + f g$, where $z$ is the trace-free part of $h$; then
\begin{align*}
0 = \beta_g h = \beta_g z + \beta_g (fg) = \delta_g z.
\end{align*}
Therefore, $z$ is transverse-traceless.  Then
\begin{align*}
\int \big \langle Jh, h \rangle\ dV  = \int \langle  J z,  z  \rangle dV + \int \langle  J (fg),  fg  \rangle dV + 2 \int \langle  J z,  fg  \rangle dV,
\end{align*}
where the last line follows from the fact that $J$ is self-adjoint (so we can combine the cross-terms).
However, the cross term vanishes
because $tr (Jz) = 0$ for $z$ transverse-traceless (as easily seen by taking a trace of \eqref{ttnvar}).  Therefore,
\begin{align*}
\int \big \langle Jh, h \rangle\ dV &=  \int \langle  J z,  z  \rangle dV + \int \langle  J (fg),  fg  \rangle dV \\
&\geq \epsilon_0 \int \big[ |z|^2 + f^2 \big]\ dV \geq \epsilon_0 \int |h|^2\ dV.
\end{align*}
We conclude that $g$ is strictly stable, so Theorem \ref{StableMain} follows from Proposition \ref{locmin}.
\end{proof}

In the case of the Bach tensor, using the terminology introduced in
 Definition~\ref{Bachdef}, we have the following
analogue to Proposition \ref{locmin}.
\begin{proposition}  \label{Bachlocmin}
Let $(M^4,g)$ be Bach-flat.
If $g$ is (strictly) Bach-stable, then $g$ is a (strict) local
minimizer for $\mathcal{W}$.
\end{proposition}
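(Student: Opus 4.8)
The plan is to reproduce the proof of Proposition~\ref{locmin} almost verbatim, with the diffeomorphism-plus-scaling gauge replaced by the diffeomorphism-plus-conformal gauge already employed in part $(iii)$ of Theorem~\ref{PBzed}. The only genuinely new ingredient is a Bach-adapted \emph{slicing lemma}. Assume $\dim(H^1_{-1/3}) = d > 0$ and that every kernel element is integrable; fix an $L^2$-orthonormal basis $\{u_1,\dots,u_d\}$ of $H^1_{-1/3}$ (each $u_\nu$ is transverse-traceless by Lemma~\ref{H1Lemma}) and a smooth family $g(Q)$, $Q \in B(0,\epsilon) \subset \RR^d$, of Bach-flat metrics with $g(0)=g$ and $\frac{\p}{\p Q_i}g(Q)|_{Q=0}=u_i$. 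I would then combine the implicit function theorem argument of Lemma~\ref{sliceSn} with the conformal map $\mathcal{H}$ of Theorem~\ref{PBzed}$(iii)$: consider
\[
\mathcal{H}_{\theta}(X,u,Q,v) = \Big( \beta_{g}\Upsilon + \textstyle\sum_i v_i \w_i,\ tr_{g}\Upsilon,\ \int \langle u_\nu, \Upsilon \rangle\, dV \Big),
\]
where $\Upsilon = e^u \phi_{X,1}^{*}(g+\theta) - g(Q)$, the second entry is valued in functions (forcing $\tilde\theta$ to be pointwise trace-free), and the third is a $d$-tuple. The linearization at the origin sends $(Y,u,P,a)$ to $\big(\Box_{\ck_g}Y^{\flat} + \sum_i a_i \w_i,\ nu + 2\delta_g Y^{\flat},\ -P_\nu\big)$, whose cokernel consists of conformal Killing forms orthogonal to all conformal Killing forms, hence is trivial. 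Thus for every $g_1 = g+\theta_1$ close to $g$ there are a diffeomorphism $\phi$, a function $u$, and a point $Q$ with
\[
e^u \phi^{*}g_1 = g(Q) + \tilde{\theta},
\]
where $\tilde{\theta}$ is trace-free, $\beta_g \tilde{\theta}=0$ (hence transverse-traceless), $L^2$-orthogonal to each $u_\nu$, and $|Q| + \|\tilde{\theta}\|_{C^{\ell+1,\alpha}} \leq C\|\theta_1\|_{C^{\ell+1,\alpha}}$.

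With this slicing the remainder of the argument is identical to Proposition~\ref{locmin}. By the conformal and diffeomorphism invariance of $\mathcal{W}$ in dimension four, $\mathcal{W}[g(Q)+\tilde{\theta}] = \mathcal{W}[e^u\phi^{*}g_1] = \mathcal{W}[g_1]$. Setting $a(s) = \mathcal{W}[g(Q) + s\tilde{\theta}/\|\tilde{\theta}\|_{H^2}]$, one has $a(0) = \mathcal{W}[g(Q)] = \mathcal{W}[g]$ (the functional is constant along the family $g(Q)$, since $\nabla\mathcal{W}$, a multiple of the Bach tensor, vanishes on Bach-flat metrics), $a'(0)=0$, and $a''(0) = \|\tilde{\theta}\|_{H^2}^{-2}\int \langle J_{g(Q)}\tilde{\theta},\tilde{\theta}\rangle\, dV_{g(Q)}$. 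Using the curvature expansion of Section~\ref{KM} and the fact that the second variation is only second order in the metric (Remark~\ref{Jrem}), I would replace $J_{g(Q)}$ by $J_g$ at the cost of an error $O(|Q|)\|\tilde{\theta}\|_{H^2}^2 = O(\|\theta_1\|_{C^{2,\alpha}})\|\tilde{\theta}\|_{H^2}^2$. Since $\tilde{\theta}$ is transverse-traceless and orthogonal to the kernel, Bach-stability---equivalently $H^2$-Bach-stability, via the spectral decomposition of the elliptic operator $J$ on $TT$ tensors---gives $\int \langle J_g\tilde{\theta},\tilde{\theta}\rangle\, dV \geq \epsilon_0 \|\tilde{\theta}\|_{H^2}^2$. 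Hence $a''(0) \geq \epsilon_0/2$ once $\|\theta_1\|_{C^{2,\alpha}}$ is small, so $\mathcal{W}[g_1] = a(\|\tilde{\theta}\|_{H^2}) \geq a(0) = \mathcal{W}[g]$, and $g$ is a local minimizer.

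In the strictly Bach-stable case $\dim(H^1_{-1/3})=0$, so there is no family $g(Q)$ and one applies the pure diffeomorphism-plus-conformal slicing of Theorem~\ref{PBzed}$(iii)$ directly; equality $\mathcal{W}[g_1]=\mathcal{W}[g]$ then forces $\tilde{\theta}=0$, i.e. $e^u\phi^{*}g_1 = g$, which is precisely the conclusion $g_1 = e^f\phi^{*}g$ of Definition~\ref{Bachdef}, proving strict local minimization. I expect the main obstacle to be the slicing lemma, specifically verifying that the full conformal factor $e^u$ can be installed simultaneously with transversality and orthogonality to the kernel---that is, that the enlarged map $\mathcal{H}$ (carrying a function $u$ rather than a scaling constant) still has surjective linearization. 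This reduces to the same cokernel computation as in Theorem~\ref{PBzed}$(iii)$, since the conformal direction contributes $nu + 2\delta_g Y^{\flat}$ to the trace equation, manifestly surjective onto the function factor, while the $TT$ and orthogonality conditions decouple exactly as in Lemma~\ref{sliceSn}.
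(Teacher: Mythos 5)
Your proposal is correct and follows essentially the same route as the paper: the paper's proof of Proposition~\ref{Bachlocmin} is exactly the three-line prescription of grafting the conformal-factor gauge of Theorem~\ref{PBzed}$(iii)$ onto the slicing of Lemma~\ref{sliceSn} so that $tr_g\tilde{\theta}=0$, and then rerunning the argument of Proposition~\ref{locmin} using the conformal and diffeomorphism invariance of $\mathcal{W}$. Your write-up supplies the details (the combined map $\mathcal{H}$, its linearization, and the cokernel computation) that the paper leaves implicit, and these all check out, including the vanishing of the cross terms $\beta_g(ug)=0$ and $\int\langle u_{\nu},\mathcal{L}_g Y\rangle\,dV=0$ because the kernel elements are transverse-traceless.
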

\begin{proof}
The details are similar to the proof of Proposition \ref{locmin},
with the following modification.
We introduce an extra conformal factor as in the slicing
in Theorem~\ref{PBzed}, to arrange moreover that $tr_{g_0}( \tilde{\theta}) = 0$
in Lemma~\ref{sliceSn}. Since the functional is conformally invariant, the argument in
the proof of Proposition~\ref{locmin} then extends to this case.
\end{proof}
Next, we prove the main theorem regarding Bach-flat metrics.
\begin{proof}[Proof of Theorem \ref{Bachthm}.]
The rigidity statement follows from Theorem~\ref{h1thm} and
Corollary~\ref{Kurcor}. For the local minimization statement,
strict Bach-stability follows from Theorems~\ref{TTPosCor} and~\ref{TTNegCor},
and the local minimization statement then follows from
Proposition \ref{Bachlocmin}.
\end{proof}

%%%%%%%%%%%%%%%%%%%%%%%%%%%%%%%%%%%%%%%%%%%%%%%%%%%%%%%%%%%%%%%%%%%%%%%%%%%%%%%%%%%%%%%%%%%%%%%%%%%%%%%%%%%%%%%%%%%%%%%%%%%%%%%%%%%%%%%%%%%%%%%%%%%%%%%%%%%%%%%%%%%%%%
\subsection{Applications to reverse Bishop's inequalities}

One consequence of stability is a reverse Bishop's inequality,
Theorem \ref{RevBishop} of the Introduction:    \vskip.1in

\noindent {\em Proof of Theorem \ref{RevBishop}.}  Since $g$ is stable,
for all metrics $\tilde{g}$ in a $C^{2,\alpha}$-neighborhood of $g$ we have
\begin{align} \label{gmin1} \begin{split}
\tilde{\mathcal{F}}_0[\tilde{g}] &=
Vol(\tilde{g})^{4/n -1} \int |Ric(\tilde{g})|^2\ dV_{\tilde{g}} \\
&\geq \tmf_0[g] = Vol(g)^{4/n -1} \int |Ric(g)|^2\ dV_g = n(n-1)^2 Vol(g)^{4/n},
\end{split}
\end{align}
and equality holds if and only if $\tilde{g}$ is (up to scaling) isometric to $g$.
Assume $\tilde{g}$ satisfies
\begin{align*}
 Ric(\tilde{g}) \leq (n-1)\tilde{g}.
\end{align*}
An additional condition on the neighborhood $U$ is that $\tilde{g}$ also satisfies
\begin{align}
 Ric(\tilde{g}) > - (n-1)\tilde{g},
\end{align}
and consequently
\begin{align*}
|Ric(\tilde{g})|^2 \leq n(n-1)^2,
\end{align*}
hence
\begin{align} \label{vgp1}
\tilde{\mathcal{F}}_0[\tilde{g}] = Vol(\tilde{g})^{4/n -1} \int |Ric(\tilde{g})|^2\ dV \leq n(n-1)^2 Vol(\tilde{g})^{4/n}.
\end{align}
Combining this with (\ref{gmin1}) we get
\begin{align*}
Vol(\tilde{g}) \geq Vol(g).
\end{align*}
Moreover, if equality holds then $\tilde{g} = e^c \phi^{*}g$, but since $g$ and $\tilde{g}$ have
the same volume, $c = 0$.  This completes the proof.  \vskip.1in

A similar argument gives

\begin{corollary} \label{RicRigidNeg}  Let $(M,g)$ be a negative Einstein manifold, normalized so that
\begin{align} \label{normalpos2}
Ric(g) = -(n-1)g.
\end{align}
Assume
\begin{align}
\label{llc2}
\lambda_1(-\Delta_L) = \lambda_L > \frac{2}{n} R.
\end{align}

\noindent $(i)$  If $n =3$ or $4$, then there exists a $C^{2,\alpha}$-neighborhood $U$ of $g$ such that if $\tilde{g} \in U$ with
\begin{align} \label{gpgreater}
 Ric(\tilde{g}) \geq -(n-1)\tilde{g},
\end{align}
then
\begin{align} \label{Vcomp2}
Vol(\tilde{g}) \geq Vol(g)
\end{align}
and equality holds if and only if $\tilde{g}$ is isometric to $g$.

\noindent $(ii)$  If $n \geq 5$, assume in addition
\begin{align}
\lambda_1(-\Delta) > \frac{2(n-4)(n-1)}{n}.
\end{align}
Then the same conclusion holds.
\end{corollary}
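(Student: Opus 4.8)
The plan is to mirror the proof of Theorem \ref{RevBishop}, with the single essential modification forced by the reversed sign in the Ricci hypothesis. The first step is to verify that, under the stated assumptions, $g$ is a strict local minimizer for $\tmf_0$. For the transverse-traceless directions this will be Theorem \ref{TTNegCor}(i) with $\tau = 0$: the relevant interval is $[\tfrac{4}{n}R, \tfrac{2}{n}R]$, and hypothesis \eqref{llc2}, namely $\lambda_L = \lambda_1(-\Delta_L) > \tfrac{2}{n}R$, places the entire $TT$-spectrum strictly above the upper endpoint $\tfrac{2}{n}R$, making the Jacobi form positive definite on $TT$-tensors. For the conformal directions I will invoke Theorem \ref{conftNeg} with $\tau = 0$: when $n = 3$ or $n = 4$ the requirement $0 > -1/3$ is automatic, while for $n \geq 5$ the supplementary hypothesis in part (ii) supplies exactly the lower bound on $\lambda_1(-\Delta)$ needed in \eqref{ell1} to force strict positivity in the conformal directions. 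Combining these two as in the proof of Theorem \ref{StableMain} — writing any $h$ with $\beta_g h = 0$ as $h = z + fg$ with $z$ transverse-traceless and observing that the cross term drops out because $tr(Jz) = 0$ — will show $g$ is strictly stable, hence a strict local minimizer for $\tmf_0$ by Proposition \ref{locmin}. (For $n = 3,4$ this is already packaged in Theorem \ref{StableMain}.)

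With strict local minimality established, the argument runs as in Theorem \ref{RevBishop}. Under the normalization \eqref{normalpos2} one has $R = -n(n-1)$ and $|Ric(g)|^2 = n(n-1)^2$, so
\begin{align*}
\tmf_0[g] = Vol(g)^{4/n-1} \int |Ric(g)|^2\ dV_g = n(n-1)^2\, Vol(g)^{4/n},
\end{align*}
exactly as in the positive case. Strict local minimality then yields, for every $\tilde g$ in a sufficiently small $C^{2,\alpha}$-neighborhood $U$ of $g$, the lower bound $\tmf_0[\tilde g] \geq n(n-1)^2\, Vol(g)^{4/n}$, with equality forcing $\tilde g = e^c\phi^* g$.

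The hard part — and the only genuine difference from the positive case — is recovering the complementary \emph{upper} bound on $\tmf_0[\tilde g]$. In Theorem \ref{RevBishop} the two-sided pinching of $Ric$ comes from the hypothesis plus an auxiliary bound built into $U$; here the hypothesis \eqref{gpgreater} gives only the lower bound $Ric(\tilde g) \geq -(n-1)\tilde g$. The missing upper bound $Ric(\tilde g) < (n-1)\tilde g$ I will obtain for free by shrinking $U$: since $Ric(g) = -(n-1)g$, continuity of the Ricci tensor in $C^{2,\alpha}$ forces the eigenvalues of $Ric(\tilde g)$ relative to $\tilde g$ to stay close to $-(n-1)$, hence strictly below $(n-1)$, once $U$ is small enough. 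The two bounds together give $|Ric(\tilde g)|^2 \leq n(n-1)^2$ pointwise, so
\begin{align*}
\tmf_0[\tilde g] = Vol(\tilde g)^{4/n-1}\int |Ric(\tilde g)|^2\ dV \leq n(n-1)^2\, Vol(\tilde g)^{4/n}.
\end{align*}
Comparing with the lower bound yields $Vol(\tilde g) \geq Vol(g)$, which is \eqref{Vcomp2}; and if equality holds then $\tilde g = e^c\phi^* g$ with equal volumes, forcing $c = 0$ and hence an isometry. The dimensional split ($n = 3,4$ versus $n \geq 5$) enters only through which stability input certifies strict local minimality, while this final reverse-Bishop step is identical in all dimensions. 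The chief obstacle is precisely the one-sidedness of \eqref{gpgreater}, which the continuity argument on a small neighborhood resolves.
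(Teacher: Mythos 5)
Your proposal follows the paper's own route: the paper proves Theorem \ref{RevBishop} in detail and then merely notes that ``a similar argument gives'' Corollary \ref{RicRigidNeg}, and what you spell out --- strict stability in the $TT$-directions from Theorem \ref{TTNegCor}$(i)$ at $\tau = 0$ via \eqref{llc2}, strict stability in the conformal directions, the splitting $h = z + fg$ with vanishing cross term as in the proof of Theorem \ref{StableMain}, Proposition \ref{locmin}, and then the reverse-Bishop comparison with the roles of the one-sided Ricci hypothesis and the shrink-the-neighborhood bound interchanged relative to Theorem \ref{RevBishop} --- is exactly that similar argument, including the equality analysis forcing $c = 0$. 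One citation in the conformal step for $n \geq 5$ needs correcting: you cannot literally invoke condition \eqref{ell1} of Theorem \ref{conftNeg}, because under the normalization $R = -n(n-1)$ that condition reads $\lambda_1 > \frac{n(n-4)}{2}$, which is strictly \emph{stronger} than the corollary's hypothesis $\lambda_1 > \frac{2(n-4)(n-1)}{n}$ (their ratio is $\frac{n^2}{4(n-1)} > 1$ since $(n-2)^2 > 0$), so the hypothesis as assumed does not verify \eqref{ell1}. What the hypothesis matches exactly is the $\tau$-dependent inequality \eqref{Lbig} inside the proof of Theorem \ref{conftNeg}, evaluated at $\tau = 0$: one needs $\lambda_1 > \frac{2(n-4)}{n^2}(-R) = \frac{2(n-4)(n-1)}{n}$, which is precisely the threshold also recorded in the remark following the proof of Theorem \ref{hypint}. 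With that citation repaired (a one-line fix, since \eqref{ell1} is only the uniform-in-$\tau$ sufficient condition obtained by letting $\tau \to \infty$ in \eqref{Lbig}), the remainder of your argument --- including the pointwise bound $|Ric(\tilde g)|^2 \leq n(n-1)^2$ from the hypothesis \eqref{gpgreater} combined with the continuity bound $Ric(\tilde g) < (n-1)\tilde g$ built into $U$ --- is correct and coincides with the paper's intended proof.
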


%%%%%%%%%%%%%%%%%%%%%%%%%%%%%%%%%%%%%%%%%%%%%%%%%%
\section{Examples} \label{ExampleSec}
%%%%%%%%%%%%%%%%%%%%%%%%%%%%%%%%%%%%%
In this section, we apply the above computations to
various examples and determine conditions for
rigidity and stability.

\subsection{ The round sphere $(S^n, g_S)$ (or any quotient thereof)}
\begin{proof}[Proof of Theorem \ref{snit}]
Normalizing so that $Ric(g_s) = (n-1) g_S$, the Lichnerowicz Laplacian
on TT-tensors is easily verified to be
\begin{align*}
\Delta_L h & =  \Delta h - 2n h.
\end{align*}
The inequality
\begin{align}
\int_{S^n} | \nabla_{i} h_{jk} + \nabla_j h_{ki} + \nabla_{k} h_{ij}|^2 dV \geq 0,
\end{align}
implies that the
least eigenvalue of the rough Laplacian on
TT-tensors is $2n$ \cite{Koiso1}.
Consequently, least eigenvalue of the Lichnerowicz Laplacian
on TT-tensors is
\begin{align}
4n = \frac{4}{n-1} R.
\end{align}
Theorem \ref{TTPosCor} then implies that if
\begin{align}
\label{upest}
\tau < \frac{2}{n(n-1)},
\end{align}
then the Jacobi operator is positive definite when restricted to
TT-tensors.
For the conformal direction, we can simply quote Theorem \ref{conftPos}
to obtain the lower restriction on $\tau$.
If $(M,g)$ is a proper quotient of $S^n$, then
the theorem follows from Proposition \ref{locmin}.
In the case of the round metric, for $\tau$ in the specified
range, $H^1_{\tau}$ consists of pure trace elements
of the form $\{ f g : f \in \Lambda \}$,
where $\Lambda$ is the space of first-order spherical harmonics.
These directions are clearly integrable, since they arise
from conformal diffeomorphisms.
Local minimization then follows from Proposition \ref{locmin}.
However, since all of the minimizing metrics are isometric,
we actually have strict local minimization.

The final statement in the theorem regarding $\tilde{\mathcal{R}}$
follows since $\tau = -1/(2(n-1))$ is always
included in the above ranges of $\tau$, and
since these metrics obviously minimize $\tilde{\mathcal{W}}$
(they are locally conformally flat), together with \eqref{RMF}.
\end{proof}
As a consequence of $\tau=0$,
we see that Theorem \ref{RevBishop} applies to any spherical space form.

\begin{remark}{\em
 We next briefly discuss the size of the neighborhood $U$ appearing in the
above theorem.  Given $\epsilon > 0$ small and
a constant $\Lambda$ large, there exists a $\delta > 0 $ such that
if
\begin{align}
\Vert Rm - (1/2) g \varowedge g \Vert_{C^0} < \delta,
\ \Vert Rm \Vert_{C^{\alpha}} < \Lambda,
\end{align}
then $\Vert g - g_S \Vert_{C^{2,\alpha}} < \epsilon$
where $g_S$ is a constant curvature metric \cite[Chapter 10]{Petersen}.
In the case of a space form, we suspect that the neighborhood of stability in
Theorem \ref{StableMain} can be weakened from pointwise $C^{2, \alpha}$-norm
to the integral condition
\begin{align}
\label{ln2}
\Vert Rm(\tilde{g}) - (1/2) \tilde{g} \varowedge \tilde{g} \Vert_{L^{n/2}} < \delta,
\end{align}
for $\delta$ sufficiently small.}
\end{remark}

We next consider the rigidity of space forms.

\begin{theorem} Let $(M,g)$ be a spherical space form,
and let $\tau$ be in the ranges specified in Lemma \ref{specgapLemma}.

\noindent $(i)$ If $(M,g)$ is not isometric to the sphere, then $H^1_{\tau}$
contains only traceless tensors.

\noindent $(ii)$ On $(S^n, g_S)$,  the pure trace elements in $H^1_{\tau}$
are exactly $\{ f g : f \in \Lambda \}$,
where $\Lambda$  is the space of first-order spherical harmonics.

In either case, if one orders the TT-spectrum of $(-\Delta_L)$ as
\begin{align}
0 < \mu_1 = 4n < \mu_2 < \mu_3 < \cdots,
\end{align}
then $H^1_{\tau}$ contains non-zero TT-tensors
exactly at the sequence of discrete values
\begin{align}
\tau_i = \frac{1}{2n(n-1)} \Big( \mu_i - 4(n-1) \Big).
\end{align}
If $n=4$ and $\tau = -1/3$, then $H^1_{\tau} = \{ 0 \}$, thus
any spherical space form is Bach-rigid.
\end{theorem}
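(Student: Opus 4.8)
The plan is to specialize the Einstein-metric eigenvalue analysis of the previous sections to constant curvature. I would first normalize $Ric(g) = (n-1)g$, so that $R = n(n-1)$, and recall from the proof of Theorem~\ref{snit} that on any such space form $\Delta_L = \Delta - 2n$ on TT-tensors, with least TT-eigenvalue $\mu_1 = 4n = \frac{4}{n-1}R$. Since $\tau$ lies in the ranges of Lemma~\ref{specgapLemma}, the spectral condition \eqref{H1gap} is available.

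For $(i)$, with $g$ not isometric to the sphere, I would simply apply Lemma~\ref{H1TT}: all of its hypotheses hold, so every $h \in H^1_\tau$ is transverse-traceless, hence traceless.

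For $(ii)$, on $(S^n, g_S)$ the sharp Lichnerowicz estimate gives $\lambda_1 = R/(n-1) = n$, so the first factor $(n-1)\Delta + R$ of the conformal operator \eqref{confjac} acquires a kernel, namely $\Lambda$; thus Lemma~\ref{H1TT} no longer applies and vanishing of the trace becomes only a necessary condition. To exhibit genuine kernel elements I would use Obata's equation $\nabla^2 f = -fg$ for $f \in \Lambda$, which gives $fg = -\tfrac12 \mathcal{L}_{\nabla f} g$; since $fg$ is then an infinitesimal diffeomorphism, differentiating $\nabla\tmf_\tau[\phi^*_t g] = \phi^*_t \nabla\tmf_\tau[g] = 0$ yields $J(fg) = 0$ as a full tensor equation, and as $\beta_g(fg) = 0$ and $\int f\, dV = 0$ we conclude $fg \in H^1_\tau$. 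Conversely, any pure-trace element $fg \in H^1_\tau$ satisfies $tr(J(fg)) = 0$; expanding $f$ in Laplace eigenfunctions and using \eqref{confjac}, each eigenvalue must annihilate one of the two factors, and the Section~\ref{ConfSS} analysis (equivalently \eqref{H1gap}) shows the root of the second factor lies strictly below $\lambda_1$ and so is not an eigenvalue, forcing $f \in \Lambda$.

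For the common TT-conclusion I would write an arbitrary $h \in H^1_\tau$ as $z + fg$ with $z$ TT (forced by $\beta_g h = 0$), take the trace of $Jh = 0$ to reduce the pure-trace part to $(i)$/$(ii)$, and deduce $Jz = 0$. By the factorization of Theorem~\ref{JTT}, $Jz = \tfrac12(\Delta_L + \tfrac2n R)(\Delta_L + (\tfrac4n + 2\tau)R)z$, and since $\tfrac2n R = 2(n-1) < 4n = \mu_1$ the first factor is invertible on TT-tensors; hence a nonzero TT element with eigenvalue $\mu_i$ occurs precisely when $(\tfrac4n + 2\tau)R = \mu_i$, i.e. $\tau = \tau_i = \frac{1}{2n(n-1)}(\mu_i - 4(n-1))$. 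Finally, for $n = 4$, $\tau = -1/3$ the two critical values $\tfrac13 R = 4$ and $\tfrac12 R = 6$ both lie below $\mu_1 = 16$, so $J|_{TT}$ has trivial kernel; since $H^1_{-1/3}$ is by definition a space of TT-tensors, $H^1_{-1/3} = \{0\}$ and Bach-rigidity follows from Corollary~\ref{Kurcor}. The one place requiring input beyond the earlier spectral lemmas is the round sphere: the subtle point is that the $fg$ with $f \in \Lambda$ must lie in the full kernel of $J$, not merely of its trace, which is exactly where the Obata/conformal-Killing structure is needed.
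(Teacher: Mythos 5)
Your proof is correct and follows essentially the same route as the paper, whose own proof simply cites Theorem \ref{h1thm} and Corollary \ref{Kurcor}: you specialize Lemma \ref{H1TT}, the factorizations \eqref{confjac} and \eqref{ttnvar}, and the least TT-eigenvalue $\mu_1 = 4n$ exactly as those results are used there. Your Obata computation showing $fg \in H^1_{\tau}$ for $f \in \Lambda$ is precisely the paper's observation, in the remark following Lemma \ref{H1TT}, that these pure-trace deformations arise from paths of conformal diffeomorphisms, made explicit.
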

\begin{proof}
This easily follows from Theorem \ref{h1thm} and Corollary~\ref{Kurcor}.
\end{proof}
As seen above in the proof Theorem \ref{snit}, on the round sphere,
for $\tau$ such that $H^1_{\tau} = \{ f g : f \in \Lambda \}$,
$g_S$ is indeed rigid, since these directions are tangent to
conformal diffeomorphisms, and the moduli in these directions are
all isometric.

\begin{proof}[Proof of Theorem \ref{nonit}.]
 As in \cite{Lamontagne}, we consider the Lie group $SU(2)$, and choose the following
basis of the Lie algebra $\mathfrak{su}(2)$
\begin{align*}
\left\{
\left(
\begin{matrix}
i & 0\\
0 & -i\\
\end{matrix}
\right) ,
\left(
\begin{matrix}
0 & i\\
i & 0 \\
\end{matrix}
\right) ,
\left(
\begin{matrix}
0 & -1\\
1 & 0 \\
\end{matrix}
\right)
\right\}.
\end{align*}
Let $\{e_1, e_2, e_3 \}$ denote the corresponding basis
of left-invariant vector fields. We declare these to
be orthogonal with $e_1$ and $e_2$ of length $1$,
and $e_3$ of length $s$ for positive $s \in \mathbf{R}$.
The resulting metric is known as a Berger sphere, and
for $s = 1$, is the round metric $g_S$.
It follows that $\{\tilde{e}_1, \tilde{e}_2, \tilde{e}_3 \}
= \{e_1, e_2, \frac{1}{s}e_3 \}$ is an orthonormal frame
satisfying the commutation relations
\begin{align*}
\newcommand{\te}{\tilde{e}}
[ \te_1, \te_2] = 2s \te_3,\
[ \te_2, \te_3] = \frac{2}{s} \te_1,\
[ \te_3, \te_1] = \frac{2}{s} \te_2.
\end{align*}
From these relations, it easily follows that in this basis
the Ricci tensor is diagonal with eigenvalues
$\{ 4 - 2s^2, 4-2s^2, 2s^2 \}$. We then have
\begin{align*}
|Ric|^2(s) &= 32 - 32s^2 + 12s^4,\\
R^2(s) &= 64 - 32s^2 + 4s^4,
\end{align*}
and therefore
\begin{align*}
|Ric|^2 + \tau R^2 = 32(1 + 2\tau) - 32(1 +\tau) s^2 + 4 (3 + \tau) s^4.
\end{align*}
Consequently,
\begin{align*}
\tmf_{\tau}[g_s] = c \tau^{4/3} (  32(1 + 2\tau) - 32(1 +\tau) s^2 + 4 (3 + \tau) s^4),
\end{align*}
for some constant $c > 0$.
At the value of $\tau = 1/3$, it is easily verified that
\begin{align}
\label{3der}
&\frac{d \tmf_{\tau}[g_s]}{ds}\Big|_{s = 1} = 0, \  \mathrm{ and }
\  \frac{d^2 \tmf_{\tau}[g_s]}{ds^2}\Big|_{s = 1} = 0, \ \mathrm{ but }
\ \frac{d^3 \tmf_{\tau}[g_s]}{ds^3}\Big|_{s = 1} > 0,
\end{align}
which shows that $s = 1$ is not a strict minimizer,
and therefore $ \tau = 1/3$ is sharp for stability

Finally, $h = g_s'$ is a multiple of $e_3 \otimes e_3$, so modulo a
constant multiple of the metric, is in the lowest eigenspace of
$(-\Delta_L)$, which is $H^1_{1/3}$. It is straightforward to see that
if there were {\em{any}} path of unit-volume critical
metrics $\tilde{g}_s$ with $\tilde{g}_s' = h$, then
\eqref{3der} would hold for  $\tilde{g}_s$. This is a contradiction
since the functional would be constant along  $\tilde{g}_s$.
\end{proof}
\begin{remark}{\em
For  $-1/2 < \tau < -1/3$, it is easy to see that that there is another
critical metric for some value $s < 1$, which is a local maximum in the
Berger-parameter direction. As in \cite{Lamontagne},
these are examples of non-Einstein critical metrics. }
\end{remark}

%%%%%%%%%%%%%%%%%%%%%%%%%%%%%%
\subsection{Hyperbolic manifolds}
%%%%%%%%%%%%%%%%%%%%%%%%%%%%%%%
\begin{proof}[Proof of Theorem \ref{hypint}]

 Let $(M,g)$ be a compact hyperbolic manifold in dimension $n$,
satisfying $Ric(g) = -(n-1) g$.
The Lichnerowicz Laplacian on TT-tensors is
\begin{align*}
\Delta_L h & =  \Delta h + 2n h.
\end{align*}
The inequality
\begin{align}
\label{codin}
\int_M  | \nabla_i h_{jk} - \nabla_j h_{ik} |^2 dV \geq 0,
\end{align}
implies that the least eigenvalue of the rough Laplacian on
TT-tensors is $n$, with equality for
Codazzi tensors \cite{Koiso1}. Consequently,
on a hyperbolic manifold,
the least eigenvalue of the rough Laplacian
on TT-tensors is bounded below by
\begin{align}
- n = \frac{1}{n-1} R.
\end{align}
Theorem \ref{TTNegCor} implies that $\tmf_{\tau}$ is strictly
stable in the TT-direction for
\begin{align}
\tau > \frac{4 - 3n}{2 n (n-1)}.
\end{align}
Combining this with Theorem \ref{conftNeg} for
strict stability in the conformal direction,
this proves strict stability. Strict
local minimization then follows from Proposition \ref{locmin}.
The local minimization statement for $\tilde{\mathcal{R}}$
for $n = 3$ and $n = 4$ follows as in the spherical case.
\end{proof}

\begin{remark}{\em
For $n \geq 5$, the upper endpoint may be improved given an
improved estimate on the lowest eigenvalue of the Laplacian
on functions. It is easy to show that if
\begin{align}
\lambda_1 >  \frac{2(n-4)}{n^2} (-R),
\end{align}
then the interval may be extended to include $\tau = 0$. }
\end{remark}

We next consider the case of rigidity of compact hyperbolic manifolds.
\begin{theorem} Let $(M,g)$ be a compact hyperbolic manifold,
and let $\tau$ be in the ranges specified in Lemma \ref{specgapLemma}.
Then $H^1_{\tau}$ contains no pure-trace elements.
If one orders the TT-spectrum of $(-\Delta_L)$ as
\begin{align}
-n \leq \mu_1  < \mu_2 < \mu_3 < \cdots,
\end{align}
then $H^1_{\tau}$ contains non-zero TT-tensors
only at the sequence of discrete values
\begin{align}
\tau_i = - \frac{1}{2n(n-1)} \Big( \mu_i + 4(n-1) \Big).
\end{align}
\end{theorem}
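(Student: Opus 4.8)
The plan is to reduce to transverse-traceless variations via Lemma \ref{H1TT}, and then exploit the explicit factorization of the linearized Euler-Lagrange operator from Theorem \ref{JTT}, reading off the admissible eigenvalues using the normalization $R = -n(n-1)$ together with the spectral lower bound for $-\Delta_L$. Throughout I normalize so that $Ric(g) = -(n-1)g$, hence $R = -n(n-1)$, and recall from the proof of Theorem \ref{hypint} that on transverse-traceless tensors $\Delta_L h = \Delta h + 2nh$, and that the Codazzi inequality \eqref{codin} together with \cite{Koiso1} yields the spectral lower bound $\mu_1 \geq -n = \tfrac{1}{n-1}R$ for $-\Delta_L$ restricted to transverse-traceless tensors.

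For the assertion about pure-trace elements, first I would invoke Lemma \ref{specgapLemma}: since $R < 0$ we are in case $(iii)$, and because $\tau$ lies in the range specified there, the spectral condition \eqref{H1gap} holds. As $(M,g)$ is not the round sphere, Lemma \ref{H1TT} then guarantees that every $h \in H^1_{\tau}$ is transverse-traceless. In particular, a non-zero pure-trace element $h = fg$ would have $tr_g\,h = nf \neq 0$, contradicting tracelessness; hence $H^1_{\tau}$ contains no non-zero pure-trace elements.

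Next I would analyze the transverse-traceless part. By the previous step every element of $H^1_{\tau}$ is transverse-traceless, so by Theorem \ref{JTT} such an $h$ satisfies
\begin{align*}
\tfrac{1}{2}\Big( \Delta_L + \tfrac{2}{n}R \Big)\Big( \Delta_L + \big(\tfrac{4}{n}+2\tau\big)R \Big)h = 0.
\end{align*}
Since $-\Delta_L$ preserves transverse-traceless tensors (Lichnerowicz), I would decompose $h$ into eigentensors of $-\Delta_L$; for an eigentensor with $-\Delta_L h = \mu h$ the operator above acts by the scalar $\tfrac{1}{2}(-\mu + \tfrac{2}{n}R)(-\mu + (\tfrac{4}{n}+2\tau)R)$, so $h$ lies in the kernel precisely when $\mu = \tfrac{2}{n}R$ or $\mu = (\tfrac{4}{n}+2\tau)R$. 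Substituting $R = -n(n-1)$ gives $\tfrac{2}{n}R = -2(n-1)$ for the first root; but $-2(n-1) < -n \leq \mu_1$ for every $n \geq 3$, so the first factor can never vanish on the transverse-traceless spectrum. Thus only the second root is admissible, and solving $\mu = (\tfrac{4}{n}+2\tau)R = -4(n-1) - 2\tau n(n-1)$ for $\tau$ yields exactly $\tau = -\tfrac{1}{2n(n-1)}(\mu + 4(n-1))$. Running this over the eigenvalues $\mu_i$ produces the discrete values $\tau_i$ of the statement, and for $\tau$ avoiding every $\tau_i$ no nonzero eigentensor survives, so $H^1_{\tau}$ has no nonzero transverse-traceless elements.

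The only genuinely delicate point is the elimination of the first factor: it hinges on the sharp lower bound $\mu_1 \geq -n$ for the Lichnerowicz spectrum on transverse-traceless tensors and the elementary inequality $-2(n-1) < -n$ valid for $n \geq 3$, which together force all kernel eigenvalues to come from the second factor. Everything else is the routine assembly of Lemmas \ref{H1TT} and \ref{specgapLemma} with the factorization \eqref{ttnvar}.
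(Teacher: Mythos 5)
Your proposal is correct and follows essentially the same route as the paper, whose entire proof is the citation ``this easily follows from Theorem \ref{h1thm}'': you simply unpack that theorem, using Lemma \ref{specgapLemma}(iii) and Lemma \ref{H1TT} to rule out the trace part and the factorization \eqref{ttnvar} for the TT part. Your explicit elimination of the first factor, via $\tfrac{2}{n}R = -2(n-1) < -n \leq \mu_1$ with the normalization $R = -n(n-1)$, is exactly the detail left implicit in the paper (it is why the stated $\tau_i$ involve only the second root), and your resulting formula for $\tau_i$ agrees with the statement.
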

\begin{proof}
This easily follows from Theorem \ref{h1thm}.
\end{proof}

\begin{remark}{\em
In the case of $n =4$, $\tau = -1/3$, a non-trivial TT-deformation
yields equality in \eqref{codin}, which means that
$h$ is a Codazzi tensor. Such deformations are exactly
locally conformally flat deformations, and in this situation
the hyperbolic structure is called {\em{bendable}}
\cite{Lafontaine, JohnsonMillson}.}
\end{remark}

%%%%%%%%%%%%%%%%%%
\subsection{Complex projective space $\CP^m$}
%%%%%%%%%%%%%%%%%%%%%%%
\begin{proof}[Proof of Theorem \ref{cpmint}]

We begin with an eigenvalue estimate on $\CP^m$ proved using
representation theory in the case $m = 2$ in \cite{Warner}, and for all
$m$ by direct calculation in \cite{Boucetta}.
\begin{proposition}[\cite{Boucetta, Warner}]
\label{evcp}
On $(\CP^m, g_{FS})$, with $Ric = 2(m+1) g$,
the least eigenvalue of the Lichnerowicz Laplacian
on TT-tensors is
\begin{align}
8(m+2) = \frac{2(m+2)}{m(m+1)} R.
\end{align}
\end{proposition}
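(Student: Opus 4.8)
The plan is to exploit the fact that $(\CP^m, g_{FS})$ is a compact irreducible Hermitian symmetric space, $\CP^m = G/K$ with $G = SU(m+1)$ and $K = S(U(m)\times U(1))$, and to compute $\mbox{spec}_{TT}(-\Delta_L)$ by harmonic analysis. Writing $\mathfrak{g} = \mathfrak{k}\oplus\mathfrak{p}$ for the associated symmetric decomposition, the isotropy representation of $K$ on $\mathfrak{p}$ realizes the tangent bundle as a homogeneous bundle, and the bundle of symmetric $2$-tensors is associated to the $K$-representation on $S^2\mathfrak{p}^{*}$. The key structural input is the classical fact (see \cite[Chapter 12]{Besse}, following Koiso) that on a symmetric space the Lichnerowicz Laplacian $\Delta_L$ is the \emph{standard} Laplacian, i.e.\ under the identification of tensor fields with $K$-equivariant maps $G \to S^2\mathfrak{p}^{*}$ it coincides with the Casimir operator of $G$ (for the metric normalized by a fixed multiple of the Killing form). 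Combining this with Peter--Weyl and Frobenius reciprocity, one gets the $L^2$-decomposition
\begin{align*}
L^2\big(\sym\big) \cong \bigoplus_{\pi \in \widehat{G}} V_{\pi} \otimes \mathrm{Hom}_K\big(V_{\pi}|_K,\, S^2\mathfrak{p}^{*}\big),
\end{align*}
on which $-\Delta_L$ acts on the $\pi$-isotypic summand by the scalar Casimir eigenvalue $\mathrm{Cas}(\pi) = \langle \lambda_\pi, \lambda_\pi + 2\rho\rangle$ computed from the highest weight $\lambda_\pi$ by Freudenthal's formula.

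The first substantive step is then to determine which irreducible $G$-representations $\pi$ actually contribute \emph{transverse-traceless} tensors. Since the conditions $tr_g h = 0$ and $\delta_g h = 0$ are $G$-invariant, this is a representation-theoretic bookkeeping problem: decompose $S^2\mathfrak{p}^{*}$ into irreducible $K$-types (the trace line, the $J$-invariant primitive part, and the $J$-anti-invariant part), compute the branching of each candidate $V_\pi|_K$ via the branching rule for $SU(m+1)\downarrow S(U(m)\times U(1))$, and subtract off the representations accounted for by the pure-trace tensors (image of $f \mapsto fg$) and by the symmetrized one-forms (image of $\delta_g^{*}$ acting on $T^{*}\CP^m$). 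What remains is the TT-spectrum. Second, one computes the Casimir eigenvalues of the surviving representations and selects the minimum; finally one fixes the overall normalization by matching the Casimir scaling to the geometric normalization $Ric = 2(m+1)g$, equivalently $R = 4m(m+1)$, so that the minimal eigenvalue comes out as $8(m+2) = \tfrac{2(m+2)}{m(m+1)}R$.

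I expect the main obstacle to be precisely this middle step: cleanly isolating the TT-summands inside the full decomposition of $S^2\mathfrak{p}^{*}$ and verifying that the lowest-lying TT-representation is the one producing $8(m+2)$ rather than a spurious mode coming from $\mathrm{im}\,\delta_g^{*}$ or from the conformal (trace) directions. The branching rules and Casimir computations are routine but delicate, and one must also confirm the normalization tying $\Delta_L$ to the Casimir operator is the one compatible with $Ric = 2(m+1)g$. A technically cleaner alternative, which is how I would cross-check the answer, uses the K\"ahler structure directly: on the K\"ahler--Einstein manifold $\CP^m$ the operator $\Delta_L$ preserves the splitting of $S^2_0$ into its $J$-invariant (primitive real $(1,1)$) and $J$-anti-invariant ($(2,0)\oplus(0,2)$) parts, and on each part it is expressible through the Hodge--de Rham Laplacian on primitive $(1,1)$-forms and the $\bar{\partial}$-Laplacian on sections of $\mathrm{Sym}^2$ of the holomorphic cotangent bundle, whose spectra on $\CP^m$ are explicitly known. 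Reading off the bottom of these spectra recovers $8(m+2)$ and, as in \cite{Boucetta, Warner}, confirms that the minimizing eigentensors are the expected ones.
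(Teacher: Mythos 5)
The paper does not actually prove Proposition \ref{evcp} --- it quotes it from the cited references (Warner, via representation theory for $m=2$, and Boucetta, by direct calculation for all $m$) --- and your outline is essentially that same approach: the Peter--Weyl/Frobenius decomposition combined with Koiso's identification of $\Delta_L$ with the Casimir operator on a compact symmetric space is exactly the Warner-style argument, while your K\"ahler-splitting cross-check through primitive $(1,1)$-forms and $S^2$ of the holomorphic cotangent bundle parallels Boucetta's computation. The structural inputs you invoke are all correct (the trace and divergence conditions are $G$-equivariant, and since the Casimir acts by the scalar $\mathrm{Cas}(\pi)$ on the entire $\pi$-isotypic summand, multiplicity spaces notwithstanding, one only needs to determine which $\pi$ carry nonzero TT multiplicity and to fix the normalization matching $Ric = 2(m+1)g$), so the one step you defer --- the branching and Casimir bookkeeping that singles out $8(m+2)$ as the bottom of the TT-spectrum --- is precisely the computation the paper likewise delegates to the references.
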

\noindent
Theorem \ref{TTPosCor} then implies that if
\begin{align}
\label{upestcp}
\tau < \frac{1}{m(m+1)},
\end{align}
then $\tmf_{\tau}$ is strictly stable in the
TT-direction.
For the conformal direction, we can simply quote Theorem \ref{conftPos}.
This proves strict stability, and strict
local minimization then follows from Proposition \ref{locmin}.
The statement for $\mathcal{R}$ in dimension $4$ follows for the reasons as discussed
in the introduction.
\end{proof}
\noindent
As a consequence of $\tau=0$, we see that Theorem \ref{RevBishop} applies to
to $(\CP^m, g_{FS})$.

We next consider the case of rigidity in the case of $m =2$.

\begin{theorem} On $\CP^2$, the Fubini-Study metric $g_{FS}$ is infinitesimally rigid
(and thus rigid) provided that $\tau < 1/6$. If one orders the TT-spectrum of $(-\Delta_L)$ as
\begin{align}
0 < \mu_1 = 32 < \mu_2 < \mu_3 < \cdots,
\end{align}
then $H^1_{\tau}$ is nontrivial only at the sequence of discrete values
\begin{align}
\tau_i = \frac{1}{48} \Big( \mu_i - 24 \Big).
\end{align}
For $\tau = -1/3$, $H^1_{\tau} = 0$, thus $g_{FS}$ is Bach-rigid.
\end{theorem}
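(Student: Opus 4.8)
The plan is to apply the general rigidity criterion of Theorem \ref{h1thm} directly to the specific spectral data of the Fubini--Study metric supplied by Proposition \ref{evcp}. First I would record the relevant normalizations: taking $m = 2$ gives $n = 2m = 4$ and $Ric = 2(m+1)g_{FS} = 6\,g_{FS}$, so that $R = 24$, while Proposition \ref{evcp} identifies the least eigenvalue of $-\Delta_L$ on transverse-traceless tensors as $\mu_1 = 8(m+2) = 32$. Substituting these into the two spectral thresholds in condition \eqref{condh1} yields $\frac{2}{n}R = 12$ and $\left(\frac{4}{n} + 2\tau\right)R = 24 + 48\tau$. The entire statement will then reduce to tracking when these thresholds meet $\mbox{spec}_{TT}(-\Delta_L)$.

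Next I would dispose of the conformal directions. For $\tau \neq -1/3$, since $(\CP^2, g_{FS})$ is not isometric to the round sphere, Lemma \ref{specgapLemma}(i) in the case $n = 4$ shows that \eqref{H1gap} holds, so by Lemma \ref{H1TT} every element of $H^1_{\tau}$ is transverse-traceless (the strict Lichnerowicz inequality $\lambda_1 > R/(n-1)$ is what rules out the pure-trace contribution). It then remains to analyze the TT part through the factorization \eqref{ttnvar} of Theorem \ref{JTT}: for a TT eigentensor with $-\Delta_L h = \mu h$ one computes $(\nabla \tmf_{\tau})'h = \frac{1}{2}(12 - \mu)(24 + 48\tau - \mu)\,h$. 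Because every TT-eigenvalue satisfies $\mu \geq \mu_1 = 32 > 12$, the first factor never vanishes, so a nonzero TT element of $H^1_{\tau}$ can occur precisely when $24 + 48\tau = \mu_i$ for some $i$, that is, at the discrete values $\tau_i = \frac{1}{48}(\mu_i - 24)$. In particular, when $\tau < 1/6$ we have $24 + 48\tau < 32 = \mu_1 \leq \mu_i$ for every $i$, so no such coincidence can occur and $H^1_{\tau} = \{0\}$; infinitesimal rigidity then upgrades to rigidity by Corollary \ref{Kurcor}.

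Finally I would treat the exceptional conformally invariant value $\tau = -1/3$ (the Bach case) separately, since there the space $H^1_{-1/3}$ is by definition already restricted to traceless tensors and the conformal argument above is inapplicable. Here the two thresholds in \eqref{condh1} specialize to $\frac{2}{n}R = 12$ and $\left(\frac{4}{n} + 2\tau\right)R = 8$, both strictly below $\mu_1 = 32$, so neither lies in $\mbox{spec}_{TT}(-\Delta_L)$; the final assertion of Theorem \ref{h1thm} then gives $H^1_{-1/3} = \{0\}$, and the Bach version of Corollary \ref{Kurcor} yields Bach-rigidity. I do not anticipate a genuine obstacle in this argument: once Proposition \ref{evcp} is in hand, the statement is essentially arithmetic with the factorized Jacobi operator, and the only point requiring care is the bookkeeping at $\tau = -1/3$, where the modified definition of $H^1$ and the conformal slicing and regularity results (Theorem \ref{PBzed} and the Bach case of Corollary \ref{Kurcor}) must be invoked in place of their $\tmf_{\tau}$ counterparts.
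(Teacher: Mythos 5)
Your proposal is correct and follows essentially the same route as the paper, which simply cites Theorem \ref{h1thm} (whose hypotheses you verify via Lemma \ref{specgapLemma}, Lemma \ref{H1TT}, the factorization \eqref{ttnvar}, and the eigenvalue $\mu_1 = 32$ from Proposition \ref{evcp}) together with Corollary \ref{Kurcor}; your arithmetic with $R = 24$, the thresholds $12$ and $24 + 48\tau$, and the separate Bach case $\tau = -1/3$ all match. The only difference is that you spell out the computations the paper leaves implicit, including the correct handling of the conformally invariant value via the final assertion of Theorem \ref{h1thm} and the Bach version of Corollary \ref{Kurcor}.
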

\begin{proof}
This easily follows from Theorem~\ref{h1thm} and Corollary~\ref{Kurcor}.
\end{proof}
As in the case of the sphere, we conjecture that the infinitesimal
deformations at the value $\tau_i$ are not integrable.

%%%%%%%%%%%%%%%%%%%%%%%
\subsection{The product metric on $S^m \times S^m$}
\label{pmsm}
%%%%%%%%%%%%%%%%%%%%%%%%%%%%%%
In this subsection, we will discuss the proof of Theorem \ref{smsmint}.
\begin{proposition}
\label{evs2}
On $(S^m \times S^m, g_1 + g_2)$, with $Ric = (m-1) g$,
the least eigenvalue of the Lichnerowicz Laplacian
on TT-tensors is $0$, with $1$-dimensional
kernel spanned by $g_1 - g_2$.  The next eigenvalue is greater than
or equal to $2m$.
\end{proposition}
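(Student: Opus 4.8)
The plan is to exploit the product structure to reduce the eigenvalue problem on $S^m \times S^m$ to classical spectral data on the individual round factors. Write $M = M_1 \times M_2$ with $M_i = (S^m, g_i)$ each a unit round sphere, so that $Ric = (m-1)g$, $R = 2m(m-1)$ and $\tfrac{2}{n}R = 2(m-1)$ with $n = 2m$. A symmetric $2$-tensor decomposes along the product into three blocks, $S^2 T^*M = \pi_1^* S^2 T^*M_1 \oplus (\pi_1^* T^*M_1 \otimes \pi_2^* T^*M_2) \oplus \pi_2^* S^2 T^*M_2$, and since the Levi-Civita connection and curvature of a Riemannian product respect the splitting $TM = TM_1 \oplus TM_2$, the operator $\Delta_L$ preserves each block. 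Using that $R_{ipjq}$ vanishes unless all indices lie in the same factor, on the two pure blocks $\Delta_L$ reduces to $\Delta_L^{M_i} + \Delta_{M_j}$ (the Lichnerowicz Laplacian of one factor plus the rough Laplacian of the other), while on the mixed block the curvature coupling drops out and $\Delta_L = \Delta - 2(m-1)$. As a first consequence, $h_0 = g_1 - g_2$ is parallel, trace-free and divergence-free, and a direct computation gives $R_{ipjq} h_0^{pq} = (m-1) h_0$, whence $\Delta_L h_0 = 2(m-1)h_0 - \tfrac{2}{n}R\,h_0 = 0$; thus $0 \in \mathrm{spec}_{TT}(-\Delta_L)$.

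Next I would locate every $\Delta_L$-eigentensor whose $(-\Delta_L)$-eigenvalue is strictly less than $2m$, working block by block via separation of variables: on a pure block the eigenvalues are sums $\kappa + \mu$ of an eigenvalue $\kappa$ of $-\Delta_L^{M_i}$ and an eigenvalue $\mu \geq 0$ of $-\Delta_{M_j}$ on functions, and on the mixed block they are sums $\nu_1 + \nu_2 + 2(m-1)$ of rough-Laplacian eigenvalues $\nu_i$ on $1$-forms. The factor spectra are classical: on the unit $S^m$ the Laplacian on functions has eigenvalues $k(k+m-1)$; the rough Laplacian on $1$-forms has least eigenvalues $1$ (on $df$ with $f$ a degree-one harmonic) and $m-1$ (on Killing forms); and, by the computation in the proof of Theorem \ref{snit}, the least eigenvalue of $-\Delta_L$ on transverse-traceless tensors is $4m$, while the remaining (pure-trace and gauge $\delta^*\omega$) parts of the $S^m$ Lichnerowicz spectrum below $2m$ consist only of $0$ (the parallel tensor $g_i$) and $m$ (the tensors $f g_i$ with $f$ a degree-one spherical harmonic). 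Assembling these, the mixed block has minimum $1 + 1 + 2(m-1) = 2m$, and the only $\Delta_L$-eigenvalues below $2m$ occur on the pure blocks, namely $0$, with eigenspace $\mathrm{span}\{g_1, g_2\}$, and $m$, with eigenspace $\{u g_1 + v g_2\}$ where $u, v$ are degree-one harmonics on either factor.

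Finally I would intersect these low eigenspaces with the transverse-traceless condition, using that $\Delta_L$ preserves $TT$, so that the $TT$-spectrum is obtained by restricting to $TT \cap E_\lambda$ inside each $\Delta_L$-eigenspace $E_\lambda$. For $\lambda = 0$ the space $\mathrm{span}\{g_1, g_2\}$ is automatically divergence-free, and $\mathrm{tr}_g(a g_1 + b g_2) = m(a+b)$ vanishes exactly when $b = -a$; hence $TT \cap E_0 = \mathrm{span}\{g_1 - g_2\}$ is one-dimensional, which yields the asserted kernel. For $\lambda = m$, tracelessness of $u g_1 + v g_2$ forces $v = -u$, so $h = u(g_1 - g_2)$; then $\beta_g h = \delta_g h$ (as $h$ is trace-free) has $M_1$-components $\nabla u$ and $M_2$-components $-\nabla u$, so $\delta_g h = 0$ forces $u$ constant, and a nonzero degree-one harmonic is never constant. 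Thus $TT \cap E_m = \{0\}$, no $TT$-eigentensor has eigenvalue $m$, and together with the mixed-block bound the next $TT$-eigenvalue after $0$ is at least $2m$.

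The main obstacle is the second step: one must confirm that the \emph{gauge} directions $\delta^*\omega$ (with $\omega$ a co-closed $1$-form) contribute no $S^m$ Lichnerowicz eigenvalue in $(0, 2m)$ — this is where the first nonzero co-closed $1$-form eigenvalue and the intertwining of $\Delta_L$ with $\delta^*$ on Einstein manifolds enter — and that the separation-of-variables bookkeeping correctly accounts for the fact that the divergence-free condition couples the pure blocks to the mixed block, so that the intersection with $TT$ must be computed inside the $\Delta_L$-eigenspaces rather than blockwise on the $TT$ directions directly.
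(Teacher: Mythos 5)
Your proposal is correct, and it organizes the argument genuinely differently from the paper. You compute the full low-lying spectrum of $\Delta_L$ on all of $S^2(T^{*}M)$ --- using the three-block decomposition, the coincidence $\tfrac{2}{n}R_g = \tfrac{2}{m}R_{g_i} = 2(m-1)$ (which is exactly what makes your identities $\Delta_L = \Delta_L^{M_i} + \Delta_{M_j}$ on the pure blocks and $\Delta_L = \Delta - 2(m-1)$ on the mixed block valid with the paper's Einstein normalization of $\Delta_L$), and separation of variables --- and only at the end intersect each eigenspace $E_{\lambda}$ with $\lambda < 2m$ against the TT subspace, which is legitimate since $\Delta_L$ preserves TT. The paper never leaves the traceless setting: it writes $h = \overset{\circ}{h_1} + \frac{f}{m}g_1 + \hat{h} + \overset{\circ}{h_2} - \frac{f}{m}g_2$, proves a Koiso-type symmetrization Bochner estimate \emph{on the product} to bound the divergence-free pure-block pieces below by $4m$ (Lemma \ref{DLSn}), invokes Lichnerowicz's commutation $\delta(\Delta_L h) = -\Delta_H(\delta h)$ together with Folland's $1$-form spectrum for the non-divergence-free pieces, and then disposes of the coupled case $\lambda = m$, $\delta \hat{h} = -\frac{1}{m}df$, by the integration-by-parts identity $\int \langle \nabla^2 f, \hat{h} \rangle\, dV = 0$ (the Hessian of $f$ lives on one factor while $\hat{h}$ is purely mixed). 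Your route buys a cleaner logical structure: your resolution of the block coupling --- intersecting with TT inside the full eigenspaces $E_{\lambda}$ rather than blockwise --- is exactly the right fix and replaces the paper's delicate coupled case analysis, and your mixed-block bound $\nu_1 + \nu_2 + 2(m-1) \geq 1 + 1 + 2(m-1) = 2m$ is even slightly sharper bookkeeping than Lemma \ref{hathp}. The cost is that you need the complete spectrum of $\Delta_L^{S^m}$ on \emph{all} of $S^2(T^{*}S^m)$, including the gauge sector, which the paper's argument never requires.

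The step you flag as the main obstacle does close, so there is no genuine gap, but you should write it out since your whole second step rests on it. On an Einstein manifold one has the intertwining $(-\Delta_L)\delta^{*}\omega = \delta^{*}(\Delta_H \omega)$, and with the paper's conventions $tr(\delta^{*}\omega) = -\delta \omega$, so the Berger--Ebin summand $\delta^{*}(\Omega^1)$ contributes to the $S^m$ Lichnerowicz spectrum exactly the Hodge eigenvalues of non-Killing $1$-forms: from exact forms $df_k$ these are $k(k+m-1)$, namely $m$ (where $\delta^{*}df_1 = f_1 g$ is pure trace, merging with your trace family since $\nabla^2 f_1 = -f_1 g$ on the unit sphere) and then $2(m+1) > 2m$; from co-exact forms they are $(\ell+1)(\ell+m-2)$, namely $2(m-1)$ for Killing forms (which are annihilated by $\delta^{*}$ and so contribute nothing) and then $3m > 2m$. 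Combined with the pure-trace spectrum $\{0, m, 2(m+1), \dots\}$ and the factor TT bound $4m$, this confirms that the only $S^m$ eigenvalues below $2m$ are $0$ and $m$, with eigenspaces $\mathbb{R}g$ and $\{fg : f \mbox{ a first harmonic}\}$, exactly as your bookkeeping assumes; note the paper uses the same Folland data in Lemma \ref{DLSn}, just at a different point of its argument. With that verified, your $\lambda = m$ computation (tracelessness forces $h = u(g_1 - g_2)$, whose divergence $(\nabla_1 u, -\nabla_2 u)$ vanishes only for $u$ constant, impossible for a nonzero first harmonic) correctly shows $TT \cap E_m = \{0\}$, completing the proof.
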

\begin{proof}
As in \cite{Kobayashi}, the general traceless symmetric tensor splits as
\begin{align}
h = \overset{\circ}{h_1}  + \frac{f}{m} g_1
  + \hat{h} + \overset{\circ}{h_2} - \frac{f}{m} g_2,
\end{align}
where $h_1$ is tangent to the first factor, $\overset{\circ}{h_1}$ is its trace-free part,
$h_2$ is tangent to the second factor, and
$\hat{h}$ are the mixed directions.
The above decomposition is orthogonal,
and is preserved by $\Delta_L$.
The curvature tensor is given by
\begin{align}
R_{ijkl} =  (g_1)_{ik} (g_1)_{jl} -  (g_1)_{jk} (g_1)_{il}
+  (g_2)_{ik} (g_2)_{jl} -  (g_2)_{jk} (g_2)_{il},
\end{align}
with $Ric = (m-1)g$, and $R = 2 m (m-1)$.
If $h$ satisfies $(-\Delta_L) h = \lambda h$,
then we have the separate equations
\begin{align}
\label{lq1}
\Delta   \overset{\circ}{h_i} - 2m  \overset{\circ}{h_i}
&= (- \lambda ) \overset{\circ}{h_i}   \\
\label{lq2}
\Delta f &= (-\lambda) f\\
\label{lq3}
\Delta \hat{h}  - 2(m-1) \hat{h} &= (-\lambda) \hat{h}.
\end{align}

\begin{lemma} \label{DLSn} The lowest eigenvalue of $(-\Delta_L)$ acting
on tensors of the form  $h = \overset{\circ}{h_1}$ or
$h = \overset{\circ}{h_2}$ is at least $4m$ if $h$ is divergence-free.
If $h$ is not divergence-free, then the smallest eigenvalue is
at least $2(m+1)$.
\end{lemma}

\begin{proof}
Without loss of generality, assume $h = \overset{\circ}{h_1}$.
We first assume that $h$ is divergence-free.  Next
symmetrize and consider
\begin{align}
\begin{split}
&\int | \nabla_i h_{jk} + \nabla_j h_{ki} + \nabla_k h_{ij}|^2 dV
= 3 \int |\nabla h|^2 dV + 6 \int \nabla_i h_{jk} \nabla_j h_{ki} dV\\
& = 3 \int |\nabla h|^2 dV - 6  \int h_{jk} \nabla_i \nabla_j h_{ki} dV\\
& =  3 \int |\nabla h|^2 dV - 6 (m-1)\int |h|^2 dV
+  6 \int h_{jk} R_{ijk}^{\ \ \ p} h_{pi} dV.
\end{split}
\end{align}
Thus we have the inequality
\begin{align}
\label{nhin0}
\int |\nabla h|^2 dV \geq 2(m-1) \int |h|^2 dV
-  2 \int h_{jk} R_{ijk}^{\ \ \ p} h_{pi} dV.
\end{align}
Since $h$ is ``tangent'' to the first $S^m$-factor and $h$ is traceless,
the last integral is
\begin{align*}
-  2 \int h_{jk} R_{ijk}^{\ \ \ p} h_{pi} dV =  2 \int |h|^2 dV.
\end{align*}
Substituting this into \eqref{nhin0}, we obtain
\begin{align}
\label{nhin2}
\int |\nabla h|^2 dV \geq 2(m-1) \int |h|^2 dV + 2 \int|h|^2 dV
= 2m \int |h|^2 dV.
\end{align}
From \eqref{lq1}, the least eigenvalue of $(-\Delta_L)$ on divergence-free
is therefore at least  $2m + 2m = 4m$.

Next if $h$ is not divergence-free, we use the formula
\cite{Lichnerowicz}
\begin{align}
\delta (\Delta_L h) = - \Delta_H ( \delta h).
\end{align}
So if $h$ is an eigenfunction of $(-\Delta_L)$
with eigenvalue $\nu$, then $\delta h$ is a
eigenfunction of $\Delta_H$ with the same eigenvalue.

From Folland \cite{Folland},
the eigenspaces of $\Delta_H$ on $1$-forms on $S^m$
are spanned by the $1$-forms
$\phi_{1\ell}$ with eigenvalues $(\ell+1)(\ell + m -2)$, and
$\psi_{1\ell}$ with eigenvalues $\ell (\ell + m -1)$.
Note that $\delta \phi_{1\ell} = 0$, and $d \psi_{1\ell} = 0$.
The first few eigenvalues of $\phi_{1\ell}$ are
\begin{align}
\label{pe2}
2(m-1), \ 3m, \ 4(m+1), \dots.
\end{align}
These are all of the divergence-free eigenforms.
Note that the $2(m-1)$-eigenvalue here corresponds to Killing forms.
The first few eigenvalues of $\psi_{1\ell}$ are
\begin{align}
\label{se2}
m, \ 2(m+1), \ 3(m +2), \dots.
\end{align}
This list is exactly the eigenvalues of the Laplacian on functions
(excepting the zero eigenvalue); this is not an accident since
$\psi_{1\ell} = d f_{\ell}$ where $f_{\ell}$ is a harmonic function with the
same eigenvalue, so none of these are divergence-free.

The formula
\begin{align}
\Delta_L h &= \Delta h - 2m h
\end{align}
tells us that the least eigenvalue of $(-\Delta_L)$
is at least $2m$, with equality if and only if $\Delta h = 0$,
in which case $h$ is parallel. Then $h$ is necessarily divergence-free,
and the above argument says the least eigenvalue of $\Delta$ on
divergence-free tensors is at least $4m$, so this cannot happen.
Consequently, the least eigenvalue is strictly greater than $2m$.

Next, the argument in \cite{Bergeretal} says that the eigenspace of the
Hodge Laplacian on $1$-forms with eigenvalue $\mu$ is spanned by forms of the form
\begin{align}
f_1(z_1) \alpha_2 (z_2) , \ f_2(z_2) \alpha_1(z_1),
\end{align}
where $f_i$ is an eigenfunction of the Laplacian on the
$i$-th $S^m$-factor with eigenvalue $\nu$, and $\alpha_i$ is an eigen-$1$-form
of the Hodge Laplacian $\tilde{\nu}$ for  $i = 1,2$ with
$\mu = \nu + \tilde{\nu}$.
Consequently, the list of eigenvalues on $1$-forms on the product is
given by all sums from \eqref{pe2} and the second list \eqref{se2}
taken with the second list beginning with $0$.
Therefore the list of eigenvalues on $1$-forms on the product begins with
\begin{align}
m, 2(m-1), 2m, 2(m+1), 3m -2, 3m, \dots
\end{align}
We already know that $\nu > 2n$, so
the next possiblity is $2(m + 1)$.

\end{proof}

The preceding lemma says that for any eigenvalue strictly less
than $2(m+1)$, we must have $\overset{\circ}{h_i} = 0$
for $i = 1,2$. So we next consider $h$ of the form
\begin{align}
\label{sform}
h = \frac{f}{m} g_1 + \hat{h} - \frac{f}{m} g_2.
\end{align}
In this case, we have the two equations
\begin{align}
\label{evq2}
\Delta f &= (-\lambda) f\\
\label{evq3}
\Delta \hat{h}  - 2(m-1) \hat{h} &= (-\lambda) \hat{h}.
\end{align}
We recall the eigenvalues of the Laplacian on functions on $S^m$ are
\begin{align}
\label{se22}
0, m, \ 2(m+1), \ 3(m +2), \dots.
\end{align}

The argument in \cite{Bergeretal} says that the eigenspace of the
Laplacian on with eigenvalue $\mu$ is spanned by functions of the form
\begin{align}
f_1(z_1) f_2 (z_2),
\end{align}
where $f_i$ is an eigenfunction of the Laplacian on the
$i$-th factor with eigenvalue $\nu$ for  $i = 1,2$ with
$\mu = \nu + \tilde{\nu}$.
Consequently, the list of eigenvalues on functions on the product is
given by all sums from the list \eqref{se22} with itself, which
begins with
\begin{align}
0, m, 2m, 2(m + 1), \dots.
\end{align}

\begin{lemma}
\label{hathp}
For divergence-free $h$ of the form $ h = \hat{h}$,
we have
\begin{align}
\int |\nabla \hat{h}|^2 \geq 2 (m-1) \int |\hat{h}|^2.
\end{align}
\end{lemma}

\begin{proof}
As in the proof of Lemma \ref{DLSn}, we symmetrize to obtain
\begin{align}
\label{nhin}
\int |\nabla h|^2 dV \geq 2 (m-1) \int |h|^2 dV -  2
\int h_{jk} R_{ijk}^{\ \ \ p} h_{pi} dV.
\end{align}
Since the curvature tensor has no mixed terms, the last term
vanishes, and the result follows.
\end{proof}

Continuing with the argument,
first, assume that $f$ is non-zero.
The first possibility is $\lambda = 0$, in which case
$f$ is constant.  Since $h$ is divergence-free, it follows that $\delta \hat{h} = 0$, and
the Proposition then implies that $\hat{h} = 0$.
This case does in fact happen, since $g_1 - g_2$ is a
kernel element.

Still assuming $f$ is non-zero,
the next possibility is $\lambda = m$. In this case $f$ is
either $f(z_1)$ or $f(z_2)$, where $f$ is a first eigenfunction
on $S^m$. Without loss of generality, assume that
$f = f(z_1)$. Then if $\hat{h}$ is non-zero, the divergence-free
condition on $h$ yields
\begin{align}
\delta \hat{h} =- \frac{1}{m} df.
\end{align}
Integrating, we obtain
\begin{align}
\begin{split}
\int \langle df, df \rangle dV
&= -m \int \langle df , \delta  \hat{h} \rangle dV
= m \int \langle \nabla^2 f,  \hat{h} \rangle dV = 0,
\end{split}
\end{align}
since the hessian of $f$ only lives on the first factor,
and $\hat{h}$ has only mixed components.
This implies $df = 0$, which is a contradiction since $f$ is
not-constant. Therefore $\hat{h} = 0$, which again says $df = 0$,
which is a contradiction.

In the above 2 paragraphs, we have shown that if $f$ is non-zero,
then $\lambda \geq 2m$ unless $\lambda = 0$, and $f$ is constant.
So we next consider the case that $\lambda < 2m$.
If $f$ is a non-zero constant, then $\lambda = 0$.
Then $\delta \hat{h} = 0$, and
Lemma \ref{hathp} then implies that $\hat{h} = 0$.
This proves the zero eigenspace is spanned by $g_1 - g_2$.
Next, if $\lambda < 2m$ and $f$ is not constant, then $f$
must vanish. In this case, $\delta \hat{h} = 0$,
and the proposition says $\lambda \geq 4(m-1)$, which
is a contradiction.

 Therefore we have shown that the kernel of $\Delta_L$
on traceless divergence-free tensors is spanned by $g_1 - g_2$,
and that the next largest eigenvalue is at least $2m$.
\end{proof}

Next, we show that for $m =2$, then $4$ is actually an eigenvalue.
We let
\begin{align}
\hat{h} &=   \alpha_1 \odot \alpha_2,
\end{align}
where $\alpha_i$ is a divergence-free eigenform of the rough Laplacian
on $1$-forms on the $i$-th factor, $i = 1, 2$. Here $\odot$
denotes symmetric product.  We have seen above
that the lowest eigenvalue of the rough Laplacian
on $S^2$ on divergence-free
$1$-forms is $1$. We then have
\begin{align}
\begin{split}
\Delta (   \alpha_1 \odot \alpha_2 )
&= \Delta (  \alpha_1 \otimes \alpha_2 +  \alpha_2 \otimes \alpha_1 )\\
& =  \Delta( \alpha_1) \otimes \alpha_2  + \alpha_1 \otimes  \Delta( \alpha_2)
 +  \Delta( \alpha_2) \otimes \alpha_1 + \alpha_2 \otimes \Delta(\alpha_1)\\
& = - 2 \alpha_1  \otimes \alpha_2  -2  \alpha_2 \otimes \alpha_1
= - 2 (\alpha_1 \odot \alpha_2),
\end{split}
\end{align}
and $\alpha_1 \odot \alpha_2$ is clearly traceless and divergence-free,
with \eqref{lq3} showing that the corresponding eigenvalue of $\Delta_L$ is
$4$.

\begin{remark}{\em
In higher dimensions, Lemma \ref{hathp} shows that the least eigenvalue of
$(-\Delta_L)$ on divergence-free $h$ of the form $\hat{h}$ is $4(m-1)$.
This does in fact occur as an eigenvalue by considering
$\alpha_1 \odot \alpha_2$ as we did above for $m = 2$.}
\end{remark}

Noting that
\begin{align}
2m = \frac{1}{m-1} R,
\end{align}
Theorem \ref{TTPosCor} then implies that if
\begin{align}
\label{upest3}
\tau < \frac{2-m}{2 m(m-1)},
\end{align}
then $\tmf_{\tau}$ is strictly stable in the
TT-direction.
For the conformal direction, we can simply quote Theorem \ref{conftPos}.
Strict local minimization then follows from Proposition \ref{locmin}, and
the statement for $\mathcal{R}$ in dimension $4$
follows for the reasons as discussed in the introduction.
this completes the proof of Theorem \ref{smsmint}.

We next consider the case of rigidity for $S^2 \times S^2$.
\begin{theorem} On $S^2 \times S^2$, the product metric
$g_1 + g_2$ is infinitesimally rigid (and therefore rigid)
provided that $\tau < - 1/2$.
If one orders the TT-spectrum of $(-\Delta_L)$ as
\begin{align}
\mu_1 = 0 < \mu_2 = 4 < \mu_3 < \cdots,
\end{align}
then $H^1_{\tau}$ is nontrivial only at the sequence of discrete values
\begin{align}
\tau_i = \frac{1}{8} \Big( \mu_i - 4 \Big).
\end{align}
For $\tau_1 = - (1/2)$, $\dim(H^1_{-1/2}) =1$, and
the path of metrics $s \mapsto e^s g_1 + e^{-s} g_2$ yields a
corresponding non-trivial
deformation, with $h = g_1 - g_2$. This
is a K\"ahler constant scalar curvature deformation, which
is a path of critical metrics for the functional
$\tmf_{\tau_1}$, therefore these infinitesimal
deformations are integrable.

 For $\tau_2 = 0$, $\dim ( H^1_0) = 9$, and this space is
spanned by tensors of the form $\alpha_1 \odot \alpha_2$
where $\alpha_i$ is a Killing form on the $i$-th factor, $i = 1, 2$.

For $\tau = -1/3$, $H^1_{\tau} = 0$, thus $g_1 + g_2$ is Bach-rigid \cite{Kobayashi}.
\end{theorem}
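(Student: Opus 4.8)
The plan is to read off the full resonance structure from the general rigidity criterion (Theorem~\ref{h1thm}), the transverse-traceless factorization (Theorem~\ref{JTT}), and the explicit TT-spectrum of $(-\Delta_L)$ in Proposition~\ref{evs2}, and then to settle the two borderline values $\tau_1=-1/2$ and $\tau_2=0$ by a direct analysis. Throughout we normalize so that $Ric=(m-1)g=g$, i.e. $m=2$, $n=4$, $R=4$, so that the two spectral values appearing in \eqref{condh1} are $\tfrac{2}{n}R=2$ and $\big(\tfrac{4}{n}+2\tau\big)R=4+8\tau$. First I would establish the characterization. For $\tau\neq-1/3$ the hypotheses of Lemma~\ref{specgapLemma}(i) hold (the product is not round), so Theorem~\ref{h1thm} gives $H^1_\tau=\{0\}$ whenever $\{2,\,4+8\tau\}\cap\mbox{spec}_{TT}(-\Delta_L)=\varnothing$. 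By Proposition~\ref{evs2} the TT-spectrum is $\{0,4,\mu_3,\dots\}$ with no eigenvalue in $(0,4)$, so $2$ is never an eigenvalue; hence $H^1_\tau\neq\{0\}$ forces $4+8\tau=\mu_i$, i.e. $\tau=\tau_i=\tfrac18(\mu_i-4)$. Conversely, at $\tau=\tau_i$ the factorization of Theorem~\ref{JTT} gives, for any TT eigentensor $h$ with $(-\Delta_L)h=\mu_i h$,
\[
(\nabla\tmf_{\tau_i})'h=\tfrac12(-\mu_i+2)(-\mu_i+4+8\tau_i)h=0,
\]
so $h\in H^1_{\tau_i}$ by Lemma~\ref{H1Lemma}. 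Since $\tau_1=-1/2$ is the smallest resonance, every $\tau<-1/2$ satisfies the emptiness condition ($4+8\tau<0$) and $\tau\neq-1/3$, whence $H^1_\tau=\{0\}$ and rigidity follows from Corollary~\ref{Kurcor}. The Bach case is the same spectral check: for $\tau=-1/3$ one has $4+8\tau=4/3\notin\{0,4,\dots\}$, so the $n=4,\tau=-1/3$ clause of Theorem~\ref{h1thm} yields $H^1_{-1/3}=\{0\}$ and Bach-rigidity, recovering \cite{Kobayashi}.

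Next I would treat $\tau_1=-1/2$ ($\mu_1=0$). By Proposition~\ref{evs2} the kernel of $(-\Delta_L)$ on TT tensors is one-dimensional, spanned by $g_1-g_2$ (traceless since $tr_g g_i=m$, and divergence-free since it is parallel), so $\dim H^1_{-1/2}=1$. For integrability I would exhibit the path $g_s=e^s g_1+e^{-s}g_2$, which has $g_0=g$ and $g_0'=g_1-g_2$, and verify that each $g_s$ is critical. Because each factor is a surface, the $(0,2)$-Ricci tensor is scale-invariant, so $Ric(g_s)=g_1+g_2=g$ is parallel with respect to $g_s$ and $R(g_s)=2(e^s+e^{-s})$ is constant. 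Substituting these into the gradient formulas \eqref{gradformF} and \eqref{gradformS} kills every derivative term, and since $p=4/n-1=0$ in dimension four one has $\tmf_{-1/2}=\mf_{-1/2}$; a short algebraic computation with the surviving curvature terms then shows $\nabla\mf_{-1/2}[g_s]=0$. Thus $\{g_s\}$ is a genuine path of critical cscK metrics and $g_1-g_2$ is integrable.

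Finally I would compute $H^1_0$ at $\tau_2=0$ ($\mu_2=4=4(m-1)$). Since $0\neq-1/3$, Lemma~\ref{H1TT} shows $H^1_0$ consists of TT tensors; by the factorization it is exactly the eigenvalue-$4$ eigenspace of $(-\Delta_L)$, which lies below $2(m+1)=6$, so by Proposition~\ref{evs2} each such $h$ has the form $\tfrac{f}{m}(g_1-g_2)+\hat h$ with $\hat h$ in the mixed components. Writing $\delta_g h=0$ on each factor and taking one further divergence gives $\tfrac12(\Delta^{(1)}+\Delta^{(2)})f=0$, i.e. $\Delta f=0$, which together with $\Delta f=-4f$ forces $f=0$. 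Hence $h=\hat h$ is a divergence-free mixed tensor with rough-Laplacian eigenvalue $2$ on each factor; the divergence-free condition then forces each factor one-form to be co-closed, i.e. a Killing form. As the Killing fields on $S^2$ form a three-dimensional space, $H^1_0$ is spanned by the products $\alpha_1\odot\alpha_2$ and has dimension $3\cdot3=9$.

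I expect the main obstacle to be this last dimension count. On each $S^2$ the rough-Laplacian eigenvalue $1$ on one-forms is attained by a six-dimensional space (three Killing plus three gradients of first eigenfunctions), so a priori $\hat h$ ranges over a $36$-dimensional space of mixed tensors; one must use the double-divergence identity to eliminate the $f$-contributions and the co-closedness argument to cut the six-dimensional factor spaces down to the three-dimensional Killing spaces. Once these two points are in place, the remainder reduces to bookkeeping with the spectral data already established in Proposition~\ref{evs2}.
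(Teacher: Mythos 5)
Your proposal is correct, and its skeleton --- reduction to TT tensors via Lemma \ref{H1TT} and Lemma \ref{specgapLemma}, the factorization of Theorem \ref{JTT} with $\tfrac{2}{n}R=2$ and $(\tfrac{4}{n}+2\tau)R=4+8\tau$, the spectral data of Proposition \ref{evs2}, and rigidity via Theorem \ref{h1thm} and Corollary \ref{Kurcor} --- is exactly the paper's. You diverge in two steps, both to good effect. First, for integrability at $\tau_1=-1/2$ the paper simply cites Derdzinski's identity \cite[Equation 38]{Derdzinski}, that the Bach tensor of a K\"ahler constant scalar curvature surface is a constant multiple of the traceless Ricci tensor, which is the $\tau=-1/2$ Euler--Lagrange equation; you instead verify criticality of $g_s=e^sg_1+e^{-s}g_2$ directly, and this works: $Ric(g_s)=g_1+g_2$ is $g_s$-parallel, $R(g_s)=2(e^s+e^{-s})$ is constant, so \eqref{gradformF} and \eqref{gradformS} reduce to their algebraic terms, and in a $g_s$-orthonormal frame the $(1,1)$-component of $\nabla\mf_{-1/2}$ is $-2e^{-2s}+(e^{-2s}+e^{2s})+(2e^{-2s}+2)-(e^{-2s}+2+e^{2s})=0$, with the other components vanishing by symmetry; your argument is self-contained where the paper's is a citation. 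Second, and more substantively, your treatment of $\dim H^1_0=9$ fills in a step the paper leaves terse: the proof of Proposition \ref{evs2} only analyzes eigenvalues $\lambda<2m$, and its exclusion of $f\neq 0$ at $\lambda=m$ uses a Hessian-pairing trick that would not apply at $\lambda=2m=4$, where $f$ could be a product $f_1(z_1)f_2(z_2)$ of first eigenfunctions whose Hessian has mixed components; the paper then merely exhibits the eigentensors $\alpha_1\odot\alpha_2$ and asserts that the determination of $H^1_0$ ``was carried out above.'' Your double-divergence identity --- applying $\delta^{(1)}$ and $\delta^{(2)}$ to the two factor components of $\delta_g h=0$, using that the two divergences commute on mixed tensors, to conclude $\Delta f=0$ and hence $f=0$ --- genuinely closes the case $\lambda=4$, $f\neq 0$, and your co-closedness argument cutting the a priori $36$-dimensional space of mixed eigentensors down to Killing$\,\odot\,$Killing is exactly the bookkeeping needed for the count of $9$. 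One phrasing correction: $\hat h$ has total rough-Laplacian eigenvalue $2$, i.e.\ eigenvalue $1$ on each factor (the rough $1$-eigenspace on $S^2$ being the six-dimensional Hodge $2$-eigenspace shifted by $Ric=g$), not ``eigenvalue $2$ on each factor''; since your subsequent splitting uses the correct values, this is cosmetic.
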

\begin{proof}
The follows from Theorem \ref{h1thm} and Corollary \ref{Kurcor}.
The fact that $H^1_{-1/2}$ is spanned $g_1 - g_2$ follows from Proposition \ref{evs2}.
The fact that $g_s$ are critical for $\tmf_{\tau_1}$ follows from \cite[Equation 38]{Derdzinski}, which says that
the Bach tensor of K\"ahler constant scalar curvature surface is a constant multiple
of the traceless Ricci tensor, which is exactly the Euler-Lagrange equation
for $\tau_1 = -1/2$. The determination of $H^1_0$ was carried out above.
\end{proof}
As in the previous examples,
we conjecture that the infinitesimal deformations at value $\tau_i$
for $i \geq 2$ are not integrable.

\subsection{The Ricci-flat case}
\begin{proof}[Proof of Theorem \ref{rfint}]
Theorem \ref{FtJf} shows that the Jacobi operator on pure trace
elements is
\begin{align}
tr(Jf) =  (n-4\tau + 4n\tau)(n-1) \Delta^2 f.
\end{align}
Consequently, $g$ is strictly stable in the conformal direction
for $-n/4(n-1) < \tau$.

Theorem \ref{JTT} shows that the Jacobi operator on TT-tensors
is a positive multiple of $\Delta_L^2$, and is therefore non-negative.
Integrating by parts, a TT-kernel element $h$ satisfies
$\Delta_L h = 0$, and is therefore an infinitesimal Einstein deformation.
Integrability of infinitesimal Einstein deformations on the torus is elementary, and the
integrability of such deformations on a Calabi-Yau manifold
was proved by Bogomolov and Tian \cite{Bogomolov, TianCY}.
Theorem \ref{rfint} then follows from Proposition \ref{locmin}.

\end{proof}
\bibliography{Rigidity_Stability}

\end{document}